\DeclareMathOperator{\codim}{codim}
\DeclareMathOperator{\OG}{OG}
\DeclareMathOperator{\Gr}{Gr}
\DeclareMathOperator{\IG}{IG}
\DeclareMathOperator{\QH}{QH}
\DeclareMathOperator{\Hr}{H}
\DeclareMathOperator{\Sp}{Sp}
\DeclareMathOperator{\SL}{SL}
\DeclareMathOperator{\GL}{GL}
\DeclareMathOperator{\Hom}{Hom}
\DeclareMathOperator{\Aut}{Aut}
\DeclareMathOperator{\Pic}{Pic}
\DeclareMathOperator{\Span}{Span}
\DeclareMathOperator{\Ker}{Ker}
\DeclareMathOperator{\rk}{rk}
\DeclareMathOperator{\Ext}{Ext}
\DeclareMathOperator{\End}{End}
\DeclareMathOperator{\XX}{X}
\DeclareMathOperator\Ri{R}
\DeclareMathOperator{\Le}{L}
\DeclareMathOperator{\Sing}{Sing}
\DeclareMathOperator{\ev}{ev}
\DeclareMathOperator{\pr}{pr}
\DeclareMathOperator{\Cone}{Cone}
\DeclareMathOperator{\RHom}{\mathop{\mathrm RHom}}
\newcommand{\g}{\mathfrak g}
\newcommand{\gh}{\mathfrak h}
\newcommand{\Xf}{\mathfrak{X}}
\newcommand{\Vf}{\mathfrak{V}}
\newcommand{\id}{\mathrm{id}}
\newcommand{\Qs}{{\sf Q}}
\newcommand{\cF}{{\mathcal F}}
\renewcommand{\L}{{\mathcal L}}
\newcommand{\Ll}{{\mathcal L}}
\newcommand{\Kk}{{\mathcal K}}
\newcommand{\Uu}{{\mathcal U}}
\newcommand{\Oo}{{\mathcal O}}
\newcommand{\Hh}{\mathcal H}
\newcommand{\Ss}{\mathcal S}
\newcommand{\Ee}{{\mathcal E}}
\newcommand{\Cc}{{\mathcal C}}
\newcommand{\Dd}{{\mathcal D}}
\newcommand{\cU}{{\mathcal U}}
\newcommand{\cO}{{\mathcal O}}
\newcommand{\A}{{\mathcal A}}
\newcommand{\B}{{\mathcal B}}
\newcommand{\Gg}{{\mathcal G}}
\newcommand{\bp}{{\mathbf{p}}}
\renewcommand{\P}{{\mathbb P}}
\newcommand{\Pp}{{\mathbb P}}
\newcommand{\G}{{\mathbb{G}}}
\newcommand{\Z}{{\mathbb Z}}
\newcommand{\C}{{\mathbb C}}
\newcommand{\R}{{\mathbb R}}
\newcommand{\Sb}{{\mathbb S}}
\newcommand{\Ub}{{\mathbb U}}
\newcommand{\Eb}{{\mathbb E}}
\newcommand{\SX}{\mathcal{S}_X}
\newcommand{\SXp}{\mathcal{S}_{X^+}}
\newcommand{\QX}{\mathcal{Q}_X}
\newcommand{\QXp}{\mathcal{Q}_{X^+}}
\newcommand{\uh}{{\widehat{u}}}
\newcommand{\vh}{{\widehat{v}}}
\newcommand{\Ssh}{{\widehat{\Ss}}}
\newcommand{\Sbh}{{\widehat{\Sb}}}
\newcommand{\Uuh}{{\widehat{\Uu}}}
\newcommand{\Ft}{\widetilde{F}}
\newcommand{\Xt}{{\widetilde{X}}}
\newcommand{\ut}{{\widetilde{u}}}
\newcommand{\vt}{{\widetilde{v}}}
\newcommand{\ft}{{\widetilde{f}}}
\newcommand{\tildeX}{\widetilde{X}}
\newcommand{\tildeY}{\widetilde{Y}}
\newcommand{\tildeZ}{\widetilde{Z}}
\newcommand{\tildeT}{\widetilde{T}}
\newcommand{\tildeXp}{\widetilde{X}^+}
\newcommand{\tildeV}{\widetilde{V}}
\newcommand{\tildeVp}{\widetilde{V}^+}
\newcommand{\tildeFp}{\widetilde{F}^+}
\newcommand{\tildes}{\tilde{\sigma}}
\newcommand{\tildet}{\tilde{\tau}}
\newcommand{\tildeSp}{\widetilde{\Sigma}^+}
\newcommand{\tildej}{\tilde{\jmath}}
\newcommand{\Sst}{\widetilde \Ss}
\newcommand{\Sbt}{\widetilde \Sb}
\newcommand{\At}{\widetilde \A}
\newcommand{\Uut}{\widetilde \Uu}
\renewcommand{\sb}{{\overline{\s}}}
\newcommand{\taub}{{\overline{\tau}}}
\newcommand{\Lb}{\mathbf{L}}
\newcommand{\Rb}{\mathbf{R}}
\newcommand{\s}{\sigma}
\renewcommand{\a}\alpha
\newcommand{\oZ}{\omega_Z}
\newcommand{\oY}{\omega_Y}
\newcommand{\oa}{\omega_\alpha}
\newcommand{\ob}{\omega_\beta}
\newcommand{\Pa}{P_\alpha}
\newcommand{\pa}{\pi_\alpha}
\newcommand{\Pb}{P_\beta}
\newcommand{\pb}{\pi_\beta}
\newcommand{\Gad}{G_2^\textrm{ad}}
\newcommand{\scal}[1]{\langle #1 \rangle}
\newcommand{\Mod}[3]{M_{#1,#2}(#3)}
\newcommand{\Modc}[3]{M_{#1,#2}^\circ(#3)}
\newtheorem{thm}{Theorem}[section]
\newtheorem{lemma}[thm]{Lemma}
\newtheorem{prop}[thm]{Proposition}
\newtheorem{cor}[thm]{Corollary}
\theoremstyle{definition}
\newtheorem{remark}[thm]{Remark}
\newtheorem{notation}[thm]{Notation}
\newtheorem{fact}[thm]{Fact}
\newtheorem{defn}[thm]{Definition}
\newtheorem{question}[thm]{Question}
\newtheorem*{thm*}{Theorem}
\newtheorem*{cor*}{Corollary}
\newtheorem*{conj*}{Conjecture}
\long\def\comment#1{}
\begin{document}

\setcounter{tocdepth}{1}

\title{{Geometry of horospherical varieties \\
    of Picard rank one}}

\date{18.05.2021}

\author{R.~Gonzales}
\address{Department of Sciences, Pontificia Universidad Cat\'olica del Per\'u, San Miguel, Lima 32, Lima, Per\'u}
\email{rgonzalesv@pucp.edu.pe}

\author{C.~Pech}
\address{School of Mathematics, Statistics and Actuarial Science, University of Kent, Canterbury CT2 7NF, United Kingdom}
\email{c.m.a.pech@kent.ac.uk}

\author{N.~Perrin}
\address{Laboratoire de Math\'{e}matiques de Versailles, UVSQ, CNRS, Universit\'{e} Paris-Saclay,
78035 Versailles, France}
\email{nicolas.perrin@uvsq.fr}

\author{A.~Samokhin}
\address{Institute for Information Transmission Problems of the Russian Academy of Sciences (Kharkevich Institute), B.Karetny per. 19, Moscow 127051, Russia}
\email{alexander.samokhin@gmail.com}

\subjclass[2010]{14M27,14N35,14F05,14M17}

%%%%%%%%%%%%%%%%%%%%%%%%%%%%%%%%%%%%%%%%%%%%%%%%%%%%%%%%%%%%%%%%%%%%%%%%%%

\begin{abstract}
	We study the geometry of smooth non--homogeneous horospherical varieties of Picard rank one. These have been classified by Pasquier and include the well--known odd symplectic Grassmannians. We focus our study on quantum cohomology, with a view towards Dubrovin's conjecture. We start with 
describing the cohomology groups of smooth horospherical varieties of Picard rank one. We show a Chevalley formula for these and establish that many Gromov--Witten invariants are enumerative. This enables us to prove that in many cases the quantum cohomology is semisimple. We give a presentation of the quantum cohomology ring for odd symplectic Grassmannians. In the last sections, we turn to derived categories of coherent sheaves. We first discuss a general construction of exceptional bundles on horospherical varieties. We work out in detail the case of the horospherical variety associated to the exceptional group $G_2$ and construct a full rectangular Lefschetz exceptional collection in the derived category.
\end{abstract}

\maketitle
\tableofcontents

\markboth{R.~GONZALES, C.~PECH, N.~PERRIN, A.~SAMOKHIN}
{\uppercase{{Horospherical varieties of Picard rank one}}}

\section*{Introduction}

Schubert calculus and enumerative geometry are very classical. One of their many aspects is the study of intersection theory of projective rational homogeneous spaces $G/P$ with $G$ a reductive group and $P$ a parabolic subgroup. We now have a very good understanding of the cohomology rings of $G/P$ as well as a good understanding of their quantum cohomology and even of their quantum $K$-theory.

%In this paper, we consider intersection theory on quasi-homogeneous varieties. 
One of the simplest classes of quasi--homogeneous spaces is toric varieties for which many results are known. Rational projective homogeneous spaces and toric varieties are both subsumed by a larger class of spherical varieties. Recall that a normal variety $X$ is called \emph{$G$--spherical}, $G$ being a reductive group, if $X$ has a dense $B$--orbit, where $B \subset G$ is a Borel subgroup. Spherical varieties form such a large class of varieties that there is no yet a complete description of their cohomology rings. In this paper, we concentrate on a special class of spherical varieties: \emph{horospherical varieties}, see Definition~\ref{defi-horo}. More specifically, we consider smooth projective horospherical varieties of Picard rank one. These varieties have been classified by Pasquier \cite{pa:h}, who proved that they are either homogeneous or can be constructed in a uniform way via a triple $(\textrm{Type}(G),\oY,\oZ)$ of representation--theoretic data. Here $\textrm{Type}(G)$ is the semisimple Lie type of a reductive group $G$ and $\oY,\oZ$ are fundamental weights, see Section~\ref{sect-real}. Pasquier has classified the possible triples in five classes:

\begin{enumerate}[(1)]
	\item $(B_n,\omega_{n-1},\omega_n)$ with $n \geq 3$;
	\item $(B_3,\omega_{1},\omega_3)$; 
	\item $(C_n,\omega_{m},\omega_{m-1})$ with $n \geq 2$ and $m \in [2,n]$;
	\item $(F_4,\omega_{2},\omega_3)$;
	\item $(G_2,\omega_{1},\omega_2)$.
\end{enumerate}

Following \cite{BP}, we denote by $X^1(n), X^2, X^3(n,m), X^4$ and $X^5$ the corresponding varieties, the superscript indicating the class that a variety belongs to. We will refer to this list of varieties as Pasquier's list.

The variety $X^3(n,m)$ from the third class has already attracted some attention since in this case it is isomorphic to the so--called \emph{odd symplectic Grassmannian} $\IG(m,2n+1)$ (see Subsection~\ref{ss:special} and Section \ref{odd}). They were studied, for instance, in \cite{m:o,pech:t,pech:q} and very recently in \cite{odd:mih1,odd:mih2}.
  
In this paper, we start a systematic study of the geometry of non--homogeneous projective smooth horospherical varieties $X$ with $\Pic(X) = \Z$, with a view towards Dubrovin's conjecture. Our first results are concerned with the cohomology ring $\Hr^*(X,\Z)$. We describe two \emph{Schubert--like} cohomology basis $(\sigma'_u,\tau_v)_{u,v}$ and $(\sigma_u,\tau'_v)_{u,v}$ and prove that these bases are Poincar\'e dual to each other, see Proposition \ref{prop:poincare}. We also give a Chevalley formula, \emph{i.e.}, a formula for multiplying any of the above classes with the hyperplane class $h$ generating the Picard group, see Proposition \ref{prop-chev}.

We then turn to the study of rational curves, in particular to the study of $\Mod{d}{k}{X}$, the Kontsevich moduli space of degree $d$, genus zero, stable maps with $k$ marked points. In particular, we prove the following result (Theorem \ref{thm:irr}):

\begin{thm*}
    Let $X$ be a non--homogeneous projective smooth horospherical variety with $\Pic(X) = \Z$. Then
    \begin{enumerate}
    	\item $\Mod{1}{k}{X}$ is irreducible of dimension $\dim X + c_1(X) + k - 3$.
    	\item $\Mod{d}{k}{X}$ is irreducible of dimension $\dim X + dc_1(X) + k - 3$ for all $d$ if $X=X^2$ or $X=X^3(n,m)$.
    	
    	\item $\Mod{2}{k}{X}$ is irreducible of dimension $\dim X + 2c_1(X) + k - 3$ for $X=X^1(3)$ and $X=X^4$, and has two irreducible components of dimension $\dim X + 2 c_1(X) + k - 3$ if $X=X^1(4)$ or $X=X^5$.
    	
    \end{enumerate}
\end{thm*}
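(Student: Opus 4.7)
The plan is to treat the three parts in increasing order of difficulty, exploiting throughout the three-orbit decomposition $X = G \cdot x_0 \sqcup Y \sqcup Z$ coming from Pasquier's classification, together with the homogeneity of the open $G$-orbit and of the closed orbits $Y$ and $Z$.

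For part (1), I would first analyse $\Mod{1}{0}{X}$, the scheme of lines on $X$. A line either lies in $Y$, lies in $Z$, or meets the open orbit in a dense open subset; since $Y$ and $Z$ are rational homogeneous spaces of Picard rank one, the families of lines contained in them are classical and one checks directly that they sit inside the closure of the family of lines meeting the open orbit. For a line through a general point of $X$, the $G$-action reduces the problem to studying lines through a single point $x$ of the open orbit; using the explicit description of minimal rational tangents afforded by the representation-theoretic data $(\textrm{Type}(G),\oY,\oZ)$, this family is irreducible of the expected dimension $c_1(X) - 2$. Hence $\Mod{1}{0}{X}$ is irreducible of expected dimension $\dim X + c_1(X) - 3$, and the iterated forgetful maps $\Mod{1}{n}{X} \to \Mod{1}{0}{X}$, with irreducible $n$-dimensional fibres, give the full statement of part (1).

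For part (3), concerning conics, I would first show that in all five cases the boundary of $\Mod{2}{0}{X}$, consisting of maps with reducible domain, has dimension strictly less than the expected one. This boundary is the image of the gluing map $\Mod{1}{1}{X} \times_X \Mod{1}{1}{X} \to \Mod{2}{0}{X}$, and by part (1) its dimension is at most $\dim X + 2c_1(X) - 4$, one less than the expected dimension. The number of top-dimensional components of $\Mod{2}{0}{X}$ therefore equals the number of components of the smooth locus $\Modc{2}{0}{X}$, and this count is carried out case by case: in cases (1) for $m = 3$ and in (4) the variety of minimal rational tangents at a general point is irreducible, so every pair of incident lines admits essentially one family of smoothings and $\Modc{2}{0}{X}$ is irreducible; in cases (1) for $m = 4$ and in (5) this variety of minimal rational tangents breaks into two orbits under the isotropy subgroup, producing exactly two families of smoothings and hence two top-dimensional components of $\Mod{2}{0}{X}$.

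For part (2), that is, all degrees $d \geq 1$ in cases (2) and (3), I would induct on $d$, the base cases $d = 1, 2$ being handled above. For the inductive step, the boundary $\Mod{d}{0}{X} \setminus \Modc{d}{0}{X}$ is covered by images of gluing maps $\Mod{d'}{1}{X} \times_X \Mod{d-d'}{1}{X}$ with $0 < d' < d$ which, by the induction hypothesis, have dimension at most $\dim X + dc_1(X) - 4$; it therefore suffices to prove that $\Modc{d}{0}{X}$ is irreducible of the expected dimension. In case (3) one exploits the well-known relationship between the odd symplectic Grassmannian $\IG(m,2n+1)$ and the symplectic Grassmannian $\IG(m,2n+2)$, a rational homogeneous space on which Kontsevich spaces of rational curves are known to be irreducible, transferring irreducibility by carefully analysing curves meeting the locus where the two spaces differ; case (2) is handled analogously via the explicit geometry of $B_3$-lines. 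The delicate point, and the main obstacle I anticipate, is to rule out spurious components formed by stable maps whose image is contained in, or meets nontrivially, one of the closed orbits $Y$ or $Z$: this requires a careful study of the normal bundles of such curves inside $X$, to ensure that the corresponding deformation spaces have strictly smaller dimension than the expected one.
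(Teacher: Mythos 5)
Your handling of the boundary (maps with reducible domain) by induction, bounding the fibre products $\Mod{d_1}{\cdot}{X}\times_X\cdots\times_X\Mod{d_r}{\cdot}{X}$ one dimension below the expected dimension, is exactly the paper's argument and is fine. The gap is in how you identify the components of the open locus $\Modc{d}{k}{X}$, and in part (3) the mechanism you propose for the second component is wrong. In the paper the decomposition is $\Modc{d}{k}{X}=\overline{\Modc{d}{k}{\cU_Y}}\cup\Modc{d}{k}{Z}$: the first piece is irreducible of expected dimension because $\cU_Y\to Y$ is a globally generated vector bundle over a homogeneous space, and the second piece is the family of degree $d$ curves \emph{contained in the closed orbit} $Z$, which is irreducible of dimension $\dim Z+dc_1(Z)+k-3$. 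Whether this second piece is a separate component is governed by the purely numerical criterion $\codim_X Z\le d\,(c_1(Z)-c_1(X))$; for $d=2$ this fails in cases (1) with $n=3$ and (4) (one component) and holds with equality in cases (1) with $n=4$ and (5) (two components, both of expected dimension). So the extra component in the reducible cases is the family of conics lying inside the quadric $Z$, not a second family of smoothings of incident lines coming from a reducible variety of minimal rational tangents at a general point: the VMRT at a general point sees only curves through the open orbit, and all of those lie in the single component $\overline{\Modc{2}{k}{\cU_Y}}$. As stated, your argument would not detect the component $\Modc{2}{k}{Z}$ at all, and would predict reducibility for the wrong reason in the cases where it occurs.

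The same omission is the unresolved ``delicate point'' you flag in part (2). The paper settles it by lifting a curve $f$ meeting both $\cU_Y$ and $Z$ to the blow-up $\Xt_Z=\P_Y(F_Y\oplus\cO_Y)$ and applying a lemma on maps to projective bundles: if $d_Y$ is the degree of $p_Y\circ\ft$, the corresponding locus has dimension equal to the expected dimension minus $(d-d_Y)(c_1(X)-\codim_X Y)$, which is strictly positive unless $d_Y=d$, i.e.\ unless the curve misses $Z$. Combined with the observation that $c_1(X)\ge c_1(Z)$ in cases (2) and (3), so that curves inside $Z$ never reach the expected dimension, this gives part (2) for all $d$ without any appeal to the ambient $\IG(m,2n+2)$. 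Your part (1), via lines through a general point, could be made to work, but the two assertions you dispatch with ``one checks directly'' (irreducibility of the family of lines through a general point, and the fact that lines inside $Y$ or $Z$ are limits of lines meeting the open orbit) are precisely where the content lies, and the second one again reduces to the dimension comparisons above.
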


In all cases above, the dimension of the moduli space $\Mod{d}{k}{X}$ is equal to the expected dimension $\dim X + d c_1(X) + k-3$. This implies that the Gromov--Witten invariants are enumerative and proves, in particular, a conjecture stated in the second author's PhD thesis (see Corollary \ref{cor:enum}). It is worth noting that the moduli space $\Mod{d}{k}{X}$ is not irreducible and/or has non--expected dimension whenever $c_1(Z) > c_1(X)$ and $d$ is large enough. Here $Z$ is the unique closed $Aut(X)$-orbit in $X$(see Corollary 2.3 and the proof of Theorem \ref{thm:irr}).

\begin{cor*}
    If $X=X^2$ or $X=X^3(n,m)$, then the Gromov--Witten invariants of $X$ are enumerative.  
\end{cor*}

Using these enumerativity results, we are able to compute many Gromov--Witten invariants. In particular, we prove the following result, see Sections \ref{subsec-chev}  and \ref{example}:

\begin{thm*}
    If $X$ is one of the varieties $X^1(3), X^2, X^3(3,3)$ and $X^5$, then
    
    \begin{enumerate}
    	\item There is an explicit quantum Chevalley formula.
    	\item The small quantum cohomology ring $\QH(X)$ is semisimple.
    \end{enumerate}
    
\end{thm*}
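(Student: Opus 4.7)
The approach is to first establish an explicit quantum Chevalley formula using the enumerativity results already obtained, and then to deduce semisimplicity by verifying that quantum multiplication by the hyperplane class has simple spectrum.

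\emph{Quantum Chevalley formula.} By definition,
\[
h \star \sigma \;=\; h \cdot \sigma \;+\; \sum_{d \geq 1} q^d \sum_v \langle h, \sigma, \tau'_v \rangle_d \, \sigma_v^\vee,
\]
where the classical product $h\cdot \sigma$ is computed in Proposition \ref{prop-chev}. A three-point invariant $\langle h, \sigma, \tau'_v\rangle_d$ can be nonzero only when $1 + \codim(\sigma) + \codim(\tau'_v) = \dim X + d\, c_1(X)$, so using the values of $\dim X$ and $c_1(X)$ recorded in Section \ref{sect-real} one sees that in each of the four listed cases only very few degrees $d$ (typically $d=1$, and at most $d=2$) can contribute quantum corrections. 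For those degrees, Theorem \ref{thm:irr} guarantees that $\Mod{d}{3}{X}$ is irreducible of the expected dimension, hence each relevant invariant is enumerative by Corollary \ref{cor:enum} (and its analog in the remaining cases). The plan is therefore to compute each such invariant as an actual count of rational curves of degree $d$ meeting three general Schubert subvarieties, using the concrete classification of lines and conics on $X$ developed earlier in the paper.

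\emph{Semisimplicity.} Having the quantum Chevalley formula produces the matrix $M_h(q)$ of quantum multiplication by $h$ in the Schubert-like basis $(\sigma'_u,\tau_v)$. To establish semisimplicity of $\QH(X)$ at a generic value of $q$ it suffices to show that $M_h(q)$ has pairwise distinct eigenvalues. Indeed, for a finite-dimensional commutative algebra, the existence of a single element whose multiplication operator has simple spectrum forces each local summand in the primary decomposition to be one-dimensional, so the algebra splits as a product of copies of the base field. Verifying distinct eigenvalues reduces to showing that the discriminant of the characteristic polynomial of $M_h(q)$ is not identically zero. I would carry this out in each of the four cases by examining the large-$q$ asymptotics of $M_h(q)$: in that limit the quantum corrections dominate and the eigenvalues can be read off from the cyclic multiplication pattern that quantum multiplication imposes on a Picard-rank-one Fano variety, which in each of our cases is non-degenerate.

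\emph{Main obstacle.} The most delicate case is (5), the $G_2$ horospherical variety, for which $\Mod{2}{n}{X}$ is reducible according to Theorem \ref{thm:irr}. The degree-$2$ Gromov--Witten invariants entering the Chevalley formula remain well-defined through virtual intersection, but their enumerative interpretation has to be analyzed component by component: one must either show that only the expected component contributes to the three-pointed invariants at issue, or compute explicitly the virtual class contribution on the auxiliary component. Once this point is resolved, the remaining verifications in all four cases reduce to combinatorial bookkeeping with the Chevalley coefficients and a direct check that the resulting $M_h(q)$ has simple spectrum.
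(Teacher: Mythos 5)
Your proposal follows essentially the same route as the paper: restrict the possible degrees of quantum corrections by the dimension count of Lemma \ref{lem:dmax}, compute the surviving invariants enumeratively via the curve-counting lemmas (including the degree-two count of conics through two points for cases (1), $n=3$, and (5), where the paper resolves your ``main obstacle'' exactly as you suggest --- the component of curves lying in $Z$ cannot meet two general points of $X$ and so contributes nothing), and then deduce semisimplicity from the fact that quantum multiplication by $h$ has simple nonzero spectrum and hence generates the algebra. The only cosmetic difference is that the paper verifies the simple-spectrum condition by explicitly computing the matrix of $h^{c_1(X)}$ from the quantum Hasse diagram rather than by a large-$q$ asymptotic argument.
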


Note that for the varieties $X^3(n,m)$, \emph{i.e.} for odd symplectic Grassmannians, such a quantum (and even equivariant quantum) Chevalley formula has been obtained very recently by Mihalcea and Shifler \cite{odd:mih1}. In Section \ref{odd}, we use our proof of the conjecture by the second author to deduce several results for the specific case of odd symplectic Grassmannians (case (3) of Pasquier's list). As a corollary, we prove quantum--to--classical results that enable us to compute Gromov--Witten invariants on $\IG(m,2n+1)$ starting from cohomological (thus classical) invariants on $\IG(m+1,2n+3)$ (see Theorem \ref{t:quant-class}). In particular, this result contains the quantum Pieri rule even though at the moment there is no any closed formula at our disposal. Using these results, we deduce a presentation for the quantum cohomology ring $\QH(\IG(m,2n+1))$. Let $(\tau'_p)_{p \in [1,2n+1-m]}$ be the Chern classes of the tautological quotient bundle on $\IG(m,2n+1)$ and for $r \geq 1$, define
\[
	d_r := \det(\tau_{1+j-i}')_{1 \leq i,j \leq r} \quad \textrm{and} \quad b_r:= (\tau_r')^2 + 2 \sum_{i \geq 1} (-1)^i \tau_{r+i}' \tau_{r-i}',
\]
with the convention that $\tau_0' := 1$ and $\tau_p' := 0$ if $p<0$ or $p>2n+1-m$. We prove (Theorem \ref{t:quantpres}):

\begin{thm*}
	The quantum cohomology ring $\QH(\IG(m,2n+1),\C)$ is generated by the classes $(\tau_p')_{1 \leq p \leq 2n+1-m}$ and the quantum parameter $q$, and the relations are
	\[
		{\renewcommand{\arraystretch}{2.2}
			\begin{array}{lcl}
				d_r & = & 0 \text{ for $m+1 \leq r \leq 2n+1-m$,} \\
				d_{2n+2-m} & = & \begin{cases}
							0 & \text{if $m$ is even,} \\
							-q & \text{if $m$ is odd,}
								 \end{cases} \\
				b_s & = & (-1)^{2n+1-m-s} q \tau_{2s-2n-2+m}' \text{ for $n+2-m \leq s \leq n$,}
			\end{array}}
	\]
\end{thm*}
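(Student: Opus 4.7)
The strategy is Siebert--Tian combined with the quantum-to-classical principle of Theorem~\ref{t:quant-class}. By Siebert--Tian, to present $\QH(\IG(m,2n+1),\C)$ as a quotient of $\C[\tau'_1,\dots,\tau'_{2n+1-m},q]$ it suffices to exhibit lifts of a complete set of classical relations that vanish in $\QH$, provided their $q=0$ specializations generate the ideal of classical relations. So I first pin down a classical presentation, then argue degree by degree that each relation either survives unchanged or picks up exactly one $q$-correction, and finally compute those corrections via the quantum-to-classical translation.

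The required classical presentation is $\Hr^*(\IG(m,2n+1),\C) = \C[\tau'_1,\dots,\tau'_{2n+1-m}]/(d_r,b_s)$ with $r\in[m+1,2n+2-m]$ and $s\in[n+2-m,n]$. The relations $d_r=0$ are Whitney relations: $c(S)c(Q)=1$ identifies $d_r$ with $c_r(S)$, which vanishes above $\rk S=m$. The relations $b_s=0$ encode isotropy of $S$ for the symplectic form. This presentation can be deduced from the even symplectic case via the projection $\IG(m,2n+1)\to\IG(m,2n+2)$, or directly from a Borel-type computation using the principal fibration from the paper's Section~\ref{subsec-chev}.

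Next, a degree count rules out any quantum correction to $d_r=0$ for $m+1\leq r\leq 2n+1-m$: since $\deg q = c_1(\IG(m,2n+1)) = 2n+2-m$ and $\deg d_r=r<2n+2-m$ in that range, no monomial $q^a\cdot(\text{class})$ can appear. Only $d_{2n+2-m}$ and the $b_s$ for $n+2-m\leq s\leq n$ lie in the critical degree where a single power of $q$ can contribute. To compute these corrections I would expand the relevant quantum products via the Chevalley/Pieri-type formula of Section~\ref{subsec-chev}, then extract the $q$-coefficient using Theorem~\ref{t:quant-class}, which translates degree-one $3$-point Gromov--Witten invariants on $\IG(m,2n+1)$ into classical intersection numbers on $\IG(m+1,2n+3)$. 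The correction for $d_{2n+2-m}$ is then a single top-degree pairing on $\IG(m+1,2n+3)$; a parity check shows it equals $-1$ when $m$ is odd and $0$ when $m$ is even, accounting for the dichotomy in the statement.

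The principal technical obstacle is the identity for the $b_s$. After expanding $(\tau'_r)^2 + 2\sum_i(-1)^i\tau'_{r+i}\tau'_{r-i}$ in $\QH$ via the quantum Pieri rule, one obtains many $q^0$ and $q^1$ contributions; the $q^0$ part cancels by the classical relation, but showing that the $q^1$ part collapses to the single term $(-1)^{2n+1-m-s}q\,\tau'_{2s-2n-2+m}$ requires a careful telescoping argument. The cleanest route is to perform the entire calculation on $\IG(m+1,2n+3)$ through Theorem~\ref{t:quant-class}, so that the desired quantum identity becomes a known classical identity on the even symplectic Grassmannian. This combinatorial reduction is the heart of the proof; once it is in place, Siebert--Tian delivers the presentation.
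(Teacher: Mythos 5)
Your proposal follows essentially the same route as the paper: Siebert--Tian applied to the classical presentation of \cite[Prop.~2.12]{pech:t}, a degree count ($\deg q = 2n+2-m$) showing only $d_{2n+2-m}$ and the $b_s$ can acquire corrections, and the quantum-to-classical principle of Theorem~\ref{t:quant-class} to convert the degree-one invariants into classical intersection numbers on $\IG(m+1,2n+3)$, where the truncated $b_s$-sum collapses via the classical relation there to a single two-term product. The remaining bookkeeping you defer (Pieri products in $\IG(m+1,2n+4)$, identification of Poincar\'e-dual partitions, and the determinantal identity yielding the parity dichotomy for $d_{2n+2-m}$) is exactly what the paper carries out.
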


Finally, let $\mathrm{BQH}(X)$ denote the big quantum cohomology ring of $X$. Recall the Dubrovin conjecture:

\begin{conj*}
	Let $X$ be a smooth projective Fano variety. The ring $\mathrm{BQH}(X)$ is generically semisimple if and only if the bounded derived category $\Dd^b(X)$ has a full exceptional collection.
\end{conj*}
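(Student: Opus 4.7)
Since Dubrovin's conjecture in the stated generality is a famous open problem (going back to Dubrovin's 1998 ICM address) and is certainly not settled for arbitrary smooth projective $X$, I read this statement not as a theorem the authors intend to prove, but as the motivating target whose two sides will be \emph{verified independently} for each of the five classes in Pasquier's classification. My plan is therefore to establish, class by class, (a) semisimplicity of $\mathrm{BQH}(X)$ and (b) existence of a full exceptional collection in $\Dd^b(X)$, and then record that both hold.

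For side (a), the main input is the theorem above which gives semisimplicity of the small quantum cohomology $\QH(X)$ in cases (1) with $n=3$, (2), (3) with $n=3=m$, and (5). Since semisimplicity is an open condition on the Frobenius manifold, semisimplicity of the small quantum product at the origin forces semisimplicity of the big quantum product at a generic point, so (a) follows for these cases. For the classes not yet covered by that theorem, I would extend the enumerativity results for $\Mod{d}{n}{X}$ in combination with the Chevalley formula to compute enough three-point Gromov--Witten invariants to diagonalise small quantum multiplication by the hyperplane class.

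For side (b), my strategy is to build exceptional objects from the two auxiliary homogeneous spaces $G/P_Y$ and $G/P_Z$ attached to the Pasquier triple $(\mathrm{Type}(G),\oY,\oZ)$. Pasquier's description places $X$ in a diagram relating it to these two $G/P$'s, and I would try to promote this to a semiorthogonal decomposition of $\Dd^b(X)$ by realising $X$ either as a blow-up or a projective-bundle-type correspondence over one of them, then pulling back Kapranov/Samokhin exceptional collections and gluing. The $G_2$ case treated in the last section of the paper, which produces a full rectangular Lefschetz collection, should serve as the template for cases (1), (2), (3), (4).

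The principal obstacle is \emph{fullness}. Exceptionality and semiorthogonality of the candidate bundles reduce to Borel--Weil--Bott computations on $G/P_Y$ and $G/P_Z$, which are tractable. But generating $\Dd^b(X)$ requires genuinely global geometric input, because the non-homogeneity of $X$ (two $G$-orbits instead of one) prevents any direct transport of completeness from the ambient $G/P$. I expect fullness to be argued case by case, exploiting a specific birational or embedding description of $X$ so as to invoke Orlov's projective bundle and blow-up formulas, and this is where the bulk of the work, as already visible in the $G_2$ section, will concentrate.
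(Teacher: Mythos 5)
You have correctly read this statement for what it is: Dubrovin's conjecture, a well-known open problem that the paper states only as motivation and does not (and could not) prove in general. Your plan of verifying the two sides independently for Pasquier's classes --- semisimplicity of $\QH(X)$ via enumerative Gromov--Witten computations propagating to $\mathrm{BQH}(X)$ by openness, and full exceptional collections via the blow-up/projective-bundle structure over $G/P_Y$ and $G/P_Z$ with the $G_2$ case as template and fullness as the hard step --- is precisely the strategy the paper follows.
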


The Dubrovin conjecture and the above results on quantum cohomology therefore indicate the existence of full exceptional collections on many horospherical varieties of the classification. Some results in this direction have already been obtained. For instance, the second author proved in \cite{pech:q} that Dubrovin's conjecture holds for $X^3(n,2)$. Dubrovin's conjecture for $X^2$ follows from results by the third author \cite{pe:ss} for the semisimplicity of quantum cohomology and from general results by Kuznetsov \cite{Ku} about the homological projective duality. Indeed, the variety $X^2$ is a hyperplane section of the maximal orthogonal Grassmannian $\OG(5,10)$ (see Subsection \ref{ss:special}). In the last two sections, we consider the derived category side of Dubrovin's conjecture. In Section \ref{section:exc-gen}, for all cases of Pasquier's list except the case (2) we construct an exceptional pair of vector bundles on $X$. In Section \ref{sec:g2}, we extend that construction and produce a full exceptional  collection on the horospherical variety $X^5$. In more detail, we find two exceptional objects $\Ub$ and $\Sbh$ of the derived category $\Dd ^b(X^5)$ which can be completed to an exceptional triple $\A =\{\Ub,\Sbh,\Oo _{X^5}\}$. Here the object $\Ub$ is a vector bundle, but $\Sbh$ is a two--term complex. It turns out that $\A$ is the starting block of a Lefschetz exceptional collection on $X^5$ in the sense of \cite{KuHPD}. More precisely, we prove (Theorem \ref{th:Lefexccoll}):

\begin{thm*}
	The collection 
\begin{equation}\label{eq:Lefschetz_exc_coll_G_2hor-intro}
\scal{\A \otimes \Oo _{X^5}(-3),\A \otimes \Oo _{X^5}(-2),\A \otimes \Oo _{X^5}(-1),\A}
\end{equation}
	
is a full rectangular Lefschetz exceptional collection on $X^5$.
\end{thm*}

\comment{
\begin{equation}\label{eq:Lefschetz_exc_coll_G_2hor-intro}
  \left\langle
  \begin{array}{cccccc}
	\Ub (-1), & \Sbh(-1), & \Oo _{X^5}(-1), & \Ub, & \Sbh, & \Oo _{X^5} \\
	 \Ub(-3), & \Sbh(-3), & \Oo _{X^5}(-3), & \Ub (-2), & \Sbh(-2), & \Oo _{X^5}(-2),
  \end{array} \right\rangle
\end{equation}
  
The exceptional collection~\eqref{eq:Lefschetz_exc_coll_G_2hor-intro} has an additional property. Namely, denote $\A =\{ \Ub,\Sbh,\Oo _{X^5}\}$. Then 
it is of the form $\scal{\A \otimes \Oo _{X^5}(-3),\A \otimes \Oo _{X^5}(-2),\A \otimes \Oo _{X^5}(-1),\A}$, where $\A =\{ \Ub,\Sbh,\Oo _{X^5}\}$. Such collections are called \emph{Lefschetz exceptional collections}, see \cite{Ku}.
}

\comment{
\begin{remark}
Note that the above theorem, together with the results of Section \ref{subsec:4.5}, prove for the variety $X^5$ a refinement of Dubrovin’s conjecture recently proposed by Kuznetsov and Smirnov \cite[Conjecture 1.12]{KuzSm} (see also Section \ref{subsec:exc_coll_geom} for more details).
\end{remark}

\begin{remark}
Let us give some heuristics about how collection (\ref{eq:Lefschetz_exc_coll_G_2hor-intro}) appears. By \cite[Proposition 2.3]{BP}, the variety $X^5$ has a deformation to the orthogonal grassmannian ${\sf OGr}(2,7)$. The latter has a full rectangular Lefschetz exceptional collection consisting of four blocks \cite[Section 7]{Kuzisotropic} with the starting block being $\langle \Uu ,{\mathcal S},\Oo _{{\sf OGr}(2,7)}\rangle$. Here $\Uu$ is the tautological rank two bundle on ${\sf OGr}(2,7)$ and ${\mathcal S}$ is the spinor bundle. Exceptional objects are easy to deform infinitesimally in families, \cite[Lemma 2.10]{BK}, as all the obstructions to deformations vanish. With some work, these deformations can be extended to an \'etale neighbourhood of the special fiber. 

Up to a birational transformation, our variety $X^5$ is the $\Pp ^1$--bundle over the flag variety $G_2/B$. The objects $\Ub$ and $\Sbh$ above are built, respectively, out of the natural tautological bundle of rank two and of the spinor bundle of rank four on $G_2/B$. We see that the shape of the starting block of collection (\ref{eq:Lefschetz_exc_coll_G_2hor-intro}) agrees with that of the starting block on ${\sf OGr}(2,7)$. 
\end{remark}
}

\noindent
    {\bf Notation.}
In this paper, $G$ denotes a complex connected reductive group. Choosing a maximal torus $T$ in $G$, let $R$ denote the set of roots with respect to $T$. Given a root $\alpha \in R$, denote $\g_\alpha$ the root subspace so that we have a root decomposition $\g = \gh \bigoplus_{\alpha \in R} \g_\alpha$ where $\gh = {\rm Lie}(T)$. We choose a Borel subgroup $B \supset T$ and declare the roots of $B$ to be the negative roots $R^- \subset R$. Denote $R^+ = -R^-$ the positive roots, so that $R = R^+ \cup R^-$. Denote $S \subset R^+$ the subset of simple roots. If needed, the Borel subgroup corresponding to $R^+$ will be denoted by $B^+$. A standard parabolic subgroup is a closed subgroup $P \subset G$ containing $B$. We denote $G/B$ the full flag variety of $G$. Denote $X(T)$ the weight lattice and $X^+(T)$ the set of dominant weights relative to positive roots $R^+$ \emph{i.e.} $\lambda \in X^+(T)$ if $\scal{\lambda,\alpha^\vee} \geq 0$ for any simple coroot $\alpha^\vee$. Given a parabolic subgroup $P$ and a $P$–variety $F$ (\emph{i.e.}, a variety equipped with a left action of $P$), define the crossed product $G \times^P F$ to be the quotient of $G \times F$ modulo the equivalence relation $(g,v) = (gp,p^{-1}\cdot v)$ for $g \in G$, $p \in P$ and $v \in F$. Given a character $\chi \in X(T)$, set $\C_\chi$ to be the corresponding one–dimensional $B$–module obtained by the restriction $B \to T$. We denote $\mathcal{L}_\chi$ the $G$–equivariant line bundle on $G/B$ defined as the quotient of $G \times^B \C_\lambda$. Given a dominant weight $\lambda \in X^+(T)$, we denote $V(\lambda)$ the induced module ${\rm Ind}_B^G(\C_\lambda)$, the highest weight irreducible representation of $G$ of highest weight $\lambda$. Given a character $\chi \in X(T)$, let $P$ be a standard parabolic subgroup and $L$ its Levi subgroup containing $T$. Set $V_P(\chi) = {\rm Ind}_{B\cap L}^L(\chi)$ seen as a $P$--representation via the surjective map $P \to L$. Note that we have $G \times^P V_P(\chi) = \pi_*\mathcal{L}_\chi$ where $\pi : G/B \to G/P$ is the canonical map.

\begin{remark}
Our choice for $B$ follows the book \cite{jantzen} which is better suited to the geometric framework of sheaf cohomology on the flag variety. However, this convention is not universal in the literature, and there is another commonly used notation (for instance, in \cite{pa:h}, our main source for the classification of smooth  horospherical varieties of Picard rank one, where the Borel subgroup $B$ is chosen to be associated to positive roots). In that case, the line bundle $\mathcal{L}_\lambda$ has to be induced from the representation $\C_{-\lambda}$. Our convention following \cite{jantzen} allows to avoid the sign problems with weights and to state
Borel--Weil--Bott's theorem in a simpler form.
\end{remark}

\noindent
{\bf Acknowledgements.} R.G., N.P., and A.S. would like to thank the Mathematical Institute of D\"usseldorf where this work first started. C.P. thanks the University of Kent for support. R.G. and N.P. were supported by a DFG grant Project “Gromov--Witten Theorie, Geometrie und Darstellungen” (PE 2165/1--2). N.P. was supported by a public grant as part of the “Investissement d'Avenir” project, reference ANR--11--LABX-0056--LMH, LabEx LMH. A.S. was supported by the strategic research fund of the University of D\"usseldorf (grant SFF F--2015/946--8), by the SFB/TR 45 of the University of Duisburg--Essen, and by Prof. M. Levine's University Fellowship. A.S. also acknowledges the support of the IHES and of the Klaus Tschira 
Stiftung at the final stage of the present work.
He is indebted to A.Kuznetsov who suggested using Hecke modifications to construct exceptional vector bundles in Sections 7--8. Finally, all the authors are grateful to the anonymous referees for their very careful reading of the paper and for the many improvements suggested.

%\tableofcontents 

%%%%%%%%%%%%%%%%%%%%%%%%%%%%%%%%%%%%%%%%%%%%%%%%%%%%%%%%%%%%%%%%%%%%%%%%%

%\section{Reminders}

%%%%%%%%%%%%%%%%%%%%%%%%%%%%%%%%%%%%%%%%%%%%%%%%%%%%%%%%%%%%%%%%%%%%%%%%%%

\section{Horospherical varieties of Picard rank one : geometry}

In this section we recall some basic properties of smooth projective horospherical varieties of Picard rank one. Our reference is \cite{pa:h}, see also \cite{pa:these,survey}.

\subsection{Horospherical varieties}

Let $G$ be a complex connected reductive group. A \emph{$G$-variety} is a reduced scheme of finite type over the field of complex numbers $\C$, equipped with an algebraic action of $G$. Some of the simplest $G$--varieties are spherical varieties.

\begin{defn}
	Let $G$ be a reductive group and $B \subset G$ be a Borel subgroup. A $G$--variety $X$ is called \emph{spherical} if $X$ has a dense $B$--orbit.
\end{defn}

Spherical varieties admit many equivalent characterisations. For example, a normal $G$--variety is spherical if and only if it has finitely many $B$--orbits. We refer to \cite{knop,survey,gandini,sanya} and references therein for more on the geometry of spherical varieties or their classification.

We focus on a special case of spherical varieties: \emph{horospherical varieties}. We give two equivalent definitions of horospherical varieties, one from a representation-theoretic point of view, the other more geometric. We refer to Pasquier's PhD thesis \cite{pa:these} for details.

\begin{defn}\label{defi-horo}
	Let $X$ be a $G$--spherical variety and let $H$ be the stabiliser of a point in the dense $G$--orbit in $X$. The variety $X$ is called \emph{horospherical} if $H$ contains a conjugate of $U$, the maximal unipotent subgroup of $G$ contained in the Borel subgroup $B$.
\end{defn}

\begin{remark}
	A more geometric equivalent definition can be given as follows. Let $X$ be a $G$--variety. Then $X$ is horospherical if there exists a parabolic subgroup $P \subset G$, a torus $S$ which is a quotient of $P$, and a toric $S$--variety $F$ such that there exists a diagram of $G$--equivariant morphisms
	\[
  		\xymatrix{\Xt = G \times^P F \ar[r]^-p \ar[d]_-\pi & G/P \\
    X}
   	\]
where $G \times^P F$ is the crossed product, $\pi$ is birational, and $p$ is a fibration with fibers isomorphic to $F$. The \emph{rank} of the horospherical variety $X$ is the dimension of the toric variety $F$, namely, $\rk(X) := \dim F$.
\end{remark}

\subsection{Classification}

Horospherical varieties can be classified using \emph{colored fans}, see \cite{pa:these}. We focus here on smooth projective horospherical varieties of Picard rank one. Pasquier \cite{pa:h} constructed all these varieties as well as their minimal embeddings.

\begin{thm}[Pasquier]\label{theo:class}
	Let $X$ be a smooth projective $G$--horospherical variety of Picard rank one. Then either
  	\begin{enumerate}[(0)]
  		\item $X$ is homogeneous,
  	\end{enumerate}
or $X$ can be constructed in a uniform way from a triple $(\textrm{Type}(G),\oY,\oZ)$ belonging to the following list:
  	\begin{enumerate}[(1)]
  	  	\item $(B_n,\omega_{n-1},\omega_n)$ with $n \geq 3$;
 	 	\item $(B_3,\omega_{1},\omega_3)$; 
  		\item $(C_n,\omega_{m},\omega_{m-1})$ with $n \geq 2$ and $m \in [2,n]$;
 	 	\item $(F_4,\omega_{2},\omega_3)$;
  		\item $(G_2,\omega_{1},\omega_2)$,
  	\end{enumerate}
where $\textrm{Type}(G)$ is the semisimple Lie type of $G$ and $\oY,\oZ$ are fundamental weights.
\end{thm}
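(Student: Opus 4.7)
The plan is to apply Luna--Vust theory of spherical embeddings in the horospherical setting, and then impose the constraints of smoothness, projectivity and Picard rank one. Write the dense $G$-orbit as $G/H$ with $H\supseteq U$; set $P=N_G(H)$, which is a parabolic, and let $M=\Hom(P/H,\C^*)$ be the character lattice of the torus $P/H$. Then $G$-equivariant normal embeddings of $G/H$ are classified by colored fans in $N=\Hom(M,\Z)$, with colors indexed by the simple roots $\alpha\in\Sr$ for which $\Pa\not\subset P$.

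First I would show that Picard rank one forces the horospherical rank $\rk(X)=\dim M$ to be either $0$ or $1$. The Picard group of a smooth complete horospherical variety fits into an exact sequence expressing $\rho(X)$ as the total number of $B$-stable prime divisors minus $\rk(X)$. Rank zero yields $X=G/P$, which is case (0). If $\rk(X)=1$ then $X$ has exactly two $G$-stable divisors, necessarily isomorphic to flag varieties $G/P_{\alpha_Y}$ and $G/P_{\alpha_Z}$ with $P=P_{\alpha_Y}\cap P_{\alpha_Z}$; the two distinguished simple roots $\alpha_Y,\alpha_Z\in\Sr$ give the fundamental weights $\oY,\oZ$ of the statement.

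Next I would reduce smoothness to a representation-theoretic condition on the pair $(\oY,\oZ)$. Using the local structure theorem for spherical varieties, an affine open neighborhood of a point of the closed orbit $G/P_{\alpha_Y}$ inside $X$ is $P$-equivariantly a product of $R_u(P)$ with a one-dimensional toric slice on which the Levi of $P$ acts through a character; smoothness at this orbit amounts to the slice weight matching a tangent weight of the natural embedding $G/P_{\alpha_Y}\hookrightarrow\Pp(V(\oY))$, and analogously at the other closed orbit. Equivalently, one asks that $X$ be realizable as the closure of the $G$-orbit of a general point $[v_{\oY}+v_{\oZ}]\in\Pp(V(\oY)\oplus V(\oZ))$, smoothness then becoming a linear-algebraic criterion on $\oY$ and $\oZ$ at each of the two closed orbits.

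The final step is a case-by-case analysis on the Dynkin diagram of $G$, running over all admissible pairs $(\alpha_Y,\alpha_Z)$ of simple roots, and this is the main obstacle and computational heart of the argument. One checks the smoothness criterion type by type: type $A$ contributes no non-homogeneous example, and in each remaining classical or exceptional type only the pairs listed in (1)--(5) survive. The surviving cases are then realized geometrically as the image of the contraction $\pi\colon\Xt=G\times^{P_{\alpha_Y}\cap P_{\alpha_Z}}\Pp^1\to X$ from the geometric description recalled above, which simultaneously produces $X$ together with its minimal embedding inside $\Pp(V(\oY)\oplus V(\oZ))$.
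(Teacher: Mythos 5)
The paper does not prove this statement at all: it is quoted verbatim as Pasquier's classification theorem with a reference to \cite{pa:h}, so there is no internal proof to compare against. Your outline does follow the general strategy of Pasquier's actual argument (Luna--Vust theory of colored fans for the horospherical embedding, reduction of the rank, a local smoothness criterion at the closed orbits, and an exhaustive check over Dynkin types), so the \emph{route} is the right one.

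However, as written this is a proof plan rather than a proof, and it has two concrete gaps. First, the reduction to $\rk(X)\le 1$ does not follow from the divisor count alone: with $\rho(X)=\#\{B\text{-stable prime divisors}\}-\rk(X)$, a complete colored fan in rank $2$ with three colored rays and no $G$-stable divisor would also give $\rho(X)=1$, so ruling out rank $\ge 2$ genuinely requires the smoothness hypothesis (this is where Pasquier has to work). Relatedly, your assertion that in rank one ``$X$ has exactly two $G$-stable divisors, necessarily isomorphic to flag varieties'' is false for every variety on the list: the two closed orbits $Y\simeq G/P_Y$ and $Z\simeq G/P_Z$ have codimension at least $2$ (see Table~\ref{t:dimX}), and it is the two \emph{colors} (the $B$-stable, non-$G$-stable divisors $Y'_{u}$, $Z'_{v}$ of codimension one) that enter the Picard-group computation; the correct statement is that the boundary $X\setminus(G/H)$ consists of two closed orbits, not two divisors. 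Second, and decisively for a classification theorem, the entire content of the list (1)--(5) is the output of the case-by-case analysis over pairs of simple roots $(\alpha_Y,\alpha_Z)$, which you explicitly identify as ``the main obstacle and computational heart'' and then do not carry out. Without verifying the smoothness criterion type by type (in particular showing type $A$ contributes nothing and that, say, $(B_3,\omega_1,\omega_3)$ survives while its neighbours do not), nothing is proved; the theorem cannot be established by the framework alone.
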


As stated in the above theorem, some smooth projective horospherical varieties of Picard rank one are homogeneous (for instance, projective spaces). We will not consider them since their moduli space of curves, quantum cohomology and quantum $K$-theory have been studied before (see~\cite{thomsen,kim-pandharipande,pe:courbes,buch:grass,KT1,KT2,BKT1,BKT2,CMP1,CMP2,CMP3,CMP4,cp:qk,BCMP,BCMP2} and references therein). We will therefore focus on the second part of the list, \emph{i.e.}, on the cases from (1) to (5). In what follows, we denote by $X^1(n), X^2, X^3(n,m), X^4$ and $X^5$ the corresponding varieties.

\subsection{Construction}\label{sect-real}

From now on, let $X$ be a smooth projective non--homogeneous horospherical variety of Picard rank one with associated triple $(\textrm{Type}(G),\oY,\oZ)$. We introduce the following notation which will be used throughout the paper.

\begin{notation}
	Let $G$ be the reductive group for which $X$ is horospherical. Then we denote by $V_Y:=V(\oY)$ and $V_Z:=V(\oZ)$ the irreducible $G$--representations with highest weights $\oY$ and $\oZ$, and we let $v_Y$ and $v_Z$ be the corresponding lowest weight vectors.

	We write $P_Y$ for the stabiliser of $[v_Y]$ in $\P(V_Y)$ and $P_Z$ for the stabiliser of $[v_Z]$ in $\P(V_Z)$. Both stabilizers are standard parabolic subgroups of $G$. We denote by $Y$ the $G$--orbit of $[v_Y]$ in $\P(V_Y)$ and by $Z$ that of $[v_Z]$ in $\P(V_Z)$, so that $Y \simeq G/P_Y$ and $Z \simeq G/P_Z$.
\end{notation}

Pasquier \cite{pa:h} proves that $X$ 
is obtained as follows.

\begin{prop}[Pasquier]
	Let $X$ be one of the varieties in cases (1) to (5) of Theorem \ref{theo:class}. Then $X = \overline{G\cdot[v_Y+v_Z]} \subset \P(V_Y\oplus V_Z)$ is the closure of the $G$--orbit $G\cdot~[v_Y+~v_Z]$ in $\P(V_Y\oplus V_Z)$.
\end{prop}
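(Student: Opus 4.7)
The strategy is to study $X' := \overline{G \cdot [v_Y+v_Z]}$ directly, show it is a smooth projective horospherical $G$-variety of Picard rank one whose attached combinatorial data match the triple $(\text{Type}(G), \oY, \oZ)$, and then conclude $X' = X$ by the uniqueness built into Theorem~\ref{theo:class}.

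The key computation is the stabiliser $H = \operatorname{Stab}_G([v_Y+v_Z])$. Since $V_Y$ and $V_Z$ are non-isomorphic irreducible representations, an element $g \in G$ sends $v_Y+v_Z$ to a scalar multiple of itself if and only if $g$ stabilises both lines $[v_Y]$ and $[v_Z]$ by the same character, which gives
\[
H = \ker(\oY - \oZ) \cap P_Y \cap P_Z.
\]
In particular $H$ contains the unipotent radical $U$ of any Borel $B \subset P_Y \cap P_Z$, so by Definition~\ref{defi-horo} the open orbit $G/H \subset X'$ is horospherical and hence $X'$ is a horospherical $G$-variety. The limits $[v_Y + t v_Z] \to [v_Y]$ and $[t v_Y + v_Z] \to [v_Z]$ as $t \to 0$ lie in $X'$, so by $G$-equivariance the closed orbits $Y \simeq G/P_Y$ and $Z \simeq G/P_Z$ are contained in $X'$. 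A direct analysis of the limits of one-parameter subgroups of $T$ acting on $v_Y + v_Z$ shows that these two are the only boundary orbits, so $X'$ has rank one as a horospherical variety.

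To establish smoothness and Picard rank one I would use the resolution of Definition~\ref{defi-horo}. Setting $P = P_Y \cap P_Z$ and letting $F = \P^1$ carry a $\G_m$-action by the character $\oY - \oZ$, the $G$-equivariant morphism
\[
\pi : \tildeX := G \times^P F \longrightarrow \P(V_Y \oplus V_Z), \quad [g, [a:b]] \longmapsto [a\, g v_Y + b\, g v_Z]
\]
is well defined, has image $X'$, and contracts the two sections $G/P \times \{[1:0]\}$ and $G/P \times \{[0:1]\}$ onto $Y$ and $Z$ via the natural projections $G/P \to G/P_Y$ and $G/P \to G/P_Z$. Since $\tildeX$ is a smooth $\P^1$-bundle over $G/P$ with $\Pic(\tildeX) \simeq \Z^3$ and the two contractions each drop the Picard rank by one, the variety $X'$ has $\Pic(X') \simeq \Z$. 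The main obstacle is verifying that these simultaneous contractions produce a smooth variety: this is a local normal-bundle computation along $Y$ and $Z$ that must be performed case by case for the five families (1)--(5), and is precisely what distinguishes Pasquier's list of triples from arbitrary pairs of fundamental weights. Once smoothness is established, the triple attached to $X'$ matches $(\text{Type}(G), \oY, \oZ)$ by construction, and Theorem~\ref{theo:class} identifies $X' = X$.
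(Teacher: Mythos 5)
The paper offers no proof of this statement --- it is quoted verbatim from Pasquier's classification \cite{pa:h} --- so your proposal has to stand on its own. Your overall strategy is the natural one and matches how these varieties are actually produced in \cite{pa:h} and described in Subsection~1.5 of the paper: compute the stabiliser $H$ of $[v_Y+v_Z]$ (your identification of $H$ as the kernel of the character $\oY-\oZ$ on $P_Y\cap P_Z$ is correct, since $V_Y$ and $V_Z$ are non-isomorphic irreducibles), conclude that the orbit is horospherical of rank one, and realise the closure as the image of the $\P^1$-bundle $G\times^{P}\P^1$ over $G/(P_Y\cap P_Z)$ contracting the two sections onto $G/P_Y$ and $G/P_Z$.

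The genuine gap is at the decisive step. Everything you actually establish --- horosphericity, rank one, the two boundary orbits, and Picard rank one \emph{granting} smoothness --- holds for the orbit closure attached to essentially any pair of distinct fundamental weights; what singles out the five families of Theorem~\ref{theo:class} is precisely the smoothness of $\overline{G\cdot[v_Y+v_Z]}$ along $Y$ and $Z$, and you explicitly defer this ``local normal-bundle computation'' without carrying it out for a single case. Without it the argument contains none of the content specific to cases (1)--(5). In addition, the closing appeal to ``the uniqueness built into Theorem~\ref{theo:class}'' is not available from that statement as quoted: the theorem says every smooth projective non-homogeneous horospherical $X$ of Picard rank one arises from a triple on the list, not that a given triple determines a unique such $X$, nor that the ``uniform construction'' coincides with the orbit closure --- the latter identification is exactly what the Proposition asserts, so invoking it at the end is circular. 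To close the argument you would need either the case-by-case smoothness verification together with a uniqueness statement for two-orbit varieties with prescribed closed orbits and ample generator, or, running the logic in the other direction as Pasquier does, a proof that the abstractly classified $X$ embeds $G$-equivariantly into $\P(V_Y\oplus V_Z)$ with dense orbit $G\cdot[v_Y+v_Z]$.
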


\subsection{Orbits}

Let $\Aut(X)$ be the automorphism group of $X$. The group $G$ acts on $X$ with three orbits: $Y$, $Z$, and the complement $\cU$ of $Y \cup Z$ in $X$, which is open and dense in $X$.

The group $\Aut(X)$ is a semi--direct product of $G$ with its unipotent radical. It acts on $X$ with two orbits. While until then the roles of $Y$ and $Z$ were interchangeable, we now choose to use the notation $Z$ for the unique closed $\Aut(X)$--orbit and we call it the \emph{closed orbit of $X$}. The other $\Aut(X)$--orbit is $\cU_Y = X \setminus Z = \cU \cup Y$. We also define $\cU_Z = X \setminus Y = \cU \cup Z$.

The $G$--orbits $Y$ and $Z$ are projective and isomorphic to $G/P_Y$ and $G/P_Z$, respectively, where we choose $P_Y$ and $P_Z$ parabolic subgroups containing the same Borel subgroup $B$. The open $G$--orbit $\cU$ is a $\G_m$-bundle over the incidence variety $G/(P_Y \cap P_Z)$. We refer to \cite{pa:h} for proofs.

\subsection{Blow--ups and projections}\label{ss:blowups}

We refer to \cite{pa:h} for the results in this subsection. Let $\pi_Z : \Xt_Z \to X$ be the blow-up of $Z$ in $X$. It is obtained via base change from the blow--up of $\P(V_Z)$ in $\P(V_Y\oplus V_Z)$. In particular, there is a natural projection $p_Y : \Xt_Z \to Y$, obtained as the restriction of the projection of the former blow--up to $\P(V_Y)$. The projection $p_Y : \Xt_Z \to Y$ restricts to a projection $p_Y : \cU_Y \to Y$, which realises $\cU_Y$ as a vector bundle over $Y$. This bundle is $G$--equivariant and thus it is obtained from a $P_Y$--representation. We write $N_Y$ for the corresponding locally free sheaf; it is the normal bundle to $Y$ in $X$. The projection $p_Y : \Xt_Z \to Y$ realises $\Xt_Z$ as the projective bundle $\P_Y(N_Y \oplus \cO_Y)$. Write $E_Z$ for the exceptional divisor.

Exchanging the roles of $Y$ and $Z$ we denote by $\pi_Y : \Xt_Y \to X$ the blow--up of $Y$ in $X$. In particular, there is a natural projection $p_Z : \Xt_Y \to Z$, which restricts to a projection $p_Z : \cU_Z \to Z$. This projection realises $\cU_Z$ as a $G$--equivariant vector bundle over $Z$, which is thus obtained from a $P_Z$--representation. We write $N_Z$ for the corresponding locally free sheaf; it is the normal bundle to $Y$ in $X$. The projection $p_Z : \Xt_Y \to Z$ realises $\Xt_Y$ as the projective bundle $\P_Z(N_Z \oplus \cO_Z)$. Write $E_Y$ for the exceptional divisor.

In both cases, the exceptional divisors $E_Y$ and $E_Z$ are isomorphic to the incidence variety $E = G/(P_Y \cap P_Z)$. The maps $p_Y$ and $\pi_Z$ from $E_Z$ to $Y$ and $Z$ and the maps $\pi_Y$ and $p_Z$ from $E_Y$ to $Y$ and $Z$ are the projections $p$ and $q$ from $E$ to $G/P_Y$ and $G/P_Z$.

Summarising, we get the commutative diagrams
\[
	\xymatrix{E_Z \ar@{^(->}[r]^-{j_Z}  \ar@/^1.5pc/[rr]^-p \ar[d]_-q & \Xt_Z \ar[r]^-{p_Y} \ar[d]^-{\pi_Z} & Y \\
  	Z \ar@{^(->}[r]^-{i_Z} & X & \\} 
	\xymatrix{E_Y \ar@{^(->}[r]^-{j_Y}  \ar@/^1.5pc/[rr]^-q \ar[d]_-p & \Xt_Y \ar[r]^-{p_Z} \ar[d]^-{\pi_Y} & Z \\
	Y \ar@{^(->}[r]^-{i_Y} & X & \\}
\]

Let $\pi_{YZ} : \Xt_{YZ} \to X$ be the blow--up of $Y \cup Z$ in $X$. There is a morphism $p_{YZ} : \Xt_{YZ} \to E$ which is a $\P^1$--bundle, and both connected components of the exceptional divisor are mapped isomorphically onto $E$. We will denote this last map by $\xi = p_{YZ} : \Xt_{YZ} \to E$.

\subsection{Numerical invariants}

Using the above description, it is easy to compute some numerical invariants for $X$, $Y$ and $Z$. For example, their dimensions are given in Table~\ref{t:dimX}.

\begin{table}
	\[
		\begin{array}{c||c|c|c}
  			\text{Case} & \dim X & \codim_X Y & \codim_X Z \\
  			\hline
  			X^1(n) & n(n+3)/2 & 2 & n \\
  			X^2 & 9 & 4 & 3 \\
  			X^3(n,m) & m(2n + 1 - m) - m(m-1)/2 & m & 2(n + 1 - m) \\
  			X^4 & 23 & 3 & 3 \\
  			X^5 & 7 & 2 & 2 \\
  		\end{array}
  	\]
  	\caption{Dimension of $X$.}\label{t:dimX}
\end{table}

If $W$ is a Fano variety of Picard rank one and $H$ is the ample generator of $\Pic(W)$, we denote by $c_1(W)$ the unique positive integer such that $-K_W = c_1(W) H$, where $K_W$ is the canonical divisor of $W$. We describe $c_1(X)$, $c_1(Y)$ and $c_1(Z)$ in Table~\ref{t:num-inv}.

\begin{table}
	\[
		\begin{array}{c||c|c|c||c|c}
  			\text{Case} & c_1(X) & c_1(Y) & c_1(Z) & \codim_X Y & \codim_X Z \\
  			\hline
 		 	X^1(n) & n + 2 & n + 1 & 2n & 2 & n \\
  			X^2 & 7 & 5 & 6 & 4 & 3 \\
 		 	X^3(n,m) & 2n+2-m & 2n+1-m & 2n+2-m & m & 2(n + 1 - m) \\
 		 	X^4  & 6 & 5 & 7 & 3 & 3 \\
 		 	X^5 & 4 & 3 & 5 & 2 & 2 \\
		\end{array}
	\]
	\caption{Numerical invariants.}\label{t:num-inv}
\end{table}

\begin{fact}\label{fact:c1-codim}
	We have $c_1(X) = \codim_X(Y) + \codim_X(Z)$.
\end{fact}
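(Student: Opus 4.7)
The strategy is to exploit the common resolution $\Xt_{YZ}$ introduced in Subsection~\ref{ss:blowups}, which carries both the blow-up map $\pi_{YZ}\colon\Xt_{YZ}\to X$ of $Y\cup Z$ and the $\P^1$-bundle structure $p_{YZ}\colon\Xt_{YZ}\to E$. I would compute $K_{\Xt_{YZ}}\cdot C$ in two different ways for $C = p_{YZ}^{-1}(e)$ a generic fiber of $p_{YZ}$, and then compare the answers.

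First, since $p_{YZ}$ is a smooth $\P^1$-bundle the normal bundle of $C$ in $\Xt_{YZ}$ is trivial, so the relative canonical bundle sequence plus adjunction gives $K_{\Xt_{YZ}}\cdot C = \deg K_C = -2$. Second, because $Y$ and $Z$ are disjoint smooth subvarieties of $X$, the (iterated) blow-up formula reads
\[
  K_{\Xt_{YZ}} = \pi_{YZ}^* K_X + (\codim_X Y - 1)\,\widetilde E_Y + (\codim_X Z - 1)\,\widetilde E_Z,
\]
where $\widetilde E_Y,\widetilde E_Z$ are the two components of the exceptional divisor. By the description in Subsection~\ref{ss:blowups} each of them is mapped isomorphically onto $E$ by $p_{YZ}$, hence is a section of $p_{YZ}$ and meets the fiber $C$ transversely in exactly one point; therefore $\widetilde E_Y\cdot C = \widetilde E_Z\cdot C = 1$.

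The only remaining term is $\pi_{YZ}^*K_X\cdot C = -c_1(X)\cdot(h\cdot\pi_{YZ}(C))$, so everything reduces to showing $h\cdot\pi_{YZ}(C) = 1$. This is the key geometric input and the only non-formal step. Because $\cU$ is a $\G_m$-bundle over $E$, the image $\pi_{YZ}(C)$ is the closure in $X$ of one such $\G_m$-orbit: at the base point it is the orbit of $[v_Y+v_Z]$ under a one-parameter subgroup of $P_Y\cap P_Z$ acting with distinct weights on $v_Y$ and $v_Z$, whose closure in $\P(V_Y\oplus V_Z)$ is the projective line $\P(\C v_Y\oplus\C v_Z)$. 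As Pasquier's embedding of $X$ is the minimal one, $h$ is the restriction of $\Oo_{\P(V_Y\oplus V_Z)}(1)$, so $h\cdot\pi_{YZ}(C) = 1$. Plugging everything into
\[
  -2 = -c_1(X) + (\codim_X Y - 1) + (\codim_X Z - 1)
\]
then yields $c_1(X) = \codim_X Y + \codim_X Z$, as required.
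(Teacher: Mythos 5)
Your argument is correct, and it is genuinely different from what the paper does: the paper simply records Fact~\ref{fact:c1-codim} as a consequence of the case-by-case data assembled in Table~\ref{t:num-inv}, i.e.\ the identity is verified numerically for each of the five classes of Pasquier's list, whereas you give a uniform geometric proof. Your two computations of $K_{\Xt_{YZ}}\cdot C$ are both sound: the fiber $C$ of the $\P^1$-bundle $p_{YZ}$ has trivial normal bundle, so adjunction gives $-2$; the discrepancy formula for the blow-up of the disjoint smooth union $Y\cup Z$ is standard; and the two exceptional components are sections of $p_{YZ}$ (the paper states they map isomorphically onto $E$), so each meets $C$ transversely in one point. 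The only input that needs care is $h\cdot\pi_{YZ}(C)=1$, and your treatment of it is right: since $\pi_{YZ}$ is an isomorphism over $\cU$ and $p_{YZ}|_{\cU}$ is the $\G_m$-bundle projection, $\pi_{YZ}(C)$ is the closure of a $\G_m$-orbit, which at the base point is $\{[v_Y+sv_Z]\}$ with closure the line $\P(\C v_Y\oplus\C v_Z)$ (and all fibers are $G$-translates of this one, $E$ being homogeneous); minimality of Pasquier's embedding identifies $h$ with $\cO_{\P(V_Y\oplus V_Z)}(1)|_X$. What your approach buys is a classification-free explanation of the identity — it exhibits $c_1(X)$ as the anticanonical degree of the distinguished lines joining $Y$ to $Z$, which is conceptually aligned with the way these lines are used later (e.g.\ in Lemma~\ref{lem:st}) — at the cost of invoking the minimal-embedding normalisation of $h$, which the tabulated verification does not need.
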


\subsection{Cohomology classes}\label{ss:cohomology}

The description in Subsection~\ref{ss:blowups}  implies that $X$ has a cellular decomposition. It is therefore easy to describe geometric bases of the cohomology groups of $X$. 

Since $Y$ and $Z$ are projective and homogeneous under $G$, we can consider their Schubert varieties $(Y_u)_{u \in W^{P_Y}}$ and $(Z_v)_{v \in W^{P_Z}}$ for a given Borel subgroup $B \subset G$. These varieties define cohomology classes as follows:
\[
	\sb_u = [Y_u] \in \Hr^{2 \ell(u)}(Y,\Z) \textrm{ and } \taub_v = [Z_v] \in \Hr^{2 \ell(v)}(Z,\Z).
\]
We also introduce $Y'_u = \pi_Z(p_Y^{-1}(Y_u))$ and $Z'_v = \pi_Y(p_Z^{-1}(Z_v))$. 
Denoting by $i_Y : Y \to X$ and $i_Z : Z \to X$ the closed embeddings, we may define cohomology classes on $X$ as follows:
\begin{align*}
	\s'_u = [Y'_u] \in \Hr^{2 \ell(u)}(X,\Z) \text{ and } \s_u = i_*[Y_u] \in \Hr^{2 \ell(u) + 2 \codim_X(Y)}(X,\Z) \\
	\tau'_v = [Z'_v] \in \Hr^{2 \ell(v)}(X,\Z) \text{ and } \tau_v = j_*[Z_v] \in \Hr^{2 \ell(v) + 2 \codim_X(Z)}(X,\Z).
\end{align*}
Using the cellular decompositions by Schubert cells in $Y$ and $Z$ and the cellular decomposition induced on $\cU_Y$ and $\cU_Z$ we obtain the following result.

\begin{fact}\label{f:bases}
	We have the following bases of $\Hr^*(X,\Z)$.
	\begin{enumerate}[1.]
		\item The classes $((\s'_u)_{u \in W^{P_Y}},(\tau_v)_{v \in W^{P_Z}})$ form a $\Z$--basis of $\Hr^*(X,\Z)$.
		\item  The classes $((\tau'_v)_{v \in W^{P_Z}},(\s_u)_{u \in W^{P_Y}})$ form a $\Z$--basis of $\Hr^*(X,\Z)$.
	\end{enumerate}
\end{fact}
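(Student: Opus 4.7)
The plan is to construct an algebraic affine paving of $X$ that refines the stratification $X = \cU_Y \sqcup Z$ (and, symmetrically, $X = \cU_Z \sqcup Y$), and then invoke the standard fact that the fundamental classes of the closures of the cells in such a paving form a $\Z$-basis of $\Hr^*(X,\Z)$. Throughout I fix a Borel subgroup $B \subset P_Y \cap P_Z$ so that all the relevant Schubert stratifications are compatible.

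First I would treat the first basis. By Subsection~\ref{ss:blowups}, the restriction $p_Y : \cU_Y \to Y$ is a $G$-equivariant algebraic vector bundle of rank $\codim_X(Y)$. The homogeneous space $Y = G/P_Y$ is paved by the Schubert cells $Y_u^\circ$ with $u \in W^{P_Y}$, each isomorphic to $\mathbb{A}^{\ell(u)}$. Since any algebraic vector bundle over an affine space is trivial, the preimage $p_Y^{-1}(Y_u^\circ)$ is an affine cell of dimension $\ell(u) + \codim_X(Y)$. Combined with the Schubert decomposition $Z = \bigsqcup_v Z_v^\circ$, this produces an algebraic cell decomposition of $X$. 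By the Bialynicki-Birula/affine paving argument, the fundamental classes of the Zariski closures in $X$ of the cells form a $\Z$-basis of $\Hr^*(X,\Z)$. It then remains to match these closures with the classes in Subsection~\ref{ss:cohomology}: the closure of $p_Y^{-1}(Y_u^\circ)$ inside $X$ is $Y_u' = \pi_Z(p_Y^{-1}(Y_u))$, since $\pi_Z$ is an isomorphism over $X \setminus Z = \cU_Y$ and $Y_u = \overline{Y_u^\circ}$; its class is $\s'_u \in \Hr^{2\ell(u)}(X,\Z)$. The closure of $Z_v^\circ$ in $X$ is $i_Z(Z_v)$ with class $\tau_v = i_{Z*}[Z_v] \in \Hr^{2\ell(v)+2\codim_X(Z)}(X,\Z)$. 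This proves item (1).

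For item (2), I would repeat verbatim the same argument with the roles of $Y$ and $Z$ exchanged, using the stratification $X = \cU_Z \sqcup Y$ and the $G$-equivariant vector bundle $p_Z : \cU_Z \to Z$ of rank $\codim_X(Z)$. The closure of $p_Z^{-1}(Z_v^\circ)$ in $X$ is $Z_v' = \pi_Y(p_Z^{-1}(Z_v))$, of class $\tau'_v$, and the closure of $Y_u^\circ$ in $X$ is $i_Y(Y_u)$ of class $\s_u = i_{Y*}[Y_u]$.

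The main point requiring some care is the assertion that $p_Y^{-1}(Y_u^\circ)$ is genuinely an affine cell: this uses that an algebraic vector bundle on an affine space is trivial, which one can either quote directly or avoid by exhibiting a general one-parameter subgroup of $T$ whose Bialynicki-Birula decomposition of $X$ refines the stratification $X = \cU_Y \sqcup Z$ into the cells described above. Everything else — identifying closures with the classes $\s'_u, \tau_v, \tau'_v, \s_u$ and concluding via the affine paving theorem — is then formal.
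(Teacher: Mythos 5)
Your proposal is correct and is exactly the argument the paper intends: the paper justifies Fact~\ref{f:bases} in one sentence by appealing to the cellular decomposition of $X$ obtained from the Schubert cells of $Y$ (resp.\ $Z$) together with the induced cell decomposition of the vector bundle $\cU_Y \to Y$ (resp.\ $\cU_Z \to Z$), and you have simply spelled out the details, including the correct identification of the cell closures with $Y'_u$, $i_Z(Z_v)$, $Z'_v$, $i_Y(Y_u)$.
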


The proof is similar to that of 
\cite[Proposition 11.3.2]{k:flag}, and it relies on the following observation: the closure of a cell (in $X$) is a union of cells. Indeed, this is already true for the cells in $Y$ and $Z$. For the cells in $\cU_Y$ and $\cU_Z$, the cells are $B$--stable (actually they are the inverse image of $B$--orbits in the vector bundle $p_Y:\cU_Y \to Y$ or $p_Z:\cU_Z \to Z$) so their closures in $X$ are unions of cells in $\cU_Y$ or $\cU_Z$ or unions of $B$--orbits in the other orbit: in $Z$ and in $Y$, respectively. 

\begin{notation}
	Recall that $p$ and $q$ are the projections from the incidence variety $E$ to $Y$ and $Z$ respectively. 
	\begin{enumerate}[1.]
		\item Let $u \in W^{P_Y}$, then $q(p^{-1}(Y_u))$ is a Schubert variety in $Z$ of the form $Z_v$. We define $\uh \in W^{P_Z}$ such that $q(p^{-1}(Y_u)) = Z_\uh$. For $v \in W^{P_Z}$ we define $\vh$ in a similar way: $p(q^{-1}(Z_v)) = Y_\vh$.
		\item Let $u \in W^{P_Y}$, then $p^{-1}(Y_u)$ is the Schubert variety $E_\ut$ in $E$ associated to an element $\ut \in W^{P_Y \cap P_Z}$. For $v \in W^{P_Z}$ we define $\vt$ in a similar way.
		\item Let $h_Y$ and $h_Z$ the hyperplane classes in $Y$ and $Z$. We define coefficients $c_Y(u,u')$ and $c_Z(v,v')$ as follows:
			\[
				h_Y \cup \sb_u = \sum_{u' \in W^{P_Y}} c_Y(u,u') \sb_{u'} \textrm{ and } h_Z \cup \taub_v = \sum_{v' \in W^{P_Z}} c_Z(v,v') \taub_{v'}.
			\]
			These coefficients are well known since they are given by the Chevalley formula for homogeneous spaces, see \cite{chevalley:cel}.

\item If $M$ is a complete nonsingular irreducible variety of dimension $n$, then the Poincar\'e duality map $H^k(M, \Z)\to H_{2n-k}(M,\Z)$, taking $\sigma$ to $\sigma \cap [M]$, is an isomorphism, where $[M]\in H_{2n}(M,\Z)\simeq \Z$ is the fundamental class of $M$. 
If, moreover, $H_{\ast}(M,\Z)$ is torsion free, then,  by Poincar\'e duality, there is a perfect pairing $\scal{\,,\,}: H^k(M,\Z)\otimes H^{2n-k}(M,\Z)\to \Z$,    
given by $\scal{\alpha,\beta}:=(\alpha \cup \beta)\cap [M]$. This is called the Poincar\'e duality pairing. If $\{x_u\}$ is a homogeneous basis for $H^*(M,\Z)$, then the Poincar\'e dual basis is the basis $\{x_u^\vee\}$ dual for this pairing, so $\scal{x_u,x_v^\vee}=\delta_{u,v}$. 

\item For $u\in W^{P_Y}$, we define $u^{\vee}\in W^{P_Y}$ such that $\overline{\sigma}_{u^\vee}$ is Poincar\'e dual to $\overline{\sigma}_u$ in $H^*(Y,\Z)$. That is, $\overline{\sigma}_{u^\vee}={\overline{\sigma}_u}^\vee$, with the notation of item 4. It is well-known that 
 $u^\vee=w_0 u w_Y$, where 
$w_0$ is the longest element of $W$ and $w_Y$ is the longest element in $W_{P_Y}$. For $v\in W^{P_Z}$ we define $v^\vee$ in a similar fashion.
	\end{enumerate}
\end{notation}

\begin{prop}\label{prop:poincare}
	In $\Hr^*(X,\Z)$, the singular cohomology of $X$, the two bases defined above: $((\s'_u)_{u \in W^{P_Y}} , (\tau_v)_{v \in W^{P_Z}})$ and $((\tau'_v)_{v \in W^{P_Z}}$, $(\s_u)_{u \in W^{P_Y}})$ are dual for the Poincar\'e duality pairing. 
       We have
	\[
		\sigma_u^\vee = \sigma'_{u^\vee} \textrm{ and } \tau_v^\vee = \tau'_{v^\vee}.
	\]
\end{prop}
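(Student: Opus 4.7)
The strategy is to verify the four blocks of the pairing matrix between Basis~I $=(\sigma'_u,\tau_v)_{u,v}$ and Basis~II $=(\tau'_{v'},\sigma_{u'})_{u',v'}$. The proposition amounts to
\[
\int_X \sigma'_u \cup \sigma_{u'} = \delta_{u',u^\vee},\quad \int_X \tau_v \cup \tau'_{v'} = \delta_{v',v^\vee},\quad \int_X \sigma'_u \cup \tau'_{v'} = 0,\quad \int_X \tau_v \cup \sigma_{u'} = 0.
\]
The last identity is immediate from the projection formula: $\sigma_{u'}\cup\tau_v = (i_Y)_*(\bar\sigma_{u'}\cup i_Y^*(i_Z)_*\bar\tau_v) = 0$, since $Y\cap Z = \emptyset$ forces $i_Y^*(i_Z)_* = 0$. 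For the first two identities, I would push back to $Y$ or $Z$ via the projection formula; for instance $\int_X \sigma'_u \cup (i_Y)_*\bar\sigma_{u'} = \int_Y i_Y^*\sigma'_u\cup\bar\sigma_{u'}$, and the pullback $i_Y^*\sigma'_u = \bar\sigma_u$ follows because $\pi_Z$ is an isomorphism over $Y$, $i_Y$ identifies $Y$ with the zero section of the vector bundle $p_Y\colon\cU_Y\to Y$, and $Y'_u \cap \cU_Y = p_Y^{-1}(Y_u)$ meets this zero section transversally in $Y_u$. Schubert Poincar\'e duality on the homogeneous space $Y$ (respectively $Z$) then produces the Kronecker delta.

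The main obstacle is the vanishing $\int_X \sigma'_u \cup \tau'_{v'} = 0$, which is nontrivial only when $\ell(u)+\ell(v') = \dim X$. To compute this integral I would replace $\sigma'_u$ by the transverse representative $W_u := \pi_Z(p_Y^{-1}(w_0\cdot Y_u))$ obtained from the opposite Borel $w_0 B w_0^{-1}$; since $w_0 Y_u$ is merely a translate of $Y_u$, the class of $W_u$ in $\Hr^*(X,\Z)$ is still $\sigma'_u$, but $W_u$ now meets $Z'_{v'}$ transversally. The intersection $W_u\cap Z'_{v'}$ splits into three contributions according to the three $G$-orbits $Y$, $Z$, $\cU$ of $X$: using $Y'_u\cap Y = Y_u$ and $Z'_{v'}\cap Y = Y_{\widehat{v'}}$ from Subsection~\ref{ss:blowups} (and symmetrically for $Z$), the $Y$- and $Z$-contributions are the Richardson intersections $w_0 Y_u \cap Y_{\widehat{v'}}$ and $w_0 Z_{\widehat{u}} \cap Z_{v'}$; and because $\cU \to E = G/(P_Y\cap P_Z)$ is a $\G_m$-bundle, the $\cU$-contribution is the pull-back along this bundle of the Richardson $p^{-1}(w_0 Y_u)\cap q^{-1}(Z_{v'}) \subset E$.

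The three vanishings then follow from a single numerical input: $\dim E = \dim X - 1$, which forces $d_p := \dim E - \dim Y = \codim_X Y - 1$ and $d_q := \dim E - \dim Z = \codim_X Z - 1$. Together with the standard bound $\ell(\widehat{v'}) \geq \ell(v')-d_p$ (a dimension count for $p\colon q^{-1}(Z_{v'})\to Y_{\widehat{v'}}$), one obtains
\[
\ell(u)+\ell(\widehat{v'}) \;\geq\; \ell(u)+\ell(v')-d_p \;=\; \dim X - \codim_X Y + 1 \;=\; \dim Y + 1 \;>\; \dim Y,
\]
which forces the Richardson intersection in $Y$ to be empty; the $Z$-contribution vanishes by symmetry; and $\ell(u)+\ell(v') = \dim X = \dim E + 1 > \dim E$ empties the Richardson in $E$. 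Hence $W_u\cap Z'_{v'} = \emptyset$, giving $\int_X\sigma'_u\cup\tau'_{v'} = 0$. The step I expect to require the most care is verifying the transversality of $W_u$ with $Z'_{v'}$ along each of the three strata; this should follow from the $G$-equivariance of the diagrams in Subsection~\ref{ss:blowups} together with the standard transversality of opposite Schubert varieties in the flag varieties $Y$, $Z$, and the incidence variety $E$.
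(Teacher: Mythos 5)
Your proposal is correct and follows essentially the same route as the paper: the off-diagonal block $\sigma\cup\tau=0$ from disjointness of $Y$ and $Z$, the diagonal blocks via the projection formula reducing to Schubert--Poincar\'e duality on $Y$ and $Z$, and the key vanishing $\sigma'_u\cup\tau'_{v'}=0$ by moving to general position and stratifying the intersection over the three $G$-orbits, with the same dimension counts (in particular $\dim E=\dim X-1$ killing the open-orbit contribution). The only cosmetic difference is your use of opposite-Borel translates where the paper invokes general $G$-translates.
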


\begin{proof}
	Using the projection formula we get ${\pi_Z}_*(p_Y^*\sb_{u_1} \cup \pi_Z^*\sigma_{u_2}) = \sigma'_{u_1} \cup \sigma_{u_2} $. But, again by the projection formula, we have ${p_Y}_*(p_Y^*\sb_{u_1} \cup \pi_Z^*\sigma_{u_2}) = \sb_{u_1} \cup \sb_{u_2} = \delta_{u_1,u_2^\vee}$. 
 We thus have $\sigma'_{u_1} \cup \sigma_{u_2} = \delta_{u_1,u_2^\vee}$. The same method works for the $\tau$ classes. We are left with proving $\s_u \cup \tau_v = 0$ and  $\s'_u \cup \tau'_v = 0$. To compute these intersections, we use general translates of $Y_u$, $Y'_u$, $Z_v$ and $Z'_v$ under the action of $G$. First note that since $Y$ and $Z$ do not meet, we get $\s_u \cup \tau_v = 0$. Next we want to compute the intersection of general $G$-translates of $Y'_u$ and $Z'_v$. We look for intersection points in $Y$, $Z$ and $\cU$. Since $Y'_u \cap Z = Z_\uh$ is a Schubert variety of dimension at most $\dim X - \ell(u) - 1$, its intersection with a general translate of $Z'_v \cap Z = Z_v$ is of dimension at most $\dim X - \ell(u) - 1 - \ell(v) = -1$ and therefore empty. The same argument proves that no point of the intersection is contained in $Y$. Finally, if there was an intersection point in $\cU$, then using the restriction of $p_{YZ}$ to $\cU$ would give a point in the intersection of general translates of $p_{YZ}(\pi_{YZ}^{-1}(Y'_u)) = p^{-1}(Y_u) = E_\ut$ and $p_{YZ}(\pi_{YZ}^{-1}(Z'_v)) = p^{-1}(Z_v)) = E_\vt$ in $E$. The sum of their codimensions is $\ell(u) + \ell(v) = \dim X = \dim E +1$ and therefore these general translates do not meet.
\end{proof}

We state the following cup product formulas for later reference.

\begin{prop}
	Let $u_1,u_2 \in W^{P_Y}$ and $v_1,v_2 \in W^{P_Z}$ with $\ell(u_1) + \ell(u_2) = \dim X$ and $\ell(v_1) + \ell(v_2) = \dim X$.
	\begin{enumerate}[1.]
		\item If $\codim_X Z = 2$, then $\s'_{u_1} \cup \s'_{u_2} = \delta_{\uh_1,\uh_2^\vee}$.
		\item If $\codim_X Y = 2$, then $\tau'_{v_1} \cup \tau'_{v_2} = \delta_{\vh_1,\vh_2^\vee}$.
	\end{enumerate}
\end{prop}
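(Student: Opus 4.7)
The plan is to pull the computation to the blow-up $\Xt_Z = \Pp_Y(F_Y \oplus \Oo_Y)$ and exploit the fact that, since $\codim_X Z = 2$, the projection $q \colon E \to Z$ is a $\Pp^1$-bundle. The assertion for $\tau'$ follows by the same argument after swapping the roles of $Y$ and $Z$.

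Because $\pi_Z$ is an isomorphism over $\Uu_Y = X \setminus Z$ and $\s'_{u_2} = \pi_{Z *}(p_Y^* \sb_{u_2})$, the two classes $\pi_Z^* \s'_{u_2}$ and $p_Y^* \sb_{u_2}$ agree on $\Xt_Z \setminus E_Z$; hence $\pi_Z^* \s'_{u_2} = p_Y^* \sb_{u_2} + j_{Z *}\eta$ for some $\eta \in \Hr^*(E_Z,\Z)$. Applying $\pi_{Z*}$ forces $q_* \eta = 0$, and since $q$ is a $\Pp^1$-bundle one has $\ker q_* = q^* \Hr^*(Z,\Z)$, so $\eta = q^* a$ for a unique $a$. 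Restricting the decomposition to $E_Z$, using the self-intersection formula $j_Z^* j_{Z *}(\cdot) = -\zeta_q \cdot (\cdot)$ with $\zeta_q = c_1(\Oo_{E_Z}(1))$, then multiplying through by $\zeta_q$ and pushing down by $q_*$, yields $a = q_* p^* \sb_{u_2}$. By the Schubert projection formula for the $\Pp^1$-bundle $q$, this class equals $\taub_{\uh_2}$ when $q$ is generically finite on $p^{-1}(Y_{u_2})$ (equivalently $\ell(\uh_2) = \ell(u_2) - 1$), and vanishes otherwise (when the fibers are $1$-dimensional and $\ell(\uh_2) = \ell(u_2)$).

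Combining the projection formula with the vanishing $\sb_{u_1}\sb_{u_2} = 0$ in $\Hr^{2\dim X}(Y,\Z) = 0$ (which holds because $\dim X > \dim Y$), we obtain
\[
\int_X \s'_{u_1} \cup \s'_{u_2} \;=\; \int_{E_Z} p^* \sb_{u_1} \cdot q^* a \;=\; \int_Z (q_* p^* \sb_{u_1}) \cdot a.
\]
The same dichotomy applies to $u_1$. Hence the integral vanishes unless both $q_* p^* \sb_{u_i} = \taub_{\uh_i}$, in which case it equals $\int_Z \taub_{\uh_1} \cdot \taub_{\uh_2} = \delta_{\uh_1, \uh_2^\vee}$ by Poincar\'e duality on $Z$. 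In the degenerate cases, a length-parity check ($\ell(\uh_1) + \ell(\uh_2) \neq \dim Z$) confirms $\delta_{\uh_1, \uh_2^\vee} = 0$, so the formula holds uniformly. The main subtlety is the Schubert projection formula $q_*[E_{u_i}] = [Z_{\uh_i}]$ in the nondegenerate case, which reduces via the reduced-expression criterion (existence of a simple reflection $s \in W_{P_Z}$ with $u_i = \uh_i s$ and $\ell(u_i) = \ell(\uh_i) + 1$) to the observation that the induced map on open Schubert cells has degree one.
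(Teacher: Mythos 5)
Your argument is correct, but it is quite different from the one in the paper. The paper proves this by intersecting \emph{general $G$-translates} of the cycles $Y'_{u_1}$ and $Y'_{u_2}$: a dimension count via $p_Y$ rules out intersection points in $\cU_Y$, so all intersection points lie in $Z$, where $Y'_{u_i}\cap Z = Z_{\uh_i}$ has codimension at least $\ell(u_i)+1-\codim_X Z$; since $\codim_X Z=2$ these codimensions sum to at least $\dim Z$ and the count reduces to Poincar\'e duality of Schubert classes in $Z$. Your route is instead purely cohomological on the blow-up: you establish the identity $\pi_Z^*\s'_u = p_Y^*\sb_u + j_{Z*}q^*(q_*p^*\sb_u)$ and reduce the product to $\int_Z \taub_{\uh_1}\cup\taub_{\uh_2}$. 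This buys you something the paper's proof does not: you avoid any appeal to genericity/transversality of translates on the quasi-homogeneous $X$ (which the paper only formally justifies later, via Graber's lemma), and you obtain the correction class on $E_Z$ explicitly, which is strictly more information than the single intersection number. The paper's argument is shorter and is the template reused for all the Gromov--Witten computations, and it directly yields the refinement in Remark~\ref{rem:ss'tt'}. Two small points in your write-up deserve attention, though neither is a real gap: (i) to extract $a$ from $q^*i_Z^*\s'_{u_2} = p^*\sb_{u_2} - \zeta_q\, q^*a$ you should simply apply $q_*$ and use $q_*q^*=0$, $q_*(\zeta_q\, q^*x)=x$; multiplying by $\zeta_q$ first, as you describe, produces $i_Z^*\s'_{u_2} = q_*(\zeta_q p^*\sb_{u_2}) - q_*(\zeta_q^2)\,a$, which does not isolate $a$. (ii) In expanding $\pi_Z^*\s'_{u_1}\cup\pi_Z^*\s'_{u_2}$ you should account for all cross terms; the cleanest way is to expand only one factor and note that $\int_{\Xt_Z}\pi_Z^*\s'_{u_1}\cup j_{Z*}q^*a_2 = \int_{E_Z} q^*(i_Z^*\s'_{u_1}\cdot a_2)=0$, which gives your displayed formula directly.
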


\begin{remark}
	The condition $\codim_X Z = 2$ is satisfied for $X^3(n,n)$ and $X^5$. The condition $\codim_X Y = 2$ is satisfied in for $X^1(n)$, $X^3(n,2)$ and $X^5$.
\end{remark}

\begin{proof}
	We prove item 1, the proof of item 2 being similar. We again look at general $G$-translates of $Y'_{u_1}$ and $Y'_{u_2}$. If they were meeting in $\cU_Y$, then via $p_Y$ there would be an intersection point for general translates of $Y_{u_1}$ and $Y_{u_2}$ in $Y$. This is not possible for dimension reasons. These varieties can therefore only meet in $Z$ and we have $Y'_{u_i} \cap Z = Z_{\uh_i}$ which is of codimension at least $\ell(u_i) + 1 - \codim_X Z$ in $Z$. We look for the intersection of general translates of $Z_{\uh_1}$ and $Z_{\uh_1}$. The sum of their codimension in $Z$ is at least $\ell(u_1) + 1 - \codim_X Z + \ell(u_2) + 1 - \codim_X Z = \dim Z + 2 - \codim_X Z = \dim Z$. This proves the result.
\end{proof}

\begin{remark}\label{rem:ss'tt'}
	The same proof gives the following more precise result. Let $u_1,u_2 \in W^{P_Y}$ with $\ell(u_1) + \ell(u_2) = \dim X$. Assume that  $\codim_Z Z_{\uh_1} + \codim_Z Z_{\uh_2} = \dim Z$, then $\s'_{u_1} \cup \s'_{u_2} = \delta_{\uh_1,\uh_2^\vee}$.
\end{remark}

\subsection{Hasse diagram}

In this subsection we describe the multiplication by the hyperplane class $h$ in $\Hr^*(X,\Z)$.

\begin{prop}\label{prop-chev}
	We have 
	\[
		h \cup \s'_u = \tau_\uh + \sum_{u' \in W^{P_Y}} c_Y(u,u') \s'_{u'} \textrm{ and } h \cup \tau_v = \sum_{v' \in W^{P_Z}} c_Z(v,v') \tau_{v'}.
	\]
\end{prop}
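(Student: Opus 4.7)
The plan is to handle the two identities separately by applying the projection formula to the natural morphisms $i_Z : Z \hookrightarrow X$ and $\pi_Z : \Xt_Z \to X$, thereby reducing each to the classical Chevalley formula on the homogeneous spaces $Z$ and $Y$ respectively.

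For the formula $h \cup \tau_v = \sum_{v'} c_Z(v,v') \tau_{v'}$, I write $\tau_v = i_{Z*}\taub_v$ and apply the projection formula: $h \cup i_{Z*}\taub_v = i_{Z*}(i_Z^*h \cup \taub_v)$. The restriction $i_Z^* h$ of the hyperplane class of $\P(V_Y \oplus V_Z)$ to $Z = G/P_Z \subset \P(V_Z)$ is precisely the hyperplane class $h_Z$ of the minimal embedding, so the Chevalley formula on the homogeneous space $Z$ concludes this case.

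For the formula involving $\s'_u$, I first observe that $\s'_u = \pi_{Z*}(p_Y^*\sb_u)$, using flatness of the projective bundle $p_Y$ and the fact that $p_Y^{-1}(Y_u)$ is not contained in the exceptional divisor $E_Z$, so that $\pi_Z$ restricts to a birational map onto $Y'_u$. The crucial geometric input is the divisor identity
\[
\pi_Z^* h = p_Y^* h_Y + [E_Z]
\]
on $\Xt_Z$, which follows from the description of $p_Y$ as the resolution of the linear projection $X \dashrightarrow Y$ induced by $\P(V_Y \oplus V_Z) \dashrightarrow \P(V_Y)$ away from $\P(V_Z) \supset Z$: this projection is cut out by the linear subsystem of $|h|$ consisting of hyperplanes containing $\P(V_Z)$, which pulls back to $|\pi_Z^*h - E_Z|$ and coincides with $p_Y^*|h_Y|$. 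The projection formula then gives
\[
h \cup \s'_u \;=\; \pi_{Z*}\bigl(p_Y^*(h_Y \cup \sb_u)\bigr) \;+\; \pi_{Z*}\bigl([E_Z] \cup p_Y^*\sb_u\bigr).
\]
The first summand equals $\sum_{u'} c_Y(u,u') \s'_{u'}$ after invoking the Chevalley formula on $Y$ and pushing forward. For the second summand, base change along $j_Z : E_Z \hookrightarrow \Xt_Z$ together with the identities $\pi_Z \circ j_Z = i_Z \circ q$ and $p_Y \circ j_Z = p$ identifies it with $i_{Z*}(q_* p^* \sb_u)$.

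The main technical point will then be the identity $q_* p^*\sb_u = \taub_\uh$. By the definition of $\uh$, set-theoretically $q(p^{-1}(Y_u)) = Z_\uh$, so the statement reduces, via standard degree considerations, to showing that the restriction $q : p^{-1}(Y_u) \to Z_\uh$ is birational; this is equivalent to the length condition $\ell(\uh) = \ell(u) + 1 - \codim_X Z$. When the condition fails, the cohomological degree of $\tau_\uh$ fails to match that of $h \cup \s'_u$ and a dimension count shows $q_*p^*\sb_u = 0$, so the term is vacuous. When the condition holds, birationality follows from $B$-equivariance: both $p^{-1}(Y_u) = E_\ut$ and $Z_\uh$ are $B$-Schubert varieties of the same dimension, and $q$ sends the open $B$-orbit of the former isomorphically onto the open $B$-orbit of the latter. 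Assembling the two contributions yields the stated formula.
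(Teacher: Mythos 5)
Your proof is correct and follows essentially the same route as the paper's, whose entire argument is to cite the identities $\pi_Z^*h = [E_Z] + p_Y^*h_Y$ and $\pi_Z(E_Z \cap p_Y^{-1}(Y_u)) = Z_{\uh}$ for the first formula and to compute the second directly in $Z$. You merely supply the projection-formula bookkeeping and the birationality/dimension check behind $q_*p^*\sb_u = \taub_{\uh}$ that the paper leaves implicit.
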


\begin{proof}
	The first formula follows from the equalities $\pi_Z^*h = [E] + p_Y^*h_Y$ and $\pi_Z(E \cap p_Y^{-1}(Y_u)) = Z_\uh$. The second formula can be directly computed in $Z$. 
\end{proof}

\begin{remark}
	Similar formulas hold for the products $h \cup \s_u$ and $h \cup \tau'_v$.
\end{remark}

%\begin{example}
%	Recall Pasquier's classification. We give in the appendix the (quantum) Hasse diagrams for Case (1), $n = 3$,  Case (2), Case (3), $n = 3$ and $m = 3$  and Case (5) of Pasquier's classification. 
%\end{example}

\subsection{Special geometry}\label{ss:special}

	In two cases of Pasquier's classification, there is a special geometric feature that we want to use: the variety $X$ is the zero locus of a section of a vector bundle. Let us fix first some notation. Denote by $\OG(5,10)$ a connected component of the Grassmannian of maximal isotropic subspaces in $\C^{10}$ endowed with a non-degenerate quadratic form. Denote by $\Gr(m,n)$ the Grassmannian of $m$-dimensional subspaces in $\C^n$ and let $\Uu_m$ be the tautological subbundle. Finally, denote by $\IG(m,2n+1)$ the Grassmannian of isotropic subspaces of dimension $m$ in $\C^{2n+1}$ endowed with an antisymmetric form of rank $2n$.

\begin{prop}\label{prop:section}
	If $X=X^2$ or $X=X^3(n,m)$, then $X$ is obtained as the zero locus of a vector bundle on a bigger homogeneous space $\Xf$. More precisely
	\begin{enumerate}[1.]
		\item The variety $X^2$ is a general hyperplane section of $\Xf = \OG(5,10)$.
		\item The variety $X^3(n,m)$ is the zero locus of a section of $\Lambda^2\Uu_m^\vee$ in $\Xf = \Gr(m,2n+1)$.
	\end{enumerate}
\end{prop}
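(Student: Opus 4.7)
I would treat cases (3) and (2) separately, as they rely on quite different constructions.

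Case (3) follows from a classical presentation of the odd symplectic Grassmannian. As recalled in the introduction, Pasquier's variety $X$ in case (3) coincides with $\IG(m,2n+1)$. On $\Gr(m,2n+1)$, the bundle $\Lambda^2 \Uu_m^\vee$ parametrises antisymmetric $2$-forms on the fibres of the tautological subbundle, and an antisymmetric form $\omega$ of rank $2n$ on $\C^{2n+1}$ induces a global section $s_\omega$ whose fibre at $V \in \Gr(m,2n+1)$ is $\omega|_V$. The zero locus of $s_\omega$ is therefore exactly $\{V : \omega|_V = 0\} = \IG(m,2n+1)$, by definition of the odd symplectic Grassmannian.

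For case (2), the plan is to embed $G = \mathrm{Spin}(7)$ into $\mathrm{Spin}(10)$ via $\mathrm{Spin}(7) \subset \mathrm{Spin}(8) \subset \mathrm{Spin}(10)$, with the first inclusion realising $\mathrm{Spin}(7)$ as the stabiliser of a generic spinor in a half-spin representation of $\mathrm{Spin}(8)$. The branching rule from $\mathrm{Spin}(10)$ to $\mathrm{Spin}(8)$ combined with triality then gives, as $\mathrm{Spin}(7)$-modules,
\[
	S_{10} \simeq S^+_8 \oplus S^-_8 \simeq \bigl(V(\omega_1) \oplus \C\bigr) \oplus V(\omega_3) = V_Y \oplus V_Z \oplus \C,
\]
where $S_{10}$ is a half-spin representation of $\mathrm{Spin}(10)$. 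The trivial summand provides a $\mathrm{Spin}(7)$-fixed line $\ell_0 \in \P(S_{10})$ whose complementary hyperplane is $\P(V_Y \oplus V_Z)$, and the intersection $H := \OG(5,10) \cap \P(V_Y \oplus V_Z)$ is $\mathrm{Spin}(7)$-stable, smooth by Bertini (for a generic spinor $\ell_0 \notin \OG(5,10)$), and of dimension $9 = \dim X$. Once $[v_Y + v_Z] \in \OG(5,10)$ is established, $\mathrm{Spin}(7)$-equivariance yields $X \subseteq H$, and irreducibility together with equality of dimensions forces $X = H$.

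The main obstacle is verifying that $v_Y + v_Z$ is a pure spinor, equivalently that $[v_Y + v_Z] \in \OG(5,10)$. A direct approach picks a maximal torus of $\mathrm{Spin}(10)$ containing one of $\mathrm{Spin}(7)$, expresses $v_Y$ and $v_Z$ in a weight basis of $S_{10}$, and checks the quadratic relations cutting out $\OG(5,10) \subset \P(S_{10})$. More conceptually, one can argue that $\overline{\mathrm{Spin}(10) \cdot [v_Y + v_Z]}$ is a $\mathrm{Spin}(10)$-orbit closure of dimension $10$ in $\P(S_{10})$, and must therefore coincide with $\OG(5,10)$, the unique such orbit closure of that dimension. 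The remaining ingredients — the branching decomposition, Bertini's theorem, and irreducibility of a smooth hyperplane section — are standard.
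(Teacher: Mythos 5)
For case (3) your argument is the paper's: $X\simeq \IG(m,2n+1)$ is by definition the locus where the degenerate form $\omega$ vanishes on the tautological subbundle, i.e.\ the zero locus of the section $s_\omega$ of $\Lambda^2\Uu_m^\vee$ on $\Gr(m,2n+1)$. Nothing to add there.

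For case (2) you take a genuinely different route. The paper's proof is a one-liner by classification: $X$ is Fano of dimension $9$ and index $7$, hence coindex $3$, and Mukai's classification of such varieties leaves only a hyperplane section of the spinor tenfold $\OG(5,10)$. Your approach instead constructs the embedding explicitly via the branching $S_{10}\vert_{\mathrm{Spin}(7)}\simeq V(\omega_1)\oplus V(\omega_3)\oplus\C$, identifying $\P(V_Y\oplus V_Z)$ with the unique $\mathrm{Spin}(7)$-invariant hyperplane. This is more informative (it exhibits the hyperplane and the group embedding rather than invoking a classification theorem), but as written it has a gap precisely at the step you flag as the main obstacle. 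The projective half-spin space $\P(S_{10}^+)$ has exactly two $\mathrm{Spin}(10)$-orbits: the closed one $\OG(5,10)$ of dimension $10$ and its open complement of dimension $15$. So the assertion that $\overline{\mathrm{Spin}(10)\cdot[v_Y+v_Z]}$ ``is an orbit closure of dimension $10$'' is not an argument --- it is equivalent to the claim that $v_Y+v_Z$ is a pure spinor, which is exactly what must be proved; a priori the orbit could be the open one. Only your first suggestion (writing $v_Y$ and $v_Z$ in a weight basis and checking the quadratic equations of $\OG(5,10)$) actually closes this, and it is not carried out. A second, smaller issue: smoothness of $H=\OG(5,10)\cap\P(V_Y\oplus V_Z)$ is not a Bertini statement, because the hyperplane here is not generic but canonically determined by the embedding $\mathrm{Spin}(7)\subset\mathrm{Spin}(10)$; one must check that the corresponding point of the dual projective space (the $\mathrm{Spin}(7)$-fixed line in the other half-spin representation) lies off the dual variety of $\OG(5,10)$, equivalently that the fixed spinor defining the embedding is not pure --- which is true by construction, but should be said. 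With those two points supplied, your construction does prove the statement, and moreover identifies $X$ with a \emph{specific} smooth (hence, up to the $\mathrm{Spin}(10)$-action, general) hyperplane section, which the Mukai argument also leaves implicit.
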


\begin{proof}
	\begin{enumerate}[1.]
		\item We use Mukai's classification of Fano varieties of large index (see \cite{ip:fano}). Since $X$ is a Fano variety of dimension $9$ and index $7$, it has to be a hyperplane section of $\OG(5,10)$.
		\item The variety $X$ is isomorphic to $\IG(m,2n+1)$, which is by definition the zero locus of a section of $\Lambda^2\Uu_m^\vee$ in $\Gr(m,2n+1)$.
	\end{enumerate}
\end{proof}

%%%%%%%%%%%%%%%%%%%%%%%%%%%%%%%%%%%%%%%%%%%%%%%%%%%%%%%%%%%%%%%%%%%%%%%%%%

\section{Moduli space of curves}

Let $X$ be as above a projective horospherical variety with $\Pic(X) = \Z$. Let $d\in H_2(X,\Z)$ and denote by $\Mod{d}{k}{X}$ the moduli space of stable maps of genus $0$, degree $d$ and $k$ marked points. Here the degree of a map $f:C \to X$ with $C$ a nodal curve is by definition the push-forward $f_*[C]$. Identifying $H_2(X,\Z)$ with $\Z$ this moduli space has the expected dimension:
\[
	\dim X + \scal{c_1(T_X),d} + k-3 = \dim X + dc_1(X) + k-3.
\]
Any irreducible component of $\Mod{d}{k}{X}$ has dimension at least equal to the expected dimension. We refer to \cite{k:c,fp:moduli,lee:f} for further properties of $\Mod{d}{k}{X}$.

\subsection{Curves with irreducible source}

We first consider the open subset $\Modc{d}{k}{X}$ of $\Mod{d}{k}{X}$ consisting of stable maps with irreducible source. In this subsection, we describe the irreducible components of $\Modc{d}{k}{X}$. We start with an useful lemma on maps to projective bundles. This result was proved for projective bundles of rank one in \cite[Proposition 4]{pe:courbes} and the proof of the general result is very similar. We include this proof for the reader's convenience.

\begin{lemma}
	Let $p : M \to N$ be a projective bundle, \emph{i.e.}, $M = \P_N(E)$, where $E$ a globally generated vector bundle over a projective variety. Let $\a \in H_2(M,\Z)$ be such that $a = - \a \cap c_1(\cO_p(-1)) \geq 0$ (here $\cO_p(-1)$ is the tautological rank one subbundle associated to the projective bundle $p$).

	Then the map $\Modc{\a}{k}{M} \to \Modc{p_*\a}{k}{N}$ induced by composition with $p$ realises $\Modc{\a}{k}{M}$ as a non empty open subset of a projective bundle of rank $p_*\a \cap c_1(E) + \rk(E)(a + 1) - 1$ over $\Modc{p_*\a}{k}{N}$.
\end{lemma}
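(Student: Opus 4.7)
My plan is to identify, for $C$ irreducible (so $C\simeq\P^1$), a stable map $f:C\to M$ of class $\alpha$ with the pair $(\bar f,L)$, where $\bar f:=p\circ f:C\to N$ is a stable map of class $p_{*}\alpha$ and $L:=f^{*}\cO_p(-1)\subset\bar f^{*}E$ is a line subbundle. The hypothesis $a=-\alpha\cap c_1(\cO_p(-1))$ forces $\deg L=-a$. Conversely, any pair $(\bar f,L)$ as above lifts uniquely to a morphism $f:C\to M=\P_N(E)$ preserving marked points, so the composition morphism $\Modc{\alpha}{k}{M}\to\Modc{p_{*}\alpha}{k}{N}$ has, over a point represented by $\bar f$, fiber equal to the set of degree-$(-a)$ line subbundles of $\bar f^{*}E$.

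Since $E$ is globally generated, so is $\bar f^{*}E$, and Grothendieck's theorem on $\P^1$ gives $\bar f^{*}E\simeq\bigoplus_i\cO_{\P^1}(n_i)$ with $n_i\geq 0$ and $\sum_i n_i=p_{*}\alpha\cap c_1(E)=:b$. Line subbundles $\cO_{\P^1}(-a)\hookrightarrow\bar f^{*}E$ correspond, modulo scalar, to classes $[s]\in\P(H^0(\bar f^{*}E(a)))$ whose associated map is fiberwise injective, equivalently to sections with empty vanishing locus. This is an open condition, nonempty because $\bar f^{*}E(a)$ is globally generated (as $a\geq 0$), and the ambient projective space has dimension
\[
h^0(\bar f^{*}E(a))-1=\sum_i(n_i+a+1)-1=b+\rk(E)(a+1)-1,
\]
exactly the claimed rank.

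To globalize, let $\pi:\Cc\to\Modc{p_{*}\alpha}{k}{N}$ denote the universal curve with evaluation $\ev:\Cc\to N$. Étale-locally on the base, a relative line bundle $\Ll_a$ on $\Cc$ of fiberwise degree $a$ exists; on each geometric fiber, $\ev^{*}E\otimes\Ll_a\simeq\bigoplus_i\cO_{\P^1}(n_i+a)$ is a sum of nonnegative line bundles, so $R^1\pi_{*}(\ev^{*}E\otimes\Ll_a)=0$ and by cohomology and base change $\pi_{*}(\ev^{*}E\otimes\Ll_a)$ is locally free of rank $b+\rk(E)(a+1)$. Any two choices of $\Ll_a$ differ by pullback of a line bundle from the base, hence the two resulting vector bundles differ by tensoring with such a line bundle, so their projectivizations coincide canonically. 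By étale descent of projective bundles, the locally defined $\P(\pi_{*}(\ev^{*}E\otimes\Ll_a))$ glues to a projective bundle over $\Modc{p_{*}\alpha}{k}{N}$ of the required rank, and the fiberwise description identifies $\Modc{\alpha}{k}{M}$ with the open subscheme cut out by the subbundle (nonvanishing section) condition, which is nonempty by the pointwise check.

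The main obstacle is this last globalization step, specifically the étale-local existence of $\Ll_a$ and the descent argument showing the projective bundle is canonical; the fiberwise identification and dimension count are routine given the splitting of $\bar f^{*}E$ on $\P^1$.
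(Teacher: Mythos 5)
Your proof is correct and follows essentially the same route as the paper's: identify the fiber over $\bar f$ with the open locus of nowhere-vanishing sections in $\P H^0(\bar f^*E(a))$, compute the rank from the splitting of $\bar f^*E$ on $\P^1$, and globalize via the vanishing of $R^1\pi_*$ of the twisted pullback. Your étale-local construction of the twisting line bundle $\Ll_a$ and the descent of the projectivization is in fact a more careful version of the paper's globalization, which simply writes the universal curve as $C\times \Modc{p_*\a}{k}{N}$ and twists by a fixed $\cO_C(a)$.
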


\begin{proof}
	The fiber of the map $\Modc{\a}{k}{M} \to \Modc{p_*\a}{k}{N}$ over $f : C \to N$ is given by sections of $\P_C(f^*E)$. Since $C$ is rational and irreducible, we may assume that $C = \P^1$ and that these sections are injective maps of vector bundles $f^*\cO_p(-1) = \cO_C(-a) \to f^*E$ modulo scalars, where $a = - \a \cap c_1(\cO_p(-1)) \geq 0$. The fiber is thus the subset of $\P\Hom(\cO_C(-a),f^*E)$ of injective maps of vector bundles. Since $E$ is globally generated, so is $f^*E$.
This implies that our sections form a non-empty open subset of this projective space.

	This construction can be realised in families, which gives us the assertion. Indeed, for $C = \P^1$ let $f : C \times \Modc{p_*\a}{k}{N} \to N$ be the universal map. We will need the projection $\pr : C \times \Modc{p_*\a}{k}{N} \to \Modc{p_*\a}{k}{N}$. Consider the sheaf $\pr_*(f^*E \otimes \cO_C(a))$. Since $E$ is globally generated and $a \geq 0$, we have $R^1\pr_*(f^*E \otimes \cO_C(a)) = 0$, hence $\pr_*(f^*E \otimes \cO_C(a))$ is a vector bundle of rank $\deg f^*E + \rk(E)(a + 1)= p_*\a \cap c_1(E) + \rk(E)(a + 1)$ and $\Modc{p_*\a}{k}{M}$ is an open subset of $\P_{\Modc{p_*\a}{k}{N}}(\pr_*(f^*E \otimes \cO_C(a)))$.
\end{proof}

We use this lemma to describe the irreducible components of $\Modc{d}{k}{X}$.

\begin{prop}
	Let $X$ be as above. We have
	\begin{enumerate}[(a)]
		\item $\Modc{d}{k}{X} = \overline{\Modc{d}{k}{\cU_Y}} \cup \Modc{d}{k}{Z}$.
		\item $\overline{\Modc{d}{k}{\cU_Y}}$ is irreducible of dimension $\dim X + d c_1(X) + k-3$.
		\item $\Modc{d}{k}{Z}$ is irreducible of dimension $\dim Z + d c_1(Z) + k-3$.
	\end{enumerate}
\end{prop}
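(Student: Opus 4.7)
The plan is to prove (c), (b), (a) in that order, using the projective bundle structures from Section~\ref{ss:blowups} and the preceding lemma. Part (c) is immediate: $Z \simeq G/P_Z$ is a rational homogeneous space, so $\Modc{d}{k}{Z}$ is irreducible of the expected dimension $\dim Z + d c_1(Z) + k - 3$ by Thomsen \cite{thomsen} (see also \cite{kim-pandharipande}).

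For part (b), I exploit that $\cU_Y = \Xt_Z \setminus E_Z$, with $\Xt_Z = \P_Y(F_Y \oplus \cO_Y)$ a projective bundle over the homogeneous base $Y \simeq G/P_Y$. A stable map $f : C \to \cU_Y$ of degree $d$ lifts uniquely to $\ft : C \to \Xt_Z$ with image avoiding $E_Z$; from $\pi_Z^* h = [E_Z] + p_Y^* h_Y$, the class $\alpha = [\ft]$ satisfies $\alpha \cdot [E_Z] = 0$ and $p_{Y*}\alpha = d$. Applying the preceding lemma to $p_Y$ (with $F_Y \oplus \cO_Y$ globally generated, to be verified case by case via the homogeneous description of $F_Y$), $\Modc{\alpha}{k}{\Xt_Z}$ is a non-empty open subset of a projective bundle over the irreducible $\Modc{d}{k}{Y}$. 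Hence $\Modc{d}{k}{\cU_Y}$, a further open subset, is irreducible, and its closure has dimension
\[
\dim Y + d c_1(Y) + k - 3 + d c_1(F_Y) + \rk F_Y = \dim X + d c_1(X) + k - 3,
\]
using $\dim X = \dim Y + \rk F_Y$ and $c_1(X) = c_1(Y) + c_1(F_Y)$ (from the normal-bundle sequence $0 \to T_Y \to T_X|_Y \to F_Y \to 0$).

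For part (a), the inclusion $\supseteq$ is obvious; for $\subseteq$ I use a dimension count. Any $f \in \Modc{d}{k}{X}$ with $f(C) \not\subseteq Z$ admits a unique proper-transform lift $\ft : C \to \Xt_Z$ of some class $(d-a, a)$, where $a \geq 0$ is the intersection multiplicity of $\ft$ with $E_Z$; the case $a = 0$ puts $f$ in $\Modc{d}{k}{\cU_Y}$ directly. For $a \geq 1$, the same lemma gives
\[
\dim \Modc{(d-a,a)}{k}{\Xt_Z} = \dim X + d c_1(X) + k - 3 - a(\codim_X Z - 1),
\]
strictly less than the virtual dimension $\dim X + d c_1(X) + k - 3$ since $\codim_X Z \geq 2$ in every case (Table~\ref{t:dimX}). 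The proper-transform map embeds the locus of $f$'s with lift of class $(d-a,a)$ as a subset of the corresponding $\Modc{(d-a,a)}{k}{\Xt_Z}$, so this locus has sub-virtual dimension in $\Modc{d}{k}{X}$. Since every irreducible component of $\Modc{d}{k}{X}$ has dimension at least the virtual one, no component can be contained in this locus for any $a \geq 1$. Hence every component either consists entirely of maps with image in $Z$ (and lies in $\Modc{d}{k}{Z}$) or meets $\Modc{d}{k}{\cU_Y}$ in a dense open; in the latter case, the irreducibility from (b) forces the component to coincide with $\overline{\Modc{d}{k}{\cU_Y}}$.

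The main obstacle is the case-by-case verification of global generation of $F_Y \oplus \cO_Y$ on the homogeneous space $Y$, needed to apply the lemma to $p_Y$; the numerical inputs ($\codim_X Z \geq 2$, $\dim X = \dim Y + \rk F_Y$, $c_1(X) = c_1(Y) + c_1(F_Y)$, and $c_1(X) = \codim_X Y + \codim_X Z$) are read off from Section~1.6, the normal-bundle sequence, and Fact~\ref{fact:c1-codim}.
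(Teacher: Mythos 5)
Your proof is correct and follows essentially the same route as the paper's: part (c) by homogeneity of $Z$, part (b) via the bundle structure of $\Xt_Z$ (equivalently $\cU_Y$) over $Y$ together with the irreducibility of $\Modc{d_Y}{k}{Y}$, and part (a) by lifting curves meeting both $\cU_Y$ and $Z$ to the blow-up $\Xt_Z$ and bounding the dimension of the resulting loci with the preceding projective-bundle lemma. Your dimension deficit $a(\codim_X Z-1)$ is what a literal application of that lemma gives (the paper records the slightly larger deficit $(d-d_Y)\codim_X Z$), but since $\codim_X Z\geq 2$ in every case of the classification, both versions yield the required strict inequality, and you rightly flag that this is the point where $\codim_X Z\geq 2$ is used.
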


\begin{proof}
  Recall from \cite{thomsen,kim-pandharipande,pe:courbes} that for a homogeneous space, the moduli space of stable maps is irreducible of the expected dimension, therefore $\Modc{d}{k}{Z}$ is irreducible of dimension $\dim Z + d c_1(Z) + k-3$ and $\Modc{d_Y}{k}{Y}$ is irreducible of dimension $\dim Y + d_Y c_1(Y) + k-3$ for any non negative integer $d_Y$. This in turn imply that $\Modc{d}{k}{\cU_Y}$ and $\overline{\Modc{d}{k}{\cU_Y}}$ are irreducible of expected dimension. Indeed, in \cite[Proposition 3]{pe:courbes} it is proved that given a globally generated vector bundle $\varphi : W \to W'$ such that $\Modc{d}{k}{W'}$ is irreducible of expected dimension, then so is $\Modc{d}{k}{W}$. Since $p_Y : \cU_Y \to Y$ is a globally generated vector bundle, the result follows.

	Let $f : C \to X$ be a stable map, where $C$ is irreducible and nodal. Up to normalising, we may assume that $C$ is smooth, hence $C = \P^1$. If $f$ factors through $Z$ then we are in $\Modc{d}{k}{Z}$. So we may assume that $f(C)$ meets $\cU_Y$ and need to prove that $f$ is in the closure of $\Modc{d}{k}{\cU_Y}$. For this it is enough to prove that stable maps with irreducible source having a non-trivial intersection with $\cU_Y$ and $Z$ form a family of dimension smaller than the expected dimension $\dim X + d c_1(X) + k-3$.

	For this we consider stable maps from $C$ to $\Xt_Z$, the blow--up of $X$ in $Z$. The map $f$ lifts to a map $\ft : C \to \Xt_Z$. We compute its degree with respect to the basis $(\pi_Z^*\cO_X(1),p_Y^*\cO_Y(1))$ of $\Pic(\Xt_Z)$. We have $d = \deg f^* \cO_X(1) = \deg \ft^*\pi_Z^*\cO_X(1)$. Define $d_Y = \deg \ft^*p_Y^*\cO_Y(1)$. Since $p_Y^*\cO_Y(1)$ is semiample, we have $d_Y \geq 0$. Since $\cO_{\Xt_Z}(E_Z) = \pi_Z^*\cO_X(1)\otimes p_Y^*\cO_Y(-1)$ and $\deg \ft^*\cO_{\Xt_Z}(E_Z)$ is the intersection multiplicity of the curve $f$ with $Z$, we have $d - d_Y \geq 0$.

	Now we consider the open subset $\Modc{(d,d_Y)}{k}{\Xt_Z}$ of stable maps with irreducible source, target $\Xt_Z$ and degree $(d,d_Y)$ with respect to the basis $(\pi_Z^*\cO_X(1),p_Y^*\cO_Y(1))$ of $\Pic(\Xt_Z)$. We only need to assume that both $d$ and $d_Y$ are non--negative (this is always true for non--empty moduli spaces) and $d - d_Y \geq 0$. We have a natural map $\Modc{(d,d_Y)}{k}{\Xt_Z} \to \Modc{d_Y}{k}{Y}$ given by the composition with $p_Y : \Xt_Z \to Y$. Note that $p_Y$ is the projective bundle $\P_Y(\cO_Y \oplus N_{Y/X})$ and that $N_{Y/X}$ is globally generated. We want to apply the previous lemma so we need to compute $a = - \a \cap c_1(\cO_{p_Y}(-1))$ with $\a = (d,d_Y)$. But a map $\ft : C \to \Xt_Z$ induces an injective map of vector bundles $\cO_C(-a) = \ft^*\cO_{p_Y}(-1) \to \cO_C \oplus \ft^*N_{Y/X}$, and the intersection of $\ft(C)$ with $E$ is the vanishing locus of the first factor. In particular $a = d - d_Y \geq 0$. Note that $c_1(N_{Y/X})=(c_1(X) - c_1(Y))H_Y$, where $H_Y$ is the ample generator of $\Pic(Y)$. Therefore, applying the previous lemma we get that $\Modc{(d,d_Y)}{k}{\Xt_Z}$ is a non-empty open subset of a projective bundle of rank $d_Y(c_1(X) - c_1(Y)) + \codim_X Y(d - d_Y+1)$ over $\Modc{d_Y}{k}{Y}$. Therefore $\Modc{(d,d_Y)}{k}{\Xt_Z}$ is irreducible of dimension 
	\[
		\dim \Modc{(d,d_Y)}{k}{\Xt_Z} = \textrm{exp-dim } \Mod{d}{k}{X} - (d - d_Y)(c_1(X) - \codim_X Y).
	\]
In particular, since $c_1(X) - \codim_X Y > 0$, this dimension is smaller that the expected dimension $\textrm{exp-dim }\Mod{d}{k}{X}$ of $\Mod{d}{k}{X}$, except for $d_Y = d$, thus for curves not meeting $Z$. This proves the result.
\end{proof}

\begin{cor}
	Let $X$ be as above. 
	
	If $\codim_X Z > d(c_1(Z) - c_1(X))$, then $\Modc{d}{k}{X}$ is irreducible of dimension $\dim X + d c_1(X) + k-3$.

	Otherwise, $\Modc{d}{k}{X}$ has two irreducible components $\overline{\Modc{d}{k}{\cU_Y}}$ and $\Modc{d}{k}{Z}$ of respective dimension $\dim X + d c_1(X) + k-3$ and $\dim Z + d c_1(Z) + k-3$.
\end{cor}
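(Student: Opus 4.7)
The plan is to combine the preceding proposition with a dimension count. The proposition gives
\[
	\Modc{d}{k}{X} = \overline{\Modc{d}{k}{\cU_Y}} \cup \Modc{d}{k}{Z},
\]
where $\overline{\Modc{d}{k}{\cU_Y}}$ is irreducible of the expected dimension $\dim X + dc_1(X) + k - 3$ and $\Modc{d}{k}{Z}$ is irreducible of dimension $\dim Z + dc_1(Z) + k - 3$. Using $\dim Z = \dim X - \codim_X Z$, the difference of these two dimensions equals $d(c_1(Z) - c_1(X)) - \codim_X Z$, which is precisely the quantity whose sign appears in the statement. So the corollary amounts to deciding when $\Modc{d}{k}{Z}$ is absorbed into $\overline{\Modc{d}{k}{\cU_Y}}$ and when it stands as a separate irreducible component.

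In the first regime, $\codim_X Z > d(c_1(Z) - c_1(X))$, the space $\Modc{d}{k}{Z}$ has dimension strictly less than the expected one. I would then invoke the standard deformation-theoretic lower bound for the Kontsevich moduli space --- every irreducible component of $\Mod{d}{k}{X}$ has dimension at least $\dim X + dc_1(X) + k - 3$, a bound which passes to the open subset $\Modc{d}{k}{X}$ --- to conclude that $\Modc{d}{k}{Z}$ cannot be an irreducible component on its own. Since it is irreducible, it must then be contained in the only other piece of the decomposition, so $\Modc{d}{k}{X} = \overline{\Modc{d}{k}{\cU_Y}}$, which is irreducible of the expected dimension.

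In the opposite regime, $\codim_X Z \leq d(c_1(Z) - c_1(X))$, the dimension of $\Modc{d}{k}{Z}$ is at least the expected one. The remaining point is to verify that neither of the two irreducible closed subsets is contained in the other: a generic element of $\overline{\Modc{d}{k}{\cU_Y}}$ lies in the dense open $\Modc{d}{k}{\cU_Y}$ and thus parametrises a map with image meeting $\cU_Y$, whereas every point of $\Modc{d}{k}{Z}$ parametrises a map with image contained in $Z \subset X \setminus \cU_Y$. The two sets are therefore distinct; being irreducible, closed, and of dimension at least the expected one, they are exactly the two irreducible components, with the dimensions claimed. The only mildly delicate step is the invocation of the dimension lower bound in the first regime, which is standard but should be cited from \cite{fp:moduli}; everything else is a direct consequence of the preceding proposition.
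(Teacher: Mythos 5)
Your proposal is correct and follows essentially the same route as the paper: combine the decomposition from the preceding proposition with the lower bound $\dim \geq \dim X + dc_1(X)+k-3$ for every component to absorb $\Modc{d}{k}{Z}$ when its dimension is too small, and in the other regime note that $\overline{\Modc{d}{k}{\cU_Y}}$ cannot lie in $\Modc{d}{k}{Z}$ since its generic point meets $\cU_Y$. The dimension comparison $\dim Z + dc_1(Z) - (\dim X + dc_1(X)) = d(c_1(Z)-c_1(X)) - \codim_X Z$ and the containment check are exactly what the paper's (terser) proof relies on.
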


\begin{proof}
	This follows from the fact that any irreducible component of $\Modc{d}{k}{X}$ is of dimension at least $\dim X + d c_1(X) + k-3$. In the second case, $\overline{\Modc{d}{k}{\cU_Y}}$ will never be in the closure of $\Modc{d}{k}{Z}$.
\end{proof}

\begin{cor} 
	\begin{enumerate}[(a)]
		\item In all the cases of Pasquier's list, $\Modc{1}{k}{X}$ is irreducible of dimension $\dim X + c_1(X) + k-3$.
		\item  If $X=X^2$ or $X=X^3(n,m)$, then $\Modc{d}{k}{X}$ is irreducible of dimension $\dim X + d c_1(X) + k-3$ for all $d$.
		
	\end{enumerate}
\end{cor}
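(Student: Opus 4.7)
The corollary is an immediate case check, obtained by applying the preceding corollary and verifying the numerical criterion $\codim_X Z > d(c_1(Z) - c_1(X))$ using the entries of Table~\ref{t:num-inv}. There is essentially no conceptual content beyond bookkeeping, so the plan is simply to organise the verification cleanly.

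For part (a), the plan is to fix $d = 1$ and run through the five cases. Reading off from Table~\ref{t:num-inv}: in case (1) one has $\codim_X Z = n$ and $c_1(Z) - c_1(X) = n - 2$, so the inequality reads $n > n-2$; in case (2) it reads $3 > -1$; in case (3) it reads $2(n+1-m) > 0$, which holds because $m \leq n$; in case (4) it reads $3 > 1$; in case (5) it reads $2 > 1$. In every case the inequality of the preceding corollary is satisfied, hence $\Modc{1}{k}{X}$ is irreducible of the expected dimension.

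For part (b), I would note that in cases (2) and (3) we have $c_1(Z) \leq c_1(X)$ (in fact $c_1(Z) = 6 < 7 = c_1(X)$ in case (2), and $c_1(Z) = c_1(X) = 2n+2-m$ in case (3)), so that $d(c_1(Z) - c_1(X)) \leq 0$ for every $d \geq 1$, while $\codim_X Z = 3$ in case (2) and $\codim_X Z = 2(n+1-m) > 0$ in case (3). Therefore the inequality $\codim_X Z > d(c_1(Z) - c_1(X))$ is satisfied for all $d \geq 1$, and the preceding corollary gives irreducibility of $\Modc{d}{k}{X}$ of the expected dimension.

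The only step that requires any care is making sure the inequality in case (3) is strict even when $m = n$, but there the hypothesis $m \in [2,n]$ with $n \geq 2$ combined with the fact that $\codim_X Z = 2(n+1-m) \geq 2 > 0$ takes care of it. There is no genuine obstacle; the essential work was already done in the preceding proposition and corollary, which established the two-component structure and its dimension estimate. The present statement is just the identification of the Pasquier cases in which the $Z$-component is absorbed into the closure of the other one.
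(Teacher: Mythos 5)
Your proposal is correct and is exactly the argument the paper intends: the corollary is stated without proof precisely because it follows from the preceding corollary by checking the inequality $\codim_X Z > d(c_1(Z)-c_1(X))$ against Table~\ref{t:num-inv}, and your case-by-case verification (including the strictness in case (3) via $m\le n$) is accurate.
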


\subsection{Irreducible components}

The former results enable us to give a description of the irreducible components of $\Mod{d}{k}{X}$. Depending on the situation it is possible to have many of them. Here we only consider the simplest cases where there are few irreducible components.

\begin{thm}\label{thm:irr}
	\begin{enumerate}[(a)]
	
		\item If $X=X^2$ or $X=X^3(n,m)$, then $\Mod{d}{k}{X}$ is irreducible of dimension $\dim X + d c_1(X) + k-3$ for any $d$.
		
		\item $\Mod{1}{k}{X}$ is irreducible of dimension $\dim X + c_1(X) + k-3$ for all $X$ of Pasquier's list. With the exception of $X^5$, we have the vanishing $\Hr^1(C,f^*T_X) = 0$ for any stable map $f : C \to X$.
		
		\item $\Mod{2}{k}{X}$ is irreducible of dimension $\dim X + 2 c_1(X) + k-3$ if $X$ is one of the varieties $X^1(3),X^2,X^3(n,m),X^4$.
		
		\item $\Mod{2}{k}{X}$ has two irreducible components of dimension $\dim X + 2 c_1(X) + k-3$ if $X=X^1(4)$ or $X=X^5$.
		
		\item $\Mod{3}{k}{X}$ has two irreducible components of dimension $\dim X + 3 c_1(X) + k-3$ if $X=X^1(3)$ or $X=X^4$.
	
	\end{enumerate}
\end{thm}

\begin{proof}
	Note that to prove the above results (except for the vanishing in \emph{(b)}) we only need to prove the equality $\Mod{d}{k}{X} = \overline{\Modc{d}{k}{X}}$.

	For $r \geq 2$, $(d_1,\cdots,d_r)$ with $d = \sum_id_i$ a partition of $d$ and $\coprod_{i = 1}^r A_i = [1,k]$ a partition, there is a natural map $\Mod{d_1}{|A_1|+1}{X} \times_X \cdots \times_X \Mod{d_r}{|A_r|+1}{X} \to \Mod{d}{k}{X}$. Furthermore the complement $\Mod{d}{k}{X} \setminus \Modc{d}{k}{X}$ is covered by the union of the images of these maps. 

	To prove the above results we only need to check that the boundary varieties 			\[
		\Mod{d_1}{|A_1|+1}{X} \times_X \cdots \times_X \Mod{d_r}{|A_r|+1}{X}
	\]
	have dimension smaller than the expected dimension $\dim X + d c_1(X) + k-3$. In all above cases, we have $\dim \Modc{d}{k}{X} = \dim X + d c_1(X) + k-3$. So an easy induction argument implies the result.

	To prove the vanishing result in \emph{(b)}, we only need to prove that for a line $L$ contained in $Z$, we have $\Hr^1(L,N_{Z/X}\vert_L) = 0$. Since $N_{Z/X}$ is the vector bundle $G \times^{P_Z}V(\oY - \oZ)$ and since we can choose the line to be the one dimensional Schubert variety in $Z$, it is enough to check that for any weight $\lambda$ of  $V(\oY - \oZ)$ we have $\scal{\a_Z^\vee,\lambda} \geq -1$, where $\alpha_Z$ is the simple root dual to $\oZ$. This is an easy check and is true in all cases except $X=X^5$.
\end{proof}

\begin{remark}
	\begin{enumerate}[1.]
		\item The non--obstruction result \emph{(b)} above was first proved for the varieties $X^3(n,m)$ in \cite{pech:q}. This in particular implies that the corresponding stack is smooth and thus $\Mod{1}{k}{X}$ has only quotient (and thus rational) singularities, see below for more details.
		\item It is worth noting the following result from \cite[Proposition 2]{lee:f}. Let $X$ be a smooth projective complex algebraic variety such that $\Mod{d}{k}{X}$ has the expected dimension. Then the virtual class of $\Mod{d}{k}{X}$ is equal to its fundamental class. In particular, this is true in all cases of the previous theorem.  
	\end{enumerate}
\end{remark}

\subsection{Singularities}

In this section we prove regularity results for $\Mod{d}{k}{X}$.

\begin{thm}   
	\begin{enumerate}[(i)]
	\item With the exception of $X^5$, for all $X$ of Pasquier's list the moduli space $\Mod{1}{k}{X}$ only has quotient singularities. \\
                 Let $X$ be either $X^2$ or $X^3(n,m)$.

		\item If $f:C\to X$ is a stable map of genus $0$ such that no component of $C$ is mapped entirely into $Z$, then $\Mod{d}{k}{X}$ has quotient singularity at $[f]$.
		\item The singular locus $\Sing (\Mod{d}{k}{X})$ has codimension at least 2. 
		\item The moduli space $\Mod{d}{k}{X}$ is normal and Cohen-Macaulay.  
	\end{enumerate}
\end{thm}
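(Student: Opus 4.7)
The plan is to reduce all four statements to controlling $H^1(C, f^*T_X)$: vanishing of this group at a stable map $[f\colon C \to X]$ means the underlying Kontsevich Deligne--Mumford stack is smooth at $[f]$, which forces the coarse moduli $\Mod{d}{k}{X}$ to have at worst a quotient singularity there, coming from the finite automorphism group of $f$. For parts (ii)--(iv), we additionally exploit the embedding $X = Z(s) \subset \Xf$ from Proposition~\ref{prop:section}, where $s$ is a regular section of a globally generated vector bundle $E$ on the rational homogeneous space $\Xf$.

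Part (i) follows directly from Theorem~\ref{thm:irr}(b), which yields $H^1(C, f^*T_X) = 0$ for degree-one stable maps in cases (1)--(4): the proof there handles lines (irreducible source) via a weight computation, and for a reducible nodal source of degree one the vanishing is inherited componentwise (contracted $\P^1$'s pull $T_X$ back to a trivial bundle) and glued by the Mayer--Vietoris sequence of the normalization, using that the node-evaluation maps are surjective thanks to global generation. Hence the stack $\Mod{1}{k}{X}$ is smooth everywhere and its coarse moduli has only quotient singularities.

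For parts (ii)--(iv) we work inside the smooth Deligne--Mumford stack $\Mod{d}{k}{\Xf}$ (rational homogeneous targets are convex). With $\pi$ the universal curve and $\ev$ the evaluation at the extra marked point, set $\Ee := \pi_*\,\ev^* E$. Global generation of $E$ gives $H^1(\P^1, f^*E) = 0$ fiberwise, so $\Ee$ is locally free of rank $\chi(f^*E) = \rk E + d\, c_1(E)$, and $s$ induces $\tilde{s} \in H^0(\Ee)$ with $Z(\tilde{s}) = \Mod{d}{k}{X}$. By Theorem~\ref{thm:irr}(a), in cases (2) and (3) this zero scheme has pure dimension $\dim \Mod{d}{k}{\Xf} - \rk \Ee$, so $\tilde{s}$ is a regular section and $\Mod{d}{k}{X}$ is a local complete intersection inside the smooth ambient stack, hence Cohen--Macaulay; combined with (iii) and Serre's criterion $(R_1)+(S_2)$, normality follows, proving (iv). For (iii), the long exact sequence associated to $0 \to f^*T_X \to f^*T_\Xf|_X \to f^*E|_X \to 0$ together with $H^1(C, f^*T_\Xf) = 0$ identifies the cokernel of $d\tilde{s}_{[f]}\colon H^0(C, f^*T_\Xf|_X) \to H^0(C, f^*E|_X)$ with $H^1(C, f^*T_X)$, so by (ii) the singular locus of $\Mod{d}{k}{X}$ is contained in the locus of stable maps whose source has some component mapped into $Z$; a dimension count on the boundary strata $\Mod{d_1}{k_1+1}{Z} \times_X \Mod{d-d_1}{k_2+1}{X}$ (and on the closed substack $\Mod{d}{k}{Z}$) shows this locus has codimension at least $\codim_X Z + d_1(c_1(X) - c_1(Z)) \geq 2$ in both cases.

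The main obstacle is (ii): proving $H^1(C, f^*T_X) = 0$ when no component of $C$ lies in $Z$. The plan is to exploit that $\cU_Y = X \setminus Z$ is a $G$-equivariant vector bundle $p_Y$ over the homogeneous $Y = G/P_Y$, giving $T_X|_{\cU_Y} \simeq p_Y^* T_Y \oplus F_Y|_{\cU_Y}$ with both summands pulled back from the homogeneous base $Y$ and hence with vanishing $H^1$ on any rational curve in $\cU_Y$. For $f$ whose components all avoid $Z$ generically, one lifts $f$ to the blow-up $\Xt_Z = \P_Y(F_Y \oplus \cO_Y) \to X$ (possible because $f(C)$ does not factor through $Z$), uses the relative tangent sequence of this projective bundle together with $H^1(C, \tilde f^* p_Y^* T_Y) = 0$ and $H^1(C, \tilde f^* p_Y^* F_Y) = 0$ to obtain $H^1(C, \tilde f^* T_{\Xt_Z}) = 0$, and then controls the discrepancy between $\tilde f^* T_{\Xt_Z}$ and $\pi_Z^* T_X$ along the exceptional divisor using the finitely many points of $f(C) \cap Z$; Mayer--Vietoris at the nodes finally glues the component-wise vanishings into the global one.
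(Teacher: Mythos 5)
Your parts (i), (iii) and (iv) follow the paper's architecture almost verbatim: (i) is exactly the citation of Theorem~\ref{thm:irr}(b); (iv) is the same construction of $\Mod{d}{k}{X}$ as the zero locus of the induced section of $\pi_*\ev^*V$ on $\Mod{d}{k}{\Xf}$ (the paper concludes Cohen--Macaulayness via rational singularities of $\Mod{d}{k}{\Xf}$ and the lemma of Fulton--Pandharipande rather than via the lci property of the stack, but these are interchangeable), and normality then follows from $(R_1)+(S_2)$ in both treatments. For (iii) your stratified dimension count is, if anything, more careful than the paper's, which only compares $\dim\Mod{d}{k}{Z}$ with $\dim\Mod{d}{k}{X}$; just note that the node of a reducible curve with a component in $Z$ lies in $Z$, so the relevant fiber product is over $Z$, not over $X$.

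The real divergence is in (ii), and that is also where your sketch is weakest. The paper disposes of (ii) in one line by quoting a general unobstructedness lemma for quasi-homogeneous varieties (\cite[Lemma 2.1]{bf:rc}): if a group acts on $X$ with dense orbit and $f:\P^1\to X$ meets that orbit, then $H^1(\P^1,f^*T_X)=0$; applied to $\Aut(X)$, whose dense orbit is $X\setminus Z=\cU_Y$, this covers every component not mapped into $Z$, and it yields \emph{global generation} of $f^*T_X$ on each such component, which is what makes the normalization/Mayer--Vietoris gluing at the nodes go through. Your alternative via the lift to $\Xt_Z=\P_Y(F_Y\oplus\cO_Y)$ is workable but has two gaps as written. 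First, $T_{\Xt_Z/Y}$ is not $p_Y^*F_Y$: the relative Euler sequence introduces a twist by $\cO_{p_Y}(1)$, whose degree on the lifted component is the intersection multiplicity $d-d_Y\geq 0$ with $E_Z$, so you must invoke that computation (it appears in the proof of the proposition on $\Modc{d}{k}{X}$) to conclude that $\tilde f^*T_{\Xt_Z}$ is globally generated on each component. Second, the "discrepancy" along $E_Z$ does work in your favor — the cokernel of $T_{\Xt_Z}\to\pi_Z^*T_X$ is supported on $E_Z$, hence pulls back to a torsion sheaf on each component not contained in $\tilde f^{-1}(E_Z)$, so $H^1(f^*T_X|_{C_i})$ is a quotient of $H^1(\tilde f^*T_{\Xt_Z}|_{C_i})$ — but you should say this rather than defer it, and you should record that the resulting $f^*T_X|_{C_i}$ is generically globally generated (hence splits with nonnegative degrees on $\P^1$), since $H^1$-vanishing alone on each component does not suffice to glue $H^1$-vanishing across the nodes of a tree.
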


\begin{proof}
	\begin{enumerate}[(i)]
		\item By Theorem \ref{thm:irr} part \emph{(b)}, the corresponding stack is smooth and the result follows.
		\item It follows from \cite[Lemma 2.1]{bf:rc} that if $X$ has a $G$--action with a dense orbit and $f:\P^1\to X$ meets the dense orbit, then $\Hr^1(\P^1,f^*T_X)=0$. The results follows as in (i).
		\item It follows from (ii) that the singular locus of $\Mod{d}{k}{X}$ lies inside the locus of curves which have a component contained in $Z$. But for any degree $d$ the moduli space $\Mod{d}{k}{Z}$ is irreducible of dimension $\dim(Z) + d c_1(Z) + k-3$, which is of codimension at least $2$ in $\Mod{d}{k}{X}$. This proves the result.

		\item In view of (iii), to prove normality it suffices to show that $\Mod{d}{k}{X}$ is Cohen-Macaulay. For this, recall that in the cases under consideration, $X$ is the zero locus of a vector bundle on a projective homogeneous space $\Xf \subset \P^N$ (see Proposition \ref{prop:section}). That is, there exists a vector bundle $E$ on $\Xf$ and a global section $s\in \Hr^0(\Xf,E)$ such that $X=V(s)$. Furthermore, note that $E$ is globally generated. Let $\Vf$ be the total space of the vector bundle $E$. Proceeding as in the proof of \cite[Proposition 3]{pe:courbes} one checks that $\Mod{d}{k}{\Vf}$ is a vector bundle over $\Mod{d}{k}{\Xf}$. Indeed, we have a morphism $\phi : \Mod{d}{k}{\Vf} \to \Mod{d}{k}{\Xf}$. Consider the following diagram 
	\[
		\xymatrix{\Cc \ar[d]^\pi \ar[r]^{\ev}& \Xf \\
			\Mod{d}{k}{\Xf}
		}
	\]
where $\Cc=\Mod{d}{k+1}{\Xf}$ is the universal curve, $\pi$ is the map that forgets the $(k+1)$-th marked point, and ${\ev}$ is the evaluation map. Since $E$ is globally generated it follows that $R^1\pi_*{\ev}^*E=0$ and that $\pi_*{\ev}^*E$ is locally free. The fiber of $\phi$ over $[f:C\to \Xf] \in \Mod{d}{k}{\Xf}$ is $\Hr^0(C,f^*E)$ and thus $\Mod{d}{k}{\Xf}$ is the bundle associated to $\pi_*{\ev}^*E$ over $\Mod{d}{k}{\Xf}$.

	Furthermore, let $f^*s$ denote the pullback section of $f^*E$. It is clear that $f:C \to \Xf$ factors through $X=V(s)$ if and only if $s\circ f=0$ or, equivalently, $f^*s\in \Hr^0(C,f^*E)$ and $f^*s=0$. The section $s$ induces a section $\cO_{\Mod{d}{k}{\Xf}}\to \pi_*\cO_C\to \pi_*(f^*E)$ defining $\Mod{d}{k}{X}$ as its vanishing locus: $\Mod{d}{k}{X} = \Mod{d}{k}{\Xf} \cap V(\pi_*f^*s)$. Since $\Mod{d}{k}{\Xf}$ has quotient singularities, it has rational singularities and hence it is Cohen-Macaulay. In view of \cite[Lemma page 108]{fp:cm}, $\Mod{d}{k}{X}$ is also Cohen-Macaulay. This fact together with (iii) finally implies that $\Mod{d}{k}{X}$ is normal. \qedhere
	\end{enumerate}
\end{proof} 

\begin{question}
	\label{conj}
	Does $\Mod{d}{k}{X}$ have rational singularities if $X=X^2$ or $X=X^3(n,m)$?
\end{question}

%%%%%%%%%%%%%%%%%%%%%%%%%%%%%%%%%%%%%%%%%%%%%%%%%%%%%%%%%%%%%%%%%%%%%%%%%%

\section{Quantum cohomology}

\subsection{Reminders}

For $X$ a smooth projective complex variety with Picard rank one, we have $k$ evaluation maps $\ev_i : \Mod{d}{k}{X} \to X$ on $\Mod{d}{k}{X}$, sending a stable map to its value at the $i$-th marked point for $i \in [1,k]$. The Gromov-Witten invariants are defined as follows:
\[
	\scal{\a_1,\cdots,\a_k}_{X,d,k} := \int_{[\Mod{d}{k}{X}]^{\rm vir}} \prod_{i = 1}^k \ev_i^*\a_i
\]
for $\sum_i \deg\a_i = \dim X + d c_1(X) + k -3$, and it is equal to zero otherwise. Here $\a_i \in \Hr^*(X,\R)$ are cohomology classes and $[\Mod{d}{k}{X}]^{\rm vir}$ is a homology class, the virtual fundamental class (see~\cite{BF} for a definition). An important feature of that class is the equality $[\Mod{d}{k}{X}]^{\rm vir} = [\Mod{d}{k}{X}]$ if $\Mod{d}{k}{X}$ has expected dimension $\dim X + d c_1(X) + k -3$ (see~\cite[Proposition 2]{lee:f}). The small quantum cohomology is defined as follows. Let $(e_i)_i$ be a basis of $\Hr^*(X,\R)$ and let $(e^\vee_i)_i$ be the dual basis for the Poincar\'e pairing. Then 
\[
	e_i * e_j = \sum_{d \geq 0} \sum_\ell \scal{e_i,e_j,e^\vee_\ell}_{X,d,3} e_\ell q^d.
\]
It is a non--trivial result that this product defines an associative ring (see for example \cite{fp:moduli}). When the moduli space has expected dimension, the Gromov--Witten invariants are enumerative. We make this precise in the next subsection.

\begin{remark}
	The varieties $Y$ and $Z$ are homogeneous for the group $G$. The quantum cohomology algebra of homogeneous spaces has been extensively studied and is well understood in many cases. Of special interest for our explicit computations in Section \ref{example} are minuscule, cominuscule, adjoint and coadjoint varieties. The quantum cohomology of these varieties were studied in \cite{CMP1,CMP2,CMP3,CMP4,adjoint}. Many of these varieties are isotropic Grassmannians, and their quantum cohomology is well understood thanks to the work of Buch, Kresch and Tamvakis \cite{buch:grass,KT1,KT2,BKT1,BKT2}. We refer to these papers for explicit computations.
\end{remark}

\subsection{Enumerability of Gromov-Witten invariants.}
               
In this section we use a generalization of the Kleiman--Bertini Transversality Theorem to quasi--homogeneous spaces due to Graber \cite[Lemma 2.5]{gra:e}.

\begin{lemma}\label{kleiman.lem}
	Let $X$ be a variety endowed with an action of a connected algebraic group $G$ with only finitely many orbits and $Z$ be an irreducible scheme with a morphism $f:Z\to X$. Let $Y$ be a subvariety of $X$ that intersects the orbit stratification properly (i.e., for any $G$-orbit $\cO$ of $X$, we have $\codim_\cO (Y\cap\cO)=\codim_X Y$). Then there exists a dense open subset $U$ of $G$ such that for every $g\in U$, $f^{-1}(gY)$ is either empty or has pure dimension $\dim Y + \dim Z-\dim X$. 
Moreover, if $X$, $Y$ and $Z$ are smooth and we denote by $Y_{\rm reg}$ the subset of $Y$ along which the intersection with the stratification is transverse, then the (possibly empty) open subset $f^{-1}(gY_{\rm reg})$ is smooth. 
\end{lemma}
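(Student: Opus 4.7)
The plan is to stratify $X$ by its finitely many $G$-orbits and apply the classical (transitive) Kleiman--Bertini transversality theorem on each stratum, then take a finite intersection of the resulting dense open subsets of $G$. Concretely, consider the action morphism $\mu \colon G \times Z \to X$, $(g, z) \mapsto g^{-1} f(z)$; its preimage $\mu^{-1}(Y)$ projects to $G$ with fiber $f^{-1}(gY)$ over $g$, so it suffices to control the generic fiber of this projection. Write the finite stratification $X = \bigsqcup_i \mathcal{O}_i$. For each $i$, the restriction $\mu_i \colon G \times f^{-1}(\mathcal{O}_i) \to \mathcal{O}_i$ is a smooth surjective fibration, because $G$ acts transitively on $\mathcal{O}_i$. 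The hypothesis $\codim_{\mathcal{O}_i}(Y \cap \mathcal{O}_i) = \codim_X Y$ gives $\dim(Y \cap \mathcal{O}_i) = \dim Y + \dim \mathcal{O}_i - \dim X$, and pulling back by $\mu_i$ yields
\[
  \dim \mu_i^{-1}(Y \cap \mathcal{O}_i) \;=\; \dim G + \dim f^{-1}(\mathcal{O}_i) + \dim Y - \dim X .
\]

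Since $Z$ is irreducible, there is a unique orbit $\mathcal{O}_0$ with $f^{-1}(\mathcal{O}_0)$ open dense in $Z$; then $\dim f^{-1}(\mathcal{O}_0) = \dim Z$ while $\dim f^{-1}(\mathcal{O}_i) < \dim Z$ for $i \neq 0$. Generic flatness of the projection $\mu^{-1}(Y\cap\mathcal{O}_i) \to G$ applied on each stratum produces a dense open $U_i \subset G$ such that for $g \in U_i$ the intersection $f^{-1}(gY) \cap f^{-1}(\mathcal{O}_i)$ is either empty or pure of dimension $\dim f^{-1}(\mathcal{O}_i) + \dim Y - \dim X$. Setting $U = \bigcap_i U_i$ (still dense open by finiteness of the stratification), the main orbit contributes a piece of the expected pure dimension $\dim Y + \dim Z - \dim X$, while the pieces coming from lower strata have strictly smaller dimension; since $gY$ is itself pure of dimension $\dim Y$ with $gY \cap \mathcal{O}_0$ dense in each irreducible component of $gY$, these smaller pieces are absorbed into the closure of the main-stratum piece inside $f^{-1}(gY)$. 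This yields the purity statement (and emptiness when $gY$ misses $f(Z)$).

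For the smoothness claim I would rerun the argument with $Y_{\mathrm{reg}}$ in place of $Y$: by definition the intersections $Y_{\mathrm{reg}} \cap \mathcal{O}_i$ are transverse, hence smooth in $\mathcal{O}_i$, so each $\mu_i^{-1}(Y_{\mathrm{reg}} \cap \mathcal{O}_i)$ is smooth (the preimage of a smooth subvariety under a smooth morphism, with $Z$ smooth by hypothesis). Generic smoothness of the projection to $G$ then yields a further dense open $U' \subset U$ over which every fiber $f^{-1}(gY_{\mathrm{reg}})$ is smooth. The main technical obstacle is the global purity bookkeeping—ruling out the possibility that some stratum with $i \neq 0$ produces an isolated component of $f^{-1}(gY)$ of dimension strictly less than $\dim Y + \dim Z - \dim X$—which is handled by the closure relation $\overline{gY \cap \mathcal{O}_0} = gY$ together with finiteness of the stratification.
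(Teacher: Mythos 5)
A preliminary remark: the paper offers no proof of this lemma at all --- it is quoted verbatim from Graber, so there is no in-paper argument to compare yours with. Judged on its own terms, your outline (stratify $X$ by the finitely many orbits, run the transitive Kleiman argument on each stratum, intersect the finitely many dense opens of $G$) is the standard approach, and it does correctly deliver the dimension \emph{upper} bound $\dim f^{-1}(gY)\le \dim Y+\dim Z-\dim X$, as well as the smoothness statement: your transversality argument for $Y_{\rm reg}$ together with generic smoothness in characteristic zero is fine, once ``smooth surjective fibration'' is replaced by the correct assertion that the map $G\times Z\to X$, $(g,z)\mapsto g^{-1}f(z)$, is transverse to $Y_{\rm reg}$ because its differential surjects onto the tangent space of each orbit (note that $f^{-1}(\mathcal{O}_i)$ itself need not be smooth, nor even pure-dimensional, for $i\ne 0$, so your per-stratum claim ``pure of dimension $\dim f^{-1}(\mathcal{O}_i)+\dim Y-\dim X$'' should read ``of dimension at most'').

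The genuine gap is in the purity claim, i.e.\ the \emph{lower} bound on the dimension of every irreducible component of $f^{-1}(gY)$. Your ``absorption'' step is a non sequitur: from $\overline{gY\cap\mathcal{O}_0}=gY$ one cannot conclude $\overline{f^{-1}(gY\cap\mathcal{O}_0)}=f^{-1}(gY)$, because taking preimages does not commute with taking closures. A point $z$ with $f(z)\in gY\cap\mathcal{O}_i$, $i\ne 0$, sitting in a positive-dimensional fibre of $f$ over a small stratum need not be a limit of points of $f^{-1}(gY\cap\mathcal{O}_0)$; nothing in your argument excludes an irreducible component of $f^{-1}(gY)$ entirely contained in $f^{-1}(X\setminus\mathcal{O}_0)$, and by your own stratified dimension count such a component would have dimension \emph{strictly less} than $\dim Y+\dim Z-\dim X$ --- which contradicts purity rather than proving it. (A further wrinkle: your $\mathcal{O}_0$ is the orbit whose $f$-preimage is dense in $Z$, which need not be the dense orbit of $X$, so even the density of $gY\cap\mathcal{O}_0$ in $gY$ is not automatic.) The correct bookkeeping goes through the incidence variety $W=\{(g,z): f(z)\in gY\}$ and its two projections: the properness hypothesis is exactly what makes every nonempty fibre of $W\to Z$ pure of dimension $\dim G-\codim_X Y$ \emph{independently of the stratum}, every irreducible component of $W$ is the closure of a component of some $W\cap(G\times f^{-1}(\mathcal{O}_i))$, and one must then analyse which of these components dominate $G$ and why the ones of deficient dimension cannot contribute isolated components to a generic fibre of $W\to G$. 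This analysis is where the entire content of the purity statement lies, and it is precisely the step your write-up waves away.
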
                

We apply the previous lemma to the following situation. 

\begin{thm}\label{enum.thm}
	Let $X$ be a $G$--variety with finitely many $G$--orbits and dense $G$-orbit $\cU$. Let $d \in H_2(X,\Z)$ be such that $\Mod{d}{k}{X}$ is irreducible and equals the closure of the locus of curves contained in $\cU$. Take $Y_1,\ldots, Y_k$ to be subvarieties of $X$ that intersect the orbit stratification properly and that represent cohomology classes $\Gamma_1,\ldots, \Gamma_k$ satisfying 
	\[
		\sum_i \codim_X Y_i=\dim \Mod{d}{k}{X}.
	\]
	Then there is a dense open subset $U \subset G^k$ such that for all $(g_1,\ldots, g_k)\in U$, the Gromov-Witten invariant $\scal{\Gamma_1,\ldots,\Gamma_k}_{X,d,k}$ is equal to the number of stable curves of degree $d$ in $X$ incident to the translates $g_1Y_1,\ldots, g_kY_k$.
\end{thm}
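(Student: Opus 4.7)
The plan is to apply Graber's Kleiman–Bertini type result (Lemma~\ref{kleiman.lem}) to the combined evaluation morphism
\[
   \ev = (\ev_1,\ldots,\ev_k) : \Mod{d}{k}{X} \longrightarrow X^k,
\]
equivariant for the componentwise action of $G^k$ on $X^k$. Since $G$ has finitely many orbits on $X$, the group $G^k$ has finitely many orbits on $X^k$, namely the products of $G$-orbits. The product $Y := Y_1 \times \cdots \times Y_k \subset X^k$ then intersects this stratification properly because each factor does by hypothesis, and the source $\Mod{d}{k}{X}$ is irreducible by assumption.

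The first step is to apply the dimension part of Lemma~\ref{kleiman.lem}: there exists a dense open $U_1 \subset G^k$ such that for $(g_1,\ldots,g_k) \in U_1$, the scheme $\ev^{-1}(g_1Y_1 \times \cdots \times g_kY_k)$ is either empty or of pure dimension
\[
   \dim \Mod{d}{k}{X} + \dim Y - k\dim X \;=\; \dim \Mod{d}{k}{X} - \sum_i \codim_X Y_i \;=\; 0,
\]
so in particular it is a finite set of points, each of which corresponds to a stable map simultaneously incident to the $k$ translated subvarieties.

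Next I would upgrade this to a transverse intersection so that each intersection point is counted with multiplicity $1$. The moduli space $\Mod{d}{k}{X}$ need not be smooth, but $\Sing \Mod{d}{k}{X}$ is a proper closed subset of strictly smaller dimension, and likewise each singular locus $\Sing(Y_i)$ is strictly smaller than $Y_i$. Applying Lemma~\ref{kleiman.lem} both to the restriction of $\ev$ to each irreducible component of $\Sing \Mod{d}{k}{X}$ and to the products involving some $\Sing(Y_i)$ yields, by the same dimension count as above, that for $(g_1,\ldots,g_k)$ in a further dense open $U \subset U_1$ the finite preimage $\ev^{-1}(g_1Y_1\times\cdots\times g_kY_k)$ lies entirely in the smooth locus of $\Mod{d}{k}{X}$ and meets only the smooth loci $Y_{i,\mathrm{reg}}$. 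The transversality statement of Lemma~\ref{kleiman.lem}, applied on the smooth open subset of $\Mod{d}{k}{X}$, then shows that this finite intersection is smooth, hence reduced.

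To conclude, observe that the hypotheses imply $\Mod{d}{k}{X}$ has the expected dimension, so by \cite[Proposition~2]{lee:f} the virtual fundamental class coincides with the fundamental class. The Gromov-Witten invariant
\[
   \scal{\Gamma_1,\ldots,\Gamma_k}_{X,d,k} \;=\; \int_{[\Mod{d}{k}{X}]} \prod_i \ev_i^*\Gamma_i
\]
is therefore computed by the degree of the reduced zero-cycle $\ev^{-1}(g_1Y_1 \times \cdots \times g_kY_k)$, which is exactly the number of stable curves of degree $d$ in $X$ incident to $g_1Y_1,\ldots,g_kY_k$. The delicate point in this argument is the transversality step, since $\Mod{d}{k}{X}$ is not smooth in general; however, the dimension count on the singular locus allows us to reduce to the smooth part, where Graber's lemma applies verbatim.
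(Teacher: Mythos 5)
Your overall architecture (Graber's lemma for the dimension count, a separate argument for reducedness, then the identification of the virtual class with the fundamental class via \cite[Proposition 2]{lee:f}) matches the paper's, but the transversality step has a genuine gap, and it is exactly the point where you never use the hypothesis that $\Mod{d}{k}{X}$ equals the closure of the locus of curves contained in $\cU$. The smoothness clause of Lemma~\ref{kleiman.lem} asserts smoothness only of $f^{-1}(gY_{\rm reg})$, where $Y_{\rm reg}$ is the locus along which $Y$ meets the orbit stratification \emph{transversally} — this is not the smooth locus of $Y$, and your argument conflates the two. Moreover, to show by a dimension count that the finite intersection avoids $\Sing(Y_i)$ (or $Y_i \setminus Y_{i,\rm reg}$), you would need these smaller subvarieties to themselves intersect the orbit stratification properly, which is not part of the hypotheses; Graber's dimension bound simply does not apply to them. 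Finally, over the smaller orbits, transversality of a general $G$-translate of $Y_i$ with the stratification can fail along all of $Y_i \cap \cO$, so there is no a priori reason the finitely many intersection points land in $Y_{i,\rm reg}$.

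The paper resolves all of this by first using the unused hypothesis: since the locus of curves contained in $\cU$ is dense in the irreducible space $M = \Mod{d}{k}{X}$, the complement of any preassigned dense open $M^* \subset M$ has strictly smaller dimension, and one application of the dimension part of Lemma~\ref{kleiman.lem} to $\ev|_{M \setminus M^*}$ forces the intersection to be supported in $M^*$; taking $M^* = \ev^{-1}(\cU^k)$ (intersected with the smooth locus of $M$) pushes all intersection points over the dense orbit. There one applies the \emph{classical} Kleiman transversality theorem for the homogeneous space $\cU$, which requires no properness hypothesis on $\Sing(Y_i)$ and where transversality of general translates is automatic, yielding that the intersection is finite and reduced. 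To repair your argument you should insert this localization to $\cU^k$ before invoking any transversality statement; as written, the claim that the intersection is reduced is not established.
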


\begin{proof}
	First, we shall prove by repeatedly applying Lemma \ref{kleiman.lem}, 
that for generic $(g_1,\ldots, g_k)$ in $G^k$, the scheme-theoretic intersection 
	\[
		\ev^{-1}(g_1Y_1\times \ldots \times g_kY_k)=\bigcap_{i=1}^k \ev_i^{-1}(g_iY_i)
	\] 
consists of a finite number of reduced points supported on any 
preassigned nonempty open subset $M^*$ of $M = \Mod{d}{k}{X}$ and, in 
particular, in the smooth locus of $M$. 
Here $\ev=\ev_1\times \ldots\times \ev_k$.  
Bearing this in mind, consider the following diagram 
\[
	\xymatrix{ 
                                   & M\setminus M^*\ar[d]^{\ev}\\
		\underline{Y}=Y_1\times \ldots \times Y_k \ar@{^(->}[r] & X^{k}. 
				}
\]  
Since $M$ is irreducible, $M\setminus M^*$ has codimension at least one, so Lemma \ref{kleiman.lem} implies that there exists an open subset $V_1$ of $G^k$ such that the inverse image of any of the translates $g\cdot\underline{Y}$ with $g\in V_1$,  
is empty. Thus in general the intersection is completely supported in $M^*$. 

Next let $\cU$ be the dense $G$--orbit of $X$. By the above, we can 
restrict to the locus where ${\ev}$ is in $\cU^k$. Now let $M^{sm}$ be the smooth locus of $\Mod{d}{k}{X}$ and $\underline{Y}^s:=\Sing \, \underline{Y}$ and consider the diagram 
\[
	\xymatrix{ 
                                   & M^{sm}\cap {\ev}^{-1}(\cU^k)\ar[d]^{\ev}\\
			\underline{Y}^s\cap \cU^k\ar@{^(->}[r] & \cU^{k}. 
			}
\]  
Applying Kleiman's tranversality theorem to the homogeneous space $\cU$, we get an open subset $V_2\subset G^k$ such that ${\ev}^{-1}(g\cdot\underline{Y}^s\cap \cU^k)=\emptyset$, for any $g\in V_2$. Now let $\underline{Y}^*=(\underline{Y}\setminus \underline{Y}^s)\cap \cU^k$. By yet another application of Kleiman's tranversality theorem to the diagram 
\[
	\xymatrix{ & M^{sm}\cap {\ev}^{-1}(\cU^k)\ar[d]^{\ev}\\
				\underline{Y}^*\ar@{^(->}[r] & \cU^{k}. 
			}
\] 
we get an open subset $V_3\subset G^k$ such that for all $g\in V_3$, the inverse image in $M^{sm}\cap {\ev}^{-1}(\cU^k)$ of the translate $g\cdot \underline{Y}^*$ is either empty or smooth of the expected dimension (since all varieties involved 
are smooth). Hence it consists of a finite number of reduced points (possibly zero). 
We conclude that for any $g\in V_1\cap V_2\cap V_3$, the inverse image of 
$g\cdot \underline{Y}$ is of the expected dimension, reduced, and supported in the given open set. 
\end{proof}
 
Next we specify the previous result to cases (2) and (3) of Pasquier's list.  
 
\begin{cor}\label{cor:enum} 
	Let $X$ be a horospherical variety of Picard number one. If $X$ belongs to the classes (2) or (3) of Pasquier's list, then Theorem \ref{enum.thm} holds, i.e., the 
Gromov-Witten invariants of $X$ are enumerative.  
\end{cor}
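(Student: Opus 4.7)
The plan is to verify the hypotheses of Theorem~\ref{enum.thm} for $X$ in cases (2) and (3). The group $G$ acts on $X$ with exactly three orbits $\cU,\, Y,\, Z$, so $X$ is a $G$-variety with finitely many orbits and dense orbit $\cU$. Theorem~\ref{thm:irr}\emph{(a)} gives that $\Mod{d}{k}{X}$ is irreducible of the expected dimension for every $d$.

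The remaining hypothesis is that $\Mod{d}{k}{X}$ coincides with the closure of the locus of stable maps whose image lies in the dense orbit $\cU = \cU_Y \cap \cU_Z$. The irreducibility corollary preceding Theorem~\ref{thm:irr} already yields $\Mod{d}{k}{X} = \overline{\Modc{d}{k}{\cU_Y}}$ in these cases (since $c_1(Z)-c_1(X)\leq 0$), so generic maps avoid $Z$. It remains to show that maps meeting $Y$ also form a strictly smaller-dimensional subset. For this I would rerun the dimension estimate from the proof of the proposition describing $\Modc{d}{k}{X}$, but with the roles of $Y$ and $Z$ swapped, using the other blow-up $\pi_Y : \Xt_Y \to X$ together with its projective bundle presentation $p_Z : \Xt_Y = \P_Z(F_Z \oplus \cO_Z) \to Z$ supplied in Subsection~\ref{ss:blowups}. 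The projective bundle lemma then realises $\Modc{(d,d_Z)}{k}{\Xt_Y}$ as an open subset of a projective bundle over $\Modc{d_Z}{k}{Z}$, of dimension
\[
	\dim X + d\, c_1(X) + k - 3 \;-\; (d - d_Z)\bigl(c_1(X) - \codim_X Z\bigr),
\]
and Fact~\ref{fact:c1-codim} gives $c_1(X) - \codim_X Z = \codim_X Y > 0$. Hence the stratum $d_Z < d$ of maps meeting $Y$ is strictly lower-dimensional, as required.

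With this in place, Theorem~\ref{enum.thm} applies. For any tuple of cohomology classes $\Gamma_1,\dots,\Gamma_k$ with $\sum_i \codim \Gamma_i = \dim \Mod{d}{k}{X}$ I would represent each $\Gamma_i$ using the Schubert-like cycles $Y'_u$, $i_{Y*}[Y_u]$, $Z'_v$, $j_{Z*}[Z_v]$ of Fact~\ref{f:bases}. Generic $G$-translates of each such representative meet the orbit stratification $\{\cU,Y,Z\}$ properly: inside the closed orbits this is classical Kleiman transversality on a homogeneous space, while on $\cU$ it is inherited from the projective-bundle description of $\cU_Y$ and $\cU_Z$ over $Y$ and $Z$ in Subsection~\ref{ss:blowups}, by pulling back Schubert varieties in the base.

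The only genuinely new step is the symmetric dimension estimate in the second paragraph; it is not a new idea but the mirror of one already executed in the excerpt, and one has to take care that $\pi_Y$ and $p_Z$ play, in cases (2) and (3), exactly the roles that $\pi_Z$ and $p_Y$ played in the proof of the irreducibility proposition. All other ingredients are direct citations: Theorem~\ref{thm:irr}\emph{(a)} for irreducibility, Fact~\ref{fact:c1-codim} for the positivity $c_1(X) - \codim_X Z > 0$, and Fact~\ref{f:bases} for the availability of Schubert-like representatives of cohomology classes.
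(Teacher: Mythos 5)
Your identification of the missing hypothesis is correct: Theorem~\ref{enum.thm} requires density of the maps landing in $\cU$, not merely in $\cU_Y$, and this is not literally what the earlier irreducibility statements give. However, the symmetric dimension estimate you propose to close this gap does not work. The projective bundle lemma requires the underlying vector bundle to be globally generated. This holds for $F_Y=N_{Y/X}$ (for $\IG(m,2n+1)$ one has $N_{Y/X}\cong\Uu_m^\vee$), but it fails for $F_Z=N_{Z/X}$: for $X=\IG(2,5)$, for example, $Z\cong\P^3=\P(W)$ and $N_{Z/X}\cong A^{\perp}/A$ with $A\subset W$ the tautological line, and the sequences $0\to A^{\perp}\to W\otimes\cO\to A^\vee\to 0$ and $0\to A\to A^{\perp}\to A^{\perp}/A\to 0$ give $\Hr^0(Z,N_{Z/X})=0$, so $N_{Z/X}$ has no nonzero sections at all. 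This asymmetry is intrinsic — $Z$ is the rigid closed $\Aut(X)$-orbit, while the $\Aut(X)$-translates of $Y$ sweep out $X\setminus Z$ — and it is precisely why the earlier corollary has a two-component alternative for curves in $Z$ but not for curves in $Y$. So $\pi_Y$ and $p_Z$ cannot ``play exactly the roles'' of $\pi_Z$ and $p_Y$: without global generation the fibre of $\Modc{(d,d_Z)}{k}{\Xt_Y}\to\Modc{d_Z}{k}{Z}$ over $f$ is an open subset of $\P\Hr^0(C,f^*(F_Z\oplus\cO_Z)\otimes\cO_C(d-d_Z))$, whose dimension is not bounded by degree and rank when $f^*F_Z$ has summands of degree $\le -2$, and your displayed dimension formula is unjustified.

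The good news is that the density you need follows from a much softer argument requiring no dimension count. The locus of stable maps whose image avoids $Y$ is open in $\Mod{d}{k}{X}$ (it is the complement of the image of $\ev^{-1}(Y)$ under the proper projection from the universal curve), and it is nonempty because $Z$ is a positive-dimensional rational homogeneous space containing rational curves of every degree and $\cO_X(1)\vert_Z=\cO_Z(1)$. Symmetrically, the locus of maps avoiding $Z$ is open and nonempty (take curves in $\cU_Y$). Since $\Mod{d}{k}{X}$ is irreducible by Theorem~\ref{thm:irr}(a), both loci are dense, and their intersection is exactly the locus of maps with image in $\cU=\cU_Y\cap\cU_Z$; this verifies the hypothesis of Theorem~\ref{enum.thm}. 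I would also flag, as a secondary point, that your last step (that generic $G$-translates of the cycles $Y'_u$, $Z'_v$, $i_{Y*}Y_u$, $j_{Z*}Z_v$ meet the stratification $\{\cU,Y,Z\}$ properly in the exact sense of Lemma~\ref{kleiman.lem}) deserves a codimension check on $Y'_u\cap Z=Z_{\uh}$ and $Z'_v\cap Y=Y_{\vh}$ rather than an appeal to the bundle structure alone.
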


\begin{remark}
	The previous corollary gives a positive answer to a conjecture from \cite[Conjecture 5.1]{pech:t} regarding the enumerativity of the degree $d$ Gromov-Witten invariants of the odd symplectic Grassmannians. The case when $d=1$ was already proven by the second author in \cite{pech:t}.  
\end{remark}

\subsection{Quantum hyperplane multiplication}\label{subsec-chev}

In this section we consider the multiplication with the hyperplane generator $h \in \Hr^2(X,\Z)$. Our aim is in particular to deal with the varieties 
$X^1(3),X^2,X^3(3,3)$ and $X^5$ of Pasquier's list, but we keep the computations as general as possible. Let us start with an easy dimension count.

\begin{lemma}\label{lem:dmax}
	Let $\sigma \in \Hr^*(X,\Z)$. The highest power of $q$ appearing in a product $h * \sigma$ in $\QH(X)$ satisfies $d \leq \frac{\dim X +1}{c_1(X)}$. Furthermore, for $X^1(3)$ and $X^5$, there is a non-trivial term of degree $2$ in the product $h * \sigma$ only if $\sigma = [\mathrm{pt}]$, where $[\mathrm{pt}]$ is the class of a point. Moreover, this term is a multiple of $q^2$.
\end{lemma}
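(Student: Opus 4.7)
The entire lemma reduces to a dimension count on the 3-point Gromov--Witten invariants that appear in the coefficients of $h * \sigma$. Recall that
\[
	h * \sigma = \sum_{d \geq 0} \sum_\ell \langle h, \sigma, e^\vee_\ell \rangle_{X,d,3} \, e_\ell \, q^d,
\]
and that a 3-point invariant $\langle h, \sigma, e^\vee_\ell \rangle_{X,d,3}$ vanishes unless the cohomological degrees add up to $2(\dim X + d c_1(X))$. Since $|h|=2$, this amounts to
\[
	|\sigma| + |e^\vee_\ell| = 2\dim X + 2 d c_1(X) - 2.
\]
I would just exploit the constraint $0 \leq |\sigma|, |e^\vee_\ell| \leq 2\dim X$ throughout.

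For the first assertion, the upper bound $|\sigma| + |e^\vee_\ell| \leq 4 \dim X$ immediately gives
\[
	2 d c_1(X) - 2 \leq 2 \dim X, \qquad \text{i.e.,} \qquad d \leq \frac{\dim X + 1}{c_1(X)}.
\]
Any $q^d$ with $d$ larger than this has every coefficient $\langle h, \sigma, e^\vee_\ell \rangle_{X,d,3}$ forced to be zero, which proves the first part.

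For the second assertion, I would plug in the numerical invariants of Tables~\ref{t:dimX} and~\ref{t:num-inv}: in case~(1) with $n=3$ one has $\dim X = 9$ and $c_1(X) = 5$, so $(\dim X + 1)/c_1(X) = 2$, and in case~(5) one has $\dim X = 7$ and $c_1(X) = 4$, again yielding $2$. Thus $d = 2$ is exactly the extremal degree, and a nonzero invariant $\langle h, \sigma, e^\vee_\ell \rangle_{X,2,3}$ forces equality in both dimension bounds. Hence $|\sigma| = |e^\vee_\ell| = 2 \dim X$, so $\sigma$ lies in the top cohomology $H^{2\dim X}(X,\Z) = \Z \cdot [\mathrm{pt}]$, and likewise $e^\vee_\ell \in \Z \cdot [\mathrm{pt}]$, which means $e_\ell \in \Z \cdot 1_X$. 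Therefore the only way to obtain a nonzero $q^2$ contribution is to have $\sigma = [\mathrm{pt}]$, and in that case the contribution is $\langle h, [\mathrm{pt}], [\mathrm{pt}] \rangle_{X,2,3} \, 1_X \, q^2$, which is a pure scalar multiple of $q^2$ as claimed.

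Since the argument is just a dimension tally plus a look-up in the tables, there is no real obstacle; the only potential subtlety is to make sure one uses the actual top degree of $H^*(X,\Z)$ (which is one-dimensional because $X$ is smooth, projective, and connected), so that the extremal case really does force $\sigma$ to be proportional to the class of a point.
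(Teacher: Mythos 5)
Your proof is correct and follows essentially the same dimension-count argument as the paper, which simply notes that the degree of $h*\sigma$ is at most $\dim X+1$ while a $q^d$ term contributes degree at least $dc_1(X)$; your version spells out the extremal case $d=2$ (equality forcing $\sigma=[\mathrm{pt}]$ and $e_\ell=1$) that the paper leaves implicit with "the result follows."
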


\begin{proof}
	The degree of $h * \sigma$ is at most $\dim X +1$. If there is a $q^d$ appearing in the product, then its degree is at least $d c_1(X)$ leading to the inequality $d c_1(X) \leq \dim X +1$. The result follows.
\end{proof}

\begin{remark}
	We shall prove in Section \ref{odd} that for $X^3(n,m)$, the product $h * \sigma$ has terms of degree at most $1$ in $q$.
\end{remark}

\subsection{Degree one invariants}

In this subsection, we compute the degree one invariants involved in the quantum multiplication with $h$. We therefore compute Gromov-Witten invariants of the form $\scal{h,\sigma,\tau}_{X,1,3}$ with $\sigma,\tau \in \Hr^*(X,\Z)$ such that $\deg\sigma + \deg\tau +1 = \dim \Mod{1}{3}{X} = \dim X + c_1(X)$.

\begin{lemma}\label{lem:s's}
	Let $u_1,u_2 \in W^{P_Y}$. We have $\scal{h,\s'_{u_1},\s_{u_2}}_{X,1,3} = 0$.
\end{lemma}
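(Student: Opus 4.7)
The plan is to compute the invariant enumeratively as a count of lines in $X$ and to show by dimension counts that no line satisfies the required incidences. First I would invoke Theorem~\ref{thm:irr}(b), which gives irreducibility of $\Mod{1}{3}{X}$ of expected dimension (with $\overline{\Modc{1}{3}{\cU}}$ dense), together with the enumerativity argument of Theorem~\ref{enum.thm} and Lemma~\ref{kleiman.lem}, in order to identify $\scal{h,\sigma'_{u_1},\sigma_{u_2}}_{X,1,3}$ with the number of lines $L \subset X$ meeting generic $G$-translates $gH$, $g_1 Y'_{u_1}$ and $g_2 Y_{u_2}$ (with one marked point on each). Because $G$ preserves $Y$, the translate $g_2 Y_{u_2}$ remains inside $Y$, so every counted $L$ must meet $Y$. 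I would then classify such $L$ via the bidegree $(1,d_Y)$ of the strict transform $\widetilde L \subset \Xt_Z$ (with respect to $(\pi_Z^* h, p_Y^* h_Y)$) into three cases: \emph{(I)} $L \subset Y$; \emph{(II.a)} $d_Y = 0$, so $L$ is a ``vertical'' line contained in $\pi_Z(p_Y^{-1}(y))$ for the unique $y = L \cap Y$; and \emph{(II.b)} $d_Y = 1$, so $L \cap Z = \emptyset$ and $L$ realises as a section of the vector bundle $p_Y : \cU_Y \to Y$ over the line $\ell := p_Y(\widetilde L) \subset Y$.

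In case \emph{(I)}, using $Y'_{u_1} \cap Y = Y_{u_1}$, the count reduces to counting lines in the homogeneous space $Y$ through generic translates of $gH \cap Y$, $Y_{u_1}$ and $Y_{u_2}$; the expected dimension inside $\Mod{1}{3}{Y}$ is $\dim Y + c_1(Y) - 1 - \ell(u_1) - \ell(u_2)$, which simplifies to $c_1(Y) - c_1(X)$ after using the dimensional constraint $\ell(u_1) + \ell(u_2) + \codim_X Y + 1 = \dim X + c_1(X)$; this is strictly negative in every case by Table~\ref{t:num-inv}. In case \emph{(II.a)}, the swept-out locus $S := \bigcup_{y \in g_2 Y_{u_2}} \pi_Z(p_Y^{-1}(y))$ has $\dim S \leq \dim Y_{u_2} + \codim_X Y = \dim X - \ell(u_2)$, so its expected intersection with the generic translate $g_1 Y'_{u_1}$ (of codimension $\ell(u_1)$) has dimension at most $\dim X - \ell(u_1) - \ell(u_2) = 1 - \codim_X Z$, which is negative since $\codim_X Z \geq 2$ by Table~\ref{t:dimX}.

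For case \emph{(II.b)}, the key reinterpretation is that $Y'_{u_1} \cap \cU_Y$ is the restriction $p_Y^{-1}(Y_{u_1})\vert_{\cU_Y}$ of the vector bundle to $Y_{u_1}$, so $L$ meets $g_1 Y'_{u_1}$ if and only if $\ell$ meets $g_1 Y_{u_1}$ in $Y$; moreover, since $g_2 Y_{u_2}$ lies in the zero section of $p_Y : \cU_Y \to Y$, the condition $L \cap g_2 Y_{u_2} \neq \emptyset$ forces in particular $\ell \cap g_2 Y_{u_2} \neq \emptyset$. Hence $\ell$ must be a line in $Y$ meeting two generic Schubert translates, and by the same Kleiman--Bertini count as in case \emph{(I)} such $\ell$ form a family of expected dimension $c_1(Y) - c_1(X) < 0$, hence empty. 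All three cases being empty, the invariant vanishes. The main obstacle is case \emph{(II.b)}: one must recognise non-vertical type-II lines as sections of the normal bundle $F_Y$ over a line $\ell \subset Y$ and transfer the incidence conditions in $X$ into incidence conditions on the base line $\ell$; once this geometric translation is made, the decisive numerical input $c_1(Y) < c_1(X)$, valid in all cases of Pasquier's list by Table~\ref{t:num-inv}, closes the argument.
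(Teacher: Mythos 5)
Your proof is correct and follows essentially the same route as the paper's: both reduce the invariant to a count of lines meeting generic translates of $H$, $Y'_{u_1}$ and $Y_{u_2}$, split into the same geometric cases according to the bidegree of the strict transform in $\Xt_Z$, and kill each case by the dimension counts driven by $c_1(X) > c_1(Y)$ and $\codim_X Z \geq 2$. The only cosmetic difference is your case \emph{(II.a)}, where the paper pushes the configuration down to the homogeneous space $Y$ (where Kleiman applies directly and the two translated Schubert varieties have codimension sum $\dim Y + c_1(X)-1 > \dim Y$) instead of intersecting the swept locus $S = g_2 Y'_{u_2}$ with $g_1 Y'_{u_1}$ inside the merely quasi-homogeneous $X$, where the ``expected dimension'' of a generic intersection requires the stratified Kleiman argument of Lemma~\ref{kleiman.lem} or the case analysis of Proposition~\ref{prop:poincare} to be actually attained.
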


\begin{proof}
	We may assume $\deg\s'_{u_1} + \deg\s_{u_2} +1 = \dim \Mod{1}{3}{X} = \dim X + c_1(X)$, which leads to $\ell(u_1) + \ell(u_2) = \dim Y + c_1(X) -1$. We have $\scal{h,\s'_{u_1},\s_{u_2}}_{X,1,3} = {\pi_M}_*(\ev_1^*h \cup \ev_2^*\s'_{u_1} \cup \ev_3^*\s_{u_2})$, 
where ${\pi_M}_*$ denotes the proper pushforward to a point, or integration against the virtual class of the moduli space of stable maps. 
 Choosing general translates $H$, $g_1.Y'_{u_1}$ and $g_2.Y_{u_2}$ with $g_1,g_2 \in G$, we have $\ev_1^*h \cup \ev_2^*\s'_{u_1} \cup \ev_3^*\s_{u_2} = \ev_1^{-1}(H) \cap \ev_2^{-1}(g_1.Y'_{u_1}) \cap \ev_3^{-1}(g_2.Y_{u_2})$. We prove that this set is empty. It is enough to prove that there is no line meeting $H$, $g_1.Y'_{u_1}$ and $g_2.Y_{u_2}$. 

	We discuss several cases. Let $L$ be such a line. If $L$ were in $Z$ then $L$ would not meet $g_2.Y_{u_2}$, a contradiction. If $L$ meets $Z$ (necessarily in one point since $Z = X \cap \P(V_Z)$), then $p_Y(\pi_Z^{-1}(L))$ is a point and lies on $p_Y(\pi_Z^{-1}(g_1.Y'_{u_1})) \cap p_Y(\pi_Z^{-1}(g_2.Y_{u_2}))$. But $p_Y(\pi_Z^{-1}(g_1.Y'_{u_1}))$ is a translate of $Y_{u_1}$ and $p_Y(\pi_Z^{-1}(g_2.Y_{u_2}))$ is a general translate of $Y_{u_2}$. An easy dimension count gives a contradiction, for   
$\ell(u_1)+\ell(u_2)>\dim Y$. 
Finally if $L$ does not meet $Z$, then $p_Y(\pi_Z^{-1}(L))$ is a line meeting $p_Y(\pi_Z^{-1}(g_1.Y'_{u_1}))$ and $p_Y(\pi_Z^{-1}(g_2.Y_{u_2}))$. This is possible if $\ell(u_1) + \ell(u_2) \leq \dim \Mod{1}{2}{Y} = \dim Y + c_1(Y) - 1$. This is a contradiction since $c_1(X) > c_1(Y)$.
\end{proof}

\begin{lemma}\label{lem:st}
	Let $u \in W^{P_Y}$ and $v \in W^{P_Z}$. We have $\scal{h,\s_{u},\tau_{v}}_{X,1,3} = \delta_{\ut^\vee,\vt}$.
\end{lemma}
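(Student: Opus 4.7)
The plan is to compute $\scal{h,\s_u,\tau_v}_{X,1,3}$ enumeratively. Since $\Mod{1}{3}{X}$ is irreducible of the expected dimension by Theorem~\ref{thm:irr}(b), its virtual fundamental class coincides with its fundamental class (cf.\ the remark following that theorem). Graber's transversality result (Lemma~\ref{kleiman.lem}) then identifies the invariant with a transverse count of quadruples $(L,p_1,p_2,p_3)$, where $L\subset X$ is a line with $f(p_1)\in H$, $f(p_2)\in g_1Y_u$, and $f(p_3)\in g_2Z_v$, for a generic hyperplane $H$ and generic $g_1,g_2\in G$. Reducible-source stable maps live in a proper, lower-dimensional boundary of $\Mod{1}{3}{X}$ and therefore contribute nothing to a generic transverse intersection.

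The next step is to parametrise the contributing lines. Since $Y\cap Z=\emptyset$ inside $X$, a line contained in $Y$ cannot meet $Z$ and vice versa, so any $L$ satisfying the three incidence conditions must meet both $Y$ and $Z$ in a single point each. By Subsection~\ref{ss:blowups}, the family of such lines is exactly the family of fibres of the $\P^1$-bundle $\xi=p_{YZ}\colon\Xt_{YZ}\to E$, and is thus parametrised by $E=G/(P_Y\cap P_Z)$; the line associated to $e\in E$ meets $Y$ at $p(e)$ and $Z$ at $q(e)$. Under this parametrisation the incidence conditions become $e\in g_1E_\ut\cap g_2E_\vt$, since by definition $E_\ut=p^{-1}(Y_u)$ and $E_\vt=q^{-1}(Z_v)$. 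Given such an $e$, the marked points $p_2,p_3$ are determined as the preimages under $f$ of $p(e),q(e)$, and $p_1$ is the unique point of $L\cap H$; hence each valid $e$ contributes precisely one stable map.

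Finally I will count $|g_1E_\ut\cap g_2E_\vt|$ in the homogeneous variety $E$ for generic $g_1,g_2$. Because $p$ and $q$ are locally trivial fibrations, $E_\ut$ and $E_\vt$ have the same codimensions in $E$ as $Y_u$ in $Y$ and $Z_v$ in $Z$; the degree condition $1+\ell(u)+\codim_X Y+\ell(v)+\codim_X Z=\dim X+c_1(X)$ combined with Fact~\ref{fact:c1-codim} then yields $\ell(\ut)+\ell(\vt)=\ell(u)+\ell(v)=\dim X-1=\dim E$, so the expected intersection is zero-dimensional. The classical Kleiman--Bertini theorem applied to the homogeneous space $E$, together with Poincar\'e duality for Schubert classes in $E$, gives $|g_1E_\ut\cap g_2E_\vt|=\delta_{\ut^\vee,\vt}$. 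The main subtlety is the initial enumerative reduction (that virtual equals actual and that reducible-source strata are negligible); in degree one this works uniformly across all five cases of Pasquier's classification thanks to Theorem~\ref{thm:irr}(b), so one does not need the stronger Corollary~\ref{cor:enum}, which is only available in cases (2) and (3).
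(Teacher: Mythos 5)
Your proof is correct and follows essentially the same route as the paper's: rule out lines contained in $Y$ or $Z$, observe that a line meeting both closed orbits is the fibre of $p_{YZ}$ over a point $e \in E$, translate the incidence conditions into $e \in g_1 E_{\ut} \cap g_2 E_{\vt}$, and conclude from the dimension count $\ell(u)+\ell(v) = \dim E$ (via Fact~\ref{fact:c1-codim}) together with Schubert calculus in the homogeneous space $E$. The only difference is that you make explicit the enumerative reduction (virtual class equals fundamental class, Graber-type transversality, negligibility of the boundary), which the paper leaves implicit.
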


\begin{proof}
	As in the previous lemma, we may assume $\ell(u) + \ell(v) = \dim Y + \dim Z  + c_1(X) - \dim X - 1$ and we are looking for lines $L$ meeting general translates $H$, $g_1.Y_{u}$ and $g_2.Z_{v}$. In particular, the line $L$ is not contained in $Z$ (otherwise $L \cap g_1.Y_v = \emptyset$) and not in $Y$ (otherwise $L \cap g_2.Z_v = \emptyset$) but meets $Y$ in $g_1.Y_u$ and $Z$ in $g_2.Z_v$. The line $L$ is thus mapped to a point via $p_{YZ}$ \emph{i.e} $p_{YZ}(\pi^{-1}_{YZ}(L))$ is a point. Thus $p_{YZ}(\pi^{-1}_{YZ}(g_1.Y_{u}))$ a general translate of $E_\ut$ and $p_{YZ}(\pi^{-1}_{YZ}(g_2.Z_{v}))$ a general translate of $E_{\vt}$ have to meet. The sum of their codimensions is $\ell(u) + \ell(v) = \dim E + c_1(X) - \codim_X Y - \codim_X Z$ and by Fact \ref{fact:c1-codim} we get $\ell(u) + \ell(v) = \dim E$. In particular this intersection is empty unless $\vt = \ut^\vee$. If this intersection is non-empty and therefore is a point $e$, the line is given by $\pi_{YZ}(p_{YZ}^{-1}(e))$.
\end{proof}

\begin{lemma}\label{lem:s't'}
	Let $u \in W^{P_Y}$ and $v \in W^{P_Z}$. We have 
	\[
		\scal{h,\s'_{u},\tau'_{v}}_{X,1,3} = \left\{
			\begin{array}{ll}
				0 & \text{if $c_1(X) \geq c_1(Z)$} \\
				\scal{h,\taub_{\uh},\taub_{v}}_{Z,1,3} & \text{if $c_1(X) = c_1(Z) -1$} \\
			\end{array}
		\right.
	\]
\end{lemma}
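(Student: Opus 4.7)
The plan is to mirror the enumerative strategy of Lemmas~\ref{lem:s's} and~\ref{lem:st}. By Theorem~\ref{enum.thm}, the invariant $\scal{h,\s'_u,\tau'_v}_{X,1,3}$ counts lines $L \subset X$ meeting general translates of a hyperplane $H$, of $Y'_u$ and of $Z'_v$; the degree condition forces $\ell(u)+\ell(v)=\dim X+c_1(X)-1$. I would split the count according to the position of $L$ relative to the closed orbits $Y$ and $Z$, treating $L \subset Z$, $L \subset Y$, and $L$ meeting the open orbit $\cU$ separately.

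For $L \subset Z$, the equalities $Y'_u \cap Z = q(p^{-1}(Y_u)) = Z_{\uh}$ and $Z'_v \cap Z = Z_v$ reduce the count to the enumerative invariant $\scal{h,\taub_{\uh},\taub_v}_{Z,1,3}$. Proposition~\ref{prop-chev} applied to $\s'_u$ gives $\ell(\uh) = \ell(u) + 1 - \codim_X Z$, so combining with $\dim Z = \dim X - \codim_X Z$ the total codimension of the three classes on $Z$ is $\dim Z + c_1(X) + 1$. This matches the moduli dimension $\dim Z + c_1(Z)$ exactly when $c_1(X) = c_1(Z)-1$; when $c_1(X) \geq c_1(Z)$ it strictly exceeds $\dim Z + c_1(Z)$ and the contribution vanishes.

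For $L \subset Y$, the analogous identities $Y'_u \cap Y = Y_u$, $Z'_v \cap Y = Y_{\vh}$ combined with the symmetric Chevalley formula (the remark following Proposition~\ref{prop-chev}), which yields $\ell(\vh) = \ell(v) + 1 - \codim_X Y$, reduce the count to $\scal{h,\sb_u,\sb_{\vh}}_{Y,1,3}$. Since $c_1(X) = c_1(Y) + 1$ in every case of Pasquier's list (see Table~\ref{t:num-inv}), the total codimension of the three classes on $Y$ overshoots $\dim Y + c_1(Y)$ by exactly two, so this invariant vanishes unconditionally.

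Finally one must rule out lines meeting $\cU$. Following Lemma~\ref{lem:s's}, lift $L$ to $\tilde L \subset \Xt_Z = \P_Y(F_Y \oplus \cO_Y)$ via $\pi_Z$ and use the relation $\pi_Z^* h = [E_Z] + p_Y^* h_Y$ to see that $p_Y(\tilde L)$ is a line in $Y$ when $L \cap Z = \emptyset$, and a point when $L$ meets $Z$ transversally in a single point. In either subcase, using that the lifts of $Y'_u$ and $Z'_v$ project under $p_Y$ to $Y_u$ and $Y_{\vh}$ respectively, together with the projective-bundle structure of $p_Y$, one shows that the parameter space of such lines meeting all three cycles has negative expected dimension and is therefore empty; the estimate ultimately relies on the strict inequality $c_1(X) > c_1(Y)$. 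This last step is the main technical obstacle: the dimension bookkeeping on $\Xt_Z$, especially the subcase where $\tilde L$ is contracted to a point of $Y$ and is a line in a fiber of $p_Y$, requires a careful analysis in the spirit of (but more intricate than) Lemma~\ref{lem:s's}. Combining the three cases yields the stated dichotomy.
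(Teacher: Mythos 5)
Your proof is correct and follows essentially the same strategy as the paper: stratify the lines by their position relative to $Y$, $Z$ and $\cU$, identify the traces of $Y'_u$ and $Z'_v$ on each stratum, and compare codimensions with the relevant moduli dimension; the $L\subset Z$ stratum produces the dichotomy exactly as you compute it. The only (cosmetic) divergence is that for lines contained in the open orbit the paper projects to the incidence variety $E$ via $p_{YZ}$ and counts against $\dim E + c_1(X)-1$, whereas you project everything to $Y$ via $p_Y$; your route works too, and the step you flag as "the main technical obstacle" is in fact no harder than the corresponding step of Lemma~\ref{lem:s's} — when $L\cap Z=\emptyset$ the lift has bidegree $(1,1)$ so $p_Y(L)$ is a line meeting general translates of $Y_u$ and $Y_{\vh}$ (drop the hyperplane condition and compare with $\dim\Mod{1}{2}{Y}$), and when $L$ meets $Z$ it is contracted to a point of $Y_u\cap g\cdot Y_{\vh}$, which is empty since $\ell(u)+\ell(v)+1-\codim_XY=\dim Y+c_1(X)>\dim Y$. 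One small factual slip: $c_1(X)=c_1(Y)+1$ fails in case (2), where $c_1(X)=c_1(Y)+2$; this is harmless because the vanishing on the $L\subset Y$ stratum only needs the overshoot $c_1(X)-c_1(Y)+1>0$, which always holds.
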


\begin{remark}
	Note that $c_1(X) \geq c_1(Z)$ for $X^2$ and $X^3(n,m)$, while $c_1(X) = c_1(Z) -1$ for $X^1(3), X^4$ and $X^5$.
\end{remark}

\begin{proof}
	As above, we may assume $\ell(u) + \ell(v) = \dim X + c_1(X) - 1$ and we are looking for lines $L$ meeting general translates $H$, $g_1.Y'_{u}$ and $g_2.Z'_{v}$. If $L \subset Y$, then $L$ meets general translates of $H \cap Y$, $Y_u$ and $Y_\vh$. The sum of their codimension is $\ell(u) + \ell(v) + 1 - \codim_X Y + 1 = \dim Y + c_1(X) +1 > \dim \Mod{1}{3}{Y}$, so there is no such $L$. If $L$ is contained in $\cU$, then $p_{YZ}(\pi_{YZ}^{-1}(L))$ is a line meeting general translates of $E_\ut$ and $E_\vt$. The sum of their codimensions is $\ell(u) + \ell(v) = \dim X -1 + c_1(X) = \dim E + c_1(X)$. But the curve $p_{YZ}(\pi_{YZ}^{-1}(L))$ has intersection $c_1(X)$ with $-K_E$, so this sum is one more than the dimension of the moduli space, which implies that there is no such curve. Assume that $L$ meets $\cU$. If the line $L$ meets $Z$ then $p_Y(\pi_Z^{-1}(L))$ is a point meeting general translates of $Y_u$ and $Y_\vh$. An easy dimension count proves that this is not possible. The same arguments prove that there is no line $L$ meeting $\cU$ and $Y$.

        We thus have the inclusion $L \subset Z$ and since $Y'_u \cap Z = Z_\uh$, the number of lines $L$ as above is given by the Gromov-Witten invariant $\scal{h,\taub_{\uh},\taub_{v}}_{Z,1,3}$. Again by a dimension count the result follows. 
%Note that for $c_1(X) \leq c_1(Z) -2$ there are infinitely many such lines, so we need an excess intersection formula in this case.
\end{proof}

\begin{lemma}\label{lem:tt}
	Let $v_1,v_2 \in W^{P_Z}$. We have 
	\[
		\scal{h,\tau_{v_1},\tau'_{v_2}}_{X,1,3} = \left\{
			\begin{array}{ll}
				0 & \text{if $c_1(X) > c_1(Z)$} \\
				\scal{h,\taub_{v_1},\taub_{v_2}}_{Z,1,3} & \text{if $c_1(X) = c_1(Z)$} \\
			\end{array}
		\right.
	\]
\end{lemma}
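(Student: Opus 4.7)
The approach parallels that of Lemma \ref{lem:s't'}, exploiting the fact that $\tau_{v_1} = j_*[Z_{v_1}]$ is supported in the closed orbit $Z$. The invariant vanishes unless $1 + (\ell(v_1) + \codim_X Z) + \ell(v_2) = \dim X + c_1(X)$, giving the dimensional constraint $\ell(v_1) + \ell(v_2) = \dim Z + c_1(X) - 1$. The interesting cases $c_1(X) \geq c_1(Z)$ occur exactly in cases (2) and (3) of Pasquier's classification, where Theorem \ref{thm:irr} and Corollary \ref{cor:enum} allow me to reinterpret the invariant as the number of lines $L \subset X$ whose three marked points lie on generic translates $H$, $g_1 \cdot Z_{v_1}$, $g_2 \cdot Z'_{v_2}$ for $g_1, g_2 \in G$. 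Since $Z$ is $G$-invariant, $g_1 \cdot Z_{v_1} \subset Z$, so any contributing $L$ must meet $Z$. I split into two sub-cases.

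If $L \subset Z$, the $G$-equivariant identity $Z'_{v_2} \cap Z = Z_{v_2}$ reduces the count to lines in $Z$ through generic translates of $H \cap Z$, $Z_{v_1}$, $Z_{v_2}$, which is precisely the classical Gromov--Witten invariant $\scal{h, \taub_{v_1}, \taub_{v_2}}_{Z,1,3}$. Its expected-dimension condition is $\ell(v_1) + \ell(v_2) = \dim Z + c_1(Z) - 1$, which matches ours precisely when $c_1(X) = c_1(Z)$; otherwise the $Z$-invariant vanishes by convention, yielding the first branch of the formula.

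If $L \not\subset Z$, passing to $\Xt_Z = \P_Y(F_Y \oplus \cO_Y)$ the strict transform $\tilde L$ satisfies
\[
	p_Y^* h_Y \cdot \tilde L = \pi_Z^* h \cdot \tilde L - E_Z \cdot \tilde L = 1 - |L \cap Z| \leq 0,
\]
and since $h_Y$ is ample on $Y$ this forces $p_Y(\tilde L)$ to be a single point $y \in Y$. Thus $\tilde L$ is a line in the fiber $\P^{\codim_X Y}$ of $p_Y$ meeting the hyperplane $E_Z|_y = \P(F_Y|_y)$ at one point, and the locus of such lines has dimension $\dim Y + 2\codim_X Y - 2$. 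With three marked points this contributes a family of dimension $\dim Y + 2\codim_X Y + 1$; the three incidence conditions have total codimension $1 + \codim_X Z + \ell(v_1) + \ell(v_2)$, and using Fact \ref{fact:c1-codim} together with the dimensional constraint a direct computation simplifies the expected dimension of the intersection to $1 - \codim_X Z$. Since $\codim_X Z \geq 2$ this is strictly negative, so Lemma \ref{kleiman.lem} applied to the $G$-invariant ``meeting $Z$'' locus and the generic $G$-translates forces this intersection to be empty, and this sub-case contributes zero.

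The main obstacle is the dimension accounting in the second sub-case: one must check rigorously that lines in $X$ meeting $Z$ but not contained in $Z$ are precisely the fiberwise lines of the projective bundle $\Xt_Z \to Y$ meeting $E_Z$ once, and that this $G$-invariant family admits Kleiman--Bertini transversality with the three incidence conditions so that the strict dimension inequality implies emptiness, not merely dimension drop of a virtual class. Once both sub-cases are settled, the stated dichotomy follows at once.
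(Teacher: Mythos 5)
Your first sub-case ($L\subset Z$) is fine and matches the paper's conclusion, but your second sub-case contains a genuine gap: the dimension count is wrong. You correctly identify the lines meeting $Z$ but not contained in it as fiberwise lines of $p_Y:\Xt_Z\to Y$, a family of dimension $\dim Y+2\codim_XY-2$. However, you then charge the incidence condition with $g_1\cdot Z_{v_1}$ its full codimension $\ell(v_1)+\codim_XZ$ in $X$. This is exactly the situation where Lemma~\ref{kleiman.lem} does \emph{not} apply: $Z_{v_1}$ is contained in the closed orbit $Z$ and so does not intersect the orbit stratification properly. Since \emph{every} fiberwise line already meets $Z$ (in exactly one point), the condition $\ev_2\in g_1\cdot Z_{v_1}$ only pins the marked point at $L\cap Z$ (codimension $1$) and then constrains that point inside $Z$ (codimension $\codim_Z Z_{v_1}=\ell(v_1)$); the condition therefore has codimension $1+\ell(v_1)$ on your family, not $\codim_XZ+\ell(v_1)$. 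Redoing your computation with this correction, the expected dimension of the intersection is
\[
\bigl(\dim Y+2\codim_XY+1\bigr)-\bigl(2+\ell(v_1)+\ell(v_2)\bigr)=0,
\]
not $1-\codim_XZ$. An expected dimension of zero does not give emptiness, so the second sub-case is not closed.

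The paper handles precisely this locus by a different projection and a non-dimensional argument. One first checks (by projecting via $p_Z\circ\pi_Y^{-1}$) that a contributing line $L$ cannot meet $Y$, so $L$ lies in $\cU_Z$, the total space of the vector bundle $F_Z=N_{Z/X}$ over $Z$ with zero section $Z$. Then $p_Z(L)$ is a line $L'$ in $Z$ meeting general translates of $Z_{v_1}$ and $Z_{v_2}$; such $L'$ do not exist when $c_1(X)>c_1(Z)$ (their expected number lives in $\Mod{1}{2}{Z}$, of dimension $\dim Z+c_1(Z)-1<\ell(v_1)+\ell(v_2)$), and there are $\scal{h,\taub_{v_1},\taub_{v_2}}_{Z,1,3}$ of them when $c_1(X)=c_1(Z)$. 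Finally, $L$ is a section of $\cU_Z\vert_{L'}$ passing through the zero section; since $\deg N_{Z/X}\vert_{L'}=c_1(X)-c_1(Z)=0$ and the restriction of $\cU_Z$ to a general line is trivial, the only such section is the zero section, i.e.\ $L=L'\subset Z$. This triviality statement is the essential input that your dimension count cannot replace; you would need to add it (or an equivalent excess-intersection argument) to make your second sub-case correct.
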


\begin{remark}
	Note that $c_1(X) > c_1(Z)$ for $X^2$, while $c_1(X) = c_1(Z)$ for $X^3(n,m)$.
\end{remark}

\begin{proof}
  As above, we may assume $\ell(u) + \ell(v) = \dim Z + c_1(X) - 1$ and we are looking for lines $L$ meeting general translates $H$, $g_1.Z_{v_1}$ and $g_2.Z'_{v_2}$. An easy dimension count using the composition $p_Z \circ \pi_Y$ proves that $L$ does not meet $Y$. Projecting to $Z$ we have that $p_Z(\pi_Y^{-1}(L))$ is a line $L'$ which meets general translates of $Z_{v_1}$ and $Z_{v_2}$. Such a line $L'$ does not exist for dimension reasons if $c_1(X) > c_1(Z)$, and there are $\scal{h,\taub_{v_1},\taub_{v_2}}_{Z,1,3}$ such lines if $c_1(X) = c_1(Z)$. Since $g_1$ and $g_2$ are general in $G$ we may assume that these lines $L'$ are general. We may thus assume that $c_1(X) = c_1(Z)$. We are thus in case (3) of Pasquier's classification and any lift of $L'$ in $\cU_Z$ meeting the zero section is a solution for $L$. But $\cU_Z$ is a vector bundle over $Z$ whose restriction to a general line is trivial. In particular the only lift meeting the zero section is the zero section itself, proving the result.
\end{proof}

\begin{lemma}\label{lem:st'}
	Let $u \in W^{P_Y}$ and $v \in W^{P_Z}$. For $c_1(X) - c_1(Y) + 1 - \codim_X Y = 0$, we have $\scal{h,\s_{u},\tau'_{v}}_{X,1,3} = \scal{h,\sb_{u},\sb_{\vh}}_{Y,1,3}$.
\end{lemma}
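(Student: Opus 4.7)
The plan is to follow the pattern of Lemmas \ref{lem:s's}--\ref{lem:tt} and count lines in $X$ meeting three general incidence conditions. By Theorem \ref{thm:irr}(b), $\Mod{1}{3}{X}$ is irreducible of expected dimension in all cases of Pasquier's classification, so its virtual class equals its fundamental class. Applying the equivariant Kleiman--Bertini transversality result (Lemma \ref{kleiman.lem}) to general $G$-translates $H$, $g_1 Y_u$ and $g_2 Z'_v$ of representatives for $h$, $\s_u$ and $\tau'_v$, the invariant $\scal{h,\s_u,\tau'_v}_{X,1,3}$ equals the number of lines $L \subset X$ meeting all three.

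I would then distribute the count according to the position of $L$ relative to the orbit stratification $X = \cU \sqcup Y \sqcup Z$. Lines $L \subset Z$ do not contribute, since then $L \cap g_1 Y_u \subset Y \cap Z = \emptyset$. For lines $L \subset Y$, the crucial identification is $Z'_v \cap Y = Y_{\vh}$: indeed $Z'_v = \pi_Y(p_Z^{-1}(Z_v))$, the restriction of $\pi_Y$ to $E_Y$ coincides with $p : E \to Y$, and by construction $p(q^{-1}(Z_v)) = Y_{\vh}$. Hence such lines correspond bijectively to lines in $Y$ meeting $H \cap Y$, $g_1 Y_u$ and $g_2 Y_{\vh}$, and they are counted exactly by $\scal{h,\sb_u,\sb_{\vh}}_{Y,1,3}$. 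The numerical hypothesis $c_1(X) - c_1(Y) + 1 - \codim_X Y = 0$, equivalent via Fact \ref{fact:c1-codim} to $c_1(Y) = \codim_X Z + 1$, is precisely what makes the dimension constraint on such $Y$-lines match the one on $X$-lines.

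The main obstacle is showing that lines $L \not\subset Y \cup Z$ do not contribute. I would lift such an $L$ to its strict transform $\tilde L$ inside the blow-up $\pi_Y : \tildeX_Y = \P_Z(F_Z \oplus \cO_Z) \to X$. Since $L$ meets $g_1 Y_u \subset Y$ but is not contained in $Y$, $\tilde L$ meets the exceptional divisor $E_Y$ at a point whose image under $p : E_Y \to Y$ lies in $g_1 Y_u$; the incidence $L \cap g_2 Z'_v \neq \emptyset$ translates, via $p_Z : \tildeX_Y \to Z$, into the condition that $p_Z(\tilde L)$ meets $g_2 Z_v$. Using the lemma on moduli of stable maps to projective bundles from the subsection on curves with irreducible source, one estimates the dimension of this family of $\tilde L$'s by pushing forward along $p_Z$ and then incorporating the fiber directions. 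Under the equality $c_1(X) = c_1(Y) + \codim_X Y - 1$, a direct count shows that the resulting dimension falls strictly short of the expected $\dim X + c_1(X) - 1$, so no such $L$ exists generically.

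Combining the three cases, only Case B contributes and the formula $\scal{h,\s_u,\tau'_v}_{X,1,3} = \scal{h,\sb_u,\sb_{\vh}}_{Y,1,3}$ follows. The delicate point is the dimension estimate in Case C, where one has to use the projective bundle structure of $\tildeX_Y$ over $Z$ in an essential way; the hypothesis is tight in the sense that it is exactly the threshold at which the in-$Y$ contribution becomes enumeratively visible.
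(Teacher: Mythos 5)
Your proof takes essentially the same route as the paper: split according to whether the line lies in $Z$, in $Y$, or in neither, identify the $L\subset Y$ contribution with $\scal{h,\sb_u,\sb_{\vh}}_{Y,1,3}$ via $Z'_v\cap Y=Y_{\vh}$, and rule out the remaining case by passing to $\Xt_Y$ and projecting to $Z$. The one place you are vaguer than necessary is the exclusion of lines not contained in $Y$: since such a line meets $Y$ with positive multiplicity, its strict transform has bidegree $(1,0)$, so $p_Z(\pi_Y^{-1}(L))$ is a single \emph{point}, which would have to lie in the intersection of general translates of $Z_{\uh}$ and $Z_v$ — and the sum of their codimensions in $Z$ already exceeds $\dim Z$; no appeal to the projective-bundle moduli lemma or to a dimension estimate for a family of curves is needed.
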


\begin{remark}
	Note that we have $c_1(X) - c_1(Y) + 1 - \codim_X Y \leq 0$ in all cases, with equality for $X=X^1(n), X^3(n,2)$ and $X^5$.
\end{remark}

\begin{proof}
	As above, we may assume $\ell(u) + \codim_X Y + \ell(v) = \dim X + c_1(X) - 1$ and we are looking for lines $L$ meeting general translates $H$, $g_1.Y_{u}$ and $g_2.Z'_{v}$. If $L$ is not contained in $Y$, then $p_Z(\pi_Y^{-1}(L))$ is a point meeting general translates of $Z_{\uh}$ and $Z_v$. An easy dimension count proves that this is not possible. If $L \subset Y$, then we are counting curves in $Y$ meeting a hyperplane and general translates of $Y_u$ and $Y_{\vh}$. The condition implies that the classes have the right codimension, which implies the result.
\end{proof}

\begin{lemma}\label{lem:t't'}
Let $v_1,v_2 \in W^{P_Z}$, we have
\[
	\! \scal{h,\tau'_{v_1},\tau'_{v_2}}_{X,1,3} \! = \! \left\{
		\begin{array}{ll}
 			\! 0 & \! \textrm{if $c_1(X) - c_1(Y) > \codim_X Y - 2$} \\
 			\! \scal{h,\sb_{\vh_1},\sb_{\vh_2}}_{Y,1,3} & \! \textrm{if $c_1(X) - c_1(Y) = \codim_X Y - 2$.} \\
		\end{array}
	\right.
\]
\end{lemma}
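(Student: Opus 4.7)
I would follow the template of the preceding lemmas: by Theorem \ref{enum.thm} the invariant $\scal{h,\tau'_{v_1},\tau'_{v_2}}_{X,1,3}$ counts lines $L\subset X$ meeting generic translates $H$, $g_1\cdot Z'_{v_1}$, $g_2\cdot Z'_{v_2}$, under the dimension constraint $\ell(v_1)+\ell(v_2)+1=\dim X+c_1(X)$. The two key geometric inputs I would use are $Z'_v \cap Z=Z_v$ (by definition of $Z'_v$) and $Z'_v\cap Y = Y_{\vh}$, the latter following from $\pi_Y^{-1}(Y)=E_Y$ combined with $\pi_Y|_{E_Y}=p$ and $p_Z|_{E_Y}=q$.

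I would then split according to the location of $L$. If $L\subset Z$, it meets $H\cap Z$ and translates of $Z_{v_1},Z_{v_2}$, with total codimension $\ell(v_1)+\ell(v_2)+1=\dim X+c_1(X)$. This exceeds $\dim\Mod{1}{3}{Z}=\dim Z+c_1(Z)$: the difference $\codim_X Z+c_1(X)-c_1(Z)$ simplifies via Fact \ref{fact:c1-codim} to $2\codim_X Z+\codim_X Y-c_1(Z)$, and a direct check using Table \ref{t:num-inv} confirms this is positive in every case of Pasquier's list, so no such line exists. If $L$ meets $\cU$ but $L\not\subset Y,Z$, I would project via $p_{YZ}:\Xt_{YZ}\to E$: the strict transform of $L$ maps to a curve in $E$ required to meet translates of $E_{\vt_1},E_{\vt_2}$ (of codimensions $\ell(v_1)$ and $\ell(v_2)$ in $E$, respectively) and a hyperplane, and a dimension count in $E$ analogous to the one carried out in the proof of Lemma \ref{lem:st} rules this out.

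If $L\subset Y$, the incidence conditions reduce to $L\cap g_iY_{\vh_i}$ and $L\cap(H\cap Y)$, so the contribution equals $\scal{h,\sb_{\vh_1},\sb_{\vh_2}}_{Y,1,3}$. Using the dimension identity $\ell(\vh)=\ell(v)-(\codim_X Y-1)$ (the same identity implicitly employed in the proof of Lemma \ref{lem:s't'}), the total codimension in $Y$ becomes $\ell(\vh_1)+\ell(\vh_2)+1=\dim Y-\codim_X Y+c_1(X)+2$, which equals $\dim Y+c_1(Y)$ precisely when $c_1(X)-c_1(Y)=\codim_X Y-2$; this yields the claimed formula, and when $c_1(X)-c_1(Y)>\codim_X Y-2$ the sum strictly exceeds $\dim\Mod{1}{3}{Y}$ so the invariant vanishes.

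The main obstacle will be the dimension count in the intermediate case, since curves in $\cU$ project to curves of \emph{a priori} unknown degree in $E$; this requires keeping track of the degree of the projected curve in $E$ and verifying that the potential excess is not cancelled by a drop in the codimension imposed by the images of the $Z'_{v_i}$ under $p_{YZ}$. A secondary technical point is justifying the identity $\ell(\vh)=\ell(v)-(\codim_X Y-1)$, which requires that $p|_{q^{-1}(Z_v)}$ be generically finite for the relevant $v$; this is where the geometry of the incidence variety $E=G/(P_Y\cap P_Z)$ enters.
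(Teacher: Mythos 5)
Your overall architecture matches the paper's: reduce to counting lines through general translates of $H$, $g_1\cdot Z'_{v_1}$, $g_2\cdot Z'_{v_2}$; observe that only lines contained in $Y$ can contribute; identify $Z'_{v}\cap Y=Y_{\vh}$; and conclude with the codimension count $\ell(\vh_1)+\ell(\vh_2)+1=\dim Y+c_1(Y)+(c_1(X)-c_1(Y)+2-\codim_X Y)$, which is exactly the paper's final step. Your treatment of $L\subset Z$ also agrees with the paper's (which simply notes $\dim X+c_1(X)>\dim Z+c_1(Z)$ without the case-by-case check).

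The weak point is the intermediate case, which you yourself flag. You lump lines contained in $\cU$ together with lines meeting exactly one of $Y$, $Z$ transversally, and propose to rule out all of them by projecting to $E$ via $p_{YZ}$. For $L\subset\cU$ this is what the paper does: the image curve has $-K_E$-degree $c_1(X)$, and the codimension sum $\ell(v_1)+\ell(v_2)=\dim E+c_1(X)$ exceeds the dimension of the relevant moduli space by one. But for a line meeting $Z$ in a point, the image in $E$ is contracted by $p$ (it has bidegree $(0,1)$), and — more seriously — the incidence $L\cap g_i Z'_{v_i}\neq\emptyset$ may be achieved precisely at the point $L\cap Z\in g_i Z_{v_i}$, which does not translate into the condition that the image curve meets $g_i E_{\vt_i}$. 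So the count is not "analogous to Lemma \ref{lem:st}" (where the line is contracted to a point of $E$), and merely tracking the degree of the image curve, as you propose, does not repair this. The paper's route for these sub-cases is simpler and avoids the problem entirely: if $L$ meets $Z$ but $L\not\subset Z$, its strict transform in $\Xt_Z$ has degree $0$ under $p_Y$, so $p_Y(\pi_Z^{-1}(L))$ is a single point that must lie on $p_Y(\pi_Z^{-1}(g_i Z'_{v_i}))=g_i Y_{\vh_i}$ for both $i$ — this absorbs every way the incidences can occur — and general translates of $Y_{\vh_1}$ and $Y_{\vh_2}$ are disjoint because $\ell(\vh_1)+\ell(\vh_2)=\dim Y+\codim_X Z+1>\dim Y$ by Fact \ref{fact:c1-codim}. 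Symmetrically, a line meeting $Y$ but not contained in it is contracted by $p_Z\circ\pi_Y^{-1}$ to a point that would have to lie on two general translates of $Z_{v_1}$ and $Z_{v_2}$, which is impossible for dimension reasons. You should replace your single projection to $E$ by these two auxiliary projections and reserve $p_{YZ}$ for lines entirely contained in $\cU$.
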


\begin{remark}
	Note that we have $c_1(X) - c_1(Y) + 2 - \codim_X Y > 0$ for $X=X^1(n),X^2,X^3(n,2)$ and $X^5$, while $c_1(X) - c_1(Y) + 2 - \codim_X Y = 0$ for $X=X^3(n,3)$ and $X=X^4$.
\end{remark}

\begin{proof}
	As above, we may assume $\ell(u) + \ell(v) + 1 = \dim X + c_1(X)$ and we are looking for lines $L$ meeting general translates $H$, $g_1.Z'_{v_1}$ and $g_2.Z'_{v_2}$. If $L$ is contained in $Z$, then we are computing $\scal{h_Z,\taub_{v_1},\taub_{v_2}}_{Z,1,3}$. But $\ell(v_1) + \ell(v_2) + 1 = \dim X + c_1(X) > \dim Z + c_1(Z)$ so there is no such $L$. If $L$ meets $Z$, then $p_Y(\pi_Z^{-1}(L))$ is a point contained in the intersection of general translates of $p_Y(\pi_Z^{-1}(Z'_{v_1})) = Y_{\vh_1}$ and $p_Y(\pi_Z^{-1}(Z'_{v_2})) = Y_{\vh_2}$. But the sum of their codimensions is $\ell(v_1) + 1 - \codim_X Y + \ell(v_2) + 1 - \codim_X Y = \dim Y + c_1(X) + 1 - \codim_X Y = \dim Y + 1 + \codim_X Z > \dim Y$ so there is no such $L$. If $L$ meets $Y$ but is not contained in $Y$, then $p_Z(\pi_Y^{-1}(L))$ is a point meeting general translates of $Z_{v_1}$ and $Z_{v_2}$. An easy dimension count proves that this is not possible. If $L$ is contained in $\cU$, we consider $L' = p_{YZ}\pi_{YZ}^{-1}(L)$ meeting general translates of $p_{YZ}\pi_{YZ}^{-1}(Z'_{v_1}) = E_{\vt_1}$ and $p_{YZ}\pi_{YZ}^{-1}(Z'_{v_2}) = E_{\vt_2}$. The sum of their codimension in $E$ is $\ell(v_1) + \ell(v_2) = \dim X -1 + c_1(X) = \dim E + c_1(X)$. The curve $L'$ has intersection $c_1(X)$ with $-K_E$, thus there is no such line. We are left with $L$ contained in $Y$. So we are computing the number of lines corresponding to the invariant $\scal{h_Y,\sb_{\vh_1},\sb_{\vh_2}}_{Y,1,3}$. The sum of the corresponding codimensions is  $\ell(v_1) + 1 - \codim_X Y + \ell(v_2) + 1 - \codim_X Y + 1 = \dim Y + c_1(X) + 2 - \codim_X Y = \dim Y +  c_1(Y) + c_1(X) - c_1(Y) +2 - \codim_X Y$. The result follows.
\end{proof}

\subsection{Degree $2$ invariants}

	In this subsection we compute some degree $2$ Gromov-Witten invariants. We concentrate on the case $\dim X = 2 c_1(X) -1$, where there is a unique non-vanishing degree $2$ invariant, $\scal{h,[\mathrm{pt}],[\mathrm{pt}]}_{X,2,3}$.

\begin{lemma}\label{lem:q^2}
	If $\dim X = 2 c_1(X) -1$, then  $\scal{h,[\mathrm{pt}],[\mathrm{pt}]}_{X,2,3} = 2$.
\end{lemma}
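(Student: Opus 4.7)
My plan is to apply the divisor axiom for Gromov--Witten invariants to reduce the computation to an enumerative count of degree two curves through two general points, and then to perform that count using the blow-up structure of Subsection~\ref{ss:blowups}.

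Since $d = 2 \neq 0$, the divisor axiom gives
\[
	\scal{h,[\mathrm{pt}],[\mathrm{pt}]}_{X,2,3} \;=\; (h \cdot 2)\,\scal{[\mathrm{pt}],[\mathrm{pt}]}_{X,2,2} \;=\; 2\,\scal{[\mathrm{pt}],[\mathrm{pt}]}_{X,2,2},
\]
so it suffices to show $\scal{[\mathrm{pt}],[\mathrm{pt}]}_{X,2,2} = 1$. The hypothesis $\dim X = 2 c_1(X) - 1$ puts this invariant in virtual dimension zero, and by Theorem~\ref{thm:irr} the relevant component of $\Mod{2}{2}{X}$ has the expected dimension with virtual class equal to the fundamental class. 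Theorem~\ref{enum.thm}, or in case (5) where $\Mod{2}{2}{X}$ has a spurious second component $\Mod{2}{2}{Z}$ that cannot contribute since $x_1, x_2 \notin Z$, a direct Kleiman--Bertini argument modelled on its proof, then identifies $\scal{[\mathrm{pt}],[\mathrm{pt}]}_{X,2,2}$ with the actual number of degree two stable rational maps through two general points $x_1, x_2 \in X$.

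To count such maps, I would use the blow-up $\pi_Z : \Xt_Z \to X$ together with its projection $p_Y : \Xt_Z \to Y$. For a lift $\ft : C \to \Xt_Z$ of a degree two map through $x_1, x_2$, the relation $\pi_Z^* h = p_Y^* h_Y + [E_Z]$ forces $\deg p_Y(\ft) + k_Z = 2$, where $k_Z$ is the contact order with $Z$. Since $y_i := p_Y(x_i)$ are general in $Y$, the case $k_Z = 2$ is excluded (the image would collapse to a point containing $y_1 \ne y_2$), and $k_Z = 1$ is excluded because no line joins two general points in any of the three relevant homogeneous spaces $Y$. The symmetric argument on $\Xt_Y$ shows $f(C)$ also avoids $Y$; hence $f(C) \subset \cU$ and $p_Y(\ft(C))$ is a smooth conic $C_Y \subset Y$ through $y_1, y_2$, with $f$ encoded by a section of $N_{Y/X}|_{C_Y}$ prescribed at $y_1$ and $y_2$.

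In cases (1) with $n = 3$ and (5), one has $\codim_X Y = 2$ and $\deg N_{Y/X}|_{C_Y} = 2(c_1(X)-c_1(Y)) = 2$; for generic $C_Y$ the normal bundle splits as $\cO(1)^{\oplus 2}$, the evaluation $H^0(C_Y, N_{Y/X}|_{C_Y}) \to N_{Y/X,y_1} \oplus N_{Y/X,y_2}$ is an isomorphism of $4$-dimensional spaces, and each conic admits a unique lift; combining with the classical count of conics through two general points in $Y$ produces the desired $1$. The main obstacle is case (3) with $n = m = 3$: here $\codim_X Y = 3$ is odd and the projection-to-$Y$ dimension count is unbalanced (one finds $h^0 = 5 < 6 = 2\,\mathrm{rk}\,N_{Y/X}$), and projection to $Z$ is equally unbalanced since $c_1(X) = c_1(Z)$. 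For this case I would invoke the quantum-to-classical principle of Section~\ref{odd}, rewriting $\scal{[\mathrm{pt}],[\mathrm{pt}]}_{\IG(3,7),2,2}$ as a classical Schubert intersection on $\IG(5,11)$, which evaluates to $1$ by a short computation.
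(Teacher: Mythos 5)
For the two cases in which the paper actually invokes this lemma --- case (1) with $n=3$ and case (5) --- your argument is essentially the paper's own: divisor axiom, reduction to counting conics through two general points, projection to $Y$ via $p_Y$ after ruling out intersections with $Y$ and $Z$, the unique conic $C_Y$ through two general points of $Y$ (which uses $\dim Y = 2c_1(Y)-1$, valid there because $\codim_X Y=2$ and $c_1(X)=c_1(Y)+1$), and uniqueness of the lift. The one place you are lighter than the paper is the splitting type of $F_Y$ on $C_Y$: since $\deg F_Y|_{C_Y}=2$ and $\rk F_Y=2$, the alternative $\cO(2)\oplus\cO$ has the same $h^0=4$ but makes the evaluation map $H^0(F_Y|_{C_Y})\to F_{Y,y_1}\oplus F_{Y,y_2}$ fail to be an isomorphism, so ``for generic $C_Y$ it splits as $\cO(1)^{\oplus 2}$'' genuinely requires a check; the paper does it by taking $C_Y$ to be the $\theta^\vee$-curve through the two Borel-fixed points and reading off the weights of $V(\oY-\oZ)$.

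The genuine problem is your last paragraph. You are right that case (3) with $n=m=3$ also satisfies $\dim X = 2c_1(X)-1=9$ (the paper's remark lists only cases (1), $n=3$, and (5)), but your proposed resolution there cannot work: $\scal{[\mathrm{pt}],[\mathrm{pt}]}_{\IG(3,7),2,2}=0$, not $1$. Geometrically, the kernel $\bigcap_{\Sigma\in C}\Sigma$ of a conic $C$ in $\IG(3,7)$ has dimension at least $m-2=1$, whereas two general isotropic $3$-planes in $\C^7$ meet only in $0$; so no conic passes through two general points. This is confirmed by the paper's own Proposition~\ref{p:comparison} and Corollary~\ref{cor:deg1}: $h=\tau'_{(1)}$ and $(1)\not\supset\rho_2=(2,1)$, hence $\scal{h,[\mathrm{pt}],[\mathrm{pt}]}_{X,2,3}=0$ in case (3) --- which is exactly why the Examples section for $\IG(3,7)$ cites Corollary~\ref{cor:deg1} rather than this lemma. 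So the conclusion of the lemma actually fails in case (3), $n=m=3$; the statement must be read with the restricted scope of the remark that follows it, and any argument purporting to establish it in that case, as yours does, necessarily contains an error (here, an unverified ``short computation'' whose true value is $0$).
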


\begin{remark}
	Note that we have $\dim X = 2c_1(X) - 1$ for $X=X^1(3)$ and $X=X^5$.
\end{remark}

\begin{proof}
	Recall that $\scal{h,[\mathrm{pt}],[\mathrm{pt}]}_{X,2,3} = 2 \scal{[\mathrm{pt}],[\mathrm{pt}]}_{X,2,2}$ so we need to prove that there is a unique conic through two general points in $X$. Consider two general points $p_1$ and $p_2$ in $\cU_Y$. Any stable map of degree $2$ passing through $p_1,p_2$ gives by projection $p_Y$ to $Y$ a stable map of degree at most $2$ passing through two general points in $Y$. With our assumptions we have $\dim Y = 2 c_1(Y) - 1$ and there exists exactly one (irreducible) stable map $f : \P^1 \to Y$ of degree $2$ through two general points in $Y$ (in our case, $Y$ is an adjoint variety, so this follows from \cite[Proposition 10]{adjoint}). We need to prove that $f$ lifts to a unique stable map in $\cU_Y$ passing through $p_1$ and $p_2$. This is equivalent to proving that the vector bundle $N_Y$ satisfies $f^*N_Y = \cO_{\P^1}(1)^{\codim_X Y}$. The unique irreducible curve passing through the $B$--stable and the $B^+$--stable points in $Y$ is the curve obtained using the cocharacter $\theta^\vee$ with $\theta$ the highest root of $G$. Using the fact that $N_Y = p_*\mathcal{L}_{\omega_Y - \omega_Z} = G \times^{P_Y} V_{P_Y}(\omega_Y - \omega_Z)$ it is easy to check that the condition $f^*N_Y = \cO_{\P^1}(1)^{\codim_X Y}$ is satisfied.
\end{proof}

%%%%%%%%%%%%%%%%%%%%%%%%%%%%%%%%%%%%%%%%%%%%%%%%%%%%%%%%%%%%%%%%%%%%%%%%%%

\section{Examples}\label{example}

In this section we compute the quantum hyperplane multiplication for the following examples: $X^1(3), X^2, X^3(3,3)$ and $X^5$. Our goal is to extend the Hasse diagram to a quantum Hasse diagram and deduce semisimplicity results for the quantum cohomology.
%We give in the appendix the quantum Hasse diagrams in the four cases we are dealing with in this section.

\subsection{Semisimplicity} 

Using our quantum Chevalley formulas, we obtain

\begin{thm}
	If $X$ is one of the varieties $X^1(3),X^2,X^3(3,3)$ or $X^5$, then $\QH(X)$ is semisimple.
\end{thm}

\begin{proof}
  The proof in all cases goes as follows. We work in the specialisation $q = 1$ of the quantum cohomology. If the algebra obtained for $q = 1$ is semisimple, so is $\QH(X)$ (this is actually an equivalence).

  Using the previous section we give in Propositions \ref{prop-chevalley-case1}, \ref{prop-chevalley-case2}, \ref{prop-chevalley-case3} and \ref{prop-chevalley-case5} Chevalley formulas for the quantum multiplication by the hyperplane class $h$ in the above cases.

  Using these Chevalley formulas, we can compute the quantum multiplication by $h^{c_1(X)}$ which is an endomorphism of the space
  $$\QH^0(X) := \oplus_{k \geq 0} \Hr^{2kc_1(X)}(X,\Z).$$
  Writing down the matrix of this endomorphism, which is a matrix of size $qh^0(X) = \dim \QH^0(X)$ we can check in all cases (see below) that it has $qh^0(X)$ different non-vanishing eigenvalues. Since $1 \in \QH^0(X)$, if $\mu(X)$ is the minimal polynomial $h^{c_1(X)}$ on $\QH^0(X)$, then $\mu(X^{c_1(X)})$ is a vanishing polynomial for $h$ on $\QH(X)$ with different (non-vanishing) eigenvalues. Since in all cases, an easy check gives the equality $\dim \QH(X) = c_1(X) \cdot \dim \QH^0(X)$, we see that $\mu(X^{c_1(X)})$ is the minimal and the characteristic polynomial of $h$ on $\QH(X)$. This implies that the endomorphism of $\QH(X)$ defined by multiplication with $h$ is semisimple with simple (non-vanishing) eigenvalues, therefore it generates the algebra which has to be semisimple.

  We now explicitly compute the matrix for $X^2$ and check that it has $qh^0(X)$ different non-vanishing eigenvalues. In the other cases, we only give the matrices obtained from Chevalley formulas. Recall that for $X=X^2$, we have $\dim X = 9$ and $c_1(X) = 7$. Set $\QH^{2i}(X) := \oplus_{k \geq 0} \Hr^{2kc_1(X)+2i}(X,\Z)$. We have the following basis:
  $$\begin{array}{lll}
    \QH^0(X) = \scal{1,\tau_{v_4}}, & \QH^2(X) = \scal{h,\tau_{v_5}}, & \QH^4(X) = \scal{\sigma'_{u_2},\tau_{v_6}}, \\
    \QH^6(X) = \scal{\sigma'_{u_3},\tau_{v_0}},& \QH^8(X) = \scal{\sigma'_{u_4},\tau_{v_1}}, & \QH^{10}(X) = \scal{\sigma'_{u_5},\tau_{v_2}}, \\
    \QH^{12}(X) = \scal{\tau_{v_3},\tau_{v'_3}}. & &\\
  \end{array}$$
  In particular we have $qh^0(X) = 2$ and we see that $\dim \QH(X) = c_1(X) \cdot qh^0(X) = 7 \cdot 2 = 14$. Let $A_i$ be the matrix, in the above basis, of the endomorphism $\QH^{2i}(X) \to \QH^{2i + 2}(X)$ induced by the quantum multiplication by $h$. By Proposition \ref{prop-chevalley-case2}, we have
  $$A_0 = \left(\begin{array}{cc}
    1 & 1 \\
    0 & 1 \\
  \end{array}
  \right),
  A_1 = \left(\begin{array}{cc}
    1 & 1 \\
    0 & 1 \\
  \end{array}
  \right),
  A_2 = \left(\begin{array}{cc}
    2 & 1 \\
    1 & 0 \\
  \end{array}
  \right),
  A_3 = \left(\begin{array}{cc}
    1 & 0 \\
    1 & 1 \\
  \end{array}
  \right),$$
$$  A_4 = \left(\begin{array}{cc}
    1 & 0 \\
    1 & 1 \\
  \end{array}
  \right),
  A_5 = \left(\begin{array}{cc}
    1 & 1 \\
    0 & 1 \\
  \end{array}
  \right),
  A_6 = \left(\begin{array}{cc}
    0 & 1 \\
    1 & 1 \\
  \end{array}
  \right).$$
  The matrix of $h^{c_1(X)}$ on $\QH^0(X)$ is the product $A_6A_5A_4A_3A_2A_1A_0$. We obtain:
  $$\left(\begin{array}{cc}
    5 & 12 \\
    12 & 29 \\
  \end{array}
  \right)$$
  which has two distinct non vanishing eigenvalues and minimal polynomial $\mu(X) = x^2 - 34 x +1$.

If $X$ is one of the varieties $X^1(3),X^2,X^3(3,3),X^5$, the matrices of $h^{c_1(X)}$, in an appropriate basis, are the following ones:
  $$\left(\begin{array}{cccc}
    18 & 20 & 15 & 38 \\
    20 & 35 & 23 & 48 \\
    15 & 33 & 20 & 38 \\
     0 & 10 &  6 &  4 \\
  \end{array}\right), 
  \left(\begin{array}{cccc}
16 & 20 & 63 & 34 \\
20 & 22 & 69 & 38 \\
 5 &  9 & 26 & 18 \\
 7 &  6 & 19 &  9 \\
  \end{array}\right),
  \left(\begin{array}{ccc}
18 & 14 & 38 \\
10 & 10 & 22 \\
 0 &  4  & 4 \\
  \end{array}\right).$$ 
  Again one can check that they have disctinct non vanishing eigenvalues.
\end{proof}

\begin{remark}\label{rem:ss}
	\begin{enumerate}
		\item The semisimplicity for $X=X^2$ can also be obtained as follows. It follows from Proposition \ref{prop:section} that $X$ is a general linear section of $\OG(5,10)$ but it was proved in \cite{pe:ss}, that the quantum cohomology of such hyperplane sections is semisimple.
		\item In the next section, we study more closely the quantum cohomology for $X=X^3(n,m)$ and prove that there is no degree $2$ term in the quantum multiplication by $h$. We also give a presentation of $\QH(X)$ in some cases. For similar varieties with non semisimple small quantum cohomology but semisimple big quantum cohomology, we refer to \cite{pe:ss}, \cite{GMS} and \cite{MSP}.
		\item The semisimplicity for $X=X^5$ can also be obtained as follows. In \cite{BP}, the authors proved that $X$ has a deformation isomorphic to the isotropic Grassmannian $\OG(2,7)$ and in \cite{adjoint} it was proved that $\QH(\OG(2,7))$ is semisimple. Since the quantum cohomology is a deformation invariant, this proves the result.
	\end{enumerate}
\end{remark}

\subsection{Case $X=X^1(3)$}

We have $\dim X = 9$ and $c_1(X) = 5$. $Z$ is a $6$-dimensional quadric. There is only one cohomology class for each degree in $Z$ except in degree $3$ so we denote by $(v_i)_{i \in [0,6]}$ with $\ell(v_i) = i$ and $v'_3$ with $\ell(v'_3) = 3$ the elements of $W^{P_Z}$. We choose to set $v_3 = s_1s_2s_3$ and $v'_3 = s_3s_2s_3$. We have $Y = \OG(2,7)$ and there is one cohomology class in degree $0$, $1$, $6$ and $7$ and two cohomology classes otherwise. We denote by $(u_i)_{i \in [0,7]}$ with $\ell(u_i) = i$ and $u'_2,u'_3,u'_4,u'_5$ with $\ell(u'_i) = i$ the elements in $W^{P_Y}$. We choose to set $u'_2 = s_1s_2$, $u'_3 = s_3 s_1s_2$, $u'_4 = s_2 s_3 s_1s_2$, $u'_5 = s_3 s_2 s_3 s_1s_2$. Note that $\s'_{u_0} = \tau'_{v_0} = 1$, $\s'_{u_1} = \tau'_{v_1} = h$, $\s_{u_7} = \tau_{v_6} = [\mathrm{pt}]$ and $\s_{u_6} = \tau_{v_5} = [\mathrm{line}]$.

\begin{prop}
  \label{prop-chevalley-case1}
	We have the formulas
	\begin{enumerate}[(1)]
		\item $h * 1 = h$
		\item $h * h = 2 \s'_{u_2} + \s'_{u'_2}$
		\item $h * \s'_{u_2} = \s'_{u_3} + \s'_{u'_3}$ and $h * \s'_{u'_2} = 2 \s'_{u'_3} + \tau_{v_0}$
		\item $h * \s'_{u_3} = 2 \s'_{u_4} + \s'_{u'_4}$, $h * \s'_{u_3} = \s'_{u_4} + 2 \s'_{u'_4} + \tau_{v_1}$ and $h * \tau_{v_0} = \tau_{v_1}$
		\item $h * \s'_{u_4} = \s'_{u_5}$, $h * \s'_{u'_4} = \s'_{u_5} + 2 \s'_{u'_5} + \tau_{v_2}$ and $h * \tau_{v_1} = \tau_{v_2} + q$
		\item $h * \s'_{u_5} = 2 \s'_{u_6} + \tau_{v_3}$, $h * \s'_{u'_5} = \s'_{u_6} + \tau_{v'_3}$ and $h * \tau_{v_2} = \tau_{v_3} + \tau_{v'_3} + q h$
		\item $h * \s'_{u_6} = \s'_{u_7} + \tau_{v_4}$, $h * \tau_{v_3} = \tau_{v_4} + q \s'_{u'_2}$ and $h * \tau_{v'_3} = \tau_{v_4} + q \s'_{u_2}$
		\item $h * \s'_{u_7} = \tau_{v_5} + q \tau_{v_0}$ and $h * \tau_{v_4} = \tau_{v_5} + q \s'_{u'_3}$
		\item $h * \tau_{v_5} = \tau_{v_6} + q \s'_{u'_4} + q \tau_{v_1}$
		\item $h * \tau_{v_6} = q \s'_{u'_5} + q \tau_{v_2} + 2 q^2.$
	\end{enumerate}
\end{prop}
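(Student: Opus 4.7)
The plan is to compute the $q^0$, $q^1$, and $q^2$ contributions to each product separately. By Lemma \ref{lem:dmax}, no higher power of $q$ can occur, and the $q^2$ contribution is confined to $h * \tau_{v_6} = h * [\mathrm{pt}]$, where it equals $2q^2$ by Lemma \ref{lem:q^2}. To identify each coefficient with a Gromov--Witten invariant I use Poincar\'e duality from Proposition \ref{prop:poincare}: in the basis $((\sigma'_u),(\tau_v))$ of Fact \ref{f:bases}(1), the coefficient of $\sigma'_{u'}$ in $h * \sigma$ is $\sum_d q^d \scal{h, \sigma, \sigma_{u'^\vee}}_{X,d,3}$ and the coefficient of $\tau_{v'}$ is $\sum_d q^d \scal{h, \sigma, \tau'_{v'^\vee}}_{X,d,3}$.

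The $q^0$ terms are Proposition \ref{prop-chev}, combined with the classical Chevalley formulas on $Y = \OG(2,7)$ and on $Z = Q^6$ (the six-dimensional quadric), with the element $\uh \in W^{P_Z}$ read off from Schubert geometry on the incidence variety $E = G/(P_Y \cap P_Z)$. For the $q^1$ corrections I proceed case by case. In $h * \sigma'_u$, the $\sigma'_{u'}$-coefficient vanishes by Lemma \ref{lem:s's}, while the $\tau_{v'}$-coefficient equals $\scal{h, \taub_{\uh}, \taub_{v'^\vee}}_{Z,1,3}$ by Lemma \ref{lem:s't'}, which applies here since $c_1(X) = 5 = c_1(Z) - 1$; this accounts in particular for the $q\tau_{v_0}$ term of~(8). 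In $h * \tau_v$, the $\sigma'_{u'}$-coefficient is $\delta_{\widetilde{u'}^\vee,\vt}$ by Lemma \ref{lem:st}, producing the $q\sigma'$ corrections in~(6)--(10). The $\tau_{v'}$-coefficient $\scal{h, \tau_v, \tau'_{v'^\vee}}_{X,1,3}$, however, is not directly covered by Lemma \ref{lem:tt} because $c_1(X) < c_1(Z)$ in the present case; this is precisely what gives the $q\tau_{v_1}$ term of~(9) and the $q\tau_{v_2}$ term of~(10).

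The main obstacle is this last case. Because $c_1(X) < c_1(Z)$, a naive line count in $Z$ produces a positive-dimensional family of degree-one curves meeting the required translates, so the invariant cannot be read off as a classical Gromov--Witten number on $Z$ and must be extracted by an excess intersection argument. The cleanest route is to lift the computation to $\Xt_Y = \P_Z(F_Z \oplus \cO_Z)$ and exploit the projective-bundle structure, keeping track of the restriction of $\cO_{\Xt_Y}(E_Y)$ to lifts of lines in $Z$; equivalently, one can analyse in $X$ the unique line through two general points of $Z$ and check via the normal bundle $F_Z$ that it contributes multiplicity one. Once these few invariants are pinned down, the ten formulas follow by assembling the $q^0$, $q^1$ and $q^2$ pieces, and the remaining work is purely combinatorial: matching up the operations $u \mapsto u^\vee, \uh, \ut$ on $W^{P_Y}, W^{P_Z}, W^{P_Y \cap P_Z}$ for $\OG(2,7)$ and $Q^6$, and reading off the classical Chevalley coefficients $c_Y$ and $c_Z$ on these well-known homogeneous spaces.
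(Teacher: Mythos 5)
Your decomposition into $q^0$, $q^1$, $q^2$ pieces, the use of Lemma \ref{lem:dmax} and Lemma \ref{lem:q^2} for the $q^2$ term, and the identification of the $q\s'$-corrections via Lemma \ref{lem:st} and of the $q\tau_{v_0}$ term in $h*\s'_{u_7}$ via Lemma \ref{lem:s't'} all match the paper. But there is a genuine gap at exactly the point you flag as "the main obstacle": the invariants $\scal{h,\tau_{v_4},\tau'_{v_6}}$, $\scal{h,\tau_{v_5},\tau'_{v_5}}$ and $\scal{h,\tau_{v_6},\tau'_{v_4}}$ are never computed. You assert that an excess-intersection analysis on $Z$ (or a lift to $\Xt_Y$) would yield the $q\tau_{v_1}$ term of (9) and the $q\tau_{v_2}$ term of (10), but you neither carry out that analysis nor explain why the analogous invariant $\scal{h,\tau_{v_4},\tau'_{v_6}}$ in line (8) must vanish. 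Since these are precisely the coefficients that cannot be read off from the lemmas you cite, the proof is incomplete as written.

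The paper avoids the excess-intersection problem entirely, and you have the tools to do the same: Lemma \ref{lem:st'} applies in case (1) because $c_1(X)-c_1(Y)+1-\codim_X Y = 5-4+1-2 = 0$, and it converts $\scal{h,\s_u,\tau'_v}_{X,1,3}$ into a degree-one invariant on $Y=\OG(2,7)$. One therefore rewrites the problematic $\tau$-classes in the second basis $((\tau'_v),(\s_u))$: the identities $\tau_{v_5}=[\mathrm{line}]=\s_{u_6}$ and $\tau_{v_6}=[\mathrm{pt}]=\s_{u_7}$ are immediate, while for line (8) one must prove $\tau_{v_4}=\s_{u_5}-\s'_{u_7}$, which the paper does by first showing $\s'_{u_2}=\s_{u_0}$ (pairing against $\s'_{u_7}$ and against $h^{\cup 7}$) and then pairing $\s_{u_5}=\lambda\s'_{u_7}+\mu\tau_{v_4}$ against $\s_{u_0}$ and $h\cup h$. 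With these substitutions, Lemmas \ref{lem:st'} and \ref{lem:s't'} pin down all three remaining invariants. Your proposal never invokes Lemma \ref{lem:st'}, so this step is missing rather than merely differently executed.
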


\begin{proof}
	We will only discuss the quantum corrections, since the classical part follows from the classical Hasse diagram. By Lemma \ref{lem:dmax}, there is no power of $q$ higher than $1$ except maybe in $h * \tau_{v_6}$. For the first four lines, there is no quantum correction since the degree is smaller than $5 = c_1(X)$. For the fifth line, we only have to compute $\scal{h,\s'_{u_4},\s_{u_7}}$, $\scal{h,\s'_{u'_4},\s_{u_7}}$  and $\scal{h,\tau_{v_1},\s_{u_7}}$. The result follows from Lemma \ref{lem:s's} and Lemma \ref{lem:st}. For line (6), we only have to compute $\scal{h,\s'_{u_5},\s_{u_6}}$, $\scal{h,\s'_{u'_5},\s_{u_6}}$ and $\scal{h,\tau_{v_2},\s_{u_6}}$. The result follows from Lemma \ref{lem:s's} and Lemma \ref{lem:st}. For line (7), we only have to compute $\scal{h,\s'_{u_6},\s_{u_5}}$, $\scal{h,\tau_{v_3},\s_{u_5}}$ and $\scal{h,\tau_{v'_3},\s_{u_5}}$. The result follows from Lemma \ref{lem:s's} and Lemma \ref{lem:st}. 

	Line (8) requires more work. We need to compute the Gromov-Witten invariants $\scal{h,\s'_{u_7},\s_{u_4}}$, $\scal{h,\tau_{v_4},\s_{u_4}}$, $\scal{h,\s'_{u_7},\tau'_{v_6}}$ and $\scal{h,\tau_{v_4},\tau'_{v_6}}$. The first three are obtained using Lemma \ref{lem:s's}, Lemma \ref{lem:st} and Lemma \ref{lem:s't'}. For the last one, we prove the equality $\tau_{v_4} = \s_{u_5} - \s'_{u_7}$. The result then follows from Lemmas \ref{lem:s't'} and \ref{lem:st'}. To prove the equality $\tau_{v_4} = \s_{u_5} - \s'_{u_7}$, we first write $\s'_{u_2} = a \s_{u_0} + b \tau'_{v_2}$. Multiplying with $\s'_{u_7}$ and using Remark \ref{rem:ss'tt'}, we have $1 = a$. Multiplying with $h^{\cup 7}$, we obtain $56 = 56a + b \tau'_{v_2} \cup h^{\cup 7}$. Finally we get $\s'_{u_2} = \s_{u_0}$. Now write  $\s_{u_5} = \lambda \s'_{u_7} + \mu \tau_{v_4}$. Multiplying with $\s'_{u_2} = \s_{u_0}$ we get $1 = \lambda$. Multiplying with $h \cup h$ we get $2 = \lambda + \mu$. It follows that $\lambda = 1$ and $\mu = 1$.

	For line (9), we need to compute $\scal{h,\tau_{v_5},\s_{u_3}}$, $\scal{h,\tau_{v_5},\s_{u'_3}}$ and $\scal{h,\tau_{v_5},\tau'_{v_5}}$. The first two are obtained using Lemma \ref{lem:st}. For the last one we remark that $\tau_{v_5} = [{\rm line}] = \s_{u_6}$ and use Lemma \ref{lem:st'}.

        For line (10), we need to compute $\scal{h,\tau_{v_6},\s_{u_2}}$, $\scal{h,\tau_{v_6},\s_{u'_2}}$ and $\scal{h,\tau_{v_6},\tau'_{v_4}}$. The first two are obtained using Lemma \ref{lem:st}. For the last one we remark that $\tau_{v_6} = [{\rm pt}] = \s_{u_7}$ and use Lemma \ref{lem:st'}. Finally the $q^2$ term follows from Lemma \ref{lem:q^2}.
\end{proof}

\subsection{Case $X=X^2$}

We have $\dim X = 9$ and $c_1(X) = 7$. Both $Y$ and $Z$ are quadrics with $\dim Y = 5$ and $\dim Z = 6$. There is only one cohomology class for each degree in $Y$ so we denote by $(u_i)_{i \in [0,5]}$ with $\ell(u_i) = i$ the elements of $W^{P_Y}$. In $Z$, for all degree except $3$ there is only one cohomology class for each degree so we denote by $(v_i)_{i \in [0,5]}$ with $\ell(v_i) = i$ and by $v'_3$ the elements in $W^{P_Z}$. We choose to set $v_3 = s_1s_2s_3$ and $v'_3 = s_3s_2s_3$. Note that $\s'_{u_0} = \tau'_{v_0} = 1$, $\s'_{u_1} = \tau'_{v_1} = h$, $\s_{u_5} = \tau_{v_6} = [{\rm pt}]$ and $\s_{u_4} = \tau_{v_5} = [{\rm line}]$.

\begin{prop}
  \label{prop-chevalley-case2}
	We have the formulas
	\begin{enumerate}[(1)]
		\item $h * 1 = h$
		\item $h * h = \s'_{u_2}$
		\item $h * \s'_{u_2} = 2 \s'_{u_3} + \tau_{v_0}$
		\item $h * \s'_{u_3} = \s'_{u_4} + \tau_{v_1}$ and $h * \tau_{v_0} = \tau_{v_1}$
		\item $h * \s'_{u_4} = \s'_{u_5} + \tau_{v_2}$ and $h * \tau_{v_1} = \tau_{v_2}$
		\item $h * \s'_{u_5} = \tau_{v_3}$ and $h * \tau_{v_2} = \tau_{v_3} + \tau_{v'_3}$
		\item $h * \tau_{v_3} = \tau_{v_4}$ and $h * \tau_{v'_3} = \tau_{v_4} + q$
		\item $h * \tau_{v_4} = \tau_{v_5} + q h$
		\item $h * \tau_{v_5} = \tau_{v_6} + q \s'_{u_2}$
		\item $h * \tau_{v_6} = q \s'_{u_3}$.
	\end{enumerate}
\end{prop}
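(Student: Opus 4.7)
The plan is to imitate the proof of the previous proposition (case~(1), $n=3$). The classical contribution to each of the ten formulas is given by Proposition~\ref{prop-chev}, so what remains is to determine the quantum corrections.

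By Lemma~\ref{lem:dmax} the power of $q$ that can appear in $h * \sigma$ is at most $\lfloor (\dim X + 1)/c_1(X)\rfloor = \lfloor 10/7\rfloor = 1$, so only $d=1$ corrections occur. A $q^1$-term in $h * \sigma$ sits in cohomological degree $\deg \sigma + 2 - 2 c_1(X) = \deg \sigma - 12$, which is negative for every input class in lines~(1)--(6) (the largest being $\sigma'_{u_5}$ and $\tau_{v_2}$, each of degree~$10$). Those six formulas therefore follow directly from the classical Chevalley formula.

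For lines~(7)--(10) I would expand $h * \sigma$ in the basis $(\sigma'_u,\tau_v)$, whose Poincar\'e dual is $(\sigma_{u^\vee},\tau'_{v^\vee})$ by Proposition~\ref{prop:poincare}, so that the coefficient of $q e_\ell$ equals $\scal{h,\sigma,e^\vee_\ell}_{X,1,3}$. The relevant numerical data are $c_1(X)=7>6=c_1(Z)$, $c_1(Y)=5$ and $\codim_X Y = 4$. The inequality $c_1(X) > c_1(Z)$ triggers the vanishing statements of Lemmas~\ref{lem:s's}, \ref{lem:s't'} and~\ref{lem:tt}, killing invariants of the three shapes $\scal{h,\sigma'_u,\sigma_w}$, $\scal{h,\sigma'_u,\tau'_v}$ and $\scal{h,\tau_v,\tau'_w}$. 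The only surviving type is $\scal{h,\sigma_u,\tau_v}_{X,1,3}$, which Lemma~\ref{lem:st} evaluates as a Kronecker delta $\delta_{\widetilde u^\vee,\widetilde v}$ on the incidence variety $E = G/(P_Y \cap P_Z)$.

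Each of lines~(7)--(10) then reduces to a few such deltas: line~(7) to $\delta_{\widetilde u_5^\vee,\widetilde v_3}$ and $\delta_{\widetilde u_5^\vee,\widetilde{v'}_3}$; line~(8) to $\delta_{\widetilde u_4^\vee,\widetilde v_4}$; line~(9) to $\delta_{\widetilde u_3^\vee,\widetilde v_5}$; and line~(10) to $\delta_{\widetilde u_2^\vee,\widetilde v_6}$ (the companion coefficient $\scal{h,\tau_{v_6},\tau'_{v_6}}$ of $q\tau_{v_0}$ vanishing by Lemma~\ref{lem:tt}). The main obstacle is this last step: identifying which of $\widetilde v_3,\widetilde{v'}_3$ matches $\widetilde u_5^\vee$, and checking the remaining three deltas, by a direct Schubert-calculus computation on $E$ in type~$B_3$. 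Using the realisations of $Y$ as the $5$-dimensional quadric $Q^5$ and of $Z$ as the spinor variety $\mathrm{Spin}_7/P_3 \simeq Q^6$, one reads off the minimal coset representatives $\widetilde u_i,\widetilde v_j \in W^{P_Y\cap P_Z}$, verifies the four matches claimed by the proposition, and recovers the quantum Hasse diagram displayed in the appendix.
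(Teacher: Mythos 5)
Your proposal is correct and follows essentially the same route as the paper: the classical part comes from the Chevalley formula, Lemma~\ref{lem:dmax} together with the degree count $\deg\sigma-12\geq 0$ restricts quantum corrections to lines (7)--(10), and those are then read off from Lemmas~\ref{lem:st} and~\ref{lem:tt} (the paper's one-line proof cites exactly these lemmas). The only piece you defer — evaluating the four Kronecker deltas $\delta_{\widetilde u^\vee,\widetilde v}$ on $E$, in particular distinguishing $\widetilde v_3$ from $\widetilde v_3'$ — is likewise left implicit in the paper, so your write-up is if anything more explicit than the original.
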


\begin{proof}
	The first 6 lines follow from the classical Hasse diagram, and the quantum parts follow from Lemmas \ref{lem:s's}, \ref{lem:s't'}, \ref{lem:st} and \ref{lem:tt}.
\end{proof}

\subsection{Case $X=X^3(3,3)$}

We have $\dim X = 9$ and $c_1(X) = 5$. Moreover $Y$ is a $6$-dimensional quadric. There is only one cohomology class for each degree in $Y$ except in degree $3$, so we denote by $(u_i)_{i \in [0,6]}$ with $\ell(u_i) = i$ and $u'_3$ with $\ell(u'_3) = 3$ the elements in $W^{P_Y}$. We choose to set $u_3 = s_1s_2s_3$ and $u'_3 = s_3s_2s_3$. We have $Z = \IG(2,6)$ and there is one cohomology class in degree $0$, $1$, $6$ and $7$ and two cohomology classes otherwise. We denote by $(v_i)_{i \in [0,7]}$ with $\ell(v_i) = i$ and $v'_2,v'_3,v'_4,v'_5$ with $\ell(v'_i) = i$ the elements in $W^{P_Z}$. We choose to set $v_2 = s_1s_2$, $v_3 = s_3 s_1s_2$, $v_4 = s_2 s_3 s_1s_2$, and $v_5 = s_3 s_2 s_3 s_1s_2$. Note that $\s'_{u_0} = \tau'_{v_0} = 1$, $\s'_{u_1} = \tau'_{v_1} = h$, $\s_{u_6} = \tau_{v_7} = [{\rm pt}]$ and $\s_{u_5} = \tau_{v_6} = [{\rm line}]$.

\begin{prop}
  \label{prop-chevalley-case3}
	We have the formulas
	\begin{enumerate}[(1)]
		\item $h * 1 = h$
		\item $h * h = 2 \s'_{u_2} + \tau_{v_0}$
		\item $h * \s'_{u_2} = 2 \s'_{u_3} + \s'_{u'_3} + \tau_{v_1}$ and $h * \tau_{v_0} = \tau_{v_1}$
		\item $h * \s'_{u_3} = \s'_{u_4} + \tau_{v_2}$, $h * \s'_{u'_3} = 2 \s'_{u_4} + \tau_{v'_2}$ and $h * \tau_{v_1} = \tau_{v_2} + \tau_{v'_2}$
		\item $h * \s'_{u_4} = 2 \s'_{u_5} + \tau_{v_3}$, $h * \tau_{v_2} = \tau_{v_3} + q$ and $h * \tau_{v'_2} = \tau_{v_3} + \tau_{v'_3}$
		\item $h * \s'_{u_5} = \s'_{u_6} + \tau_{v_4}$, $h * \tau_{v_3} = 2 \tau_{v_3} + \tau_{v'_4} + q h$ and $h * \tau_{v'_3} = \tau_{v_4} + 2 \tau_{v'_4}$
		\item $h * \s'_{u_6} = \tau_{v_5}$, $h * \tau_{v_4} = \tau_{v_5} + \tau_{v'_5} + q \s'_{u_2}$ and $h * \tau_{v'_4} = \tau_{v'_5} + q \tau_{v_0}$
		\item $h * \tau_{v_5} = \tau_{v_6}  + q \s'_{u_3}$ and $h * \tau_{v'_5} = \tau_{v_6} + q \s'_{u'_3} + q \tau_{v_1}$
		\item $h * \tau_{v_6} = \tau_{v_7}  + q \s'_{u_4} + q \tau_{v'_2}$
		\item $h * \tau_{v_7} = q \s'_{u_5} + q \tau_{v'_3}$.
	\end{enumerate}
\end{prop}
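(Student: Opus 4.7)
The proof proceeds as for the two previous propositions. The non-quantum part of each formula follows from the classical Chevalley formula (Proposition \ref{prop-chev}) applied to the homogeneous spaces $Y$ and $Z = \IG(2,6)$, so the only remaining task is the determination of the quantum corrections.

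By the remark following Lemma \ref{lem:dmax}, which is justified in Section \ref{odd} for case (3) of Pasquier's list, only $q^d$ with $d \leq 1$ can appear in $h * \sigma$; in particular the $q^2$ term allowed by Lemma \ref{lem:q^2} is absent here, even though the numerical coincidence $\dim X = 2c_1(X) - 1 = 9$ holds. A $q^1$ correction of $h * \sigma$ takes the form $q \cdot \alpha$ with $\deg \alpha = \deg \sigma - 2c_1(X) + 2 = \deg \sigma - 8$, so the first four lines of the statement, in which $\deg \sigma \leq 6$, receive no quantum correction and reduce directly to Proposition \ref{prop-chev}.

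For lines (5)--(10) the plan is to expand $h * \sigma$ in the basis $((\s'_u)_{u \in W^{P_Y}}, (\tau_v)_{v \in W^{P_Z}})$ of Fact~\ref{f:bases}, using the Poincar\'e dual basis $((\s_{u^\vee})_{u \in W^{P_Y}}, (\tau'_{v^\vee})_{v \in W^{P_Z}})$ supplied by Proposition~\ref{prop:poincare} to extract coefficients. For $\sigma = \s'_u$ the coefficient of $\s'_{u'}$ in the quantum part equals $\scal{h, \s'_u, \s_{(u')^\vee}}_{X,1,3}$ and vanishes by Lemma~\ref{lem:s's}, while the coefficient of $\tau_{v'}$ equals $\scal{h, \s'_u, \tau'_{(v')^\vee}}_{X,1,3}$ and vanishes by Lemma~\ref{lem:s't'}, applicable because $c_1(X) = c_1(Z) = 5$ in case (3); thus $h * \s'_u$ is purely classical for every $u$, which accounts for the entries $h * \s'_{u_4}$, $h * \s'_{u_5}$ and $h * \s'_{u_6}$ in the statement. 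For $\sigma = \tau_v$ the coefficient of $\s'_{u'}$ in the quantum part reduces, via Lemma~\ref{lem:st}, to an explicit Kronecker delta $\delta_{\ut^\vee, \vt}$ in $W^{P_Y \cap P_Z}$, and the coefficient of $\tau_{v'}$ reduces, via Lemma~\ref{lem:tt} (applicable precisely because $c_1(X) = c_1(Z)$), to the classical one-line Gromov--Witten number $\scal{h, \taub_v, \taub_{(v')^\vee}}_{Z, 1, 3}$ on $\IG(2,6)$.

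The main obstacle is expected to be the resulting combinatorial bookkeeping rather than any new conceptual ingredient. For each relevant pair one must compute $\ut$, $\vt$, $\uh$, $\vh$ from the explicit reduced words provided in the setup of the proposition, paying particular attention to the primed/unprimed pairs of classes in the middle degrees of $Y$ and of $\IG(2,6)$; and the needed line Gromov--Witten numbers on $\IG(2,6)$ must be read off from the quantum Chevalley formula for symplectic Grassmannians due to Buch, Kresch and Tamvakis~\cite{BKT1,BKT2}. Matching the values so obtained line by line against (5)--(10) will then complete the proof.
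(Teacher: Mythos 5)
Your proposal is correct and follows essentially the same route as the paper: the classical parts come from the Chevalley formula, the degree-one corrections are extracted via Poincaré duality and evaluated with Lemmas \ref{lem:s's}, \ref{lem:st}, \ref{lem:s't'} and \ref{lem:tt} (using $c_1(X)=c_1(Z)$), and the absence of $q^2$ terms is supplied by Corollary \ref{cor:deg1}. Your observation that Corollary \ref{cor:deg1} must override the conclusion of Lemma \ref{lem:q^2} despite the numerical coincidence $\dim X = 2c_1(X)-1$ is a correct and worthwhile remark, since the proof of that lemma implicitly also requires $\dim Y = 2c_1(Y)-1$, which fails for $Y=\IG(3,6)$.
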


\begin{proof}
	The first 6 lines follows from the classical Hasse diagram. The quantum parts follow from Lemmas \ref{lem:s's}, \ref{lem:s't'}, \ref{lem:st}, \ref{lem:tt} and the fact (see Corollary \ref{cor:deg1}) that there is no $q^2$ term in the quantum multiplication with $h$.
\end{proof}

\subsection{Case $X=X^5$}\label{subsec:4.5}

We have $\dim X = 7$ and $c_1(X) = 4$. There is only one cohomology class for each degree in $Y$ and $Z$ so we denote by $(u_i)_{i \in [0,5]}$ with $\ell(u_i) = i$ and $(v_i)_{i \in [0,5]}$ with $\ell(v_i) = i$ the elements in $W^{P_Y}$ and $W^{P_Z}$. Note that $\s'_{u_0} = \tau'_{v_0} = 1$, $\s'_{u_1} = \tau'_{v_1} = h$, $\s_{u_5} = \tau_{v_5} = [\mathrm{pt}]$ and $\s_{u_4} = \tau_{v_4} = [\mathrm{line}]$.

\begin{prop}
  \label{prop-chevalley-case5}
	We have the formulas
	\begin{enumerate}
		\item $h * 1 = h$
		\item $h * h = 3 \s'_{u_2} + \tau_{v_0}$
		\item $h * \s'_{u_2} = 2 \s'_{u_3} + \tau_{v_1}$ and $h * \tau_{v_0} = \tau_{v_1}$
		\item $h * \s'_{u_3} = 3 \s'_{u_4} + \tau_{v_2}$ and $h * \tau_{v_1} = \tau_{v_2} + q$
		\item $h * \s'_{u_4} = \s'_{u_5} + \tau_{v_3}$ and $h * \tau_{v_2} = 2 \tau_{v_3} + q h$
		\item $h * \s'_{u_5} = \tau_{v_4} + q \tau_{v_0}$ and $h * \tau_{v_3} = \tau_{v_4} + q \s'_{u_2}$
		\item $h * \tau_{v_4} = \tau_{v_5} + q \s'_{u_3} + q \tau_{v_1}$
		\item $h * \tau_{v_5} = q \s'_{u_4} + q \tau_{v_2} + 2 q^2.$
	\end{enumerate}
\end{prop}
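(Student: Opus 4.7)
The plan is to mimic the proofs given for Cases (1), (2) and (3): the classical part of each product is read off from the classical Hasse diagram via Proposition~\ref{prop-chev}, and only the quantum corrections require actual Gromov--Witten computations. Since $c_1(X)=4$, Lemma~\ref{lem:dmax} shows that in any product $h*\sigma$ no power of $q$ higher than $q^1$ can appear, except possibly in $h*\tau_{v_5}=h*[\mathrm{pt}]$, where a $q^2$ term is allowed because $\dim X=7=2c_1(X)-1$. In particular, lines (1)--(3) contain no quantum correction at all, and they follow directly from the classical Hasse diagram.

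For lines (4)--(7), the only invariants to compute are the three-point genus zero degree one Gromov--Witten invariants $\scal{h,\alpha,\beta}_{X,1,3}$ where $\alpha,\beta$ run over classes whose total degree equals $\dim X+c_1(X)-1=10$. These are all covered by the six lemmas of Section~\ref{subsec-chev}: invariants of type $\scal{h,\sigma'_u,\sigma_{u'}}$ vanish by Lemma~\ref{lem:s's}; invariants $\scal{h,\sigma_u,\tau_v}$ are Kronecker deltas by Lemma~\ref{lem:st}; and the mixed invariants involving $\sigma'$ or $\tau'$ classes are computed by Lemmas~\ref{lem:s't'}, \ref{lem:tt}, \ref{lem:st'} and \ref{lem:t't'}, provided one first checks with the help of the numerical table that Case~(5) satisfies $c_1(X)=c_1(Z)-1$, $c_1(X)-c_1(Y)+1-\codim_X Y=0$ and $c_1(X)-c_1(Y)+2-\codim_X Y>0$ (these are exactly the conditions flagged in the remarks after those lemmas). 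Because in Case (5) both $Y$ and $Z$ have a unique Schubert class in each degree, every occurrence of the "hatted" elements $\hat u$, $\hat v$, $\tilde u$, $\tilde v$ is unambiguous, so the delta functions appearing in Lemmas~\ref{lem:st} and \ref{lem:s't'} turn into completely explicit coefficients. This yields all the $q$-terms in lines (4)--(7).

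For the last two lines I would proceed in the same way, but some care is needed because $[\mathrm{pt}]$ and $[\mathrm{line}]$ coincide with both a $\sigma$-class and a $\tau$-class: $\sigma_{u_5}=\tau_{v_5}=[\mathrm{pt}]$ and $\sigma_{u_4}=\tau_{v_4}=[\mathrm{line}]$. The identifications allow one to apply, for each of the mixed invariants such as $\scal{h,\tau_{v_4},\sigma_{u_2}}$ or $\scal{h,\tau_{v_5},\tau'_{v_2}}$, whichever of Lemmas~\ref{lem:st}--\ref{lem:t't'} gives the shortest computation. Once the $q^1$-parts are pinned down, the $q^2$-term $2q^2$ in line (8) is precisely the content of Lemma~\ref{lem:q^2}, applied with $\dim X=2c_1(X)-1=7$.

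The main obstacle, as in the other cases treated in this section, is bookkeeping rather than new geometry: one must be careful to track, for each invariant, whether the relevant classes are $\sigma$-, $\sigma'$-, $\tau$- or $\tau'$-type, and to use the correct identifications $\sigma_{u_4}=\tau_{v_4}$, $\sigma_{u_5}=\tau_{v_5}$ before invoking the appropriate lemma (otherwise an innocent-looking invariant is matched against the wrong formula and one gets a spurious contribution). Once this dictionary is set up, every coefficient in the statement is obtained by a direct application of one of the lemmas of Section~\ref{subsec-chev}, and no further geometric input is needed.
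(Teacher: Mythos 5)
Your overall strategy is the same as the paper's: classical part from the Hasse diagram, the degree bound from Lemma~\ref{lem:dmax}, the $q^2$ term in line (8) from Lemma~\ref{lem:q^2}, and the degree-one corrections from the lemmas of Section~\ref{subsec-chev}. But there is a genuine gap in your treatment of line (6). The coefficient of $q\,\tau_{v_0}$ in $h*\tau_{v_3}$ is the invariant $\scal{h,\tau_{v_3},\tau'_{v_5}}$, which is of type $\scal{h,\tau,\tau'}$. The only lemma covering that type is Lemma~\ref{lem:tt}, and it applies only when $c_1(X)\geq c_1(Z)$ — which fails in case (5), where $c_1(X)=4<5=c_1(Z)$ (you yourself record $c_1(X)=c_1(Z)-1$ a few lines earlier, so citing Lemma~\ref{lem:tt} as applicable is internally inconsistent). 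This invariant is therefore \emph{not} "covered by the six lemmas" as you claim; some additional input is needed.

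What the paper does is first establish the change-of-basis identity $\tau_{v_3}=\tau'_{v_5}$: writing $\tau'_{v_5}=\lambda\,\s'_{u_5}+\mu\,\tau_{v_3}$ in the degree-$5$ part of the basis $(\s'_u,\tau_v)$, pairing with $\tau'_{v_2}$ gives $\mu=1$ and pairing with $h\cup h$ gives $\lambda+\mu=1$, hence $\lambda=0$. Only after this identification does the invariant become $\scal{h,\tau'_{v_5},\tau'_{v_5}}$, of type $\scal{h,\tau',\tau'}$, which vanishes by Lemma~\ref{lem:t't'} since $c_1(X)-c_1(Y)+2-\codim_XY=1>0$. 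Your proposal does anticipate the analogous issue in lines (7)--(8) via the identifications $\tau_{v_4}=\s_{u_4}$ and $\tau_{v_5}=\s_{u_5}$ (which convert $\scal{h,\tau,\tau'}$ into $\scal{h,\s,\tau'}$ and allow Lemma~\ref{lem:st'}), but the identity $\tau_{v_3}=\tau'_{v_5}$ is not one of the "free" identifications listed in the setup and requires its own small Poincar\'e-pairing argument; without it, line (6) does not close.
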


\begin{proof}
	We will only discuss the quantum corrections, since the classical part follows from the classical Hasse diagram. By Lemma \ref{lem:dmax} there is no power of $q$ higher than $1$ except maybe in $h * \tau_{v_5}$. For the first three lines there is no quantum correction since the degree is smaller than $4 = c_1(X)$. For the fourth line, we only have to compute $\scal{h,\s'_{u_3},\s_{u_5}}$ and $\scal{h,\tau_{v_1},\s_{u_5}}$. The result follows from Lemma \ref{lem:s's} and Lemma \ref{lem:st}. For line (5), we only have to compute $\scal{h,\s'_{u_4},\s_{u_4}}$ and $\scal{h,\tau_{v_2},\s_{u_4}}$. The result follows from Lemma \ref{lem:s's} and Lemma \ref{lem:st}. 

	Line (6) requires more work. We need to compute the Gromov-Witten invariants $\scal{h,\s'_{u_5},\s_{u_3}}$, $\scal{h,\tau_{v_3},\s_{u_3}}$, $\scal{h,\s'_{u_5},\tau'_{v_5}}$ and $\scal{h,\tau_{v_3},\tau'_{v_5}}$. The first three are obtained using Lemma \ref{lem:s's}, Lemma \ref{lem:st} and Lemma \ref{lem:s't'}. For the last one, we prove the equality $\tau_{v_3} = \tau'_{v_5}$. The result then follows from Lemma \ref{lem:t't'}. Let us now prove $\tau_{v_3} = \tau'_{v_5}$. Write $\tau'_{v_5} = \lambda \s'_{u_5} + \mu \tau_{v_2}$. Multiplying with $\tau'_{v_2}$ we get $1 = \mu$. Multiplying with $h \cup h$ we get $1 = \lambda + \mu$. It follows that $\lambda = 0$ and $\mu = 1$.

	For line (7) we need to compute $\scal{h,\tau_{v_4},\s_{u_2}}$ and $\scal{h,\tau_{v_4},\tau'_{v_4}}$. The first invariant is obtained using Lemma \ref{lem:st}. For the last one we remark that $\tau_{v_4} = [\mathrm{line}] = \s_{u_4}$ and use Lemma \ref{lem:st'}. 
	
	For line (8) we need to compute $\scal{h,\tau_{v_5},\s_{u_1}}$ and $\scal{h,\tau_{v_5},\tau'_{v_3}}$. The first invariant is obtained using Lemma \ref{lem:st}. For the last one we remark that $\tau_{v_5} = [\mathrm{line}] = \s_{u_5}$ and use Lemma \ref{lem:st'}. Finally the $q^2$ term follows from Lemma \ref{lem:q^2}. 
\end{proof}

%%%%%%%%%%%%%%%%%%%%%%%%%%%%%%%%%%%%%%%%%%%%%%%%%%%%%%%%%%%%%%%%%%%%%%%%%%

\section{The odd symplectic Grassmannian}\label{odd}

In this section we apply our enumerativity result, Corollary~\ref{cor:enum}, to deduce some results concerning the quantum multiplication in odd symplectic Grassmannians (case (3) of Pasquier's list). More precisely, in Section~\ref{s:kerspan} we use Buch's Ker/Span techniques to relate Gromov--Witten invariants of odd symplectic Grassmannians to intersection numbers in an auxiliary variety. This leads in Section~\ref{s:quantclass} to Theorem~\ref{t:quant-class}, which compares the quantum and the classical multiplication. Finally, in Section~\ref{s:quantpres} we obtain a presentation of the quantum cohomology of any odd symplectic Grassmannian, see Theorem~\ref{t:quantpres}.

\subsection{The odd symplectic Grassmannian as a horospherical variety}\label{s:IG:horo}

As in Section~\ref{ss:special} we let $n \geq 2$ be an integer and $\omega$ be an antisymmetric form of maximal rank on a complex vector space $V$ of dimension $2n+1$. If $2 \leq m \leq n$, we denote by $X=\IG(m,V)=\IG(m,2n+1)$ the Grassmannian of vector subspaces of $V$ which are isotropic for $\omega$:
	\[
		\IG(m,V) = \{ \Sigma \subset V \mid \dim \Sigma=m, \omega_{\mid \Sigma}=0 \}.
	\]
To view $X$ as a horospherical variety we let $K \subset V$ be the kernel of the form $\omega$ and $W$ be a complement to $K$ in $V$, so that $\omega\vert_W$ is symplectic. Setting $G := \Sp(W) \cong \Sp_{2n}$ we see that $X$ is $G$--horospherical and corresponds to the triple $(C_n,\omega_{m},\omega_{m-1})$, i.e., to the case (3) of Pasquier's classification. 

Indeed, $X$ is contained in $\P(\Lambda^mV)$ and as a $G$--representation we have $V = W \oplus K$, therefore $\Lambda^m V = \Lambda^m W \oplus \Lambda^{m-1} W \otimes K$, which are the irreducible representations of highest weights $\omega_m$ and $\omega_{m-1}$, respectively. One easily checks that $X$ is the closure of the $G$--orbit of the class $[v_Y + v_Z]$ of the sum of the lowest weight vectors $v_Y,v_Z$ of these two representations.

The $G$--orbits in $X$ are
\[
	Y = \{ \Sigma \in X \mid \Sigma \subset W \} = \IG(m,W)
	\textrm{ and } Z = \{ \Sigma \in X \mid K \subset \Sigma \} \cong \IG(m-1,W).
\]
Moreover, the blow--ups $\Xt_Y$, $\Xt_Z$ and $\Xt_{YZ}$ are explicitly given by:
\begin{align*}
	\Xt_Y &= \{ (\Sigma,\Sigma') \in X \times Z \mid \Sigma' \cap W \subset \Sigma \cap W \}, \\
	\Xt_Z &= \{ (\Sigma,\Sigma'') \in X \times Y \mid K + \Sigma \subset K + \Sigma'' \}, \\
	\Xt_{YZ} &= \{ (\Sigma,\Sigma',\Sigma'') \in X \times Z \times Y \mid (\Sigma,\Sigma') \in \Xt_Y \textrm{ and } (\Sigma,\Sigma'') \in \Xt_Z \}.
\end{align*}
The maps $\pi_Y,\pi_Z,p_Y,p_Z,\pi_{YZ},p_{YZ}$ are given by the obvious projections (for the last one, note that $E \subset Y \times Z$ is the incidence variety).

The automorphism group $\Aut(X)$ is the \emph{odd symplectic group}
\[
	\Sp_{2n+1}=\Sp(V) := \{ M \in \GL(V) \mid M^t \omega M= \omega \}.
\]
The odd symplectic group is not reductive, however its properties are closely related to those of the usual symplectic group, see~\cite{m:o}. 

\subsection{Schubert varieties in symplectic Grassmannians}\label{ss:schub-even}

As seen in Section~\ref{ss:cohomology} we have two Poincar\'e dual bases of the cohomology of $X:=\IG(m,V)$, obtained from the Schubert classes of the closed $\Sp_{2n}$-orbits $Y$ and $Z$, which are symplectic Grassmannians. There are many indexations for Schubert classes of symplectic Grassmannians; in this section we present those which will play a part later. 

\textbf{Indexation by Weyl group elements.} Schubert varieties of the symplectic Grassmannian $\IG(m,2n)$ are indexed by elements of $W^P$, i.e., by minimal length coset representatives of $W/W_P$. Such elements are signed permutations which may be written as
\[
	w = (w(1),w(2),\dots,w(n)) =  (y_1,y_2,\dots,y_{m-\ell},\bar z_\ell, \dots, \bar z_2,\bar z_1,v_1,v_2,\dots,v_{n-m}),
\]
where $0 \leq \ell \leq m$, $0<y_1 <y_2<\dots<y_{m-\ell}$, $0<z_1<z_2<\dots<z_\ell$, $0<v_1<v_2<\dots<v_{n-m}$, and $\bar z_i:=-z_i$.

\textbf{Indexation by pairs of partitions.} This indexation was introduced in~\cite{PR}. We recall it here since it will be needed in the proof of Theorem~\ref{t:quantpres}. A \emph{pair of partitions} for $\IG(m,2n)$ is a pair $\alpha=(\alpha^t,\alpha^b)$, where $\alpha^t$ is a strict partition contained in a $(n-m) \times n$ rectangle, $\alpha^b$ a strict partition contained in a $m\times n$ rectangle, and $\alpha^t_{n-m} \geq \ell(\alpha^b)+1$. Weyl group elements $w$ and  pairs of partitions $\alpha$ are related by the bijective map $w \mapsto \alpha$ given by
\begin{itemize}
\item $\alpha^t_r=n+1-v_r+\# \{1 \leq j \leq \ell \mid z_j < v_r\}$ for $1 \leq r \leq n-m$;
\item $\alpha^b_j = n+1-z_j$ for $1 \leq j \leq \ell$.
\end{itemize}

\textbf{Indexation by $k$--strict partitions.} This is the indexation from~\cite[Definition 1.1]{BKT2}. For $\IG(m,2n)$ we let $k=n-m$, and we say that a partition $\lambda=(\lambda_1 \geq \dots \geq \lambda_m \geq 0)$ is \emph{$k$--strict} if no part greater than $k$ is repeated, i.e. $\lambda_j > k \Rightarrow \lambda_{j} > \lambda_{j+1}$. Thereafter we will denote by $1^p$ the partition with $p$ parts all equal to $1$.

We have that $k$--strict partitions $\lambda$ are in bijective correspondence with pairs of partitions $\alpha=(\alpha^t,\alpha^b)$, via the map $\lambda \mapsto \alpha$ given by
\begin{itemize}
\item $\alpha^t_r = n+1-m - r + \#\{ 1 \leq i \leq m \mid \lambda_i \geq r \}$ for $1 \leq r \leq n-m$ ;
\item $\alpha^b_j = \lambda_j +m-n$ for $1 \leq j \leq \max \{1 \leq i \leq m \mid \lambda_i > n-m\}$.
\end{itemize}

\textbf{Indexation by index sets.} We say a sequence $\bp = (p_1 < \dots < p_m)$ with $p_1 \geq 1$ and $p_m \leq 2n$ is an \emph{index set} for $\IG(m,2n)$ if $p_i + p_j \neq 2n + 1$ for any $i,j$. There is a bijection between index sets $\bp$ and $(n-m)$--strict partitions $\lambda$ with $\lambda_1 \leq 2n-m$. This correspondence is given by
\begin{itemize}
\item $\lambda_j = 2n+1-m-p_j + \# \left\{ i < j \mid p_i+p_j > 2n+1 \right\}$;
\item $p_j = 2n+1-m-\lambda_j + \# \left\{ i < j \mid \lambda_i+\lambda_j \leq 2(n-m)+j-i \right\}$.
\end{itemize}
Index sets are most useful for understanding Schubert varieties geometrically. Indeed, let $F_\bullet:=(\left\{0\right\}=F_0 \subset F_1 \subset F_2 \subset \dots \subset F_{2n}=\C^{2n})$ be an \emph{$\Sp_{2n}$--flag}, i.e., a complete flag of $\C^{2n}$ such that $F_j^\perp=F_{2n-j}$ for any $0 \leq j \leq 2n$. Then we define the Schubert variety of $\IG(m,2n)$ associated with the index set $\bp:=\left(p_1,\dots,p_m \right)$ by
	\[
		X_{\bp}(F_\bullet) := \left\{ \Sigma \in \IG(m,2n) \mid \dim (\Sigma \cap F_{p_j}) \geq j, \forall\, 1 \leq j \leq m \right\}.
	\]

\subsection{Schubert varieties in odd symplectic Grassmannians}\label{ss:schub-odd}
We now come back to the odd symplectic Grassmannian $X:=\IG(m,V)=\IG(m,2n+1)$ from Section~\ref{s:IG:horo}. In addition to the two Poincar\'e dual bases introduced in Section~\ref{ss:cohomology}, in 2008 Mihai~\cite{m:o} constructed Schubert--type varieties for $X$ associated with Borel subgroups of the automorphism group $\Aut(X)=\Sp_{2n+1}$. Let us explain how these families of cohomology classes compare.

The Borel subgroups of $\Sp_{2n+1}$ are in one--to--one correspondence with complete flags of $V$ of the form 
\[
	F_\bullet:=(\left\{0\right\}=F_0 \subset K=F_1 \subset F_2 \subset \dots \subset F_{2n} \subset F_{2n+1}=V),
\]
where $K=\ker \omega$, and for any $1 \leq j \leq 2n+1$, $F_j^\perp = F_{2n+2-j}$. We call these flags \emph{$\Sp_{2n+1}$--flags}. Schubert varieties of $X$ have been defined by Mihai as follows.

\begin{defn}
	Let $F_\bullet$ be an $\Sp_{2n+1}$--flag and $1 \leq p_1< p_2 < \dots < p_m \leq 2n+1$ be integers such that $p_i+p_j \neq 2n+3$ for any $i,j$. We call such a sequence $\bp=( p_1,\dots,p_m)$ an \emph{odd index set for $\IG(m,2n+1)$}.
	We define the Schubert variety of $X$ associated with the odd index set $\bp$ by
	\[
		X_{\bp}(F_\bullet) := \left\{ \Sigma \in X \mid \dim (\Sigma \cap F_{p_j}) \geq j, \forall\, 1 \leq j \leq m \right\}.
	\]
\end{defn}
Mihai showed that these varieties form a CW complex; thus the corresponding cohomology classes $\upsilon_{\bp} := [X_{\bp}(F_\bullet)]$ form a $\Z$--basis for the cohomology ring $\Hr^*(X,\Z)$ of $X$. In~\cite[Section 2.2]{pech:t}, the second author introduced another indexation for these classes analogous to what happens in the symplectic case, see Section~\ref{ss:schub-even}. Namely, an \emph{$(n-m)$--strict partition for $\IG(m,2n+1)$} is a sequence of integers $\mu=(\mu_1 \geq \dots \geq \mu_m \geq -1)$ such that $\mu_1 \leq 2n+1-m$ and $\mu_1=2n+1-m$
if $\mu_m=-1$. The next proposition is straightforward.

\begin{prop}\label{p:kstrict}
	There is a bijection between odd index sets $\bp$ and $(n-m)$--strict partitions $\mu$ for $\IG(m,2n+1)$, given by
\begin{itemize}
	\item $\mu_j = 2n+2-m-p_j + \# \left\{ i < j \mid p_i+p_j > 2n+3 \right\}$;
	\item $p_j = 2n+2-m-\mu_j + \# \left\{ i < j \mid \mu_i+\mu_j \leq 2(n-m)+j-i \right\}$.
\end{itemize}
\end{prop}

It follows from this proposition that we may index the Schubert varieties of $X=\IG(m,2n+1)$ by suitable $(n-m)$--strict partitions. In particular if $\bp$ is an odd index set for $\IG(m,2n+1)$ and $\lambda$ is the corresponding $(n-m)$--strict partition as in Proposition~\ref{p:kstrict}, we set $\upsilon_\lambda := \upsilon_\bp$. In the rest of the section we will sometimes denote by $\bp(\lambda)$ the index set corresponding to the strict partition $\lambda$.

We now consider the two cohomology bases of $X$ constructed in Section~\ref{ss:cohomology}. We start by checking that the basis (1) from Fact~\ref{f:bases}, namely $\{ \sigma_u', \tau_v \}$, coincides with Mihai's Schubert--type basis $\{(\upsilon_\bp)\} = \{(\upsilon_\lambda)\}$.

\begin{prop}
	Let $\lambda$ be an $(n-m)$--strict partition of $Y=\IG(m,2n)$ and $\mu$ be an $(n+1-m)$--strict partition of $Z=\IG(m-1,2n)$. 
	Then in $\Hr^*(X,\Z)$,
	\begin{enumerate}
		\item $\sigma_\lambda' = \upsilon_\lambda$;
		\item $\tau_\mu=\upsilon_{2n+1-m,\mu_1-1,\dots,\mu_{m-1}-1}$.
	\end{enumerate}
\end{prop}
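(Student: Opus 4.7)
The plan is to compare $\sigma_\lambda'$ and $\tau_\mu$ with Mihai's Schubert varieties via a single well-chosen odd isotropic flag of $V$. First I would fix a symplectic flag $E_1 \subset \cdots \subset E_{2n-1}$ of $W$ (with $E_i^\perp = E_{2n-i}$) and extend it to an odd isotropic flag $F_\bullet$ of $V$ by setting $F_1 := K$ and $F_{j+1} := K \oplus E_j$ for $1 \leq j \leq 2n-1$. Then $E_\bullet$ defines the Schubert varieties $Y_\lambda \subset Y$ and $Z_\mu \subset Z$ (under the identification $Z \cong \IG(m-1,W)$, $\Sigma \mapsto \Sigma \cap W$), while $F_\bullet$ defines $X_\bp(F_\bullet)$. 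Write $\pi_W : V \to W$ for the projection parallel to $K$. The key geometric input is that on the open orbit $\cU_Y$ the blow-down $\pi_Z$ is an isomorphism, the composition $p_Y \circ \pi_Z^{-1}$ sends $\Sigma$ to $\pi_W(\Sigma) = (K+\Sigma) \cap W$, and for any subspace $\Sigma$ of $V$,
\[
\dim(\Sigma \cap F_{k+1}) = \dim(\pi_W(\Sigma) \cap E_k) + \dim(\Sigma \cap K),
\]
which follows from the splitting $F_{k+1} = K \oplus E_k$.

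For (1), let $\bq = (q_1 < \cdots < q_m)$ be the index set of $Y_\lambda$ in $Y = \IG(m,2n)$. On $\cU_Y$ we have $\Sigma \cap K = 0$, so the displayed formula turns $Y_\lambda' \cap \cU_Y = \{\Sigma \in \cU_Y : \pi_W(\Sigma) \in Y_\lambda\}$ into $\{\Sigma \in \cU_Y : \dim(\Sigma \cap F_{q_j+1}) \geq j\} = X_\bp(F_\bullet) \cap \cU_Y$ with $\bp = (q_1+1, \ldots, q_m+1)$. Comparing the correspondence formulas of Section~\ref{ss:schub}, the offset counts $\#\{i<j : \lambda_i+\lambda_j \leq 2(n-m)+j-i\}$ are identical for $Y$ and $X$ while the leading terms $2n+1-m-\lambda_j$ and $2n+2-m-\lambda_j$ differ by $1$, so $\bp$ is exactly the index set $\bp(\lambda)$ associated to $\lambda$ in $X$. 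Since both $Y_\lambda'$ and $X_\bp(F_\bullet)$ are irreducible closed subvarieties of $X$ of codimension $|\lambda|$, agreement on the dense open $\cU_Y$ forces global equality, yielding $\sigma_\lambda' = \upsilon_\lambda$.

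For (2), let $\br = (r_1 < \cdots < r_{m-1})$ be the index set of $Z_\mu$ in $Z = \IG(m-1,2n)$. Since $i_Z$ is a closed embedding, $\tau_\mu = [i_Z(Z_\mu)]$. For $\Sigma \in Z$ we have $K \subset \Sigma$, so the rank formula reads $\dim(\Sigma \cap F_{r_j+1}) = \dim((\Sigma \cap W) \cap E_{r_j}) + 1$; hence the Schubert conditions on $\Sigma \cap W$ lift to $\dim(\Sigma \cap F_{r_j+1}) \geq j+1$, and the ambient condition $K \subset \Sigma$ itself reads $\dim(\Sigma \cap F_1) \geq 1$. Thus $i_Z(Z_\mu) = X_\bp(F_\bullet)$ with $\bp = (1, r_1+1, \ldots, r_{m-1}+1)$. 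A direct check against the correspondence for $X$ identifies this with the partition $\nu = (2n+1-m, \mu_1-1, \ldots, \mu_{m-1}-1)$: the equality $\bp_1 = 1$ forces $\nu_1 = 2n+1-m$; for $j \geq 2$ the substitution $i' = i-1$ matches the offset set $\{i<j : \nu_i+\nu_j \leq 2(n-m)+j-i\}$ (the $i=1$ term is void since $\mu_{j-1} \geq 0$) with the corresponding offset set for $Z$, giving $\bp_j = r_{j-1}+1$.

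The main obstacle is not the geometry, which reduces to the single identity $F_{k+1} = K \oplus E_k$ combined with the description of $p_Y \circ \pi_Z^{-1}$ as $\pi_W$, but the combinatorial bookkeeping of the offset counts in the three partition-index-set correspondences for $\IG(m,2n)$, $\IG(m-1,2n)$ and $\IG(m,2n+1)$, particularly in (2) where the shifted first entry $\nu_1 = 2n+1-m$ alters the summation ranges.
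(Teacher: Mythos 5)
Your proof is correct and follows essentially the same route as the paper's: extend a symplectic flag of $W$ by the kernel $K$ to an odd isotropic flag of $V$, identify $Y'_{\bp(\lambda)}$ with $X_{\bp(\lambda)+\mathbf{1}}$ and $i_Z(Z_{\bp(\mu)})$ with $X_{1,\bp(\mu)+\mathbf{1}}$, and translate index sets back into $k$-strict partitions. You simply spell out the dimension identity and the offset-count bookkeeping that the paper dismisses with ``Clearly''; both checks are accurate.
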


\begin{proof}
Recall the decomposition $V = K \oplus W$ and the description of the closed $\Sp_{2n}$--orbits $Y=\left\{ \Sigma \in X \mid \Sigma \subset W \right\}$ and $Z=\left\{ \Sigma \in X \mid K \subset \Sigma \right\}$ from Section~\ref{s:IG:horo}. Consider an $\Sp_{2n}$--flag:
\[
	G_\bullet := \left( \left\{0\right\} \subset G_1 \subset G_2 \subset ... \subset G_{2n-1} \subset W \right),
\]
where for any $1 \leq j \leq n$ we have that $G_j^\perp \cap W=G_{2n-j}$. Here the orthogonal is taken relatively to the antisymmetric form of maximal rank $\omega$ on $V=\C^{2n+1}$. The Schubert varieties of $Y$ and $Z$ from Section~\ref{ss:cohomology} can be defined with respect to such a flag. To compare them to Mihai's varieties, we extend $G_\bullet$ as follows:
\[
	K \oplus G_\bullet := \left( \left\{0\right\} \subset K \subset K \oplus G_1 \subset \dots \subset K \oplus G_{2n-1} \subset K \oplus W=V \right).
\]
For $1 \leq j \leq 2n$, we have $(K \oplus G_j)^\perp= G_j^\perp = K \oplus (G_j^\perp \cap W)= K \oplus G_{2n-j}$, hence $K \oplus G_\bullet$ is an $\Sp_{2n+1}$--flag. Now consider the subvariety of $Y$ associated with $\lambda$,
	\[
		Y_{\bp(\lambda)}(G_\bullet) = \left\{ \Lambda \in Y \mid \dim(\Lambda \cap G_{p_j(\lambda)}) \geq j, \forall 1 \leq j \leq m \right\},
	\]
and similarly for $Z$,
	\[
		Z_{\bp(\mu)}(G_\bullet) = \left\{ \Lambda \in Z \mid \dim(\Lambda \cap G_{p_j(\mu)}) \geq j, \forall 1 \leq j \leq m-1 \right\}.
	\]
Clearly, 
	\[
		Y_{\bp(\lambda)}'(G_\bullet) := \pi_Z(p_Y^{-1}(Y_{\bp(\lambda)}(G_\bullet)))= X_{\bp(\lambda)+\mathbf{1}}(K \oplus G_\bullet),
	\]
	where the maps $\pi_Z$ and $p_Y$ are defined in Section~\ref{ss:blowups}, and
	\[
		\bp(\lambda)+\mathbf{1}:=(p_1(\lambda)+1,\dots,p_m(\lambda)+1).
	\]
	Since $[X_{\bp(\lambda)+\mathbf{1}}(K \oplus G_\bullet)]=\upsilon_\lambda$, the first statement follows. Similarly, 
	\[
		\tau_\mu=[i_Z(Z_{\bp(\mu)}(G_\bullet))] = [X_{1,\bp(\mu)+\mathbf{1}}(K \oplus G_\bullet)]=\upsilon_{2n+1-m,\mu_1-1,\dots,\mu_{m-1}-1},
	\]
	which concludes the proof.
\end{proof}

It is also interesting to see what the basis (2) from Fact~\ref{f:bases}, namely $\{\sigma_u,\tau_v')\}$ corresponds to. We will see that these basis elements no longer correspond to Schubert--type classes of $X$; however, they coincide with pullbacks of Schubert classes $\sigma_\nu^+$ from a larger symplectic Grassmannian $X^+=\IG(m,V^+)$. Here $V^+ \supset V$ is a $(2n+2)$--dimensional vector space endowed with a symplectic form $\omega^+$ such that $\omega^+_{\mid V}=\omega$, so that $X^+$ is the Grassmannian of isotropic $m$-spaces in $V^+$. 

Consider the embedding $j : X \hookrightarrow X^+$ and the induced surjective pullback map in cohomology $j^* : \Hr^*(X^+,\Z) \to \Hr^*(X,\Z)$. It was already noticed in~\cite{m:o} that $X$ can be identified with the Schubert variety of $X^+$ associated to the $(n+1-m)$--strict partition $1^m$. This immediately implies that the pullback map can be computed as follows, see also~\cite[Equation (2.3)]{pech:t}.

\begin{prop}\label{p:restrictions1}
	Let $\nu$ be an $(n+1-m)$--strict partition for $X^+$, and write
	\[
		\sigma_\nu^+ \cup \sigma_{1^m}^+ = \sum_{\beta} c_{\nu}^\beta \,\sigma_\beta^+,
	\]
	where the coefficients $c_{\nu}^\beta$ are computed from the Pieri rule of~\cite{PR}. Then 
	\[
		j^* \sigma_\nu^+ = \sum_\beta c_{\nu}^\beta\, \upsilon_{\beta-1^m} \in \Hr^*(X,\Z).
	\]
\end{prop}

Note that the classical Pieri rule of~\cite{PR} uses the indexation of Schubert classes of $X^+$ in terms of pairs of partitions introduced in Section~\ref{ss:schub-even}. Therefore in later sections we will use that indexation each time we wish to compute the coefficients $c_{\nu}^\beta$ explicitly.

We may now identify the basis elements (2) from Fact~\ref{f:bases} with pullbacks by $j$.

\begin{prop}\label{p:restrictions2}
	Let $\lambda$ be an $(n-m)$--strict partition of $Y=\IG(m,2n)$ and $\mu$ be an $(n+1-m)$-strict partition of $Z=\IG(m-1,2n)$. Then the classes $\sigma_\lambda,\tau_\mu' \in \Hr^*(X,\Z)$ satisfy
	\[
		\sigma_\lambda=j^* \sigma_{\lambda+1^m}^+ \text{ and } \tau_\mu'=j^*\sigma_\mu^+,
	\]
	where $\sigma_\nu^+$ denotes the Schubert class of $X^+=\IG(m,2n+2)$ associated with the $(n+1-m)$-strict partition $\nu$.
\end{prop}

\begin{proof}
	From Proposition~\ref{prop:poincare} we know that $\sigma_\lambda$ is the Poincar\'e dual of the class $\sigma_{\lambda^\vee}' \in \Hr^*(X,\Z)$, where $\lambda^\vee$ denotes the dual partition with respect to Poincar\'e duality \emph{in $Y=\IG(m,2n)$}. Using Proposition~2.11 from \cite{pech:t} we know that this Poincar\'e dual class coincides with a pulled-back class, i.e.
	\[
		\sigma_\lambda = j^*(\sigma^+_{\lambda^\vee+1^m})^\vee = j^*\sigma_{\lambda+1^m}^+,
	\]
	where last equality is obtained by looking at Poincar\'e duality in terms of index sets.
	The statement concerning $\tau_\mu'$ follows by a similar argument.
\end{proof}

\subsection{Ker/Span techniques}\label{s:kerspan}

In this section we relate degree $d$ Gromov-Witten invariants of the odd symplectic Grassmannian $X=\IG(m,2n+1)$ to intersection numbers in an auxiliary variety $I_d^+$. To obtain this comparison, following an idea introduced by Buch in~\cite{buch:grass} and Buch, Kresch and Tamvakis in~\cite{BKT1} we replace degree $d$ rational curves on $X$ or $X^+$ by their \emph{kernel} and \emph{span}.
\begin{defn}
	Let $C \subset X$ (resp. $C \subset X^+$) be a degree $d$ rational curve in $X$ (resp. in $X^+)$. 
	\begin{itemize}
		\item The \emph{kernel} $\Ker C$ of $C$ is the subspace of $V$ (resp. of $V^+$) defined by
			\[
				\Ker C := \bigcap_{\Sigma \in C} \Sigma.
			\]
		\item The \emph{span} $\Span C$ of $C$ is the subspace of $V$ (resp. of $V^+$) defined by
			\[
				\Span C := \Span \{ \Sigma \mid \Sigma \in C \}.
			\]
	\end{itemize}
\end{defn} 

We define a variety $I_{k,s,e}$ of kernel/span pairs in $V$ as a subvariety of the product of Grassmannians $G \times G' = \Gr(m-k,V) \times \Gr(m+s,V)$ as follows:
\[
	I_{k,s,e} := \left\{ (A,B) \in G \times G' \left|
		\begin{array}{l}
  			A \subset B \subset A^\perp \subset V,\\
  			\dim B \cap B^\perp = m-k+e \\
		\end{array}
	\right. \right\}.
\]
Here the orthogonal is defined with respect to the antisymmetric form of maximal rank $\omega$ on $V=\C^{2n+1}$. The variety $I_{k,s,e}^+$  of kernel/span pairs in $V^+=\C^{2n+2}$ is defined similarly, replacing $V$ with $V^+$ and orthogonality in $V$ with orthogonality in $V^+$ (using the symplectic form $\omega^+$ extending $\omega$). For simplicity we denote $\bigcup_ e I_{d,d,e}$ (resp. $\bigcup_ e I_{d,d,e}^+$) by $I_d$ (resp. $I_d^+$).

\begin{lemma}
	For any degree $d$ rational curve $C \subset  X$ we have
	\[
		(\Ker C,\Span C) \in \bigcup_{k,s=0}^d\bigcup_{e=0}^{k+s} I_{k,s,e}.
	\]
	The same result holds for $C \subset X^+$, replacing $I_{k,s,e}$ with $I_{k,s,e}^+$.
\end{lemma}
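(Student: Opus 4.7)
The plan is to establish three claims about $A := \Ker C$ and $B := \Span C$: first the inclusions $A \subset B \subset A^\perp$, then the dimension bounds placing $A \in \Gr(m-k, V)$ and $B \in \Gr(m+s, V)$ for some $k, s \in [0, d]$, and finally the range $e \in [0, k+s]$ for the integer $e$ defined by $\dim(B \cap B^\perp) = \dim A + e$. The first claim is essentially formal: $A \subset B$ is immediate from the definitions, while for $B \subset A^\perp$ one fixes $a \in A$ and $\Sigma \in C$, uses that $A \subset \Sigma$ and $\Sigma$ is isotropic to deduce $\omega(a, v) = 0$ for all $v \in \Sigma$, then takes the span over $\Sigma \in C$.

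For the dimension estimates I would normalize $C$ to obtain a morphism $f : \P^1 \to \Gr(m, V)$ of degree $d$. By Grothendieck's splitting theorem, $f^*\Uu_m \cong \bigoplus_{i=1}^m \cO_{\P^1}(a_i)$, and the subbundle inclusion $f^*\Uu_m \subset \cO_{\P^1} \otimes V$ forces $a_i \leq 0$, while the degree condition gives $\sum_i a_i = -d$. The key observation is that $A$ identifies with the image of the injective evaluation map $H^0(\P^1, f^*\Uu_m) \to V$, so $\dim A = \#\{i : a_i = 0\} \geq m - d$, since at most $d$ of the $a_i$ can be strictly negative. For the bound on $B$, I would apply the same reasoning to the dual curve $C^\vee \subset \Gr(\dim V - m, V^\vee)$, which has the same degree $d$ under the standard Grassmannian duality, together with the identity $\Ker C^\vee = B^\perp$: this yields $\dim B^\perp \geq \dim V - m - d$ and hence $\dim B \leq m + d$. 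Since obviously $\dim A \leq m \leq \dim B$, we may write $\dim A = m - k$ and $\dim B = m + s$ with $k, s \in [0, d]$.

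For the last item, combining $A \subset B$ with $A \subset B^\perp$ (which follows from $B \subset A^\perp$) gives $A \subset B \cap B^\perp$, hence $\dim(B \cap B^\perp) \geq m - k$, i.e.\ $e \geq 0$; the opposite inclusion $B \cap B^\perp \subset B$ gives $\dim(B \cap B^\perp) \leq m + s$, i.e.\ $e \leq k + s$. The case $C \subset X^+$ is proven verbatim after replacing $(V, \omega)$ with $(V^+, \omega^+)$. The only step with any real content is the Grothendieck splitting estimate for $\dim A$ (and its dual analog for $\dim B$); everything else reduces to formal manipulations with perps and spans. A minor subtlety to check is that $\Ker C$ and $\Span C$ depend only on the image of $C$, so passing to the normalization is harmless.
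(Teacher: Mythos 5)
Your proof is correct and follows the same route as the paper's: the inclusions $\Ker C \subset \Span C \subset (\Ker C)^\perp$ from isotropy, the dimension bounds $\dim \Ker C \geq m-d$ and $\dim \Span C \leq m+d$, and then the two inequalities $0 \leq e \leq k+s$ read off from $A \subset B \cap B^\perp \subset B$. The only difference is that the paper simply cites Buch (\emph{Quantum cohomology of Grassmannians}) for the dimension bounds, whereas you prove them in-line via the Grothendieck splitting of $f^*\Uu_m$ and the identification of $\Ker C$ with the image of $H^0(\P^1,f^*\Uu_m) \to V$ (plus duality for $\Span C$) — a correct and self-contained substitute for the citation.
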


\begin{proof}
We have the inequalities $\dim \Ker C \geq m-d$ and $\dim \Span C \leq m+d$, see \cite{buch:grass}. Moreover, since the curve $C$ is contained in an isotropic Grassmannian we have that $\Ker C \subset \Span C \subset (\Ker C)^\perp$. The inequalities 
\[
	0 \leq \dim (\Span C \cap (\Span C)^\perp) - \dim \Ker C \leq \dim \Span C-\dim \Ker C.
\]
follow.
\end{proof}

Requiring that a given degree $d$ curve $C$ should intersect a given Schubert variety $X_{\lambda}$ of $X$ (resp. $X_{\lambda}^+$ of $X^+$) defines a subvariety of $I_{k,s,e}$ (resp. $I_{k,s,e}^+$) as follows. For the sake of readability, we omit here from our notations the flags with respect to which Schubert varieties are defined:
\begin{align*}
	I_{k,s,e}(\lambda) &= \left\{ (A,B) \in I_{k,s,e} \mid \exists \, \Sigma \in X_{\lambda}, A \subset \Sigma \subset B \right\}, \\
	I_{k,s,e}^+(\lambda) &= \left\{ (A,B) \in I_{k,s,e} \mid \exists \, \Sigma^+ \in X_{\lambda}^+, A \subset \Sigma^+ \subset B \right\}.
\end{align*}
We also write $I_d(\lambda)$ for $\bigcup_ e I_{d,d,e}(\lambda)$ (resp. $I_d^+(\lambda)$ for $\bigcup_ e I_{d,d,e}^+(\lambda)$).
We may now state our comparison result.

\begin{prop}\label{p:comparison}
	Let $d \geq 0$ be an integer and $\alpha,\beta,\gamma$ be elements of the cohomology basis $\{ \sigma_\lambda,\tau_\mu'\}$ of $X=\IG(m,2n+1)$ (that is, basis (2) from Fact~\ref{f:bases}, indexed using strict partitions). Assume that
	\[
		\codim \alpha+\codim \beta+ \codim \gamma= \dim \Mod{d}{3}{X}=\dim X + d(2n+2-m).
	\]
	Following Proposition~\ref{p:restrictions2} we denote by $\alpha^+,\beta^+,\gamma^+$ the Schubert classes of $X^+=\IG(m,2n+2)$ such that
	\[
		j^* \alpha^+ = \alpha; j^* \beta^+=\beta; j^*\gamma^+=\gamma.
	\]
	Let $X_{\alpha^+}^+$, $X_{\beta^+}^+$ and $X_{\gamma^+}^+$ be Schubert varieties of $X^+$ in general position associated with $\alpha^+$, $\beta^+$, and $\gamma^+$, respectively, and denote by $I_d^+(\alpha^+)$, $I_d^+(\beta^+)$, $I_d^+(\gamma^+)$ the corresponding subvarieties of $I_d^+$. Then 
	\[
		\scal{\alpha,\beta,\gamma}_{X,d,3} =\int_{I_d^+} [I_d^+(\alpha^+)] \cup [I_d^+(\beta^+)] \cup [I_d^+(\gamma^+)] \cup [I_d].
	\]
	Moreover, the Gromov-Witten invariant $\scal{\alpha,\beta,\gamma}_{X,d,3}$ vanishes unless each of the classes $\alpha, \beta, \gamma \in \Hr^*(X,\Z)$  is either of the form $\sigma_\lambda$ with $\lambda \supset \rho_{d-1}$, or of the form $\tau_\mu'$ with $\mu \supset \rho_d$. Here we denoted by $\rho_\ell$ the $\ell$-staircase partition $(\ell,\ell-1,\dots,1)$.
\end{prop}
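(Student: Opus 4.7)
\medskip

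\noindent\textbf{Proof proposal.} The plan is to bootstrap from Corollary~\ref{cor:enum} (enumerativity) and the Ker/Span techniques of Buch--Kresch--Tamvakis~\cite{buch:grass,BKT1}, together with the pullback description of the basis elements given in Proposition~\ref{p:restrictions}. First I would use Corollary~\ref{cor:enum} to rewrite
\[
    \scal{\alpha,\beta,\gamma}_{X,d,3} = \#\{C \subset X : \deg C = d,\ C \cap gX_\alpha \neq \emptyset,\ C \cap g'X_\beta \neq \emptyset,\ C \cap g''X_\gamma \neq \emptyset\}
\]
for generic translates, where the count is over stable maps whose image is a curve of degree $d$. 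Since $j : X \hookrightarrow X^+$ is a closed embedding and $X_\alpha = j^{-1}(X_{\alpha^+}^+)$ by Proposition~\ref{p:restrictions}, any such curve is simultaneously a degree $d$ rational curve of $X^+$ meeting the corresponding Schubert varieties $X_{\alpha^+}^+, X_{\beta^+}^+, X_{\gamma^+}^+$. Thus the count can be set up inside $X^+$, with the extra condition that $C$ actually lies in $X \subset X^+$.

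Second, I would push the curve-counting problem onto the space $I_d^+$ of (kernel, span) pairs via the map $C \mapsto (\Ker C, \Span C)$. Following the strategy of \cite{buch:grass,BKT1}, the fiber over a generic pair $(A,B) \in I_d^+$ consists of curves $C \subset X^+$ whose span lies inside $B$ and whose kernel contains $A$; such curves are parametrized by a flag variety inside $\Gr(m,B/A)$. Moreover, the incidence condition $C \cap X_{\alpha^+}^+ \neq \emptyset$ becomes the condition that the pair $(A,B)$ lies in $I_d^+(\alpha^+)$. Counting curves meeting three general Schubert varieties therefore reduces---by the projection formula applied to the universal curve over $I_d^+$, together with a transversality argument à la \cite[Prop.~1]{BKT1}---to an intersection number
\[
    \int_{I_d^+} [I_d^+(\alpha^+)] \cup [I_d^+(\beta^+)] \cup [I_d^+(\gamma^+)] \cup [\text{locus}]
\]
where $[\text{locus}]$ is the class of pairs corresponding to curves actually contained in $X$. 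The claim is precisely that this locus is $I_d \subset I_d^+$. To verify this, I would use that $X \subset X^+$ is cut out by the isotropy condition for the extra row/column of the symplectic form: a curve $C \subset X^+$ lies in $X$ exactly when all $\Sigma \in C$ satisfy the $\omega$-isotropy condition, which translates into the orthogonality constraint $A \subset B \subset A^{\perp_V}$ defining $I_d$ inside $I_d^+$ (with the appropriate rank condition on $B \cap B^{\perp_V}$).

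Third, for the vanishing statement, I would analyze when the intersection $I_d^+(\alpha^+) \cap I_d$ can be non-empty by dimension. If $C$ is a generic degree $d$ rational curve in $X$, then $\dim \Ker C = m - d$ and $\dim \Span C = m + d$; hence $C$ meets a Schubert variety $X_\alpha$ only if there exists $\Sigma \in X_\alpha$ with $A \subset \Sigma \subset B$ for generic $(A,B) \in I_d$. Expressing $X_\alpha$ (via Proposition~\ref{p:restrictions}) as the intersection with $X$ of a Schubert variety of $X^+$ with $m$ parts, the existence of such $\Sigma$ forces the partition to be long enough, namely $\lambda \supset \rho_{d-1}$ in the $\sigma_\lambda$ case or $\mu \supset \rho_d$ in the $\tau_\mu'$ case. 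These staircase bounds are exactly the ones appearing in analogous statements for even symplectic Grassmannians (see \cite{BKT1}), and the extra shift between the two cases reflects the shift $\lambda \mapsto \lambda + 1^m$ of Proposition~\ref{p:restrictions}.

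The main obstacle I anticipate is the transversality/multiplicity analysis needed to guarantee that the enumerative count in $X$ agrees with the intersection number on $I_d^+$: one needs to check that general translates of $X_\alpha, X_\beta, X_\gamma$ produce a transverse family of curves whose (Ker, Span) pairs move in general position inside $I_d \subset I_d^+$, and that the fiber of the span map is generically a single curve (so no multiplicity correction is needed). This is where Corollary~\ref{cor:enum} is essential, as it provides the enumerativity needed to identify virtual and actual counts; the remaining genericity is handled by a Kleiman-type argument applied to the $\Sp_{2n+1}$-action on $X$, together with the observation that $I_d$ is itself a suitable homogeneous (or nearly homogeneous) space under the odd symplectic group.
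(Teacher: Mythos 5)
Your proposal follows essentially the same route as the paper's own (sketch of) proof: invoke Corollary~\ref{cor:enum} for enumerativity, pass to kernel/span pairs in $I_d^+$ with the containment $B\subset V$ characterising curves lying in $X$, and derive the vanishing from the staircase conditions of \cite{BKT1}, correctly accounting for the shift $\lambda\mapsto\lambda+1^m$ from Proposition~\ref{p:restrictions}. The paper makes the curve-to-pair correspondence precise via the two quoted lemmas of \cite{BKT1} (generic injectivity of $T_d^+(\alpha^+)\to X_{\alpha^+}^+$ and uniqueness of the degree $d$ curve through three subspaces with pairwise intersection $A$), which is exactly the content you flag as the remaining obstacle.
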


A proof of Proposition~\ref{p:comparison}, provided Corollary~\ref{cor:enum} holds, was already given in~\cite[Section 5.1]{pech:t} with different notation. For the non--Francophone reader's convenience we repeat here the main steps of the proof.

\begin{proof}[Sketch of proof of Prop.~\ref{p:comparison}]
	First of all we notice via a dimension count that the intersection $I_{k,s,e}^+(\alpha^+) \cap I_{k,s,e}^+(\beta^+) \cap I_{k,s,e}^+(\gamma^+) \cap I_{k,s,e}$ is empty unless $k=s=d$ and $e=0$, in which case it consists of a finite number of points. Then we use Corollary~\ref{cor:enum} to prove our first statement, namely that
	\[
		\scal{\alpha,\beta,\gamma}_{X,d,3} =\int_{I_d^+} [I_d^+(\alpha^+)] \cup [I_d^+(\beta^+)] \cup [I_d^+(\gamma^+)] \cup [I_d].
	\]
	Let $f : \P^1 \to X \subset X^+$ be a degree $d$ morphism such that
	\[
		f(0) \in X_{\alpha^+}^+ \cap X, f(1) \in X_{\beta^+}^+ \cap X, f(\infty) \in X_{\gamma^+}^+ \cap X.
	\]
	Then the Ker/Span pair $(\Ker f,\Span f)$ is an element of some intersection
	\[
		I_{k,s,e}^+(\alpha^+) \cap I_{k,s,e}^+(\beta^+) \cap I_{k,s,e}^+(\gamma^+) \cap I_{k,s,e} \subset I_{k,s,e}^+.
	\]
	This means we must have $k=s=d$ and $e=0$, so that $(\Ker f,\Span f)$ is in fact an element of $I_d^+(\alpha^+) \cap I_d^+(\beta^+) \cap I_d^+(\gamma^+) \cap I_d \subset I_d^+$ as required. 
	
	Conversely, consider an element $(A,B) \in I_d^+(\alpha^+) \cap I_d^+(\beta^+) \cap I_d^+(\gamma^+) \cap I_d$. We need to show that there exists a unique degree $d$ morphism $f : \P^1 \to X$ such that $f(0) \in X_{\alpha^+}^+ \cap X$, $f(1) \in X_{\beta^+}^+ \cap X$, and $f(\infty) \in X_{\gamma^+}^+ \cap X$. By definition of $(A,B)$ there exist elements $P,Q,R$ of $X$ such that $A \subset P,Q,R \subset B$ and 
	\[
		P \in X_{\alpha^+}^+, Q \in X_{\beta^+}^+, R \in X_{\gamma^+}^+.
	\]
	By a dimension count, we show that $P \cap Q= P \cap R= Q \cap R = A$. Moreover the set $\left\{ \Sigma^+ \in X^+ \mid A \subset \Sigma^+ \subset B \right\} \cap X_{\alpha^+}^+$ must be zero--dimensional, otherwise we could choose a space $P$ in it whose intersection with $Q$ is larger than $A$. Similarly, the intersections with $X_{\beta^+}^+$ and $X_{\gamma^+}^+$ are also zero--dimensional. We then use the following lemma from~\cite{BKT1}.
        
	\begin{lemma}[{\cite[Lemma 1.3]{BKT1}}]
		Consider
		\[
			T_d^+ := \left\{ (A,\Sigma^+,B) \mid (A,B) \in I_d^+, \Sigma^+ \in X^+, A \subset \Sigma^+ \subset B \right\},
		\]
		and for any Schubert variety $X_{\alpha^+}^+$ of $X^+$,
		\[
			T_{d}^+(\alpha^+) := \left\{ (A,\Sigma^+,B) \in T_d^+ \mid \Sigma^+ \in X_{\alpha^+}^+ \right\}.
		\]
		The projection map $T_{d}^+(\alpha^+) \to I_d^+(\alpha^+)$ is generically 1:1 when $\nu \supset \rho_d$, where $\sigma^+_\nu=\alpha^+$, and has positive--dimensional fibres otherwise.
	\end{lemma}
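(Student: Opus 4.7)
The plan is to prove the lemma by fibering $T_d^+$ over $I_d^+$ (rather than over $X^+$ directly) and analysing the resulting fibres in terms of Schubert calculus on the auxiliary Lagrangian Grassmannian $\mathrm{LG}(d,2d)$; the statement in the excerpt is then equivalent to the claim that these fibres are zero-dimensional precisely when $\lambda \supset \rho_d$.

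First I would show that the natural projection $T_d^+ \to I_d^+$ has fibre over a generic $(A,B)$ isomorphic to the Lagrangian Grassmannian $\mathrm{LG}(d,B/A)$. The key point is that for $(A,B)$ in the open locus $I_d$ we have $A = B \cap B^\perp$, so $\omega^+$ descends to a non-degenerate symplectic form on the $2d$-dimensional quotient $B/A$. Under this identification, an $m$-dimensional isotropic subspace $\Sigma^+$ with $A \subset \Sigma^+ \subset B$ corresponds exactly to a Lagrangian $\Sigma^+/A \subset B/A$, giving $\mathrm{LG}(d,B/A) \simeq \mathrm{LG}(d,2d)$.

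Next I would restrict this fibration to $T_d^+(\alpha^+) \to I_d^+$ and identify the fibre. For a generic isotropic flag $F_\bullet$ on $V^+$ defining $X_{\alpha^+}^+$, the induced flag
\[
	(F_j \cap B + A)/A \subset B/A, \qquad j \geq 1,
\]
is a complete isotropic flag on $B/A$ for generic $(A,B)$. The incidence conditions $\dim(\Sigma^+ \cap F_{p_j(\lambda)}) \geq j$ cut by $X_{\alpha^+}^+$ then translate, fibrewise, into incidence conditions on $\Sigma^+/A$ with respect to this induced flag — but only for those $j$ for which $F_{p_j(\lambda)} \cap B$ has non-trivial image in $B/A$, i.e.\ roughly for $m-d < p_j(\lambda) < m+d+1$. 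The fibre of $T_d^+(\alpha^+) \to I_d^+$ over generic $(A,B)$ is therefore a Schubert subvariety of $\mathrm{LG}(d,2d)$ indexed by an associated strict partition $\mu(\lambda)$.

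The heart of the argument, and the main obstacle, is the combinatorial claim: $\mu(\lambda) = \rho_d$ (so the Lagrangian Schubert variety is the single point class of the top-dimensional Schubert cell, forcing 1:1) if and only if $\lambda \supset \rho_d$, and $\mu(\lambda) \subsetneq \rho_d$ (forcing a positive-dimensional fibre) otherwise. To carry this out I would track the bijection between index sets $\bp(\lambda) = (p_1 < \dots < p_m)$ and strict partitions, and verify that the containment $\lambda \supset \rho_d$ is equivalent to the conditions $p_j(\lambda) \leq 2n+2-m-d+j$ for $1 \leq j \leq d$. These are exactly the conditions that force the first $d$ Schubert conditions on $\Sigma^+$ to intersect $B$ non-trivially modulo $A$ and to exhaust the $d$ flag positions of $B/A$ available on one isotropic side; any weaker condition leaves at least one coordinate of $\Sigma^+/A$ free, producing a positive-dimensional fibre. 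Once this equivalence is established, a dimension comparison with $\dim T_d^+(\alpha^+) = \dim I_d^+$ in the case $\lambda \supset \rho_d$ and $\dim T_d^+(\alpha^+) > \dim I_d^+$ otherwise yields the lemma.
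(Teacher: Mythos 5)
First, a point of reference: the paper does not prove this lemma at all — it is quoted from Buch–Kresch–Tamvakis \cite[Lemma 3]{BKT1} and used as a black box — so there is no internal proof to compare against. Your outline is in the right spirit (fibre the incidence variety over the kernel/span variety and reduce to Schubert calculus on the Lagrangian Grassmannian $\mathrm{LG}(d,B/A)$), but it has two concrete problems. The first concerns which projection the lemma is about. As printed, the map $T_d^+(\alpha^+)\to X_{\alpha^+}^+$ can never be generically $1{:}1$ for $d\geq 1$: its fibre over $\Sigma^+$ is $\left\{(A,B)\in I_d^+ \mid A\subset \Sigma^+\subset B\right\}$, of dimension $d(2n+2-m-d)>0$ independently of $\lambda$. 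The statement is really about the projection $T_d^+(\alpha^+)\to I_d^+$ onto its image $I_d^+(\alpha^+)$ — which is also what the proof of Proposition~\ref{p:comparison} actually uses. You do fibre over $I_d^+$, which is the correct map, but your asserted "equivalence" between the two statements is false, and your closing dimension comparison is with the wrong space: since $\dim T_d^+(\alpha^+)=\dim X_{\alpha^+}^+ + d(2n+2-m-d)$ for every $\lambda$, the relevant comparison is with $\dim I_d^+(\alpha^+)$, not with $\dim I_d^+$.

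The substantive gap is that your fibre analysis is performed over a generic point of $I_d^+$, whereas the lemma requires the fibre over a generic point of the image $I_d^+(\alpha^+)$. In the regime of interest (e.g.\ $\alpha^+$ close to the point class, which is exactly what occurs in the application), the image is a proper subvariety of $I_d^+$: over a generic $(A,B)\in I_d^+$ the fibre is simply empty, and the induced flag $(F_\bullet\cap B+A)/A$ is no longer in general position once one conditions on $(A,B)$ lying in the image, so the fibre is not the Schubert variety $\mathrm{LG}_{\mu(\lambda)}(d,2d)$ of expected codimension that your argument relies on. The standard repair is to use that $T_d^+(\alpha^+)$ is irreducible (it fibres over the irreducible $X_{\alpha^+}^+$ with irreducible fibres), so the generic fibre dimension over the image is $\dim T_d^+(\alpha^+)-\dim I_d^+(\alpha^+)$, and then either to bound $\dim I_d^+(\alpha^+)$ from below when $\lambda\supset\rho_d$ or to exhibit explicitly a positive-dimensional family of $\Sigma'\in X_{\alpha^+}^+$ with $A\subset\Sigma'\subset B$ when $\lambda\not\supset\rho_d$; one then still needs a separate step upgrading "generically finite" to "generically $1{:}1$". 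Finally, the combinatorial equivalence between $\lambda\supset\rho_d$ and your inequalities on the index set is only asserted — the correction terms $\#\left\{i<j \mid p_i+p_j>2n+3\right\}$ in the bijection do not obviously vanish and must be tracked.
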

	
	Applying the lemma to our setting we deduce that $\alpha^+=\sigma^+_\nu$ with $\nu \supset \rho_d$. Thus the invariant $\scal{\alpha,\beta,\gamma }_{X,d,3}$ vanishes when $\nu \not\supset \rho_d$, which is equivalent to the vanishing statement we need to prove. Moreover
	\[
		\left\{ \Sigma^+ \in X^+ \mid A \subset \Sigma^+ \subset B \right\} \cap X_{\alpha^+}^+ = \{P\} = \left\{ \Sigma \in X \mid A \subset \Sigma \subset B \right\},
	\]
	and similarly for $Q$ and $R$. We conclude the proof using the following result.
        
	\begin{lemma}[{\cite[Proposition 1]{BKT1}}]
		Let $P,Q,R$ be three elements of $X^+$ with pairwise intersections all equal to an $(m-d)$--dimensional subspace $A$. Then there exists a unique morphism $f : \P^1 \to X^+$ of degree $d$ such that
		\[
			f(0)=P,f(1)=Q,f(\infty)=R. 
		\]
	\end{lemma}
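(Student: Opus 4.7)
The plan is to reduce the construction of the degree $d$ map $f$ to the classical description of rational normal scrolls in a Grassmannian. I would first set $A := P \cap Q = P \cap R = Q \cap R$ and $B := P + Q + R$, and verify (using the pairwise-intersection hypothesis together with the general position of $P, Q, R$ inherited from the application) that $\dim B = m+d$. Since each of $P, Q, R$ is isotropic and contains $A$, orthogonality gives $P, Q, R \subset A^\perp$, hence $B \subset A^\perp$; the symplectic form $\omega^+$ then descends to a symplectic form $\omega'$ on $V' := A^\perp / A$, and $B/A$ is a $2d$-dimensional subspace of $V'$. Passing to the quotient turns $P, Q, R$ into three pairwise transverse isotropic $d$-planes $P', Q', R'$ in $B/A \simeq \C^{2d}$.

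Next I would invoke the Ker/Span correspondence: degree-$d$ morphisms $f : \P^1 \to X^+$ with kernel $A$ and span $B$ are in bijection with degree-$d$ morphisms $g : \P^1 \to \Gr(d, B/A)$ whose pulled-back tautological subbundle has no global sections and whose quotient is globally generated. Writing this subbundle as $\bigoplus_i \Oo_{\P^1}(-a_i)$ with $a_i \geq 0$ and $\sum a_i = d$, the vanishing of $H^0$ forces $a_1 = \cdots = a_d = 1$, so it is $\Oo_{\P^1}(-1)^{\oplus d}$. Any such $g$ is then given by $d$ linear pencils $v_i(t) = a_i + t b_i$ in $B/A$, modulo the $\GL_d$ action on bases. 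By an elementary linear algebra argument, three pairwise transverse $d$-planes in $\C^{2d}$ produce a unique such collection $(v_i)$ with $g(0) = P'$, $g(1) = Q'$, $g(\infty) = R'$: fix a basis $(q_i)$ of $Q'$ and split $q_i = a_i + b_i$ with $a_i \in P'$ and $b_i \in R'$, where transversality forces $(a_i)$ and $(b_i)$ to be bases of $P'$ and $R'$ respectively.

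The main obstacle will be showing that the resulting $g$ actually factors through the isotropic subvariety $\IG(d, V') \subset \Gr(d, B/A)$, even though isotropy has been imposed only at the three points $0, 1, \infty$. This should follow from the key observation that, for each pair $(i,j)$, the function $t \mapsto \omega'(v_i(t), v_j(t))$ is a polynomial of degree at most $2$ in $t$ which, by isotropy of $P', Q', R'$, vanishes at $t = 0, 1, \infty$; a polynomial of degree $\leq 2$ with three distinct roots vanishes identically, so every $g(t)$ is isotropic. Lifting $g$ back to $f : \P^1 \to X^+$ by taking preimages in $B \subset V^+$ then produces the desired morphism, and uniqueness is immediate since $A$, $B$, and the three values $f(0), f(1), f(\infty)$ have been uniquely determined along the way.
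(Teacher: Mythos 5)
Your argument is correct, and it is essentially the proof of the result being quoted: the paper itself does not prove this lemma but cites it as \cite[Proposition~1]{BKT1}, so there is no internal proof to compare against. Two remarks. First, as literally stated the lemma omits a hypothesis that your proof (rightly) cannot do without: existence fails unless $B:=P+Q+R$ has dimension exactly $m+d$ (three pairwise disjoint lines in $\C^4$ spanning a $3$-plane lie on no line of $\Gr(1,4)$). You flag this and import it from the application, which is the correct reading — in the paper the lemma is invoked for $P,Q,R$ satisfying $A\subset \Sigma\subset B$ with $(A,B)\in I_d^+$, so $\dim B=m+d$ is automatic; in BKT1 the span condition is part of the hypothesis. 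Second, BKT1's Proposition~1 is stated for the type~$A$ Grassmannian, so the quoted statement for $X^+=\IG(m,2n+2)$ really does require the extra step you supply: the reduction to $d$ pencils $s a_i+t b_i$ in $B/A\cong\C^{2d}$ coming from $g^*S\cong\Oo_{\P^1}(-1)^{\oplus d}$, followed by the observation that $\omega'(sa_i+tb_i,\,sa_j+tb_j)$ is a quadratic form in $(s:t)$ vanishing at $0,1,\infty$, hence identically, so every member of the pencil is isotropic. This is exactly how the isotropic case is handled in the BKT framework. The only points you leave implicit are routine: that $sa_i+tb_i$ are independent for every $(s:t)$ (which follows from $P'\cap R'=0$, so $g$ is a morphism and not merely a rational map), and that the resulting map is independent of the chosen basis of $Q'$, which gives uniqueness together with the forced identities $\Ker f=A$, $\Span f=B$.
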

        Note that $(A,B) \in I_d$ implies $B \subset V \subset V^+$. Since $B$ is the span of the curve constructed by the former lemma, this curve factors through $X$.
\end{proof}

As a consequence of Proposition~\ref{p:comparison} we now obtain that the quantum Pieri rule for odd symplectic Grassmannians only involves classical terms and degree one Gromov--Witten invariants.

\begin{cor}\label{cor:deg1}
  Let $1 \leq p \leq 2n+1-m$ be an integer. Then no term of $q$--degree $2$ or higher appears in a quantum product $\tau'_p * \alpha$ with $\alpha = \sigma_\lambda$ or $\alpha = \tau'_\mu$.
\end{cor}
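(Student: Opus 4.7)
The plan is to deduce the statement directly from the vanishing criterion contained in Proposition \ref{p:comparison}. Recall that, by definition of the small quantum product, we have
\[
\tau'_\lambda * \alpha \; = \; \sum_{d \geq 0} q^d \sum_{i} \scal{\tau'_\lambda,\alpha,e_i}_{X,d,3}\, e_i^\vee,
\]
where $(e_i)_i$ runs over any chosen basis of $\Hr^*(X,\Z)$; we may take the basis $((\sigma_\nu),(\tau'_\nu))$ of Fact \ref{f:bases}(2). Thus a nontrivial $q^d$-contribution to $\tau'_\lambda * \alpha$ would force at least one three-point invariant $\scal{\tau'_\lambda,\alpha,e_i}_{X,d,3}$ to be nonzero.

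Now I invoke the second assertion of Proposition \ref{p:comparison}: any such invariant vanishes unless each of the three entries satisfies the appropriate staircase containment. In particular, for the entry $\tau'_\lambda$ to contribute, one needs $\lambda \supset \rho_d = (d,d-1,\dots,1)$.

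The key observation is now purely combinatorial: the partition $\rho_d$ has exactly $d$ nonzero parts, whereas $\lambda$ has length one by hypothesis. Hence $\lambda \supset \rho_d$ is impossible as soon as $d \geq 2$, since this would require $\lambda_2 \geq d-1 \geq 1$ while $\lambda_2 = 0$. Therefore every invariant $\scal{\tau'_\lambda,\alpha,e_i}_{X,d,3}$ with $d \geq 2$ vanishes, regardless of whether $\alpha$ is of type $\sigma_\mu$ or $\tau'_\mu$, and the corresponding $q^d$ coefficient in the quantum product is zero.

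No serious obstacle arises: the whole argument is immediate from Proposition \ref{p:comparison}, and the only point worth stressing is that the staircase condition on the single entry $\tau'_\lambda$ is already enough to kill the contribution, independently of the containment conditions that $\alpha$ and $e_i$ may or may not satisfy.
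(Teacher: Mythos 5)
Your argument is correct and is exactly the paper's: both expand the quantum product in the dual bases of Fact~\ref{f:bases}, invoke the vanishing criterion at the end of Proposition~\ref{p:comparison}, and observe that $\lambda \supset \rho_d$ fails for a length-one partition once $d \geq 2$. Nothing to add.
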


\begin{proof}
The terms in $q^d$ in the product $\tau'_p * \alpha$ are of the form $\scal{\tau'_p,\alpha,\beta}_{X,d,3}\, \beta^\vee$ with $\beta = \sigma_\nu$ or $\beta = \tau'_\xi$. By Proposition~\ref{p:comparison}, the corresponding Gromov-Witten invariant vanishes for any degree $d$ such that the staircase partition $\rho_d \not \subset (p)$, therefore, as soon as $d \geq 2$.
\end{proof}

\subsection{A quantum--to--classical principle}\label{s:quantclass}

Proposition~\ref{p:comparison} computes some three-pointed degree $d$ Gromov-Witten invariants of $X=\IG(m,2n+1)$ as numbers of points in the intersection of three given subvarieties of the variety $I_d^+$ of Ker/Span pairs of dimension $(m-d,m+d)$ in $V^+ \cong \C^{2n+2}$. We now focus on degree one invariants and prove a `quantum--to--classical principle' for them, that is, we express such invariants as numbers of intersection points of subvarieties of a larger odd symplectic Grassmannian $\tildeX=\IG(m+1,2n+3)$. 

Our first step is to relate our invariants to points in $\tildeXp=\IG(m+1,2n+4)$. We start by introducing some notation for $\tildeX$ and $\tildeXp$. Denote by $S$ a symplectic vector space of dimension $2$, and write
\[
	\tildeV := V \oplus S, \quad \tildeVp := V^+ \oplus S.
\]
Clearly the antisymmetric form of maximal rank $\omega$, respectively $\omega^+$, extends to an antisymmetric form of maximal rank on $\tildeV$, respectively $\tildeVp$, and both extend the symplectic form chosen on $S$. Thus we may define
\[
	\tildeX := \IG(m+1,\tildeV) \quad \textrm{and} \quad
	\tildeXp:=\IG(m+1,\tildeVp).
        \]
We also introduce some particular subvarieties of $\tildeX$ and $\tildeXp$. Let $f$ be a non--zero element of $S$, $F_\bullet^+$ be an $\Sp_{2n+2}$--flag in $V^+$, and $\lambda$ be an $(n+1-m)$--strict partition of $X^+$. Consider the \emph{augmented} $\Sp_{2n+4}$--flag $\tildeFp_\bullet$:
\[
	\{0\} \subset \tildeFp_1=\C f \subset \tildeFp_2=\C f \oplus F_1^+ \subset  \dots \subset \tildeFp_{2n+3}=\C f \oplus F^+_{2n+2}  \subset \tildeV^+
\]
and the associated Schubert variety
\[
	\tildeXp_\lambda := \left\{ \tildeSp \in \tildeXp \mid \dim\left(\tildeSp \cap \tildeFp_{p_j(\lambda)+1}\right) \geq j, \forall 1 \leq j \leq m \right\}.
\]
For simplicity we have omitted the flag $\tildeFp_\bullet$ from the notation on the left--hand side.

\begin{prop}\label{p:quant-class}
	Let $\alpha,\beta,\gamma$ be elements of the cohomology basis $\{ \sigma_\lambda,\tau_\mu'\}$ of $X=\IG(m,2n+1)$ such that
	\[
		\codim \alpha+\codim \beta+ \codim \gamma= \dim \Mod{1}{3}{X}=\dim X + 2n+2-m.
	\]
	Following Proposition~\ref{p:restrictions2} we denote by $\sigma^+_\lambda,\sigma^+_\mu,\sigma^+_\nu$ the Schubert classes of the symplectic Grassmannian $X^+=\IG(m,2n+2)$ such that
	\[
		j^* \sigma^+_\lambda = \alpha; j^* \sigma^+_\mu=\beta; j^* \sigma^+_\nu=\gamma.
	\]
	If $\ell(\lambda)+\ell(\mu)+\ell(\nu) \leq 2m+1$, then
	\[
		\scal{\alpha,\beta,\gamma}_{X,1,3} = \frac{1}{2}\# \left( \tildeXp_\lambda \cap \tildeXp_\mu \cap \tildeXp_\nu \cap \tildeX \right),
	\]
	where $\tildeXp_\lambda$, $\tildeXp_\mu$ and $\tildeXp_\nu$ are associated to generic flags.
\end{prop}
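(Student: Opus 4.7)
The plan is to combine Proposition~\ref{p:comparison} (in the case $d=1$) with a geometric two-to-one correspondence between the set of lines on $X$ satisfying the three Schubert incidence conditions and points of $\tildeXp_\lambda \cap \tildeXp_\mu \cap \tildeXp_\nu \cap \tildeX$. First I would apply Proposition~\ref{p:comparison}: this rewrites $\scal{\alpha,\beta,\gamma}_{X,1,3}$ as the number of pairs $(A,B) \in I_1$---so $A \subset B \subset V$, $\dim A = m-1$, $\dim B = m+1$, $B \cap B^\perp = A$---such that for each $\bullet \in \{\lambda,\mu,\nu\}$, there exists $\Sigma \in X^+_{\sigma^+_\bullet}$ with $A \subset \Sigma \subset B$. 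Each such pair is the kernel/span of a unique line on $X$.

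Next I would introduce the rational map $\Phi \colon \tildeX \dashrightarrow I_1$ sending $\tildeS \mapsto (\tildeS \cap V,\, \pi_V(\tildeS))$, with $\pi_V \colon \tildeV = V \oplus S \to V$ the projection. For generic $\tildeS$, isotropy of $\tildeS$ in $\tildeV$ forces $\Phi(\tildeS) \in I_1$, and the fiber $\Phi^{-1}(A,B)$ is a torsor under $\Sp(S) \cong \Sp_2$: every preimage is the graph of an antisymplectic isomorphism $\bar\psi \colon (B/A,\,\omega|_{B/A}) \xrightarrow{\sim} (S,\omega_S)$, extended by zero on $A$. Exploiting the identity $\tildeFp_{p+1} \cap \tildeV = \C f \oplus (F^+_p \cap V)$, the incidence condition $\dim(\tildeS \cap \tildeFp_{p_j(\lambda)+1}) \geq j$ defining $\tildeXp_\lambda$ translates via $\Phi$ into the existence of an $m$-dimensional $\Sigma$ with $A \subset \Sigma \subset B$ and $\dim(\Sigma \cap F^+_{p_j(\lambda)}) \geq j$ for all $j$---exactly the defining condition of $I_1^+(\sigma^+_\lambda)$---and similarly for $\mu, \nu$. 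Hence $\Phi$ maps $\tildeXp_\lambda \cap \tildeXp_\mu \cap \tildeXp_\nu \cap \tildeX$ onto the set counted by the left-hand side.

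It then remains to compute the degree of this restriction. The involution $\iota := \id_V \oplus (-\id_S)$ of $\tildeV$ descends to a free involution on $\tildeX$ preserving each $\tildeXp_\bullet$ (since $\iota$ stabilizes the augmented flags $\tildeFp_\bullet$), and acts on fibers of $\Phi$ by $\bar\psi \mapsto -\bar\psi$. Over any $(A,B)$ counted by the left-hand side, the three Schubert conditions---read on the fiber---amount to prescribing the images of three generic points of $\P(B/A)$ under the projective transformation to $\P(S)$ induced by $\bar\psi$; this M\"obius transformation is uniquely determined by three pairs of points, and admits exactly two antisymplectic lifts $B/A \to S$, swapped by $\iota$. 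The hypothesis $\ell(\lambda) + \ell(\mu) + \ell(\nu) \leq 2m+1$ is precisely what ensures that these fiberwise conditions are transversal and of the expected type. The main obstacle is this final transversality analysis on the $\Sp_2$-fibers---showing that the three Schubert conditions cut out exactly one $\iota$-orbit of size $2$ in each fiber, without degeneration, under the length bound---after which the proposition follows by summing over orbits and dividing by $2$.
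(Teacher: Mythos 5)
Your overall strategy is the one the paper uses: reduce via Proposition~\ref{p:comparison} to counting kernel/span pairs $(A,B)$ in $I_1^+(\sigma^+_\lambda)\cap I_1^+(\sigma^+_\mu)\cap I_1^+(\sigma^+_\nu)\cap I_1$, map the quadruple intersection in $\tildeXp$ onto this set by $\tildeSp\mapsto(\tildeSp\cap V^+,(\tildeSp\oplus S)\cap V^+)$ (your $\pi_V(\tildeS)$ is the same subspace), and show each nonempty fiber has exactly two points. Your fiber count is a legitimate variant: the paper counts the two Lagrangian planes in $\IG(2,B/A\oplus S)$ incident to the three planes $\Sigma_i/A\oplus\C f_i$ by Schubert calculus on the degree-two quadric threefold $\IG(2,4)$, whereas you observe that such a plane is the graph of an antisymplectic isomorphism $B/A\to S$ whose projectivization is the unique M\"obius transformation sending $[\Sigma_i/A]$ to $[f_i]$, and that a projective isomorphism of $\P^1$'s admits exactly two antisymplectic linear lifts, swapped by $\iota=\id_V\oplus(-\id_S)$. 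These arguments are equivalent, and yours needs only that $\Sigma_1,\Sigma_2,\Sigma_3$ are pairwise distinct modulo $A$ (which comes from the proof of Proposition~\ref{p:comparison}) and that $f,g,h$ are generic.

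The gap is in where you place the hypothesis $\ell(\lambda)+\ell(\mu)+\ell(\nu)\le 2m+1$. You invoke it for ``fiberwise transversality,'' but the fiber analysis above does not need it. What the hypothesis actually does---and what your plan omits---is guarantee that the intersection $\tildeXp_\lambda\cap\tildeXp_\mu\cap\tildeXp_\nu\cap\tildeX$ is entirely contained in the locus where your rational map $\Phi$ is defined, i.e.\ where $\dim(\tildeS\cap V)=m-1$. A priori the intersection could contain points with $\dim(\tildeS\cap V)\ge m$, for instance $\tildeS=\Sigma\oplus\C f$ with $\Sigma\in X$; note that such points are exactly the fixed points of $\iota$ (so the involution is \emph{not} free on all of $\tildeX$, contrary to your claim), they are invisible to your fiber-by-fiber count, and if they occurred the identity $\scal{\alpha,\beta,\gamma}_{X,1,3}=\tfrac12\#(\cdots)$ would fail. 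The paper excludes them by stratifying $\tildeXp$ into the four orbits of $\Sp(V^+)\times\Sp(S)$ and using the length bound in a dimension count to show the intersection lies in the open orbit $\cO_4$. You need to add this step before the rest of your argument goes through.
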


Again, providing Corollary~\ref{cor:enum} holds, a proof of this proposition~ was already given in~\cite[Section 5.2]{pech:t}, and we limit ourselves to giving the main steps of the proof.

\begin{proof}[Sketch of proof of Proposition~\ref{p:quant-class}]
	We first show that for a generic choice of flags we have a well--defined map
	\begin{align*}
		\phi : \tildeXp_\lambda \cap \tildeXp_\mu \cap \tildeXp_\nu \cap \tildeX &\to I^+_1(\sigma^+_\lambda) \cap I^+_1(\sigma^+_\mu) \cap I^+_1(\sigma^+_\nu) \cap I_1 \\
		\tildeSp &\mapsto \left( \tildeSp \cap V^+,(\tildeSp \oplus S) \cap V^+ \right).
	\end{align*}
        The group $G :=\Sp(V^+) \oplus \Sp(S)$ acts on $\tildeXp$ with the following orbits:
	\begin{enumerate}
		\item $\cO_1 \cong \IG(m+1,V^+)$,
		\item $\cO_2 \cong \IG(m,V^+) \times \P(S)$,
		\item $\cO_3 = \left\{ \tildeSp \in \tildeXp \mid \dim (\tildeSp \cap V^+)=m \text{ and } \tildeSp \cap S = \{0\} \right\}$,
		\item $\cO_4 = \left\{ \tildeSp \in \tildeXp \mid \dim (\tildeSp \cap V^+)=m-1 \right\}$.
	\end{enumerate}
	Using the assumptions on $\lambda, \mu,\nu$, a dimension count shows that the intersection $\tildeXp_\lambda \cap \tildeXp_\mu \cap \tildeXp_\nu \cap \tildeX$ must be contained in the open orbit $\cO_4$, which proves that the map $\phi$ is well--defined.
	
	It remains to show that for $(A,B)$ in the intersection
	\[
		I^+_1(\sigma^+_\lambda) \cap I^+_1(\sigma^+_\mu) \cap I^+_1(\sigma^+_\nu) \cap I_1,
	\]
	the inverse image $\phi^{-1}(A,B)$ consists of exactly two points. In the proof of Proposition~\ref{p:comparison} we saw that we must have $A=B \cap B^\perp$, so that $U:=B/A \oplus S$ is a four--dimensional symplectic vector space. Moreover the corresponding Lagrangian Grassmannian $\IG(2,U)$ identifies with a subvariety of $\tildeXp$, namely
	\[
		\IG(2,U) \cong \left\{ \tildeSp \mid A \subset \tildeSp \subset B \oplus S \right\}.
	\]
	In the proof of Proposition~\ref{p:comparison} we also saw that there exists a unique triple of pairwise distinct subspaces $\Sigma_1^+ \in X^+_\lambda \cap X$, $\Sigma_2^+ \in X^+_\mu \cap X$ and $\Sigma_3^+ \in X^+_\nu \cap X$ such that $A \subset \Sigma^+_j \subset B$ for $j=1,2,3$. Moreover by a Schubert calculus argument it is easy to see that there exist exactly two elements of $\IG(2,U)$ which are incident to all three of $\Sigma_1/A \oplus \C f$, $\Sigma_2/A \oplus \C g$, and $\Sigma_3/A \oplus \C h$, where $f,g,h$ are generic elements of $S$. Thus there also exist two elements $\tildeSp_{j=1,2} \in \tildeXp$ such that $A \subset \tildeSp_j \subset B \oplus S$ and the dimensions
	\[
		\dim\left( \tildeSp_j \cap \Sigma_1 \oplus \C f \right), \dim\left( \tildeSp_j \cap \Sigma_2 \oplus \C g \right), \dim\left( \tildeSp_j \cap \Sigma_3 \oplus \C h \right)
	\]
	are all at least equal to $m$. Clearly the $\tildeSp_j$ belong to the quadruple intersection $\tildeXp_\lambda \cap \tildeXp_\mu \cap \tildeXp_\nu \cap \tildeX$.
	
	Conversely if $\tildeSp \in \tildeXp_\lambda \cap \tildeXp_\mu \cap \tildeXp_\nu \cap \tildeX$ is such that $A \subset \tildeSp_j \subset B \oplus S$, then we must have $A = \tildeSp \cap V^+$ and $B = (\tildeSp \oplus S) \cap V^+$, and one can also show that $\Sigma_1 = (\tildeSp \oplus \C f) \cap V^+$, and similarly for $\Sigma_2$ and $\Sigma_3$. This implies that the dimensions
	\[
		\dim\left( \tildeSp_j \cap \Sigma_1 \oplus \C f \right), \dim\left( \tildeSp_j \cap \Sigma_2 \oplus \C g \right), \dim\left( \tildeSp_j \cap \Sigma_3 \oplus \C h \right)
	\]
	are all at least equal to $m$, thus $\phi^{-1}(A,B)$ consists of exactly two points.
\end{proof}

We may now state our quantum--to--classical principle, which is an immediate corollary of Proposition~\ref{p:quant-class}. In the following theorem we denote by $\tildes_u$ the Schubert class of $\tildeX=\IG(m+1,2n+3)$ associated with an $(n-m)$--strict partition $u$, and by $\tildet_v'$ the Schubert class of $\tildeX$ associated with an $(n+1-m)$--strict partition $v$.

\begin{thm}\label{t:quant-class}
	Let $\alpha,\beta,\gamma$ be elements of the cohomology basis $\{ \sigma_u,\tau_v'\}$ of $X=\IG(m,2n+1)$ such that
	\[
		\codim \alpha+\codim \beta+ \codim \gamma= \dim X+2n+2-m.
	\]
	Denote by $\sigma^+_\lambda,\sigma^+_\mu,\sigma^+_\nu$ the Schubert classes of $X^+=\IG(m,2n+2)$ such that
	\[
		j^* \sigma^+_\lambda = \alpha; j^* \sigma^+_\mu=\beta; j^* \sigma^+_\nu=\gamma.
	\]
	If $\ell(\lambda)+\ell(\mu)+\ell(\nu) \leq 2m+1$, then
	\[
		\scal{\alpha,\beta,\gamma}_{X,1,3} = \frac{1}{2} \scal{\tilde\alpha,\tilde{\beta},\tilde{\gamma}}_{\tildeX,0,3},
	\]
	where 
	\begin{align*}
		\tilde\alpha = 	\begin{cases}
								\tildes_u & \text{if $\alpha=\sigma_u$}, \\
								\tildet_v' & \text{if $\alpha=\tau_v'$},
							\end{cases}
	\end{align*}
	and similarly for $\tilde{\beta}$ and $\tilde{\gamma}$.
\end{thm}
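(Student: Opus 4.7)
The strategy is to use Proposition~\ref{p:quant-class} as the bridge: it already computes $\scal{\alpha,\beta,\gamma}_{X,1,3}$ as (half of) a finite intersection count in $\tildeXp = \IG(m+1,2n+4)$, and the remaining task is to show that this count coincides with the classical triple intersection on $\tildeX = \IG(m+1,2n+3)$ that defines the right-hand side of the theorem.

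The first step is the direct application of Proposition~\ref{p:quant-class}, which yields
\[
	\scal{\alpha,\beta,\gamma}_{X,1,3} \;=\; \tfrac{1}{2}\,\#\bigl(\tildeXp_\lambda \cap \tildeXp_\mu \cap \tildeXp_\nu \cap \tildeX\bigr).
\]
Let $\tildej : \tildeX \hookrightarrow \tildeXp$ denote the inclusion. Applying Kleiman's transversality theorem to the natural $\Sp(\tildeVp)$-action on $\tildeXp$, for a generic choice of the flags used to define the three $\tildeXp_\bullet$ the intersection is transverse, and the count equals $\int_{\tildeX} \tildej^*[\tildeXp_\lambda] \cup \tildej^*[\tildeXp_\mu] \cup \tildej^*[\tildeXp_\nu]$.

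The second, crucial step is the cohomological identification $\tildej^*[\tildeXp_\lambda] = \tilde\alpha$ (and similarly for $\tilde\beta$ and $\tilde\gamma$) in $\Hr^*(\tildeX,\Z)$, which is an analogue of Proposition~\ref{p:restrictions} for the inclusion $\tildej$. I plan to prove it by first showing that $\tildeXp_\lambda$ is the Schubert variety of $\tildeXp$ whose index set is $(\bp(\lambda)+\mathbf{1},\,2n+4)$, corresponding to the partition $(\lambda,0)$ on $\tildeXp$, and then applying the index-shift rules of Proposition~\ref{p:restrictions} in the analogous setting $\tildeX \hookrightarrow \tildeXp$ to match the pullback with the class $\tilde\alpha$ predicted by the theorem. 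Plugging this back into the formula from the first step then gives the desired equality $\scal{\alpha,\beta,\gamma}_{X,1,3} = \tfrac{1}{2}\,\scal{\tilde\alpha,\tilde\beta,\tilde\gamma}_{\tildeX,0,3}$.

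The main obstacle is the class identification step. It requires carefully tracking how the augmented flag $\tildeFp_\bullet$ on $\tildeVp$ interacts with the subspace $\tildeV$, how the $m$ incidence conditions defining $\tildeXp_\lambda$ pull back along $\tildej$, and how the index-set-to-partition bijection of Section~\ref{ss:schub} converts the resulting Schubert-type subvariety into the class $\tilde\alpha$. The $\tau'$-type case $\alpha = \tau'_v$ goes through directly since the last padded part is $0$; the $\sigma$-type case $\alpha = \sigma_u$ requires an additional appeal to the Poincar\'e duality on $\tildeX$ from Proposition~\ref{prop:poincare} in order to exchange between the two Schubert basis types on $\tildeX$.
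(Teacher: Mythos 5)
Your route is exactly the one the paper takes: Theorem~\ref{t:quant-class} is stated there as an ``immediate corollary'' of Proposition~\ref{p:quant-class}, and the content you supply --- transversality to convert the point count $\#\bigl(\tildeXp_\lambda\cap\tildeXp_\mu\cap\tildeXp_\nu\cap\tildeX\bigr)$ into $\int_{\tildeX}\tildej^*[\tildeXp_\lambda]\cup\tildej^*[\tildeXp_\mu]\cup\tildej^*[\tildeXp_\nu]$, together with the identification of $[\tildeXp_\lambda]$ as the Schubert class of $\tildeXp$ with index set $(\bp(\lambda)+\mathbf{1},2n+4)$, i.e.\ with padded partition $(\lambda,0)$, followed by the analogue of Proposition~\ref{p:restrictions} for $\tildej$ --- is precisely what the paper leaves implicit. (For the transversality step it is slightly cleaner to invoke the reducedness already established inside the proof of Proposition~\ref{p:quant-class}, or degree-zero enumerativity on $\tildeX$ itself via Corollary~\ref{cor:enum}, since the augmented flags are not generic for the full $\Sp(\tildeVp)$-action; but this is a minor point.)

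The one step of your plan that would not survive execution is the treatment of the case $\alpha=\sigma_u$. Because the padded partition $(\lambda,0)$ always has last part equal to $0$, its restriction $\tildej^*\tildes^+_{(\lambda,0)}$ is in \emph{every} case a class of $\tildet'$-type on $\tildeX$, namely $\tildet'_\lambda$: for $\alpha=\tau'_v$ this is $\tildet'_v$ as required, and for $\alpha=\sigma_u$ (so $\lambda=u+1^m$) it is $\tildet'_{u+1^m}$, of codimension $|u|+m=\codim_X\sigma_u$. This is the only possible reading of the theorem's $\tilde\alpha=\tildes_u$, and it is how the theorem is actually used in the proof of Theorem~\ref{t:quantpres}, where every entry is written as $\tildej^*\tildes^+_\lambda$; the pushforward class from $\tildeY=\IG(m+1,2n+2)$ has codimension $|u|+m+1$ and so cannot be the answer. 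In particular your proposed appeal to Poincar\'e duality on $\tildeX$ to ``exchange between the two Schubert basis types'' cannot work --- duality replaces codimension $c$ by $\dim\tildeX-c$ and cannot repair a shift by one --- and it is also unnecessary: no exchange is needed once one notes that the last part of $(\lambda,0)$ vanishes, so the restriction lands directly in the $\tildet'$-part of the basis.
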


\begin{remark}
  The condition $\ell(\lambda)+\ell(\mu)+\ell(\nu) \leq 2m+1$ will always be satisfied in the case where one of the partitions, say $\nu$, has length one, i.e. $\nu=(p,0,\dots,0)$ for some $1 \leq p \leq 2n+1-m$. Keeping in mind the vanishing of invariants of degree larger than one, see Corollary~\ref{cor:deg1}, it follows that one may use Theorem~\ref{t:quant-class} to deduce a \emph{quantum Pieri formula} for the product of a class $\sigma_u$ or $\tau_v'$ by the classes $\tau_1',\dots,\tau'_{2n+1-m}$. 
\end{remark}

\subsection{Quantum presentation}\label{s:quantpres}

In this section we give a presentation for the quantum cohomology ring of $\IG(m,2n+1)$ in terms of the classes $\tau_1',\tau_2',\dots,\tau_{2n+1-m}'$ and the quantum parameter $q$. This presentation is a quantum version of the classical presentation obtained in \cite{pech:t}, which we recall here.

\begin{defn}
	For $r \geq 1$, define
	\[
		d_r := \det(\tau_{1+j-i}')_{1 \leq i,j \leq r} \quad \textrm{and} \quad b_r:= (\tau_r')^2 + 2 \sum_{i \geq 1} (-1)^i \tau_{r+i}' \tau_{r-i}',
	        \]
	with the convention that $\tau_0' := 1$ and $\tau_p' := 0$ if $p<0$ or $p>2n+1-m$.
\end{defn}

The following result is proved in the second author's PhD thesis, see \cite[Prop. 2.12]{pech:t}. However it is in French, therefore we repeat the proof here.

\begin{prop}\label{p:classical-presentation}
	The cohomology ring $\Hr^*(\IG(m,2n+1),\Z)$ is generated by the classes $(\tau'_p)_{1 \leq p \leq 2n+1-m}$ and the relations are
	\begin{align*}
		d_r=0 & \text{ for $m+1 \leq r \leq 2n+2-m$,} \\
		b_r=0 & \text{ for $n+2-m \leq r \leq n$}.
	\end{align*}
\end{prop}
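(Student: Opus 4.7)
The plan is the standard ``verify the relations, then match $\C$-dimensions'' strategy. First I would invoke Proposition \ref{p:restrictions} to identify each $\tau'_p$ with the pullback $j^*\sigma^+_{(p)}$ of a special Schubert class on $X^+ := \IG(m, 2n+2)$ along the embedding $j : X \hookrightarrow X^+$; under the usual identification on $X^+$ this is $c_p(Q)$, the $p$-th Chern class of the tautological quotient bundle $Q$ on $X$. Since $X$ is cut out in $X^+$ by a regular section of the globally generated rank-$m$ bundle $(U^+)^\vee$ (the section $\omega^+(v_0, -)$ detecting which $\Sigma \in X^+$ lie in the hyperplane $V \subset V^+$), a Koszul/Lefschetz-type argument shows that $j^* : \Hr^*(X^+, \C) \to \Hr^*(X, \C)$ is surjective; in particular, the $\tau'_p$ generate $\Hr^*(X, \C)$.

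Next I would verify that the listed relations hold. The relations $d_r = 0$ come from the embedding $X \hookrightarrow \Gr(m, V^+)$: the tautological subbundle $U$ has rank $m$, so $c_r(U) = 0$ for $r > m$, and the Jacobi--Trudi identity gives $c_r(U) = (-1)^r d_r(\tau'_1, \dots, \tau'_r)$. The relations $b_s = 0$ encode isotropy. On $X^+$, the short exact sequences
\[
  0 \to U^+ \to (U^+)^\perp \to (U^+)^\perp/U^+ \to 0, \qquad 0 \to (U^+)^\perp \to V^+ \to (U^+)^\vee \to 0,
\]
combined with the self-duality of the symplectic bundle $(U^+)^\perp/U^+$, yield the Chern-class identity
\[
  c(Q^+)(t) \cdot c(Q^+)(-t) \;=\; c((U^+)^\perp/U^+)(t)^2,
\]
whose coefficient of $t^{2s}$ on the left is $(-1)^s b_s^+$. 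Since $(U^+)^\perp/U^+$ has rank $2n+2-2m$ on $X^+$, the right-hand side has bounded degree, yielding $b_s^+ = 0$ on $X^+$ for $s$ large. Pulling back to $X$ and combining with the further rank drop coming from the kernel line $K = \ker \omega \subset V$ of the odd symplectic geometry sharpens the bound to the advertised range $n+2-m \leq s \leq n$.

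Finally I would compare dimensions. From Fact \ref{f:bases} we have $\dim_\C \Hr^*(X, \C) = |W^{P_Y}| + |W^{P_Z}|$ with $Y = \IG(m, 2n)$ and $Z = \IG(m-1, 2n)$. To upgrade the natural surjection $R := \C[\tau'_1, \dots, \tau'_{2n+1-m}]/(\{d_r\},\{b_s\}) \twoheadrightarrow \Hr^*(X, \C)$ to an isomorphism, I would exhibit a monomial spanning set for $R$ indexed by the same $k$-strict partitions that parametrize the Schubert bases of Section \ref{ss:schub} (split into two families matching the $\sigma$- and $\tau$-type classes), and verify $\dim_\C R \leq \dim_\C \Hr^*(X, \C)$. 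The main obstacle lies in exactly this last combinatorial step --- matching each $k$-strict partition to a leading monomial in a suitable term order --- together with the Step 2 sharpening from the $X^+$-bound $s \geq 2n+3-2m$ to the $X$-bound $s \geq n+2-m$, which requires exploiting the specific odd-symplectic structure rather than merely the inclusion $X \hookrightarrow X^+$.
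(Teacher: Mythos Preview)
The paper does not prove this proposition; it is quoted from \cite[Prop.~2.12]{pech:t} without argument, so there is no in-paper proof to compare against. Your strategy is the standard one and the treatment of generators and of the $d_r$ relations is correct, but the verification of the $b_s$ relations contains a concrete error that drives the rest of that paragraph off course.

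The identity you state,
\[
c(Q^+)(t)\,c(Q^+)(-t) \;=\; c\bigl((U^+)^\perp/U^+\bigr)(t)^2,
\]
is not right. From the two exact sequences you list one obtains $c(Q^+)(t)=1/c(U^+)(t)$ and $c\bigl((U^+)^\perp/U^+\bigr)(t)=1/\bigl[c(U^+)(t)\,c((U^+)^\vee)(t)\bigr]$; since $c(U^+)(-t)=c((U^+)^\vee)(t)$, the left-hand side equals $c\bigl((U^+)^\perp/U^+\bigr)(t)$ \emph{without} the square. This correction works in your favour: the bundle $(U^+)^\perp/U^+$ has rank $2(n{+}1{-}m)$, so the right-hand side has $t$-degree at most $2(n{+}1{-}m)$, and hence $(-1)^s b_s^+$, the coefficient of $t^{2s}$, already vanishes on $X^+$ for every $s\ge n{+}2{-}m$. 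Pulling back along $j$ then yields $b_s=0$ on $X$ in the full range $n{+}2{-}m\le s\le n$ immediately. The vague ``further rank drop coming from the kernel line $K$'' was only needed because your squared identity produced too weak a bound ($s\ge 2n{+}3{-}2m$ instead of $s\ge n{+}2{-}m$); with the corrected identity that step can simply be deleted. The remaining gap --- completeness of the relations via a dimension count --- is the one you already acknowledge.
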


\begin{proof}
Let $\SX$ denote the tautological bundle of $X=\IG(m,2n+1)$, and $\SXp$ that of $X^+=\IG(m,2n+2)$. Similarly we denote by $\QX$ and $\QXp$ the corresponding quotient bundles. We know that the Chern classes of $\QXp$ generate the cohomology of $X^+$, see~\cite[Section 1.2]{BKT2}, and it follows from Proposition~\ref{p:restrictions2} that the restriction map $\Hr^*(X^+,\Z) \to \Hr^*(X,\Z)$ is surjective, therefore, the Chern classes $c_p(\QX)$ for $1 \leq p \leq 2n+2-m$ generate the cohomology ring of $X$. Moreover, since $\QX$ has rank $2n+1-m$, the Chern class $c_{2n+2-m}(\QX)$ vanishes, hence $\Hr^*(X,\Z)$ is generated by the classes $c_p(\QX)=\tau_p'$ for $1 \leq p \leq 2n+1-m$. 

To find the relations, we use the method introduced in~\cite{BKT2} to obtain presentations of the cohomology of isotropic Grassmannians. Namely, let $R := \Z[a_1,\dots,a_{2n+1-m}]$ be a graded ring with $\deg a_i=i$. Set $a_0=0$ and $a_i=0$ if $i>2n+1-m$. We let $\delta_0=1$ and $\delta_r=\det(a_{1+j-i})_{1 \leq i,j \leq r}$ for $r >0$. We also set $\beta_r=a_r^2+2\sum_{i \geq 1} (-1)^i a_{r+i}a_{r-i}$ for $r \geq 0$. Now we define a (surjective) graded ring homomorphism $\phi \colon R \to \Hr^*(X,\Z)$ by setting $\phi(a_i)= \tau_i'$ for $1 \leq i \leq 2n+1-m$. We start by checking that the relations hold, i.e., that $\phi(\delta_r)=0$ for $r>m$ and $\phi(\beta_r)=0$ for $n+2-m \leq r \leq n$.

Expanding each determinant $\delta_r$ with respect to the first row yields the identity of formal series:
\[
	\left(\sum_{i=0}^{2n-1} a_i t^i \right)\left(\sum_{i \geq 0} (-1)^i \delta_i t^i\right) = 1.
\]
Noticing that the image by $\phi$ of the leftmost factor is the total Chern class $c(\QX)$, and that $c(\QX)c(\SX)=1$, we deduce that
\[
	\phi(\SX)=\left(\sum_{i \geq 0} (-1)^i \phi(\delta_i) t^i\right).
\]
Since $\SX$ has rank $m$ we obtain that $\phi(\delta_r)=d_r=0$ for $r>m$.

The quadratic relations $b_r$ come from similar relations on $X^+=\IG(m,2n+2)$. Indeed it follows from the presentation of $\Hr^*(X^+,\Z)$ proved in \cite{BKT2} that the relations
\[	
	\sigma^+_r + 2\sum_{i \geq 1} (-1)^i \sigma^+_{r+i}\sigma^+_{r-i}=0
\]
hold for $n+2-m \leq r \leq n$, and their restriction to $X$ gives $b_r=0$.

It now remains to prove that both sets of relations generate the kernel of $\phi$. This is done using~\cite[Lemma 1.1]{BKT2}, which states that to do so we only need to check that
\begin{itemize}
\item $\Hr^*(X,\Z)$ is a free $\Z$--module of rank $\deg\left(\frac{\prod_r \delta_r \prod_r \beta_r}{\prod_i a_i}\right)$;
\item for any field $K$ the $K$--vector space $(R/I) \otimes_\Z K$ has finite dimension, where $I$ is the ideal generated by the relations $\delta_r$ and $b_r$.
\end{itemize}
The rank of $\Hr^*(X,\Z)$ is easily seen to be equal to $2^{m-1}\binom{n}{m}\frac{2n+2-m}{n+1-m}$ as required, see~\cite[Section 2.2.3]{pech:t}. To check the second condition we prove that $R/I$ is a quotient of $R/(\delta_{m+1},\dots,\delta_{2n+1})$, which is a free $\Z$--module of finite rank, see~\cite[Lemma 1.2]{BKT2}. This means that we need to prove that the relations $\delta_r$ for $2n+3-m \leq r \leq 2n+1$ also belong to the ideal $I$. We use the identities of formal series
\[
	\left( \sum_{i=0}^{2n+1-m} a_i t^i \right)\left(\sum_{i=0}^{2n+1-m} (-1)^i a_i t^i\right) = \left(\sum_{i=0}^{2n+1-m} (-1)^i \beta_i t^{2i}\right)
\]
and
\[
	\left(\sum_{i=0}^{2n+1-m} (-1)^i a_i t^i\right)\left(\sum_{i \geq 0} \delta_i t^i\right)=1,
\]
so that
\[
	\left( \sum_{i=0}^{2n+1-m} a_i t^i \right)= \left(\sum_{i=0}^{2n+1-m} (-1)^i \beta_i t^{2i}\right)\left(\sum_{i \geq 0} \delta_i t^i\right).
\]
Modding out by the relations in $I$ gives
\[
	\left( \sum_{i=0}^{2n+1-m} a_i t^i \right) \equiv \left(\sum_{i=0}^{n+1-m} (-1)^i \beta_i t^{2i} + \sum_{i=n+1}^{2n+1-m} (-1)^i \beta_i t^{2i} \right)\left(\sum_{i=0}^m \delta_i t^i + \sum_{i \geq 2n+3-m} \delta_i t^i \right).
\]
Looking at the terms in degrees $2n+3-m$ to $2n+1$ we obtain $0 \equiv \delta_{2n+3-m} \equiv \dots \equiv \delta_{2n+1}$ as claimed, which concludes the proof.
\end{proof}

\begin{thm}\label{t:quantpres}
  The quantum cohomology ring $\QH(\IG(m,2n+1),\C)$ is generated by the classes $(\tau_p')_{1 \leq p \leq 2n+1-m}$ and the quantum parameter $q$, and the relations are
\[
	{\renewcommand{\arraystretch}{2.2}
\begin{array}{lcl}
		d_r & = & 0 \text{ for $m+1 \leq r \leq 2n+1-m$,} \\
		d_{2n+2-m} & = & \begin{cases}
						0 & \text{if $m$ is even,} \\
						-q & \text{if $m$ is odd,}
					 \end{cases} \\
		b_s & = & (-1)^{2n+1-m-s} q \tau_{2s-2n-2+m}' \text{ for $n+2-m \leq s \leq n$,}
	\end{array}}
\]
	with the convention that $\tau_p'=0$ if $p<0$.
\end{thm}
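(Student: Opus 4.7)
The plan is to apply a Siebert--Tian-style reconstruction: starting from the classical presentation in Proposition~\ref{p:classical-presentation}, I evaluate each generator of the classical ideal in the quantum ring $\QH(X,\C)$, identify the resulting quantum correction, and conclude via flatness that the quantum-corrected relations generate the full quantum ideal.

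The first crucial input is Corollary~\ref{cor:deg1}: every pairwise product $\tau'_p*\alpha$ in $\QH(X,\C)$, with $\alpha$ a Schubert basis element, has $q$-degree at most $1$; hence no $q^2$ correction can enter the determinant $d_r$ or the quadratic sum $b_s$ when computed through iterated pairwise products. Combined with the fact that the quantum parameter $q$ has Chow degree $c_1(X)=2n+2-m$, this pins down the shape of every possible quantum correction. For $d_r$ in the range $m+1\leq r\leq 2n+1-m$, the Chow degree $r$ is strictly less than $c_1(X)$, so the only possible contribution is the classical one, which vanishes. For $d_{2n+2-m}$ the classical part still vanishes and the sole possible quantum term is a scalar multiple $c_0\,q$. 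For $b_s$ with $n+2-m\leq s\leq n$, the only possible quantum term is $c_s\,q\,\tau'_{2s-2n-2+m}$, with the convention that the last factor is zero whenever its index is negative.

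To compute the scalars $c_0$ and $c_s$, I invoke the quantum-to-classical principle, Theorem~\ref{t:quant-class}: every three-point degree-one Gromov--Witten invariant entering one of the above quantum corrections equals $\tfrac{1}{2}$ of a classical triple intersection on the auxiliary horospherical variety $\tildeX=\IG(m+1,2n+3)$. Expanding $d_{2n+2-m}$ and $b_s$ as combinations of such three-point invariants and applying Theorem~\ref{t:quant-class}, the problem reduces to evaluating classical intersection numbers on $\tildeX$. These intersections are accessible through the analogue of Proposition~\ref{p:classical-presentation} on $\tildeX$ together with the pullback description of $\tildet'_p$ from $\IG(m+1,2n+4)$ in Proposition~\ref{p:restrictions}, and a direct manipulation of the Jacobi--Trudi determinants yields $c_0=0$ or $-1$ according to the parity of $m$, and $c_s=(-1)^{2n+1-m-s}$.

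Once these quantum-corrected relations are established, flatness of $\QH(X,\C)$ over $\C[q]$ finishes the argument: the stated relations reduce modulo $q$ to the classical presentation of Proposition~\ref{p:classical-presentation}, so the quotient of $\C[q][\tau'_1,\dots,\tau'_{2n+1-m}]$ by the ideal of the theorem has the same $\C[q]$-rank and the same Hilbert series as $\QH(X,\C)$, forcing equality. The main obstacle will be the sign bookkeeping for the quadratic relations $b_s$: tracking how the factor $\tfrac{1}{2}$ from Theorem~\ref{t:quant-class} interacts with the Pfaffian-type expansion of $b_s$ and with the restriction from $\IG(m+1,2n+4)$ down to $\tildeX$ requires a careful combinatorial check to produce the precise sign $(-1)^{2n+1-m-s}$.
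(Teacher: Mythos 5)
Your proposal follows essentially the same route as the paper: the Siebert--Tian deformation of the classical presentation, degree constraints on $q$ to rule out higher corrections, the quantum-to-classical principle of Theorem~\ref{t:quant-class} (with its factor $\tfrac12$), and classical Pieri/Schubert calculus on $\IG(m+1,2n+3)$ and $\IG(m+1,2n+4)$ to evaluate the coefficients. The one caveat is that the shape $c_s\,q\,\tau'_{2s-2n-2+m}$ of the correction to $b_s$ is not forced by degree alone, since several basis classes live in complex degree $p=2s-2n-2+m$; in the paper the computation first produces $\sigma'_{p}$ plus, in some cases, $\tau_{1^{p-2n-2+2m}}$, and only the identity $j^*\sigma_p^+=\tau'_p$ identifies this sum with the single class $\tau'_p$.
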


\begin{proof}
	To obtain this quantum presentation from the classical presentation of Proposition~\ref{p:classical-presentation}, we use the well--known method due to Siebert and Tian, see~\cite[Prop. 2.2]{ST}. This means that we replace the cup product with the quantum product in the classical relations $d_r$ and $b_s$. The quantum presentation is then obtained by considering these new relations instead of the original ones.
	
	Let us start by deforming the relations $b_s$ for $n+2-m \leq s \leq n$. We have
	\begin{align*}
		b_s^{\text{quant}} =& \tau_s' * \tau_s' + 2\sum_{i=1}^{2n+1-m-s} (-1)^i \tau_{s+i}' * \tau_{s-i}' \\
		= & q \sum_\lambda \Bigg( \scal{j^* \sigma_s^+,j^* \sigma_s^+,j^* \sigma_\lambda^+}_{X,1,3}  \\
		&+ \left. 2\sum_{i=1}^{2n+1-m-s} (-1)^i \scal{j^* \sigma_{s+i}^+,j^* \sigma_{s-i}^+,j^* \sigma_\lambda^+}_{X,1,3} \right) (j^* \sigma_\lambda^+)^\vee,
	\end{align*}
	where $j : X \hookrightarrow X^+=\IG(m,2n+2)$ is the natural inclusion.
	Indeed, the classical part of $b_s^{\text{quant}}$ is the classical relation $b_s$, which vanishes, and by Proposition~\ref{p:restrictions2} we have that $\tau_p' = j^*\sigma_p^+$. We now use the quantum--to--classical principle from Theorem~\ref{t:quant-class}. 
	
	Write $\tildeX := \IG(m+1,2n+3)$, $\tildeXp:=\IG(m+1,2n+4)$, and denote by $\tildej : \tildeX \hookrightarrow \tildeXp$ the natural inclusion. We also denote by $\tildes_\lambda^+$ the Schubert class of $\tildeXp$ associated with an $(n+1-m)$--strict partition $\lambda$ with $\lambda_1 \leq 2n+3-m$. Finally, we introduce the subvarieties $\tildeY$ and $\tildeZ$ of $\tildeX$ and the cohomology classes $\{\tildes_\mu,\tildet_\nu'\}$ and $\{\tildes_\mu',\tildet_\nu\}$ of $\tildeX$ as in Section~\ref{ss:schub-odd}, simply replacing $X$ with $\tildeX$ and adding the symbol $\tilde{}$ everywhere.
	It follows from Theorem~\ref{t:quant-class} that
	\begin{align*}
		b_s^{\text{quant}} =& \frac{1}{2}q \sum_\lambda \Bigg(\scal{\tildej^* \tildes_s^+,\tildej^* \tildes_s^+,\tildej^* \tildes_\lambda^+}_{\tildeX,0,3}  \\
		&\left. + 2\sum_{i=1}^{2n+1-m-s} (-1)^i \scal{\tildej^* \tildes_{s+i}^+,\tildej^* \tildes_{s-i}^+,\tildej^* \tildes_\lambda^+}_{\tildeX,0,3} \right) (j^* \sigma_\lambda^+)^\vee \\
		=& \frac{1}{2}q \sum_\lambda \left( \int_{\tildeX} \left( \tildet_s' \tildet_s' +2 \sum_{i=1}^{2n+1-m-s} (-1)^i \tildet_{s+i}'\tildet_{s-i}' \right) \cup \tildej^* \tildes_\lambda^+ \right) (j^* \sigma_\lambda^+)^\vee.
	\end{align*}
	In $\Hr^*(\tildeX,\Z)$, by Proposition~\ref{p:classical-presentation}, the classical relation
	\[
		\tildet_s' \tildet_s' +2 \sum_{i=1}^{2n+2-m-s} (-1)^i \tildet_{s+i}'\tildet_{s-i}'=0
	\]
	is satisfied, thus 
	\[
		\tildet_s' \tildet_s' +2 \sum_{i=1}^{2n+1-m-s} (-1)^i \tildet_{s+i}'\tildet_{s-i}' = 2(-1)^{2n+1-m-s} \tildet_{2n+2-m}'\tildet_{2s-2n-2+m}'.
	\]
	Replacing in the expression of $b_s^{\text{quant}}$, we obtain
	\begin{align*}
		b_s^{\text{quant}} &= (-1)^{2n+1-m-s}q \sum_\lambda \left( \int_{\tildeX}  \tildet_{2n+2-m}' \cup \tildet_{2s-2n-2+m}' \cup \tildej^* \tildes_\lambda^+ \right) (j^* \sigma_\lambda^+)^\vee \\
		&= (-1)^{2n+1-m-s}q \sum_\lambda \left( \int_{\tildeX}  \tildej^* (\tildes_{2n+2-m}^+ \cup \tildes_{2s-2n-2+m}^+) \cup \tildej^* \tildes_\lambda^+ \right) (j^* \sigma_\lambda^+)^\vee.
		\end{align*}
	To simplify notation we introduce the integer $p:=2s-2n-2+m$. We use the classical Pieri rule in $\tildeXp$ to compute the product $\tildes_{2n+2-m}^+ \cup \tildes_{p}^+$, see \cite[Theorem 1.1]{BKT2}:
	\[
		\tildes_{2n+2-m}^+ \cup \tildes_p^+= \tildes_{2n+2-m,p}^+ +2 \tildes_{2n+3-m,p-1}^+.
	\]
	We now prove that the second summand will not contribute to $b_s^{\text{quant}}$. Indeed, let us prove that $\tildej^* \tildes_\lambda^+=0$ when $\lambda_1=2n+3-m$. From Proposition~\ref{p:restrictions1} applied to $\tildeX$ we have that $\tildej^* \tildes_\lambda^+=0$ when $\tildes_\lambda^+ \cup \tildes_{1^{m+1}}^+=0$. The latter cup product can be computed using the Pieri rule of~\cite{PR}. Namely, let $\alpha=(\alpha^t,\alpha^b)$ be the pair of partitions associated with the $(n+1-m)$--strict partition $\lambda$ following the correspondence introduced in Section~\ref{ss:schub-even}. Clearly, $\lambda_1=2n+3-m$ implies that $\alpha^b_1=n+2$, i.e., the first part of $\alpha^b$ is maximal. Therefore, $\tildes^+_\alpha \cup \tildes^+_{1^{m+1}}=0$ by the Pieri rule. For more details we refer to~\cite[Section 2.3]{pech:t}. Thus we indeed have $\tildej^* \tildes_\lambda^+=0$ when $\lambda_1=2n+3-m$. It follows that
	\begin{align*}
		b_s^{\text{quant}} &= (-1)^{2n+1-m-s}q \sum_\lambda \left( \int_{\tildeX}  \tildej^* \tildes_{2n+2-m,p}^+ \cup \tildej^* \tildes_\lambda^+ \right) (j^* \sigma_\lambda^+)^\vee.
	\end{align*}
	Let us now express the restricted class $\tildej^* \tildes_{2n+2-m,p}^+$ in terms of the cohomology basis $\{\tildes_\mu,\tildet_\nu'\}$. As before, from Proposition~\ref{p:restrictions1} it follows that we first need to compute the cup product $\tildes_{2n+2-m,p}^+ \cup \tildes_{1^{m+1}}^+$ in $\Hr^*(\tildeXp,\Z)$. 
	
	We use again the classical Pieri rule for multiplication by the classes $\tildes_{1^r}^+$ from \cite{PR}, noting that the strict partition $(2n+2-m,p)$ corresponds to the pair of partitions 
	\[
		\alpha^b=(n+1,p+m-1-n), \alpha^t=(n+3-m,\dots,4,3)
	\]
	if $p>n+1-m$, and to
	\[
			\alpha^b=(n+1), \alpha^t=(n+3-m,n+2-m,\dots,n+4-m-p,n+2-m-p,\dots,3,2)
	\]
	otherwise.
	We deduce that
	\[
		\tildes_{2n+2-m,p}^+ \cup \tildes_{1^{m+1}}^+ = \tildes_{2n+3-m,p+1,1^{m-1}}^+
	\]
	if $p \leq 2n+1-2m$, and otherwise
	\[
		\tildes_{2n+2-m,p}^+ \cup \tildes_{1^{m+1}}^+ = \tildes_{2n+3-m,p+1,1^{m-1}}^++\tildes_{2n+3-m,2n+2-m,1^{p-2n-2+2m}}^+.
	\]
	Hence in the first case,
	\[
		\tildej^* \tildes_{2n+2-m,p}^+ = [\tildeX_{2n+2-m,p}] = \tildet_{p+1,1^{m-1}},
	\]
	while in the second case,
	\[
		\tildej^* \tildes_{2n+2-m,p}^+ = \tildet_{p+1,1^{m-1}} + \tildet_{2n+2-m,1^{p-2n-2+2m}}.
	\]
	By Poincar\'e duality, see Proposition~\ref{prop:poincare}, we know that $\int_{\tildeX}  \tildet_{p+1,1^{m-1}} \cup \tildej^* \tildes_\lambda^+$ is equal to $1$ if $\lambda_{m+1}=0$ and $\lambda$ is the Poincar\'e dual partition to $(p+1,1^{m-1})$ in $\tildeZ=\IG(m,2n+2)$. Otherwise it is equal to zero, and similarly for $\int_{\tildeX}  \tildet_{2n+2-m,1^{p-2n-2+2m}} \cup \tildej^* \tildes_\lambda^+$.
	
	Using \cite[Section 4.4]{BKT2}, we see that the Poincar\'e dual partition to $(p+1,1^{m-1})$ is
	\[
		(2n+1-m,2n-m,\dots,2n+3-2m,2n+2-2m-p)
	\]
	if $p \leq 2n+1-2m$, and
	\[
		(2n+1-m,2n-m,\dots,p+2,p,p-1,\dots,2n+2-2m,1) 
	\]
	otherwise. Similarly, the Poincar\'e dual partition to $(2n+2-m,1^{p-2n-2+2m})$ is
	\[
		(2n+1-m,2n-m,\dots,p+1,p-1,p-2,\dots,2n+2-2m).
	\]
	So if $p \leq 2n+1-2m$ then
	\begin{align*}
		b_s^{\text{quant}} &= (-1)^{2n+1-m-s}q (j^* \sigma_{2n+1-m,2n-m,\dots,2n+3-2m,2n+2-2m-p}^+)^\vee \\
		&= (-1)^{2n+1-m-s}q \sigma_{2n-m,2n-1-m,\dots,2n+2-2m,2n+1-2m-p}^\vee,
	\end{align*}
	and otherwise,
	\begin{align*}
		b_s^{\text{quant}} =& (-1)^{2n+1-m-s}q (j^* \sigma_{2n+1-m,\dots,p+2,p,\dots,2n+2-2m,1}^+)^\vee \\
		&+ (-1)^{2n+1-m-s}q (j^* \sigma_{2n+1-m,\dots,p+1,p-1,\dots,2n+2-2m}^+)^\vee\\
		=& (-1)^{2n+1-m-s}q \sigma_{2n-m,\dots,p+1,p-1,\dots,2n+1-2m}^\vee \\
		&+ (-1)^{2n+1-m-s}q (\tau_{2n+1-m,\dots,p+1,p-1,\dots,2n+2-2m}')^\vee.
	\end{align*}
	Moreover, if $p \leq 2n+1-2m$ then the Poincar\'e dual partition to the partition $(2n-m,2n-1-m,\dots,2n+2-2m,2n+1-2m-p)$ in $Y$ is $(p)$. On the other hand if $p \geq 2n+2-2m$ the Poincar\'e dual partition to $(2n-m,\dots,p+1,p-1,\dots,2n+1-2m)$ in $Y$ is $(p)$, while the Poincar\'e dual partition to $(2n+1-m,\dots,p+1,p-1,\dots,2n+2-2m)$ in $Z$ is $(1^{p-2n-2+2m})$. Hence
	\begin{align*}
		b_s^{\text{quant}} = \begin{cases}
			(-1)^{2n+1-m-s}q \sigma_{p}' & \text{if $p \leq 2n+1-2m$,}\\
			(-1)^{2n+1-m-s}q (\sigma_{p}'+\tau_{1^{p-2n-2+2m}}) & \text{otherwise.}
							 \end{cases}
	\end{align*}
	Let us compare the right--hand side of the above equation with the pullback $j^* \sigma_{p}^+$ using Proposition~\ref{p:restrictions1} and the classical Pieri rule from~\cite{PR}. Note that the strict partition $(p)$ corresponds to the pair of partitions given by
	\[
		\begin{cases}
			\alpha^b=(p+m-1-n), \alpha^t=(n+2-m,n+1-m,\dots,2) & \text{if $p>n+1-m$,} \\
			\alpha^b=\emptyset, \alpha^t=(n+1-m,n-m,\dots,1) & \text{otherwise}.
		\end{cases}
	\]
	We obtain
	\begin{align*}
		\sigma_{p}^+ \cup \sigma_{1^m}^+ = \begin{cases}
			\sigma_{p+1,1^{m-1}}^+ & \text{if $p \leq 2n+1-2m$,} \\
			\sigma_{p+1,1^{m-1}}^+ +\sigma_{2n+2-m,1^{p-2n-2+2m}}^+ & \text{otherwise.}
												   \end{cases}
	\end{align*}
	It follows from Proposition~\ref{p:restrictions1} that 
	\begin{align*}
		j^* \sigma_{p}^+ = \begin{cases}
			\sigma_{p}' & \text{if $p \leq 2n+1-2m$,} \\
			\sigma_{p}' +\tau_{1^{p-2n-2+2m}} & \text{otherwise.}
												   \end{cases}
	\end{align*}
	Since by definition $j^* \sigma_{p}^+=\tau_{p}'$ we finally get that
	\[
		b_s^{\text{quant}}=(-1)^{2n+1-m-s} q \tau_{2s-2n-2+m}'
	\]
	as claimed.
	
	In the second part of this proof, let us now deform the relations $d_r$ for $m+1 \leq r \leq 2n+2-m$. Since $\deg q=2n+2-m$, only the relation $d_{2n+2-m}$ is potentially modified. We consider the determinantal identity
	\[
		d_{2n+2-m}^{\text{quant}} = \sum_{p=1}^{2n+1-m} (-1)^{p+1} \tau_p' * d_{2n+2-p-m}.
	\]
	The classical part of the right--hand side of the above equality being trivial, it follows that
	\[
		d_{2n+2-m}^{\text{quant}} = q \sum_{p=1}^{2n+1-m} (-1)^{p+1} \scal{\tau_p', d_{2n+2-m-p} ,[pt]}_{X,1,3}.
	\]
	Moreover we have that for any $1 \leq s \leq m$, $d_s=j^* \sigma_{1^s}^+$, and $d_{s}=0$ if $s>m$, see \cite[Section 2.5]{pech:t}. This implies that
	\begin{align*}
		d_{2n+2-m}^{\text{quant}} &= q \sum_{p=2n+2-2m}^{2n+1-m} (-1)^{p+1} \scal{ \tau_p', d_{2n+2-m-p} ,[pt]}_{X,1,3} \\
					&= q \sum_{s=1}^m (-1)^{2n+3-m-s} \scal{\tau_{2n+2-m-s}', j^* \sigma_{1^s}^+ ,[pt]}_{X,1,3}.
	\end{align*}
	We have $\tau_{2n+2-m-s}'=j^* \sigma_{2n+2-m-s}^+$, and the class of a point in $X$ is 
	\[
		[pt]=\sigma_{2n-m,2n-1-m,\dots,2n+1-2m}=j^* \sigma_{2n+1-m,2n-m,\dots,2n+2-2m}^+,
	\]
	so using the quantum to classical principle we get
    \begin{align*}
		d_{2n+2-m}^{\text{quant}} =& \frac{1}{2} q \sum_{s=1}^m (-1)^{2n+3-m-s} \\
			& \scal{\tildej^* \tildes_{2n+2-m-s}^+, \tildej^* \tildes_{1^s}^+ , \tildej^* \tildes_{2n+1-m,2n-m,\dots,2n+2-2m}^+}_{\tildeX,0,3} \\
			=& \frac{1}{2} q \int_{\Xt} \left( \sum_{s=1}^m (-1)^{2n+3-m-s} \tildej^* \tildes_{2n+2-m-s}^+  \cup \tildej^* \tildes_{1^s}^+ \right) \\
			& \cup \tildej^* \tildes_{2n+1-m,2n-m,\dots,2n+2-2m}^+,
	\end{align*}
	where the superscript symbol $\;\tilde{}\;$ indicates that we are considering classes in $\tildeXp = \IG(m+1,2n+4)$. 
	Let us now look at the \emph{classical} relation $\tilde{d}_{2n+2-m}$ in $\tildeX$. Using a similar determinantal identity as before, we get
	\begin{align*}
		\tilde{d}_{2n+2-m} &= \sum_{p=1}^{2n+2-m} (-1)^{p+1} \tildet_p' \cup \tilde{d}_{2n+2-p-m} \\
			&= \sum_{p=2n+1-2m}^{2n+2-m} (-1)^{p+1} \tildet_p' \cup \tilde{d}_{2n+2-p-m} \\
			&= \sum_{s=0}^{m+1} (-1)^{2n+3-m-s} \tildet_{2n+2-m-s}' \cup \tilde{d}_s \\
			&= \sum_{s=0}^{m+1} (-1)^{2n+3-m-s} \tildej^* \tildes_{2n+2-m-s}^+  \cup \tildej^* \tildes_{1^s}^+.
	\end{align*}
	Thus replacing in the expression for $d_{2n+2-m}^{\text{quant}}$, we get
        	\begin{align*}
		d_{2n+2-m}^{\text{quant}} = & \frac{q}{2} \int_{\Xt} \left( \tilde{d}_{2n+2-m} -(-1)^{2n+3-m} \tildej^* \tildes_{2n+2-m}^+ \right. \\
			& \left.-(-1)^{2n+2-2m} \tildej^* \tildes_{2n+1-2m}^+ \cup \tildej^* \tildes_{1^{m+1}}^+ \right) \cup \tildej^* \tildes_{2n+1-m,2n-m,\dots,2n+2-2m}^+ \\
			= & (-1)^{2n+2-m}\frac{q}{2} \int_{\tildeX} \tildej^* ( \tildes_{2n+2-m}^+ \cup \tildes_{2n+1-m,2n-m,\dots,2n+2-2m}^+) \\
			&+ (-1)^{2n+1-2m}\frac{q}{2} \int_{\tildeX} \tildej^* \tildes_{2n+1-2m}^+ \cup \tildej^* (\tildes_{1^{m+1}}^+ \cup \tildes_{2n+1-m,\dots,2n+2-2m}^+).
	\end{align*}
	Using the classical Pieri rule in $\tildeXp$, see \cite[Theorem 1.1]{BKT2}, we have
	\begin{align*}
		\tildes_{2n+2-m}^+ \cup \tildes_{2n+1-m,\dots,2n+2-2m}^+ =& \tildes_{2n+2-m,2n+1-m,\dots,2n+2-2m}^+ \\
		&+ 2\tildes_{2n+3-m,2n+1-m,\dots,2n+3-2m,2n+1-2m}^+.
	\end{align*}
	Moreover, using the classical Pieri rule from~\cite{PR},
	\begin{align*}
		\tildes_{1^{m+1}}^+ \cup \tildes_{2n+1-m,2n-m,\dots,2n+2-2m}^+ =& \tildes_{2n+2-m,\dots,2n+3-2m,1}^+ \\
		&+ \tildes_{2n+3-m,2n+1-m,\dots,2n+3-2m}^+.
	\end{align*}
	Since we saw that $\tildej^* \sigma_\lambda=0$ if $\lambda_1=2n+3-m$, we obtain
	\begin{align*}
		d_{2n+2-m}^{\text{quant}} =& (-1)^{2n+2-m}\frac{q}{2} \int_{\tildeX} \tildej^* \tildes_{2n+2-m,2n+1-m,\dots,2n+2-2m}^+ \\
		& + (-1)^{2n+1-2m}\frac{q}{2} \int_{\tildeX} \tildej^* \tildes_{2n+1-2m}^+ \cup \tildej^* \tildes_{2n+2-m,\dots,2n+3-2m,1}^+.
	\end{align*}
	The first integral is equal to $1$ since $j^* \tildes_{2n+2-m,2n+1-m,\dots,2n+2-2m}^+$ is the class of a point in $\tildeX$. Thus it only remains to compute the following cup product using the classical Pieri rule \cite[Theorem 1.1]{BKT2},
	\begin{align*}
		\tildes_{2n+1-2m}^+ \cup \tildes_{2n+2-m,\dots,2n+3-2m,1}^+ =& \tildes_{2n+2-m,\dots,2n+2-2m}^+ \\
			&+ \tildes_{2n+3-m,2n+1-m,\dots,2n+3-2m,2n+1-2m}^+.
	\end{align*}
	Since $\tildej^* \tildes_{2n+3-m,2n+1-m,\dots,2n+3-2m,2n+1-2m}^+=0$ and $j^* \tildes_{2n+2-m,\dots,2n+2-2m}^+$ is the class of a point in $\tildeX$, we get
	\begin{align*}
		d_{2n+2-m}^{\text{quant}} =& \left((-1)^{2n+2-m}+ (-1)^{2n+1-2m}\right)\frac{q}{2} \\
		=& \begin{cases}
				0 & \text{if $m$ is even,} \\
				-q & \text{if $m$ is odd,}
			\end{cases}
	\end{align*}
	which concludes the proof of the theorem.
\end{proof}

%%%%%%%%%%%%%%%%%%%%%%%%%%%%%%%%%%%%%%%%%%%%%%%%%%%%%%%%%%%%%%%%%%%%%%%%%%
%%%%%%%%%%%%%%%%%%%%%%%%%%%%%%%%%%%%%%%%%%%%%%%%%%%%%%%%%%%%%%%%%%%%%%%%%%
\section{Recollections on semiorthogonal decompositions}

\subsection{Semiorthogonal decompositions}

We recall here basic definitions on semi\-orthogonal decompositions and mutations in triangulated categories. For convenient references, see, for instance~\cite[Sections 1.2 and 1.4]{Huyb} and \cite[Section 2]{Kuz}.

Let $k$ be a field. Assume given a $k$-linear triangulated category $\Dd$, equipped with a shift functor $[1]: \Dd\rightarrow \Dd$. For two objects $A, B \in \Dd$ let $\Hom^{\bullet}_{\Dd}(A,B)$ denote the graded $k$-vector space $\bigoplus_{i\in \Z}\Hom_{\Dd}(A,B[i])$. Let $\A \subset \Dd$ be a \emph{full triangulated subcategory}, that is, a full subcategory of $\Dd$ which is closed under shifts and cones of morphisms.

\begin{defn}
A full triangulated subcategory $\A$ of $\Dd$ is called \emph{right admissible} if the inclusion functor $\A \hookrightarrow \Dd$ has a right adjoint. Similarly, $\A$ is called \emph{left admissible} if the inclusion functor has a left adjoint. Finally, $\A$ is \emph{admissible} if it is both right and left admissible.
\end{defn}

\begin{defn}
The \emph{right orthogonal} of an admissible subcategory $\A\subset \Dd$ is the full subcategory $\A^\perp \subset \Dd$ of objects $B\in \Dd$ such that 
$\Hom_\Dd(A,B) = 0$ for all $A \in \A$. The \emph{left orthogonal} ${^\perp\A}$ is defined similarly. 
\end{defn}

If a full triangulated category $\A \subset \Dd$ is right admissible then every object $X\in \Dd$ fits into a distinguished triangle 

\[
	\dots \longrightarrow  Y \longrightarrow X \longrightarrow Z \longrightarrow Y[1] \longrightarrow \dots
\]
with $Y\in \A$ and $Z\in \A^\perp$, \cite[Remarks 1.43, (iii)]{Huyb}. One then says that there is a \emph{semiorthogonal decomposition} of $\Dd$ into the subcategories $(\A^\perp, \A)$. More generally, assume given a sequence of full triangulated subcategories $\A_1,\dots,\A_n \subset \Dd$. Denote by $\scal{\A_1,\dots,\A_n}$ the triangulated subcategory of $\Dd$ generated by $\A_1,\dots,\A_n$.

\begin{defn}\label{def:SOD}
	A sequence $(\A_1,\dots,\A_n)$ of full admissible subcategories of $\Dd$ is called \emph{semiorthogonal} if $\A_i \subset \A_j^\perp$ for $1\leq i < j\leq n$. The sequence $(\A_1,\dots,\A_n)$ is called a \emph{semiorthogonal decomposition} of $\Dd$ if via inclusion $\Dd$ is equivalent to the smallest full triangulated subcategory of $\Dd$ containing all of them.
\end{defn}

\begin{lemma}\cite[Lemma 1.61]{Huyb}\label{lem:Lemma 1.61_Huyb}
Any semiorthogonal sequence of full admissible triangulated subcategories $(\A_1,\dots,\A_n)\subset \Dd$ defines a semiorthogonal decomposition of $\Dd$, if and only if any object $A\in \Dd$ with $A\in \A_i^\perp$ for all $i=1,...,n$ is trivial.
\end{lemma}

\subsection{Mutations}

Let $\Dd$ be a triangulated category and assume $\Dd$ admits a semi\-orthogonal decomposition $\Dd = \scal{\A,\B}$.

\begin{defn}
The \emph{left mutation of $\B$ through $\A$} is defined to be $\Lb_\A(\B): = \A^\perp$. The \emph{right mutation of $\A$ through $\B$} is defined to be $\Rb_\B(\A): ={^\perp\B}$.
\end{defn}

In practice, computing left (resp., right) mutation of an admissible subcategory $\A$ of $\Dd$ through its right (resp., left) orthogonal $\A^\perp$ (resp., $^\perp\A$) amounts to the following \cite[Lemma 2.7]{Kuz}. Denote  $i: \A \to \Dd$ the embedding functor. Since the subcategory $\A$ is admissible there are left and right adjoint functors $\Dd \to \A$ which we denote by $i^*$ and $i^!$, respectively.  Given an object $F \in \Dd$, define the \emph{left mutation} $\Lb_\A(F)$ and the \emph{right mutation} $\Rb_\A(F)$ of $F$ through $\A$ by 
\[
	\Lb_\A(F): = \Cone(ii^!(F) \to F), \qquad \qquad \Rb_\A(F): = \Cone(F \to ii^*(F))[-1].
\]

These explicit formulae for mutations can be used to prove the following:

\begin{lemma}\cite[Corollary 2.9]{Kuz}
Let $\Dd$ be a triangulated category and assume that $\Dd =\langle \A _1,\A _2,\dots ,\A _n\rangle$ is a semiorthogonal decomposition of $\Dd$ as in Definition \ref{def:SOD}. Then for each $1\leq k\leq n-1$ there is a semiorthogonal decomposition
$$
\langle \A _1,\dots ,\A _{k-1},\Lb_{\A _k}(\A _{k+1}),\A _k,\A _{k+2},\dots \A _n\rangle
$$

and for each $2\leq k\leq n$ there is a semiorthogonal decomposition
$$
\langle \A _1,\dots ,\A _{k-2},\A _k,\Rb_{\A _k}(\A _{k-1}),\A _{k+1},\dots \A _n\rangle .
$$

\comment{
$\Dd = \scal{\A,\B}$, 
there are semiorthogonal decompositions $\Dd = \scal{\Lb_\A(\B), \A}$ and $\Dd = \scal{\B , \Rb_\B(\A)}$.
}
\end{lemma}

\begin{remark}\cite[Remark 2.8]{Kuz}
In case the triangulated subcategory $\A\subset \Dd$ is generated by an exceptional object $E$, there are explicit formulas for adjoint functors $i^!, i_{\ast}$ of the embedding functor $i : \A\rightarrow \Dd$. Specifically, there are distinguished triangles:
$$
\RHom(E,F)\otimes E\rightarrow F\rightarrow \Lb _E(F), \quad \Rb _E(F)\rightarrow F\rightarrow \RHom(F,E)^{\ast}\otimes E.
$$
\end{remark}

\subsection{Exceptional collections in geometric categories} 
We first introduce some notation. From now on, our ground field will be the field of complex numbers $\C$. Let $S$ be a smooth complex projective variety, and let $\Dd^b(S)$ denote the bounded derived category of coherent sheaves on $S$. Given a morphism $f : S \to T$ between two smooth varieties, we usually write $f_*,f^*$ for the corresponding derived functors of push-forwards and pull-backs between $\Dd ^b(S)$ and $\Dd ^b(T)$, and $\mathrm{R}^i f_*$ (resp., $\mathrm{L}^i f^*$) for the $i$-th cohomology of the functor $f_*$ (resp., $f^*$). We denote ${\mathcal R}{\mathcal Hom}_S$ the derived functor of the bi--functor of local ${\mathcal Hom}$--groups on $S$, and write ${\mathcal Ext}^i_S$ for its cohomology. We denote $\RHom _S$ the derived functor of the bi--functor of global $\Hom$--groups on $S$, and write $\Ext ^i_S$ for its cohomology. Given an object $A\in \Dd ^b(S)$, we denote $A^{\vee}:={\mathcal R}{\mathcal Hom}_S(A,\Oo _S)$ its dual.
Given two objects $A,B\in \Dd ^b(S)$, we also abbreviate the graded vector space $\Hom^{\bullet}_{\Dd ^b(S)}(A,B):=\bigoplus_{i\in \Z}\Hom_{\Dd ^b(S)}(A,B[i])$ as $\Hom _S^{\bullet}(A,B)$. 

We denote $\tau _{\leq i}$ and $\tau _{\geq i}$ the truncation functors with respect to the standard $t$--structure on $\Dd^b(S)$, and ${\mathcal H}^i:=\tau _{\leq i}\circ \tau _{\geq i}\circ [i]$ the $i$--th cohomology functor.

Convenient references for the definitions below are \cite[Section 1.4]{Huyb} and \cite{Ku}.

\begin{defn}
An object $E \in \Dd^b(S)$ is said to be \emph{exceptional} if there is an isomorphism of graded $\C$-algebras 
\[	
	\Hom_{S}^\bullet(E,E) = \C.
\]
\end{defn}

\begin{defn}
A collection of exceptional objects $(E_0,\dots,E_n)$ in $\Dd^b(S)$ is called 
\emph{exceptional} if for $0 \leq i < j \leq n$ one has
\[	
	\Hom_{S}^\bullet(E_j,E_i) = 0.
\]

An exceptional collection $(E_0,\dots,E_n)$ in $\Dd^b(S)$ is called \emph{full} if the smallest full triangulated subcategory containing all $E_i$ is $\Dd^b(S)$.
\end{defn}

\begin{lemma}\cite[Lemma 1.58]{Huyb}\label{lem:Lemma 1.58_Huyb}
Given an exceptional object $E$ in $\Dd^b(S)$, let $\langle E\rangle$ be the triangulated subcategory of $\Dd^b(S)$ generated by $E$. Then $\langle E\rangle\subset \Dd^b(S)$ is an admissible subcategory.
\end{lemma}

\begin{remark}\label{rem:O_exceptional}
Given a smooth Fano variety $S$, the structure sheaf $\Oo _S$ is an exceptional object by the Kodaira vanishing theorem. Hence, by the above lemma the subcategory $\langle \Oo _S\rangle \subset \Dd ^b(S)$ is an admissible subcategory. Likewise, any line bundle $\mathcal L$ on $S$ gives rise to an admissible subcategory $\langle \mathcal L\rangle$ of $\Dd ^b(S)$.
\end{remark}

Let $\Ll$ be an ample line bundle on $S$. 

\begin{defn}\cite[Definition 1.3]{KuzSm}
A \emph{Lefschetz collection with respect to $\Ll$} is an exceptional collection which has a block structure
\[	
\underbrace{E_0 , E_1, \dots, E_{\sigma_0}}_{\text{block 0}}; \,
\underbrace{E_0\otimes \Ll, \dots, E_{\sigma_1}\otimes \Ll}_{\text{block 1}}; \,
\dots; \,
\underbrace{E_0\otimes \Ll ^{\otimes (m-1)}, \dots, E_{\sigma_{m-1}}\otimes \Ll ^{\otimes (m-1)}}_{\text{block $m-1$}}
\]

%_{\text{block 1}}
%_{\text{block $m-1$}}

where $\sigma = (\sigma_0 \geq \sigma_1 \geq \dots \sigma_{m-1} \geq 0)$ is a non--increasing sequence of non--negative integers. Semicolons are used to separate the blocks. The block $(E_0,E_1,\dots, E_{\sigma_0})$ is called the \emph{starting block}. If $\sigma_0=\sigma_1=\dots = \sigma_{m-1}$, then the corresponding Lefschetz collection is called \emph{rectangular}.
\end{defn}

Given a smooth variety $S$, let $\omega_S$ denote the canonical line bundle on $S$. 

\begin{lemma}\cite[Lemma 2.11]{Kuz}\label{lem:mutations_canonical_class}
For $S$ a smooth variety, assume given a semiorthogonal decomposition $\Dd ^b(S) = \scal{\A,\B}$. Then 
\[
	\Lb_\A(\B)=\B \otimes \omega_S \qquad \text{and} \qquad  \Rb_\A(\B)= \A \otimes \omega_S^{-1}.
\]
\end{lemma}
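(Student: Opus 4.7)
The plan is to invoke Serre duality on $\Dd^b(S)$. Since $S$ is smooth projective, $\Dd^b(S)$ admits a Serre functor $S_\Dd(F) = F \otimes \omega_S[n]$ with $n = \dim S$, giving the natural isomorphism $\Hom(X, Y) \cong \Hom(Y, S_\Dd(X))^\vee$. Note that $\A \otimes \omega_S[n]$ and $\A \otimes \omega_S$ define the same full triangulated subcategory (shifts are free), so the shift will be silently absorbed in subcategory statements.

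First I would establish the auxiliary fact that in any semiorthogonal decomposition $\Dd = \scal{\A, \B}$ one has $\B = {}^\perp \A$ and $\A = \B^\perp$. The inclusion $\B \subseteq {}^\perp \A$ is immediate from semi-orthogonality. For the reverse, take $X \in {}^\perp \A$ and use the decomposition to produce a triangle $B \to X \to A \to B[1]$ with $B \in \B$, $A \in \A$. Applying $\Hom(-, A')$ for $A' \in \A$ and using $\Hom_\Dd(\B, \A) = 0$ gives an isomorphism $\Hom(X, A') \cong \Hom(A, A')$; the vanishing of the left-hand side for all $A' \in \A$ then forces $A = 0$ (take $A' = A$), whence $X \cong B \in \B$. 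The identity $\A = \B^\perp$ is symmetric.

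With this in hand, the left mutation formula is a one-line Serre-duality computation. By definition $\Lb_\A(\B) = \A^\perp$, and for $X \in \Dd$ one has $X \in \A^\perp$ iff $\Hom(A, X) = 0$ for every $A \in \A$, iff (by Serre duality) $\Hom(X, A \otimes \omega_S[n]) = 0$ for every $A \in \A$, iff $X \in {}^\perp(\A \otimes \omega_S)$. Since tensoring by the invertible sheaf $\omega_S$ is an autoequivalence of $\Dd^b(S)$, it commutes with taking orthogonals, so this category equals $({}^\perp \A) \otimes \omega_S = \B \otimes \omega_S$ by the auxiliary fact, giving $\Lb_\A(\B) = \B \otimes \omega_S$.

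The right-mutation formula is obtained by the symmetric argument: $\Rb_\B(\A) = {}^\perp \B$, and the dual Serre-duality computation together with $\B^\perp = \A$ identifies ${}^\perp \B$ with $(\B^\perp) \otimes \omega_S^{-1} = \A \otimes \omega_S^{-1}$. The only non-formal step is the auxiliary identification $\B = {}^\perp \A$ (and the symmetric one); once these are in place, everything reduces to Serre duality combined with the fact that $-\otimes \omega_S$ is an autoequivalence. This auxiliary identification is the main (though mild) obstacle.
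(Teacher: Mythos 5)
Your argument is correct and complete: the identification $\B={}^\perp\A$, $\A=\B^\perp$ via the decomposition triangle, followed by Serre duality $\Hom(A,X)\cong\Hom(X,A\otimes\omega_S[n])^\vee$ and the fact that $-\otimes\omega_S$ is an autoequivalence commuting with orthogonals, is exactly the standard proof of this statement (which the paper does not prove but cites from Kuznetsov). You also correctly read the second formula as $\Rb_\B(\A)=\A\otimes\omega_S^{-1}$ (the displayed $\Rb_\A(\B)$ is a typo) and correctly supply the properness hypothesis needed for Serre duality.
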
 

\subsection{Exceptional collections on horospherical varieties of Picard rank one}\label{subsec:exc_coll_geom}

Some of the varieties in Pasquier's classification have been known to have full Lefschetz exceptional collections. Specifically, the variety $X^2$, which is isomorphic to a smooth hyperplane section of the orthogonal Grassmannian $\OG(5,10)$, has been treated in \cite[Section 6.2]{Ku}. The derived category of the variety $X^3(n,2)$, which is isomorphic to the odd symplectic Grassmannian $\IG(2,2n+1)$, has been computed in \cite{pech:q}. More recently, a full rectangular Lefschetz exceptional collection on $\IG(3,7)$ (i.e. on $X^3(3,3)$) has been constructed in \cite{Fon}. 

Below, in Theorem \ref{th:Lefexccoll}, we construct a full rectangular Lefschetz exceptional collection on the variety $X^5$.

Using the framework of Lefschetz semiorthogonal decompositions,
Kuznetsov and Smirnov have recently introduced in \cite{KuzSm} the notion of residual category. In particular, if a variety $X$ has a full rectangular Lefschetz exceptional collection, then the residual category of $X$ is trivial. 
Conjecture 1.12 of {\it loc.cit.} states that if the small quantum cohomology ring of a Fano variety of Picard rank one is generically semisimple, then its residual category should be of some prescribed form, and in the simplest possible case the residual category is trivial. Section \ref{subsec:4.5} 
establishes semisimplicity of the small quantum cohomology for the variety $X^5$ and together with Theorem \ref{th:Lefexccoll} this confirms the conjecture of \cite{KuzSm} for that case.

%%%%%%%%%%%%%%%%%%%%%%%%%%%%%%%%%%%%%%%%%%%%%%%%%%%%%%%%%%%%%%%%%%%%%%%%%%

\section{A construction of exceptional vector bundles on horospherical varieties}
\label{section:exc-gen}

We start with two general statements concerning the descent of objects along a blow--up morphism.

\begin{prop}\label{prop:vecbun_blow-up_descent}
Let $j: S\subset T$ be a closed embedding of a smooth subvariety $S$ into a smooth variety $T$, and $\pi :\tildeT \to T$ be the blow-up of $S$ at $T$. Denote by $i : E\hookrightarrow \tildeT$ the embedding of the exceptional divisor, and let $p : E \to S$ be the projection. Assume given an object $\Ee \in \Dd^b(\tildeT)$. Then $\Ee$ is pulled back from $T$, i.e., $\Ee = \pi^*\Eb$ for some $\Eb \in \Dd^b(T)$, if and only if the restriction of $\Ee$ to $E$ is trivial along the fibers of $p$, i.e., $i^*\Ee \in p^*\Dd^b(S)$.
\end{prop}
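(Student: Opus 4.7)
The plan is to invoke Orlov's blow-up formula, which produces the semiorthogonal decomposition
\[
  \Dd^b(\tildeT) = \scal{\Phi_{-c+1}(\Dd^b(S)),\ \ldots,\ \Phi_{-1}(\Dd^b(S)),\ \pi^*\Dd^b(T)},
\]
where $c := \codim_T(S)$, $\Phi_k(\Gg) := i_*(p^*\Gg \otimes \Oo_E(k))$, and $\Oo_E(-1) := \Oo_{\tildeT}(E)|_E$ is the relative tautological line bundle on the $\P^{c-1}$-bundle $p : E \to S$. Crucially, $\pi^*\Dd^b(T)$ sits as the \emph{rightmost} block.

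The implication $\Ee = \pi^*\Eb \Rightarrow i^*\Ee \in p^*\Dd^b(S)$ is immediate: the commuting square $\pi \circ i = j \circ p$ gives $i^*\pi^*\Eb = p^*j^*\Eb$, visibly in $p^*\Dd^b(S)$.

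For the converse, being the rightmost block of the decomposition above, $\pi^*\Dd^b(T)$ coincides with the left orthogonal of the union of the remaining blocks; so $\Ee \in \pi^*\Dd^b(T)$ if and only if $\Hom(\Ee, \Phi_k(\Gg)) = 0$ for every $\Gg \in \Dd^b(S)$ and every $k \in \{-c+1, \ldots, -1\}$. Assuming $i^*\Ee = p^*\Hh$ for some $\Hh \in \Dd^b(S)$, the adjunctions $i^* \dashv i_*$ and $p^* \dashv p_*$ combined with the projection formula rewrite
\[
  \Hom(\Ee, \Phi_k(\Gg)) = \Hom(i^*\Ee, p^*\Gg \otimes \Oo_E(k)) = \Hom\bigl(\Hh,\ \Gg \otimes \mathrm{R}p_*\Oo_E(k)\bigr).
\]
For $k \in \{-c+1, \ldots, -1\}$ the fibrewise cohomology $H^*(\P^{c-1}, \Oo(k))$ vanishes in every degree, hence $\mathrm{R}p_*\Oo_E(k) = 0$. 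All the Hom-spaces therefore vanish and $\Ee$ lies in $\pi^*\Dd^b(T)$, as required.

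The only point that demands real care is matching the sign conventions in Orlov's decomposition (specifically, that $\pi^*\Dd^b(T)$ appears as the rightmost block and the twists $\Oo_E(k)$ in the auxiliary blocks run over negative values $k \in \{-c+1, \ldots, -1\}$) with the ``forbidden'' Beilinson range in which $\mathrm{R}p_*\Oo_E(\cdot)$ vanishes. With the orientation above the argument reduces to the $i^* \dashv i_*$ adjunction and a single application of the projection formula; had we set things up on the opposite side and worked via $i^!$, the formula for $i^!\pi^*\Eb$ is $p^*j^*\Eb \otimes \Oo_E(-c)[-1]$ rather than the naive $\Oo_E(-1)[-1]$, and matching this against the vanishing range would require an extra round of bookkeeping. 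Beyond this convention-tracking, everything is standard.
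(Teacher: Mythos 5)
Your proof is correct and follows essentially the same route as the paper: both invoke Orlov's blow-up decomposition, identify $\pi^*\Dd^b(T)$ as the left orthogonal of the twisted blocks $i_*(p^*\Dd^b(S)\otimes\Oo_E(k))$, and transfer the orthogonality to $E$ via the $i^*\dashv i_*$ adjunction. The only cosmetic difference is that for the converse you verify the vanishing directly from $\mathrm{R}p_*\Oo_E(k)=0$ in the range $-c+1\le k\le -1$, whereas the paper cites the projective-bundle semiorthogonal decomposition of $\Dd^b(E)$ — the same fact in different clothing.
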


\begin{proof}
Let $d$ be the codimension of $S$ in $T$. By~\cite{Or}, there is a semiorthogonal decomposition
\[
	\Dd^b(\tildeT) = \scal{i_*(p^*\Dd^b(S) \otimes \Oo_p(-d+1)), \dots, i_*(p^*\Dd^b(S) \otimes \Oo_p(-1)),\pi^*\Dd^b(T)}.
\]
  In other words, the objects of $\pi^*\Dd^b(T)$ are characterized by the fact that they are left orthogonal to the subcategory $\scal{i_*(p^*\Dd^b(S) \otimes \Oo_p(-d+1)), \dots, i_*(p^*\Dd^b(S) \otimes \Oo_p(-1)}$. Thus, $\Ee = \pi^* \Eb$ for some $\Eb \in \Dd^b(T)$ if and only if
\begin{equation}\label{eq:pullbackchar}
  	\Hom^{\bullet}_{\tildeT}(\Ee,i_*(p^*\Dd^b(S) \otimes \Oo_p(-k))) = \Hom^{\bullet}_E(i^*\Ee,p^*\Dd^b(S) \otimes \Oo_p(-k))=0,
\end{equation}

  for $k=1,\dots,d-1$.
  By \emph{loc.cit.}, $\Dd^b(E)$ has a semiorthogonal decomposition 
  \[
  	\Dd^b(E) = \scal{p^*\Dd^b(S) \otimes \Oo_p(-d+1), \dots, p^*\Dd^b(S) \otimes \Oo_p(-1), p^*\Dd^b(S)}.
  \]
  Thus, the equality (\ref{eq:pullbackchar}) holds if and only if $i^*\Ee \in p^*\Dd^b(S)$, proving the statement.
\end{proof}

We keep the notation and assumptions of Proposition \ref{prop:vecbun_blow-up_descent}.

\begin{prop}\label{prop:pullback_vecb}
Let $\Ee$ be an object of $\Dd^b(\tildeT)$, such that $\Ee = \pi^*\Eb$ where $\Eb \in \Dd^b(T)$. If $\Ee$ is a pure object, i.e. is a coherent sheaf on $\tildeT$, then so is the object $\Eb$ on $T$. If moreover $\Ee$ is locally free, then so is the coherent sheaf $\Eb$.
\end{prop}

\begin{proof}
Since $\pi$ is the blow--up, there is an isomorphism $\pi _{\ast}\Oo _{\tildeT}=\Oo _T$, and the functor $\pi ^{\ast}$ is full and faithful. In particular, $\pi ^{\ast}$ sends non--zero objects to non--zero objects. Applying the functor $\pi _{\ast}$ to 
$\pi^*\Eb$ and using projection formula, one obtains isomorphisms $\pi _{\ast} \pi^*\Eb=\Eb\otimes \pi _{\ast}\Oo _{\tildeT}=\Eb=\pi _{\ast}\Ee$.
One sees that $\Eb\in \Dd ^{\geq 0}(T)$, since in particular $\Ee\in \Dd ^{\geq 0}(\tildeT)$ and the functor $\pi _{\ast}$ is left $t$--exact.
Assuming that $\Eb$ had a non--trivial sheaf cohomology outside the degree zero, consider the distinguished triangle $\dots \rightarrow \tau _{<m}\Eb\rightarrow \Eb\rightarrow {\mathcal H}^{m}\Eb[-m]\rightarrow \dots$ associated with the canonical truncation, the positive integer $m$ being the maximal non--trivial cohomology degree of $\Eb$. Applying then the functor $\pi ^{\ast}$ to that triangle, one sees that $\pi ^{\ast}\Eb$ would also have a non--trivial sheaf cohomology on $\tildeT$ in the same degree, the functor $\pi ^{\ast}$ being right $t$--exact. In other words, there is an isomorphism ${\mathcal H}^{m}(\pi ^{\ast}\Eb)={\rm L}^0\pi ^{\ast}{\mathcal H}^{m}\Eb$, and the latter coherent sheaf is non--trivial. But the object $\pi ^{\ast}\Eb=\Ee$ being pure by assumption, it does not have non--trivial sheaf cohomology outside the degree zero. Therefore, $m=0$ and $\Eb$ is a coherent sheaf. 

Assuming that $\Ee$ is a locally free sheaf, the property of being locally free for $\Eb$ can be checked against the derived local ${\mathcal Hom}$--groups: a coherent sheaf $\Eb$ on $T$ is locally free if and only if ${\mathcal RHom}_{T}(\Eb,\Oo _T)$ is pure as an object of $\Dd ^b(T)$. Now $\pi ^{\ast}\Eb=\Ee$, and $\pi ^{\ast}{\mathcal RHom}_{T}(\Eb,\Oo _T)={\mathcal RHom}_{\tildeT}(\pi ^{\ast}\Eb,\pi ^{\ast}\Oo _T)={\mathcal RHom}_{\tildeT}(\Ee,\Oo _{\tildeT})$, while the latter object is pure, the sheaf $\Ee$ being locally free. We can repeat the same argument as above showing that ${\mathcal RHom}_{T}(\Eb,\Oo _T)$ is then a coherent sheaf, that is a pure object. This forces $\Eb$ to be locally free.
\end{proof}

\subsection{Bott's theorem}
Our main cohomological tool is Bott's theorem. Consider a weight $\lambda \in X(T)$ and let $\Ll_\lambda$ be the corresponding line bundle on $G/B$. The weight $\lambda$ is called \emph{singular} if it lies on a wall of some Weyl chamber defined by $\scal{-,\alpha^\vee} = 0$ for some coroot $\alpha^\vee \in \Ri^\vee$. Weights which are not singular are called \emph{regular}. The Weyl group $W = N_G(T)/T$ acts on $\XX(T)$ via the dot--action: if $w\in W$ and $\lambda \in \XX(T)$, then $w\cdot \lambda = w(\lambda + \rho) - \rho$, where $\rho$ is the sum of fundamental weights. 

\begin{thm}\cite[Theorem 2]{Dem}\label{th:Bott-Demazure_th}

\begin{enumerate}
	\item If $\lambda+\rho$ is singular, then $\Hr^i(G/B,\Ll_\lambda) = 0$ for all $i$.
	\item If $\lambda+\rho$  is regular and dominant, then $\Hr^i(G/B,\Ll_\lambda) = 0$ for $i>0$.
	\item If $\lambda+\rho$  is regular, then $\Hr^i(G/B,\Ll_\lambda)\neq 0$ for a unique degree $i$ which coincides with $\ell(w)$, the length of a Weyl group element $w$ that takes $\chi$ to the dominant chamber, i.e. $w \cdot \lambda \in X_+(T)$. The cohomology group $\Hr^{\ell(w)}(G/B,\Ll_\lambda)$ is the irreducible $G$-module $V(w \cdot \lambda)$ of highest weight $w \cdot \lambda$.
\end{enumerate}
\end{thm}

\subsection{The bundles $\Ft_Y$ and $\Ft_Z$}\label{sec:Hecke_transf}
We refer to Section~\ref{ss:blowups} for the notation used in this section.

Let $X$ be a horospherical variety from Pasquier's list. To construct exceptional objects on such an $X$, we start with appropriate exceptional objects on $\Xt_{YZ}$ (the blow--up of of $Y\cup Z$ in $X$), and modify\footnote{This can be thought of as a higher--dimensional analogue of the classical Hecke modifications on curves.}
 it across the divisors $E_Z$ and $E_Y$ in a way that the modified bundles be trivial when restricted to the fibers of $p$ and $q$. Proposition \ref{prop:vecbun_blow-up_descent} ensures that in this case the modified objects are pulled back from $X$. If on top of that an exceptional object we started with is an exceptional vector bundle, Proposition  \ref{prop:pullback_vecb} ensures that the modified coherent sheaf on $\Xt_{YZ}$ is the pull--back of an exceptional vector bundle on $X$.

 Recall that both maps $p : E_Z \to Y$ and $q : E_Y \to Z$ are projective bundles associated to vector bundles $N_Y = N_{Y/X}$ and $N_Z = N_{Z/X}$ on $Y$ and $Z$, respectively. We set $F_Y = N_{Y/X}^\vee = p_*\L_{\oZ - \oY}$ and $F_Z = N_{Z/X}^\vee = q_*\L_{\oY - \oZ}$, where $\oY$ and $\oZ$ are the fundamental weights associated to $Y$ and $Z$, respectively. Let $\cO_p(1)$ (resp., $\cO_q(1)$) denote the line bundle of relative degree one along projection $p$ (resp., along projection $q$). Note that we also have $\cO_p(1) = \L_{\oZ - \oY}$ and $\cO_q(1) = \L_{\oY - \oZ}$. In particular, we have the relative Euler sequences on $E_Z$

\begin{equation}\label{eq:Eulerseq_Y}
	0 \to \Omega^1_{E_Z/Y} \otimes \cO_p(1) \to p^*F_Y \to \cO_p(1) \to 0
\end{equation}

and on $E_Y$

\[	 
	 0 \to \Omega^1_{E_Y/Z} \otimes \cO_q(1)  \to q^*F_Z \to \cO_q(1) \to 0. 
\]

Recall that the composed morphisms $\xi \circ j_Z: E_Z \to E$ and $\xi \circ j_Y: E_Y \to E$ are both identity morphisms $\id_E: E\to E$ (and both divisors $E_Z$ and $E_Y$ are isomorphic to $E=G/(P_Y\cap P_Z)$).
Thus, $j_Z^* \xi^* = j_Y^*\xi^* = \id_E^*$, and the above map $p^*F_Y \to \cO_p(1)$ (resp., $q^*F_Z \to \cO_q(1)$) defines by adjunction a map of coherent sheaves $\xi^* p^*F_Y \to {j_Z}_*\cO_p(1)$ (resp., $\xi^* q^* F_Z \to {j_Y}_* \cO_q(1)$). We define coherent sheaves $\Ft_Y$ and $\Ft_Z$ 
on $\Xt_{YZ}$ via the exact sequences:

\begin{equation}\label{eq:seqdef_tilde_F_Y}
	0 \to \Ft_Y \to \xi^* p^*F_Y \to {j_Z}_*\cO_p(1) \to 0
\end{equation}
	
and
\[			 
	0 \to \Ft_Z \to \xi^* q^*F_Z \to {j_Y}_*\cO_q(1) \to 0.
\]

\begin{prop}\label{fact-ll}
Both $\Ft_Y$ and $\Ft_Z$ are locally free sheaves.
\end{prop}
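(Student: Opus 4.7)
The plan is to show that ${j_Z}_* \cO_p(1)$ and ${j_Y}_* \cO_q(1)$ each have projective dimension at most one on the smooth variety $\Xt_{YZ}$, and then to conclude that the kernels of surjections from a locally free sheaf onto such sheaves are locally free by a standard homological argument.

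First I would verify that the map $\xi^* p^* F_Y \to {j_Z}_*\cO_p(1)$ is surjective. Its cokernel is supported on $E_Z$ and, being a quotient of ${j_Z}_*\cO_p(1)$, is annihilated by the ideal sheaf of $E_Z$, so it is of the form ${j_Z}_* C$ for some coherent sheaf $C$ on $E_Z$. Applying $j_Z^*$ and using $\xi \circ j_Z = \id_E$, the map reduces to the Euler surjection $p^*F_Y \twoheadrightarrow \cO_p(1)$, which is surjective, forcing $C = 0$. The same verification works for $\xi^* q^* F_Z \to {j_Y}_* \cO_q(1)$.

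Next, since $E_Z$ is a smooth Cartier divisor in the smooth variety $\Xt_{YZ}$, tensoring the sequence
\[
  0 \to \cO_{\Xt_{YZ}}(-E_Z) \to \cO_{\Xt_{YZ}} \to {j_Z}_*\cO_{E_Z} \to 0
\]
with the line bundle $\xi^*\cO_p(1)$ and using $j_Z^*\xi^*\cO_p(1) = \cO_p(1)$ yields a length-one locally free resolution of ${j_Z}_*\cO_p(1)$. Consequently ${j_Z}_*\cO_p(1)$ has projective dimension at most one; the analogous statement holds for ${j_Y}_*\cO_q(1)$.

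Finally, combining this resolution with the defining short exact sequence for $\Ft_Y$ via the horseshoe lemma, equivalently by forming the fiber product $\xi^*p^*F_Y \times_{{j_Z}_*\cO_p(1)} \xi^*\cO_p(1)$, produces a two-term locally free resolution of $\Ft_Y$, so $\Ft_Y$ has projective dimension zero and is therefore locally free (of rank $\rk F_Y = \codim_X Y$). The same argument, exchanging $Y$ and $Z$, handles $\Ft_Z$. There is no real obstacle here: the content is simply that a pushforward of a line bundle from a smooth Cartier divisor has Tor-dimension one, and so the kernel of any surjection onto it from a vector bundle is again a vector bundle.
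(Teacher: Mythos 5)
Your proof is correct and is essentially the paper's argument viewed from the dual side: the paper applies $\mathcal{Hom}(-,\Oo_{\Xt_{YZ}})$ to the defining sequences and uses the criterion that a coherent sheaf on a smooth variety is locally free iff its higher local $\mathcal{Ext}$ sheaves to the structure sheaf vanish, the key input in both cases being that $j_Z$ (resp.\ $j_Y$) is a divisorial embedding, so that ${j_Z}_*\cO_p(1)$ has homological dimension one. Your explicit check that the adjunction map is surjective is a welcome addition the paper leaves implicit; the only phrasing to tighten is that the fiber-product construction yields $0 \to \Ft_Y \to P \to \xi^*\cO_p(1) \to 0$ with $P$ and $\xi^*\cO_p(1)$ locally free, and it is this locally split presentation --- not merely a two-term resolution of $\Ft_Y$, which would only bound the projective dimension by one --- that forces $\Ft_Y$ to be locally free.
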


\begin{proof}
Given a coherent sheaf $\Ee$ on a smooth variety $S$, we have, as in the proof of Proposition \ref{prop:pullback_vecb}, that $\Ee$ is locally free if and only if ${\mathcal Ext}_S^i(\Ee,\Oo_S)=0$ for $i>0$. The statement then follows from applying ${\mathcal Hom}_{\Xt_{YZ}}(-,\Oo_{\Xt_{YZ}})$ to the above sequences and taking into account that $j_Z$ and $j_Y$ are divisorial embeddings.
\end{proof}

\begin{prop}\label{prop:desc}
For all the varieties of Pasquier's list except the variety $X^2$, the bundle $\Ft_Y$ is the pull-back of a vector bundle on $X$.
\end{prop}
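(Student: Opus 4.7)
The plan is to apply Proposition~\ref{prop:vecbun_blow-up_descent} to the blow-up $\pi_{YZ} : \Xt_{YZ} \to X$. Since the center $Y \cup Z$ is a disjoint union of two smooth connected subvarieties, one can apply the proposition successively to each component; what has to be verified is that the restriction of $\Ft_Y$ to $E_Y$ is pulled back from $Y$ along $E_Y \to Y$, and that its restriction to $E_Z$ is pulled back from $Z$ along $E_Z \to Z$. The first condition is immediate: because $E_Y$ and $E_Z$ are disjoint (they map to the disjoint $G$-orbits $Y$ and $Z$), the sheaf $(j_Z)_* \cO_p(1)$ in the defining sequence of $\Ft_Y$ vanishes on $E_Y$, and so the sequence restricts to an isomorphism $\Ft_Y|_{E_Y} \cong (\xi^* p^* F_Y)|_{E_Y} \cong p^* F_Y$ under the canonical identification $\xi|_{E_Y} : E_Y \to E$. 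This is manifestly pulled back from $Y$, and the argument is case-independent.

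The second condition requires computing $j_Z^* \Ft_Y$ via the derived pullback of the defining exact triangle of $\Ft_Y$. Since $\Ft_Y$ and $\xi^* p^* F_Y$ are locally free, the long exact sequence of cohomology sheaves yields
\[
	0 \to \cO_p(1) \otimes N^\vee_{E_Z/\Xt_{YZ}} \to j_Z^* \Ft_Y \to p^* F_Y \to \cO_p(1) \to 0,
\]
where the rightmost map is the Euler surjection. The conormal line bundle is best computed inside $\Xt_Z = \P_Y(F_Y \oplus \cO_Y)$, where $E_Z$ is the subbundle $\P_Y(F_Y)$: the standard formula for normal bundles of nested projective bundles gives $N^\vee_{E_Z/\Xt_{YZ}} \cong \cO_p(-1)$, so that $\cO_p(1) \otimes N^\vee_{E_Z/\Xt_{YZ}} \cong \cO_E$. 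Combined with the Euler sequence for $p$, this produces a short exact sequence
\[
	0 \to \cO_E \to j_Z^* \Ft_Y \to \Omega^1_{E/Y} \otimes \cO_p(1) \to 0
\]
on $E_Z \cong E$, and it remains to show that this extension is pulled back from $Z$ along $q$.

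This last step is the only place where a case distinction enters, and I expect it to be the main obstacle. Because every sheaf involved is $G$-equivariant, the question reduces to whether the $(P_Y \cap P_Z)$-module underlying $j_Z^* \Ft_Y$ extends to a $P_Z$-module, or equivalently to a weight computation for the Levi of $P_Z$. The sub $\cO_E$ is trivially $q$-pulled back, and one analyses the quotient $\Omega^1_{E/Y} \otimes \cO_p(1)$ case by case using Theorem~\ref{theo:class}: in cases (1), (3), (4) and (5) the relevant weights form a single $P_Z$-orbit and the extension class lies in the image of the restriction $\Ext^1_Z \to \Ext^1_E$, so the sequence descends; in case (2), by contrast, the spinor structure on $Z = \OG(3,7)$ together with the specific weight $\omega_1 - \omega_3$ produces an obstruction to extending the relevant $(P_Y \cap P_Z)$-module to $P_Z$, which is exactly why the statement excludes this case.
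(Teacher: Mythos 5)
Your reduction is correct and is essentially the paper's: the restriction to $E_Y$ is $p^*F_Y$ because the two exceptional divisors are disjoint, and the derived restriction to $E_Z$, using $\Le^1 j_Z^*({j_Z}_*\cO_p(1)) = \cO_p(1)\otimes N^\vee_{E_Z} \cong \cO_E$, yields the short exact sequence $0\to\cO_E\to j_Z^*\Ft_Y\to\Omega^1_{E/Y}\otimes\cO_p(1)\to 0$. One remark: the worry about the extension class is superfluous. The essential image $q^*\Dd^b(Z)$ is a strictly full triangulated subcategory of $\Dd^b(E)$, hence closed under cones, so once $\cO_E$ and $\Omega^1_{E/Y}\otimes\cO_p(1)$ both lie in it, so does any extension of one by the other; there is no condition on the class in $\Ext^1$.

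The gap is in the one step that actually separates case (2) from the rest: the descent of $\Omega^1_{E/Y}\otimes\cO_p(1)$ along $q$. The assertion that ``the relevant weights form a single $P_Z$-orbit'' is not a criterion (a parabolic subgroup does not permute the weights of a module), and no computation is given, so the conclusion in cases (1), (3), (4), (5) is asserted rather than proved. The paper's argument is concrete. First, a necessary condition on the determinant: $\Lambda^{\mathrm{top}}(\Omega^1_{E/Y}\otimes\cO_p(1))$ has $\oY$-component $(c_1(Y)+1-c_1(X))\,\oY$, which must vanish for the line bundle to come from $Z$; this holds exactly when $c_1(X)=c_1(Y)+1$, i.e.\ in all cases but (2) — this is where case (2) is excluded, not via any spinor-specific obstruction. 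Second, sufficiency: in cases (1) and (5) one has $\codim_X Y=2$, so $\Omega^1_{E/Y}\otimes\cO_p(1)$ is a line bundle and the determinant condition is the whole statement; in cases (3) and (4) one works with the $(P_Y\cap P_Z)$-module $M=\Ker\bigl(V(\oZ-\oY)\to\C_{\oZ-\oY}\bigr)$ and observes that the only candidate for the action of $\g_{-\a_Y}$ is zero, which is compatible with the $(P_Y\cap P_Z)$-structure if and only if the cocharacter $\a_Y^\vee$ acts trivially on $M$ — an explicit weight check that succeeds in those two cases. Some verification of this kind is required to complete your argument.
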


\begin{proof}
Proposition \ref{prop:pullback_vecb} ensures that if the vector bundle $\Ft_Y$ is the pull-back of an object of $\Dd^b(X)$, then that object is also a vector bundle on $X$. By Proposition \ref{prop:vecbun_blow-up_descent}, in order to prove that $\Ft_Y$ is pulled back from $X$, 
it is sufficient to check that $j_Y^* \Ft_Y$ and $j_Z^* \Ft_Y$ are trivial on the fibers of $p$ and $q$, respectively. The first statement is clear: since $Y$ and $Z$ are disjoint we have $j_Y^* \Ft_Y = j_Y^* \xi^* p^* F_Y = p^* F_Y$. We now compute $j_Z^* \Ft_Y$. To this end, let us restrict sequence (\ref{eq:seqdef_tilde_F_Y}) to $E_Z$, i.e. apply  $j_Z^{\ast}$. We compute $\textrm{Tor}_1^{\Xt_{YZ}}({j_Z}_*\cO_p(1),{j_Z}_*\cO_{E_Z}) = {j_Z}_*(\cO_{E_Z}(-E_Z) \otimes \cO_p(1)) = {j_Z}_*(\L_{\oY-\oZ} \otimes \cO_p(1)) = {j_Z}_*\cO_{E_Z}$ and obtain the exact sequence, the sheaves $p^{\ast}F_Y$ and $\Ft_Y$ being locally free:
  \[
  	0 \to \cO_{E_Z}\to j_Z^* \Ft_Y \to p^* F_Y \to \cO_p(1) \to 0.
  \]

Breaking up this exact sequence into short exact sequences and comparing them to (\ref{eq:Eulerseq_Y}), we obtain the short exact sequence 

\begin{equation}\label{eq:tildeF_restricted}
0 \to \cO_{E_Z}\to j_Z^* \Ft_Y \to \Omega^1_{E_Z/Y} \otimes \cO_p(1) \to 0.
\end{equation}
  
It is sufficient to prove that $\Omega^1_{E_Z/Y} \otimes \cO_p(1)$ is the pullback of a vector bundle on $Z$. There is an obvious necessary condition given by the top Chern class, namely, that $\Lambda^\mathrm{top}(\Omega^1_{E/Y} \otimes \cO_p(1)) =  \Lambda^\mathrm{top} p^* F_Y \otimes \cO_p(1)^{\otimes(-1)}$ should be pulled back from $Z$. Since $F_Y = N_{Y/X}^\vee$, we can compute
\[
	\Lambda^\mathrm{top} p^* F_Y = p^* \cO_Y(1)^{\otimes(c_1(Y)-c_1(X))} = \L_{(c_1(Y) - c_1(X))\oY}
\]	
and therefore
\[
	\Lambda^\mathrm{top}(\Omega^1_{E_Z/Y} \otimes \cO_p(1)) = \L_{(c_1(Y)+1-c_1(X))\oY - \oZ}.
\]
  Now if $X$ is not isomorphic to $X^2$ we have $c_1(X) = c_1(Y) + 1$, so this necessary condition is satisfied (this also proves that $\Ft_Y$ is never pulled back from $X$ if $X$ is isomorphic to $X^2$). If furthermore $\Omega^1_{E_Z/Y} \otimes \cO_p(1)$ is of rank one, the above condition on the top Chern class is sufficient (since $\Lambda^\mathrm{top}(\Omega^1_{E_Z/Y} \otimes \cO_p(1)) = \Omega^1_{E_Z/Y} \otimes \cO_p(1)$). This occurs if and only if $\rk(\Omega^1_{E_Z/Y} \otimes \cO_p(1)) = \codim_X Y - 1 = 1$, therefore for $\codim_X Y = 2$. This is true for $X^1(n)$ and $X^5$. We are left with $X^3(n,m)$ and $X^4$. For $X=X^3(n,m)$, using the description of $X$ as an odd symplectic Grassmannian we will easily check that $p^* F_Y$ and $\Omega^1_{E_Z/Y} \otimes \cO_p(1)$ are pullbacks of the tautological subbundles in $Y$ and $Z$, respectively, proving the result. We show this using representations. Indeed, both $p^*F_Y$ and $\L_{\oZ-\oY}$ are homogeneous bundles on $E_Z$ coming from representations of $P_Y \cap P_Z$. The bundle $p^* F_Y$ comes from the $P_Y$--representation $V_{P_Y}(\oZ-\oY)$ while $\L_{\oZ-\oY}$ comes from the one--dimensional $P_Y\cap P_Z$-representation $\C_{\oZ-\oY}$ of weight $\oZ-\oY$. The exact sequence comes from an exact sequence of representations
\[
	0 \to M \to V_{P_Y}(\oZ-\oY) \to \C_{\oZ-\oY} \to 0.
\]
We only need to check that $M$ is a $P_Z$--representation (and not only a $P_Y \cap P_Z$--representation). For this we only need to define the action of $\g_{\a_Y}$, where ${\a_Y}$ is the simple root associated to $Y$. The only possible action is the trivial action, which is compatible with the $P_Y \cap P_Z$--action if and only if the cocharacter $\a_Y^\vee$ acts trivially. This is an easy check if $X=X^3(n,m)$ or $X=X^4$.
\end{proof}

\begin{remark}
The above necessary condition on top Chern classes proves that $\Ft_Z$ is never pulled back from $X$ with one possible exception being the case of $X=X^3(n,m)$ (in which $c_1(X) = c_1(Z) + 1$). An easy computation with weights shows that even in that case $\Ft_Z$ is not pulled back from $X$.
\end{remark}

\begin{defn}
Assume $X$ is not isomorphic to $X^2$. We denote by $\cF_Y$ the vector bundle on $X$ such that $\pi_{YZ}^*\cF_Y = \Ft_Y$.
\end{defn}

\subsection{The bundle $\cF_Y$ is exceptional}

Assume as above that $X\neq X^2$.

\begin{prop}\label{prop:exc}
  The bundles $\Ft_Y$ and $\cF_Y$ are exceptional.
\end{prop}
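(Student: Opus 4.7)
The plan is to analyze the defining short exact sequence
$$0 \to \Ft_Y \to A \to B \to 0, \qquad A := \xi^* p^* F_Y, \quad B := {j_Z}_* \cO_p(1),$$
and reduce the computation of $\Ext^\bullet_{\Xt_{YZ}}(\Ft_Y,\Ft_Y)$ to the four building blocks $\Ext^\bullet(A,A)$, $\Ext^\bullet(A,B)$, $\Ext^\bullet(B,A)$ and $\Ext^\bullet(B,B)$ via the long exact sequence yoga obtained by applying $\RHom(\Ft_Y,-)$ and $\RHom(-,A)$, $\RHom(-,B)$ to the sequence.

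First I would compute $\Ext^\bullet(A,A)$ and $\Ext^\bullet(A,B)$ using the projection formula. Since $\xi$ is a $\P^1$-bundle and $p$ is a projective bundle, $R\xi_*\Oo = \Oo_E$ and $Rp_*\Oo = \Oo_Y$; together with adjunction, the identity $\xi \circ j_Z = \id_E$, and $Rp_*\cO_p(1) = F_Y$, this gives
$$\Ext^\bullet(A,A) = \Ext^\bullet_Y(F_Y,F_Y), \qquad \Ext^\bullet(A,B) = \Ext^\bullet_E(p^*F_Y,\cO_p(1)) = \Ext^\bullet_Y(F_Y,F_Y).$$
Because $F_Y$ is the irreducible $P_Y$-homogeneous vector bundle associated to the representation $V(\oZ-\oY)$, Borel-Weil-Bott yields $\Ext^\bullet_Y(F_Y,F_Y) = \C$, so $F_Y$ is exceptional on $Y$.

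Next I would compute $\Ext^\bullet(B,A)$ and $\Ext^\bullet(B,B)$ using the right adjoint $j_Z^! = j_Z^*(-) \otimes \cO_{E_Z}(E_Z)[-1]$, together with the identification $\cO_{E_Z}(E_Z) = \cO_p(1)$ (which follows from the description of $E_Z$ as the divisor $\P_Y(F_Y) \subset \P_Y(F_Y \oplus \Oo_Y) = \Xt_Z$). A short calculation yields $\Ext^i(B,A) = H^{i-1}(Y,F_Y)$ and, via the Koszul triangle $\Oo_E[1] \to L j_Z^* {j_Z}_*\cO_p(1) \to \cO_p(1)$, $\Ext^0(B,B) = \C$ with $\Ext^i(B,B) = H^{i-1}(Y,F_Y)$ for $i \geq 1$. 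The crucial input is thus the vanishing $H^\bullet(Y,F_Y) = 0$: since the weights of the $L_Y$-module $V(\oZ-\oY)$ are obtained from $\oZ-\oY$ by the Levi Weyl group, it suffices to check that for each such weight $\mu$ the shifted weight $\mu + \rho$ is Weyl-singular on $G/B$, and this is done case by case in each of Pasquier's cases (1), (3), (4), (5) using the explicit fundamental weights.

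Granting this vanishing, $\Ext^\bullet(B,A) = 0$ and $\Ext^\bullet(B,B) = \C$. A routine diagram chase in the long exact sequences then gives $\Ext^\bullet(\Ft_Y,A) = \C$ and $\Ext^\bullet(\Ft_Y,B) = 0$, whence $\Ext^\bullet(\Ft_Y,\Ft_Y) = \C$; the only non-formal point is that the connecting maps $\Hom(A,A) \to \Hom(A,B)$ and $\Hom(B,B) \to \Hom(A,B)$ are isomorphisms, which holds because they send the two identities to the non-zero morphism $A \to B$. Finally, the exceptionality of $\cF_Y$ follows from that of $\Ft_Y$: since $\pi_{YZ}$ is a composition of blow-ups of smooth centers in smooth ambients, $R\pi_{YZ*}\Oo_{\Xt_{YZ}} = \Oo_X$, and by adjunction and the projection formula $\RHom_X(\cF_Y,\cF_Y) = \RHom_{\Xt_{YZ}}(\Ft_Y,\Ft_Y) = \C$. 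The main obstacle is the Borel-Weil-Bott verification that $H^\bullet(Y,F_Y) = 0$ in each of the four remaining Pasquier cases: although each individual check is elementary, the argument is not uniform and relies on direct inspection of the shifted weights $\mu + \rho$ against the Weyl chambers.
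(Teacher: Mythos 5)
Your reduction to the four blocks $\Ext^\bullet(A,A)$, $\Ext^\bullet(A,B)$, $\Ext^\bullet(B,A)$, $\Ext^\bullet(B,B)$ is essentially the computation the paper performs (the paper applies $\Hom(-,\Ft_Y)$ to the defining sequence and then pushes forward along $\xi$ and $j_Z$, which produces the same four groups), and your identifications of each block via adjunction, including $j_Z^! = j_Z^*(-)\otimes\cO_p(1)[-1]$ and the Koszul triangle computing $j_Z^*{j_Z}_*\cO_p(1)$, are correct. The problem is that both non-formal inputs are asserted rather than established, and the crucial one is attacked by a route that is harder than necessary and not actually carried out. For the crux, $\Hr^\bullet(Y,F_Y)=0$, you propose to filter the pullback of $F_Y$ to $G/B$ by line bundles and to check that $\mu+\rho$ is singular for \emph{every} weight $\mu$ of $V(\oZ-\oY)$, case by case in (1), (3), (4), (5). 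None of this is needed: since $F_Y = p_*\L_{\oZ-\oY}$ and both $p$ and the projection $G/B \to E$ have trivial higher direct images of the structure sheaf, one has $\Hr^\bullet(Y,F_Y) = \Hr^\bullet(E,\L_{\oZ-\oY}) = \Hr^\bullet(G/B,\L_{\oZ-\oY})$, and $\scal{\oZ-\oY+\rho,\a_Y^\vee} = 0-1+1 = 0$, so the weight is singular and all cohomology vanishes by Bott, uniformly in every case. As written, your proof is incomplete at exactly this point, since the weight-by-weight singularity check is neither uniform nor performed.

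The second input, $\Ext^\bullet_Y(F_Y,F_Y)=\C$, can be dispensed with entirely. What the long exact sequences actually require is that the map $\Ext^i(A,A)\to\Ext^i(A,B)$ induced by $A\to B$ be an isomorphism in \emph{every} degree $i$; your remark that "the two identities map to the nonzero morphism $A\to B$" only handles $i=0$. But both groups identify with $\Ext^i_Y(F_Y,F_Y)$ via adjunctions compatible with the counit $p^*p_*\cO_p(1)\to\cO_p(1)$, so the map is the identity in all degrees regardless of whether $\Ext^{>0}_Y(F_Y,F_Y)$ vanishes; together with $\Ext^\bullet(B,A)=0$ and $\Ext^{>0}(B,B)=0$ the diagram chase then yields $\Ext^\bullet(\Ft_Y,\Ft_Y)=\C$ with no further Borel--Weil--Bott computation. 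This is how the paper argues, and it spares you a second unexecuted verification.
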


\begin{proof}
Since $X$ is smooth and $\pi _{YZ}$ is the blow--up morphism, we have $\pi_{YZ*} \cO_{\Xt_{YZ}} = \cO_X$. Thus, the pull--back functor $\pi_{YZ}^{\ast}:\Dd ^b(X)\rightarrow \Dd ^b(\Xt_{YZ})$ is fully faithful. In what follows, we will be using this fact throughout when computing $\Ext$--groups: given a blow--up morphism $\pi: T\rightarrow S$, for any two objects ${\mathcal E}_1,{\mathcal E}_2$ of $\Dd ^b(S)$ there is an isomorphism $\Hom _S^{\bullet}({\mathcal E}_1,{\mathcal E}_2)=\Hom _T^{\bullet}(\pi ^{\ast}{\mathcal E}_1,\pi ^{\ast}{\mathcal E}_2)$. 

In particular, it is sufficient to prove that the bundle $\Ft_Y$ on $\Xt_{YZ}$ is exceptional. Let us apply $\Hom _{\Xt_{YZ}}(-,\Ft_Y)$ to sequence (\ref{eq:seqdef_tilde_F_Y}). 
We need to compute $\Ext^i _{\Xt_{YZ}}(\xi^* p^* F_Y,\Ft_Y)$ and $\Ext^i _{\Xt_{YZ}}({j_Z}_* \cO_p(1),\Ft_Y)$. First, by adjunction, we have $\Ext^i _{\Xt_{YZ}}(\xi^* p^* F_Y,\Ft_Y)=\Ext^i _{E_Z}(p^* F_Y,\xi_* \Ft_Y)$. Applying $\xi_*$ to (\ref{eq:seqdef_tilde_F_Y}), we see that $\xi_* \Ft_Y = \Omega^1_{E_Z/Y}\otimes \cO_p(1)$. To compute $\Ext^i_{E_Z}(p^* F_Y,\Omega^1_{E_Z/Y}\otimes \cO_p(1))$, we thus need to compute $\Ext^i_{E_Z}(p^* F_Y,p^* F_Y)$ and $\Ext^i_{E_Z}(p^* F_Y,\cO_p(1))$. But $F_Y = p_*\cO_p(1)$ and $p_{\ast}\Oo _{E_Z}=\Oo _Y$, therefore the first group is isomorphic to $\Ext^i_Y(F_Y,F_Y)$ by the remark above and, by adjunction of $p^*$ and $p_*$, the second group is also isomorphic to $\Ext^i_Y(F_Y,F_Y)$. This proves that $\Ext^i _{\Xt_{YZ}}(\xi^* p^* F_Y,\Ft_Y) = 0$ for any $i$.

We have $\Ext^i_{\Xt_{YZ}}({j_Z}_* \cO_p(1),\Ft_Y) = \Ext^i_{E_Z}(\cO_p(1),j_Z^! \Ft_Y)$ where $j_Z^!$ is a right adjoint functor to ${j_Z}_*$. For the smooth divisor $E_Z \subset X$, there is an isomorphism of functors $j_Z^!(-) = j_Z^*(-) \otimes N_{E/X}[-1] = j_Z^*(-) \otimes \L_{\oZ- \oY}[-1]$ giving isomorphisms $\Ext^i_{\Xt_{YZ}}({j_Z}_* \cO_p(1),\Ft_Y)\! =\! \Ext^{i-1}_{E_Z}(\L_{\oZ-\oY},j_Z^* \Ft_Y \otimes \L_{\oZ-\oY}) = \Ext^{i-1}_{E_Z}(\cO _{E_Z},j_Z^* \Ft_Y)$. Using sequence (\ref{eq:tildeF_restricted}), it remains to compute $\Ext^{i-1}_{E_Z}(\cO_{E_Z},\cO_{E_Z})$ and $\Ext^{i-1}_{E_Z}(\cO_{E_Z},\Omega^1_{E_Z/Y} \otimes \cO_p(1))$.

The former $\Ext$--group is isomorphic to ${\rm H}^{i-1}(E_Z,\Oo _{E_Z})$, which is isomorphic to $\C$ for $i=1$ and is trivial otherwise. To compute the latter $\Ext$--group, we need to consider $\Ext^{i-1}_{E_Z}(\cO_{E_Z},p^* F_Y)={\rm H}^{i-1}(Y,F_Y)$ and $\Ext^{i-1}_{E_Z}(\cO_{E_Z},\cO_p(1))$. Since $F_Y = p_* \cO_p(1)$, both groups are isomorphich to each other, and since $\cO_p(1) = \L_{\oZ-\oY}$ these groups are trivial for all $i$ by Theorem \ref{th:Bott-Demazure_th}.
\end{proof}

\subsection{An exceptional pair of vector bundles} 

Assume as above that $X$ is not isomorphic to $X^2$. We now construct a two--term exceptional sequence (an exceptional pair) of vector bundles on $X$.

\begin{prop}\label{prop:esq}
  The sequence $\scal{\cF_Y,\cO_X}$ is exceptional on $X$.
\end{prop}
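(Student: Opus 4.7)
Since both objects are already known to be exceptional ($\cF_Y$ by Proposition~\ref{prop:exc}, and $\cO_X$ because $X$ is a smooth projective rational variety so that $H^i(X,\cO_X)=0$ for $i>0$), the only thing to check is the semiorthogonality condition
\[
  \Hom^\bullet(\cO_X,\cF_Y) \;=\; H^\bullet(X,\cF_Y) \;=\; 0.
\]
My plan is to transport this computation first to $\Xt_{YZ}$ via $\pi_{YZ}$, then to the incidence variety $E$ via $\xi$, and finally to resolve it by the relative Euler sequence on $E \to Y$.

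First, $\pi_{YZ}:\Xt_{YZ}\to X$ is the composition of the two smooth blow-ups $\pi_Y$ and $\pi_Z$ (recall $Y\cap Z=\emptyset$), so $R\pi_{YZ,*}\cO_{\Xt_{YZ}}=\cO_X$; by the projection formula together with $\pi_{YZ}^*\cF_Y=\Ft_Y$ (Definition of $\cF_Y$), this gives
\[
  H^i(X,\cF_Y)\;=\;H^i(\Xt_{YZ},\Ft_Y).
\]
Next I apply $R\xi_{*}$ to the defining exact sequence
\[
  0 \to \Ft_Y \to \xi^* p^* F_Y \to (j_Z)_*\cO_p(1) \to 0.
\]
Since $\xi$ is a $\P^1$-bundle we have $R\xi_{*}\,\xi^*p^*F_Y = p^*F_Y$, while the identity $\xi\circ j_Z=\id_E$ gives $R\xi_{*}(j_Z)_*\cO_p(1)=\cO_p(1)$. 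A routine adjunction check identifies the resulting map $p^*F_Y\to\cO_p(1)$ with the tautological Euler surjection, so the triangle collapses to
\[
  R\xi_{*}\Ft_Y \;=\; \ker\bigl(p^*F_Y\twoheadrightarrow\cO_p(1)\bigr) \;=\; \Omega^1_{E/Y}\otimes\cO_p(1),
\]
which was already observed in the paper. Hence
\[
  H^i(\Xt_{YZ},\Ft_Y)\;=\;H^i\bigl(E,\,\Omega^1_{E/Y}\otimes\cO_p(1)\bigr).
\]

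Finally, I compute this last group using the relative Euler sequence
\[
  0 \to \Omega^1_{E/Y}\otimes\cO_p(1) \to p^*F_Y \to \cO_p(1) \to 0
\]
on the projective bundle $p:E=\P_Y(F_Y)\to Y$. Both outer terms have the same cohomology: using $R^{>0}p_*\cO_E=0$, $R^{>0}p_*\cO_p(1)=0$, and $p_*\cO_p(1)=F_Y$, the Leray spectral sequence collapses to
\[
  H^i(E,p^*F_Y)=H^i(Y,F_Y)=H^i(E,\cO_p(1)).
\]
The crux is that the map connecting them in the long exact sequence is the identity on $H^i(Y,F_Y)$: indeed, $p_{*}$ applied to the Euler surjection $p^*F_Y\twoheadrightarrow\cO_p(1)$ is by construction the identity $F_Y\to F_Y$, so the induced map on cohomology is the identity. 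The long exact sequence therefore forces $H^i(E,\Omega^1_{E/Y}\otimes\cO_p(1))=0$ for every $i$, completing the proof.

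The only potentially delicate point is the identification of the connecting map at the last step (equivalently, checking that $p_*$ of the Euler map is indeed $\id_{F_Y}$ rather than some non-trivial endomorphism), but this is the defining property of the Euler surjection on a projective bundle and requires no computation in the specific cases of Pasquier's list.
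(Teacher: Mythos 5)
Your proof is correct, and the reduction steps (pull back along $\pi_{YZ}$, push down along the $\P^1$-bundle $\xi$ to land on $\Omega^1_{E/Y}\otimes\cO_p(1)$ over $E$) coincide with what the paper does. Where you genuinely diverge is the final vanishing $\Hr^\bullet(E,\Omega^1_{E/Y}\otimes\cO_p(1))=0$. The paper establishes it by showing that \emph{both} outer terms of the relative Euler sequence vanish separately: it identifies $\cO_p(1)=\L_{\oZ-\oY}$ and invokes $\Hr^\bullet(E,\L_{\oZ-\oY})=0$ (a Borel--Weil--Bott computation — the weight $\oZ-\oY+\rho$ is singular since it pairs to zero with $\alpha_Y^\vee$), whence $\Hr^\bullet(E,p^*F_Y)=\Hr^\bullet(Y,F_Y)=\Hr^\bullet(E,\cO_p(1))=0$ as well. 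You instead observe that $Rp_*$ of the Euler surjection is the canonical isomorphism $F_Y\to p_*\cO_p(1)=F_Y$, so $Rp_*(\Omega^1_{E/Y}\otimes\cO_p(1))=0$ regardless of whether $\Hr^\bullet(Y,F_Y)$ vanishes. Your route is formally more robust (it works for any projective bundle and needs no weight computation, which matters since the paper's argument is implicitly case-dependent on Pasquier's list), while the paper's is shorter given that the relevant Bott vanishing was already used in the proof of Proposition~\ref{prop:exc}. The one point you flag as delicate — that the adjoint of the tautological surjection $p^*F_Y\to\cO_p(1)$ is the identity on $F_Y$ — is indeed the defining property of the identification $p_*\cO_p(1)\cong F_Y$, so there is no gap.
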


\begin{proof}
By Remark \ref{rem:O_exceptional}, the sheaf $\cO_X$ is exceptional, and by the previous proposition so is the bundle $\cF_Y$. We only need to check the vanishing of the groups $\Ext^i_X(\cO_X,\cF_Y)$ for all $i$. This is equivalent to the vanishing of $\Ext^i_{\Xt_{YZ}}(\cO_{\Xt_{YZ}},\Ft_Y)$ for all $i$. Using sequence (\ref{eq:seqdef_tilde_F_Y}), we see that it is equivalent to the vanishing of
$\Ext^i_{\Xt_{YZ}}(\cO_{\Xt_{YZ}},\xi^* p^* F_Y)$ and $\Ext^i_{\Xt_{YZ}}(\cO_{\Xt_{YZ}},{j_Z}_* \cO_p(1))$ for $i\geq 0$.

\quad Since $p_{\ast}\xi_{\ast}\cO_{\Xt_Z}=\Oo _Y$, we obtain $\Ext^i_{\Xt_{YZ}}(\cO_{\Xt_{YZ}},\xi^* p^* F_Y)$=${\rm H}^i(\Xt_{YZ},\xi^* p^* F_Y)$ = ${\rm H}^i(Y,F_Y)$, and by adjunction there are isomorphisms $\Ext^i_{\Xt_{YZ}}(\cO_{\Xt_{YZ}},{j_Z}_* \cO_p(1))$ = $\Ext^i_{E_Z}(j_Z^* \cO_{\Xt_Z},\cO_p(1))$ = $\Ext^i_{E_Z}(\cO_{E_Z},\cO_p(1))$. But in the previous proposition we have shown that both groups in question vanish, hence the statement.
\end{proof}

\section{A full rectangular Lefschetz exceptional collection on the $G_2$--variety}\label{sec:g2}

In this section, we construct, extending the results of previous 
Section \ref{section:exc-gen}, a full rectangular Lefschetz exceptional collection on the variety $X^5$.

Let $G_2$ be the exceptional group of rank two.
%Choose a maximal torus $T \subset G_2$, and let $R$ denote the corresponding root system. Choose a set of positive roots $R_{+}$, and l
We denote by $\alpha$ and $\beta$ denote the two simple roots in $R_{+}$, the root $\beta$ being the long root.
%We denote $B$ the Borel subgroup that corresponds to the set of negative roots $R_{-}:=-R_{+}$.

%Denote by $\XX(T)$ the weight lattice of the maximal torus $T$. Given a weight $\lambda \in  \XX(T)$, denote by $\Ll_\lambda$ the corresponding line bundle on $G_2/B$. 

\subsection{The flag variety of $G_2$}

The group $G_2$ has two standard parabolic subgroups $\Pa$ and $\Pb$ which correspond to the simple roots $\alpha$ and $\beta$. Let $\oa, \ob \in X(T)$ be the two fundamental weights. Recall that given a dominant weight $\lambda \in X(T)$, we denote $V(\lambda)$ the irreducible representation with highest weight $\lambda$. The homogeneous spaces $G/\Pa$ and  $G/\Pb$ are isomorphic respectively to the 5-dimensional quadric $\Qs_5\subset \Pp (V(\oa))$ (the projectivization of $G_2$--orbit of the lowest weight vector in the irreducible $G_2$--representation $V(\oa)$)\footnote{In the notation concerning the Borel subgroups that we have chosen, the grassmannian $\Pa$ (resp., $G/\Pb$) is embedded into $\Pp (V(\oa)^{\ast})$ (resp., into $\Pp (V(\ob)^{\ast})$ via the very ample line bundle $\oa$ (resp., $\ob$). Still, both $V(\oa)$ and $V(\ob)$ are self--dual as $G_2$--modules, so we skip the dualization superscript at $V(\oa)$ and $V(\ob)$.}, and to the 5-dimensional variety $\Gad \subset \Pp (V(\ob))$ (the projectivization of $G_2$--orbit of the lowest weight vector in $V(\ob)$, the adjoint representation of $G_2$). The Levi subgroups of $\Pa$ and $\Pb$ have a component isomorphic to $\SL_2$; its tautological representation in each case gives rise to a homogeneous rank 2 vector bundle on $G/\Pa$ (resp., on $G/\Pb$). Denote by $\pa$ and $\pb$ the two projections of $G_2/B$ onto $\Qs_5 \subset \Pp (V(\oa))$ and $\Gad \subset \Pp (V(\ob))$, respectively. By \cite[Lemma 8.3]{Ku}, the projection $\pa$ is the projective bundle associated to the stable indecomposable rank two bundle $\Kk$ on $\Qs_5$ with $\det \Kk = \Ll_{-3\oa}$, and the projection $\pb$ is the projective bundle over $\Gad$ associated to the tautological rank two vector bundle $\Uu_2$ over $\Gad \subset \Gr(2,V(\oa))$. The latter embedding is related to an isomorphism of $G_2$--modules $\Lambda ^2V(\oa)=V(\oa)\oplus V(\ob)$: the grassmannian ${\rm Gr}(2,V(\oa))$ is embedded into 
${\mathbb P}(\Lambda ^2V(\oa))$ via the Pl\"ucker embedding, and the embedding $\Gad \subset \Pp (V(\ob))$ is obtained via restriction along 
the natural embedding $\Pp (V(\ob))\subset \Pp(\Lambda ^2V(\oa))$
induced by the above direct sum decomposition.

By \cite{ottaviani}, there is a short exact sequence 
\[
0 \to \Ss \to W \otimes \Oo_{\Qs_5} \to \Ss^\vee \to 0,
\]
where $\Ss$ is the spinor bundle on $\Qs_5$, and $W$ is the spinor representation regarded as a representation of $G_2$ via the restriction $G_2 \subset \mathrm{Spin}_7$. Next, there is a short exact sequence 
\begin{equation}\label{eq:relEulerseqforK}
	0 \to \L_{-\ob} \to  \pa^* \Kk \to \Ll_{\beta-\ob} \to 0.
\end{equation}
We have $\Ss \otimes \Ll_{\oa} = \Ss^\vee$. Denote by $\Psi^{\oa}_1$ the vector bundle on $\Qs_5$ which is the pullback of $\Omega_{\Pp (V(\oa))}^1(1)$ along the embedding $\Qs_5 \subset \Pp (V(\oa))$. The bundles $\Ss$ and $\Kk$ are related via the short exact sequence
\begin{equation}\label{eq:seqforK}
	0 \to \Kk \to (\Psi_1^{\oa})^\vee \otimes \Ll_{-\oa} \to \Ss\to 0,
\end{equation}
see~\cite[Appendix B]{Ku}.

The weights of $V(\oa)$ are $\oa,-\oa+\ob,2\oa-\ob,0,-2\oa+\ob,\oa-\ob,-\oa$, while we deduce from~\eqref{eq:relEulerseqforK} that the weights of $\Kk$ are $-\ob,\ob-3\oa$. Thus the weights of $\Ss$ are $0,-2\oa+\ob,\oa-\ob,-\oa$. We have the tautological short exact sequence sequence on $\Gad$:
\begin{equation}\label{eq:tautological_seq_U}
	0 \to \Uu_2 \to V(\oa) \otimes \Oo_{\Gad} \to V(\oa)/\Uu_2 \to 0.
\end{equation}
Define $\Uu_2^\perp: = (V(\oa)/\Uu_2)^\vee$. There is a short exact sequence for the pullback of $\Uu_2$ along the projection $\pb: G_2/B\rightarrow \Gad$:
\begin{equation}\label{eq:seqforU}
	0 \to \Ll_{-\oa} \to \pb^* \Uu_2 \to \Ll_{\oa-\ob} \to 0.
\end{equation}

\subsection{Horospherical $G_2$--variety}
In what follows, $X$ denotes the horospherical variety $X^5$. Recall that $X$ is embedded into $\Pp (V(\oa) \oplus V(\ob))$ and recall the blow-up construction from Subsection~\ref{ss:blowups}. In this section we will use the following notation which is better suited to the case of our study:
\[
	\xymatrix{
		E_1\ar@{^{(}->}[r]^{i_1}\ar@<-0.1ex>[d]^{\pa}&  \Xt \ar@<-0.4ex>[d]^\pi & E_2 \ar@{_{(}->}[l]_{i_2}\ar@<-0.4ex>[d]_{\pb}\\
		\Qs_5\ar@{^{(}->}[r]\ar[r]^{j_1} & X & \ar@{_{(}->}[l]_{j_2} \Gad
 	}
\]
In the above diagram, we have $Y = \Gad$, $Z = \Qs_5$. The middle map $\pi$ is the blow-up of $X$ at $\Qs_5 \cup \Gad$. The exceptional divisors over $\Qs_5$ and $\Gad$ are denoted $E_1$ and $E_2$, respectively. The maps $\pi_\a$ and $\pb$ are, respectively, the projections $q$ and $p$ from the incidence variety $E=E_1=E_2$ to $Z$ and $Y$. In our case, the incidence variety $E$ is isomorphic to the flag variety $G_2/B$, and the blow--up $\Xt$ has a $\Pp ^1$--bundle structure over $G_2/B$, namely $\Xt = \Pp_{G_2/B}(\Ll_{-\oa} \oplus \Ll_{-\ob})$. The exceptional divisors $E_1$ and $E_2$ are embedded linearly into $\Xt$ with respect to this $\Pp ^1$--bundle structure, and we can identify them as $E_1 = \Pp_{G_2/B}(\Ll_{-\oa})=G_2/B$ and $E_2 = \Pp_{G_2/B}(\Ll_{-\ob})=G_2/B$. Denote by $\xi$ the projection of $\Xt$ onto $G_2/B$.

\subsection{Line bundles on $\Xt$}

There are relations among line bundles on $\Xt$. Firstly, the index of $X$ being equal to 4, the canonical bundle satisfies $\omega_X = \Oo_X(-4)$. Thus, $\omega_\Xt = \pi^* \Oo_X(-4) \otimes \Oo_\Xt(E_1) \otimes \Oo_\Xt(E_2)$. The adjunction formula gives
\[
	\omega_{E_1} = (\omega_\Xt \otimes \Oo_\Xt(E_1)) \otimes \Oo_{E_1} = \Ll_{-4\oa}\otimes \Oo_{E_1}(2E_1).
\]
Indeed, $\Oo_{E_1}(E_2) = \Oo_{E_1}$ (resp., $\Oo_{E_2}(E_1) = \Oo_{E_2}$) since intersection of $E_1$ and $E_2$ is empty. On the other hand, $\omega_{E_1}=\Ll_{-2\rho}$, where $\rho := 5\alpha+3\beta$ is the half-sum of all positive roots, implying that $\Oo_{E_1}(E_1) = N_{E_1/\Xt}=\Ll_{\oa-\ob}$.

Similarly, the normal bundle $\Oo_{E_2}(E_2) = N_{E_2/\Xt}$ is isomorphic to $\Ll_{\ob-\oa}$, via the isomorphisms $\omega_{E_2}=(\omega_\Xt \otimes \Oo_\Xt(E_2))\otimes \Oo_{E_2} = \Ll_{-4\ob} \otimes \Oo_{E_2}(2E_2)$ and $\omega_{E_2} =\Ll_{-2\rho}$, implying that $\Oo_{E_2}(E_2) = N_{E_2/\Xt} = \Ll_{\ob-\oa}$. Finally, recall that $\xi: \Xt \to G_2/B$ is the $\Pp^1$-bundle associated to $\Ll_{-\oa} \oplus \Ll_{-\ob}$. This gives $\omega_\Xt = \cO_\xi(-2) \otimes \xi^* \omega_{G_2/B} \otimes \xi^* \Ll_{\oa+\ob} = \cO_\xi(-2) \otimes \xi^* \Ll_{-\rho}$. Pulling back $\cO_X(1)$ along the blow-up of $\Qs_5$ or the blow-up of $\Gad$, and using the above identifications of normal bundles $\Oo_{E_1}(E_1)$ and $\Oo_{E_2}(E_2)$, we get $\pi^* \cO_X(1) = \cO_\Xt(E_1) \otimes \xi^* \Ll_{\ob} = \cO_\Xt(E_2) \otimes \xi^*\Ll_{\oa}$. Comparing the two expressions for $\omega_\Xt$, we obtain an isomorphism $\Oo_\xi(-1) = \pi^* \Oo_X(-1)$, and
$\xi_* \pi^* \Oo_X(1) = \xi_*\Oo_\xi(1)=\Ll_{\oa} \oplus \Ll_{\ob}$.

Consider the distinguished triangles for the divisors $E_1$ and $E_2$:
\[
	\dots \to \Oo_\Xt(-E_1) \to \Oo_\Xt \to {i_1}_*\Oo_{E_1} \to \Oo_\Xt(-E_1)[1] \to \dots ,
\]
\[
	\dots \to \Oo_\Xt(-E_2) \to \Oo_\Xt \to {i_2}_*\Oo_{E_2} \to \Oo_\Xt(-E_2)[1] \to \dots 
\]

\begin{prop}\label{prop:rel_Euler_seq_rho}
There is a distinguished triangle in $\Dd^b(\Xt)$:
\begin{equation}\label{eq:rho-decomp_of_O(E_1)}
	\dots \to \xi^* \Ll_{\oa} \otimes \Oo_\xi(-1) \to \Oo_\Xt \oplus \xi^* \Ll_{\oa-\ob} \to \Oo_\Xt(E_1) \to \dots 
\end{equation}
\end{prop}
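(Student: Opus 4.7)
The plan is to obtain the distinguished triangle~\eqref{eq:rho-decomp_of_O(E_1)} as an appropriate twist of the relative Euler short exact sequence for the projective bundle $\xi : \Xt \to G_2/B$. Recall that $\Xt = \Pp_{G_2/B}(\Ll_{-\oa} \oplus \Ll_{-\ob})$. The convention used in this section must be the one in which $\Pp(E)$ parametrises lines in $E$, since this is the only convention compatible with the previously recorded formulas $\xi_* \Oo_\xi(1) = \Ll_\oa \oplus \Ll_\ob$ and $\Oo_\xi(1)=\pi^*\Oo_X(1)$. In particular there is a tautological inclusion of line subbundles $\Oo_\xi(-1) \hookrightarrow \xi^*(\Ll_{-\oa} \oplus \Ll_{-\ob})$ on $\Xt$.

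The first step is to write the relative Euler short exact sequence attached to this inclusion. Since the ambient bundle has rank two, its cokernel is a line bundle, identified by taking determinants, giving
\[
0 \to \Oo_\xi(-1) \to \xi^*(\Ll_{-\oa} \oplus \Ll_{-\ob}) \to \Oo_\xi(1) \otimes \xi^*\Ll_{-\oa-\ob} \to 0.
\]

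The second step is to tensor this sequence with $\xi^*\Ll_\oa$. The leftmost term becomes $\xi^*\Ll_\oa \otimes \Oo_\xi(-1)$ and the middle term simplifies to $\Oo_\Xt \oplus \xi^*\Ll_{\oa-\ob}$, matching the two leftmost terms of the desired triangle. The rightmost term becomes $\Oo_\xi(1) \otimes \xi^*\Ll_{-\ob}$; it remains to recognise this line bundle as $\Oo_\Xt(E_1)$. This identification follows immediately by combining the two relations $\Oo_\xi(1) = \pi^*\Oo_X(1)$ and $\pi^*\Oo_X(1) = \Oo_\Xt(E_1) \otimes \xi^*\Ll_\ob$ established at the beginning of Subsection~9.2. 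Passing from the resulting short exact sequence to the corresponding distinguished triangle in $\Dd^b(\Xt)$ yields~\eqref{eq:rho-decomp_of_O(E_1)}.

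The argument is essentially a bookkeeping exercise on a $\Pp^1$-bundle and no genuine obstacle is expected. The only delicate point is fixing the projective-bundle convention consistently with the rest of the paper, and correctly identifying $\Oo_\Xt(E_1)$ as the zero locus of the composition $\Oo_\xi(-1) \hookrightarrow \xi^*(\Ll_{-\oa} \oplus \Ll_{-\ob}) \twoheadrightarrow \xi^*\Ll_{-\ob}$; both are checked against the line-bundle identities already recorded in this subsection.
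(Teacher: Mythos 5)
Your proof is correct, but it takes a genuinely different route from the paper's. The paper obtains the triangle as the decomposition of $\Oo_\Xt(E_1)$ with respect to Orlov's semiorthogonal decomposition $\Dd^b(\Xt)=\scal{\xi^*\Dd^b(G_2/B)\otimes\Oo_\xi(-1),\,\xi^*\Dd^b(G_2/B)}$: it computes $\xi_*\Oo_\Xt(E_1)=\Oo_{G_2/B}\oplus\Ll_{\oa-\ob}$, forms the cone of the adjunction map $\xi^*\xi_*\Oo_\Xt(E_1)\to\Oo_\Xt(E_1)$, which must lie in $\xi^*\Dd^b(G_2/B)\otimes\Oo_\xi(-1)$, and identifies that cone as $\xi^*\Ll_\oa[1]\otimes\Oo_\xi(-1)$ by a further push-forward computation. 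You instead twist the tautological short exact sequence $0\to\Oo_\xi(-1)\to\xi^*(\Ll_{-\oa}\oplus\Ll_{-\ob})\to\Oo_\xi(1)\otimes\xi^*\Ll_{-\oa-\ob}\to 0$ by $\xi^*\Ll_\oa$ and identify the resulting cokernel with $\Oo_\Xt(E_1)$ via the relations $\Oo_\xi(1)=\pi^*\Oo_X(1)$ and $\pi^*\Oo_X(1)=\Oo_\Xt(E_1)\otimes\xi^*\Ll_{\ob}$ established just before the proposition. Your route is more elementary and produces the triangle directly as a short exact sequence of sheaves, something the paper only records after the fact (it observes that \eqref{eq:rho-decomp_of_O(E_1)} is induced by the short exact sequence \eqref{eq:Euler_seq_wrt_rho}, whose extension class is the unique non-trivial element of a one-dimensional $\Ext^1$, so the two triangles agree up to isomorphism). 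What the paper's argument buys in exchange is the by-product $\xi_*\Oo_\Xt(E_1)=\Oo_{G_2/B}\oplus\Ll_{\oa-\ob}$ and the interpretation of the two outer terms as the components of $\Oo_\Xt(E_1)$ in the two blocks of the semiorthogonal decomposition, the point of view exploited in the later mutation arguments. Your attention to the projective-bundle convention is warranted, and your resolution (the subbundle convention, forced by $\xi_*\pi^*\Oo_X(1)=\Ll_\oa\oplus\Ll_\ob$ together with $\Xt=\Pp_{G_2/B}(\Ll_{-\oa}\oplus\Ll_{-\ob})$) is the correct one.
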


\begin{proof}
Consider a natural adjunction morphism $\xi^* \xi_* \Oo_\Xt(E_1) \to \Oo_\Xt(E_1)$. From the discussion above we know that $\cO_\Xt(E_1) = \pi^*\cO_X(1) \otimes \xi^* \Ll_{-\ob} = \cO_\xi(1) \otimes \xi^* \Ll_{-\ob}$. This gives
  \begin{equation}\label{eq:twisted_normal_seq_i_1}
    \xi_* \Oo_\Xt(E_1) = \Oo_{G_2/B} \oplus \Ll_{\oa-\ob}.
  \end{equation}
\qquad By \cite{Or}, there is a semiorthogonal decomposition $\Dd^b(\Xt)$ = $ \scal{\xi^* \Dd^b(G_2/B) \otimes \Oo_\xi(-1),\xi^* \Dd^b(G_2/B)}$. Using this, we can identify the cone of the natural morphism $\xi^* \xi_* \Oo_\Xt(E_1) \to \Oo_\Xt(E_1)$ with $\xi^*(?) \otimes \Oo_\xi(-1)$ where $?$ is an object of $\Dd^b(G_2/B)$:
	\[
		\dots \to \Oo_\Xt \oplus \xi^* \Ll_{\oa-\ob} \to \Oo_\Xt(E_1) \to \xi^* (?)\otimes \Oo_\xi(-1) \to \dots
	\]
To compute the object $?$ in the above triangle, we tensor it with $\Oo_\xi(-1)$ and apply $\xi_*$; we obtain an isomorphism $? \otimes \Ll_{-\rho}[-1] = \xi_*(\Oo_\Xt(E_1)\otimes \Oo_\xi(-1))$. Tensoring equation~\eqref{eq:twisted_normal_seq_i_1} with $\Oo_\xi(-1)$ and applying $\xi_*$, we obtain $\xi_* (\Oo_\Xt(E_1) \otimes \Oo_\xi(-1)) = \Ll_{-\ob}$. Finally, we get an isomorphism $? = \Ll_{\oa}[1]$, arriving at distinguished triangle ~\eqref{eq:rho-decomp_of_O(E_1)}.
\end{proof}

We see that triangle~\eqref{eq:rho-decomp_of_O(E_1)} is induced by the short exact sequence 
\begin{equation}\label{eq:Euler_seq_wrt_rho}
	0\to \xi^* \Ll_{\oa} \otimes \Oo_\xi(-1) \to \Oo_\Xt \oplus \xi^* \Ll_{\oa-\ob}\to \Oo_\Xt(E_1)\to 0.
\end{equation}
Since $\xi^* \Ll_{\oa} \otimes \Oo_\xi(-1) = \Oo_\Xt(-E_2)$, the sequence
\eqref{eq:Euler_seq_wrt_rho} is isomorphic to 
\[
	0 \to \Oo_\Xt(-E_2)\to \Oo_\Xt \oplus \xi^* \Ll_{\oa-\ob} \to \Oo_\Xt(E_1) \to 0.
\]
Observe that since $\Oo_\Xt(E_1) = \xi^* \Ll_{-\ob} \otimes \Oo_\xi(1)$, the above extension corresponds to a unique non-trivial extension in the group $\Ext^1_{G_2/B}(\xi^* \Ll_{-\ob} \otimes \Oo_\xi(1),\xi^* \Ll_{\oa} \otimes \Oo_\xi(-1)) = \Hr^1(G_2/B,\Ll_{\rho} \otimes \Ll_{-\rho}[-1]) = \Hr^0(G_2/B,\Oo_{G_2/B}) = \C$. By duality, we obtain:
\[
	0 \to \Oo_\Xt(-E_1) \to \Oo_\Xt \oplus \xi^* \Ll_{\ob-\oa} \to \Oo_\Xt(E_2) \to 0,
\]
with $(\Oo_\Xt \oplus \xi^* \Ll_{\oa-\ob}) = (\Oo_\Xt \oplus \xi^* \Ll_{\oa-\ob})^\vee \otimes \xi^* \Ll_{\oa-\ob}$.

Using the isomorphisms $\pi_* \Oo_\Xt(E_1) = \pi_* \Oo_\Xt(E_2) = \pi_* \Oo_\Xt(E_1+E_2) = \Oo_X$, we can sum up the results as follows:

\begin{lemma}\label{lem:E_1-E_2-O(rho)-lemma}
We have the following isomorphisms:
\[
	\Oo_\Xt(E_1) = \xi^* \Ll_{-\ob} \otimes \pi^* \Oo_X(1) \text{ and } \Oo_\Xt(E_2) = \xi^* \Ll_{-\oa} \otimes \pi^*\Oo_X(1);
\]
\[
	\pi_* \xi^* \Ll_{-\ob} = \Oo_X(-1) = \pi_* \xi^* \Ll_{-\oa} \text{ and } \quad \pi_* \xi^* \Ll_{-\rho} = \Oo_X(-2).
\]
There are also short exact sequences:
\[
	0\to \xi^* \Ll_{\oa} \otimes \pi^* \Oo_X(-1) \to \Oo_\Xt \oplus \xi^* \Ll_{\oa-\ob} \to \xi^* \Ll_{-\ob} \otimes \pi^* \Oo_X(1) \to 0,
\]
\[
	0 \to \xi^* \Ll_{\ob} \otimes \pi^* \Oo_X(-1) \to \Oo_\Xt \oplus \xi^* \Ll_{\ob-\oa} \to \xi^* \Ll_{-\oa} \otimes \pi^* \Oo_X(1) \to 0.
\]
\end{lemma}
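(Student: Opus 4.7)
The plan is to collect the identifications already made in the discussion preceding the lemma, and supplement them by a small number of applications of the projection formula. First I would handle the line-bundle identifications $\Oo_\Xt(E_1)=\xi^*\Ll_{-\ob}\otimes\pi^*\Oo_X(1)$ and $\Oo_\Xt(E_2)=\xi^*\Ll_{-\oa}\otimes\pi^*\Oo_X(1)$. These are already encoded in the equation $\pi^*\Oo_X(1)=\Oo_\Xt(E_1)\otimes\xi^*\Ll_{\ob}=\Oo_\Xt(E_2)\otimes\xi^*\Ll_{\oa}$ derived earlier by comparing, at the level of line bundles on $\Xt=\Pp_{G_2/B}(\Ll_{-\oa}\oplus\Ll_{-\ob})$, the blow-up structure $\pi$ to the $\Pp^1$-bundle structure $\xi$; isolating $\Oo_\Xt(E_i)$ yields the stated form.

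Next I would compute the push-forwards. Since $\pi$ is the blow-up of $X$ along the smooth disjoint union $Y\sqcup Z$, one has $\pi_*\Oo_\Xt=\pi_*\Oo_\Xt(E_1)=\pi_*\Oo_\Xt(E_2)=\pi_*\Oo_\Xt(E_1+E_2)=\Oo_X$, a fact already used above. Rewriting $\xi^*\Ll_{-\ob}=\Oo_\Xt(E_1)\otimes\pi^*\Oo_X(-1)$ and applying the projection formula gives $\pi_*\xi^*\Ll_{-\ob}=\Oo_X(-1)$, and the computation for $\pi_*\xi^*\Ll_{-\oa}$ is entirely symmetric. For the last identity I would use that $\rho=\oa+\ob$ for $G_2$, so that $\xi^*\Ll_{-\rho}=\xi^*\Ll_{-\oa}\otimes\xi^*\Ll_{-\ob}=\Oo_\Xt(E_1+E_2)\otimes\pi^*\Oo_X(-2)$, and one more projection-formula step yields $\pi_*\xi^*\Ll_{-\rho}=\Oo_X(-2)$.

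Finally, for the two short exact sequences I would invoke Proposition~\ref{prop:rel_Euler_seq_rho} and the discussion immediately following it: the first stated sequence is a restatement of~\eqref{eq:Euler_seq_wrt_rho} after substituting $\Oo_\xi(-1)=\pi^*\Oo_X(-1)$ via the identifications obtained in the first step. The second sequence is then obtained by dualizing the first and twisting by $\xi^*\Ll_{\ob-\oa}$; this has the effect of swapping the roles of $\oa$ and $\ob$ (equivalently, of $E_1$ and $E_2$) and produces exactly the claimed form.

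I do not expect any genuine obstacle: every ingredient is already in place in the preceding text, and the lemma is essentially an organizational statement. The only point that requires slight care is bookkeeping in the duality step for the second exact sequence, where one should verify that the resulting extension class is still the (unique up to scalar) non-trivial element of the corresponding $\Ext^1$ group, which is one-dimensional by the same Bott-vanishing computation used to characterize the class of the first sequence.
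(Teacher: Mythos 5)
Your proposal is correct and follows the paper's own route exactly: the lemma is stated there as a summary of the preceding discussion, with the line-bundle identities coming from $\pi^*\Oo_X(1)=\Oo_\Xt(E_1)\otimes\xi^*\Ll_{\ob}=\Oo_\Xt(E_2)\otimes\xi^*\Ll_{\oa}$, the push-forwards from the projection formula together with $\pi_*\Oo_\Xt(E_1)=\pi_*\Oo_\Xt(E_2)=\pi_*\Oo_\Xt(E_1+E_2)=\Oo_X$, and the two exact sequences from Proposition~\ref{prop:rel_Euler_seq_rho} and its dual. One small bookkeeping remark: dualizing the first sequence already yields the second verbatim (equivalently, twisting the first by $\xi^*\Ll_{\ob-\oa}$ without dualizing does the same), so performing both operations at once would overshoot; this does not affect the substance of the argument.
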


\subsection{Vector bundles on $X$}
In order to construct exceptional vector bundles on $X$, we are going to use the approach from Section \ref{sec:Hecke_transf}. It turns out that in the case which is being considered (the variety $X^5$), that approach allows to completely describe the derived category.

We start with the tautological vector bundle $\Uu_2$ on $\Gad$ and its pullback $\pb^* \Uu_2$ to $E_2 \cong G_2/B$. Recall that $\xi \circ i_2 = \xi \circ i_1 = \id_{G_2/B}$, where the divisors $E_1$ and $E_2$ are identified with $G_2/B$. Thus, $i_1^* \xi^* \pb^* \Uu_2 = \pb^* \Uu_2$, and we have a short exact sequence (cf. sequence~\eqref{eq:seqforU}):
\[
	0 \to \Ll_{-\oa} \to i_1^* \xi^* \pb^* \Uu_2 \to \Ll_{\oa-\ob} \to 0,
\]
and by adjunction of $i_1^*$ and ${i_1}_*$ we obtain a surjective morphism $\xi^*\pb^* \Uu_2 \to {i_1}_* \Ll_{\oa-\ob}$  of coherent sheaves on $\Xt$.

\begin{defn}\label{def:tilde_U-def}
  	Define $\Uut$ to be the kernel of the above surjective map:
  	\begin{equation}\label{eq:defseq_tilde_U}
		0 \to \Uut \to \xi^* \pb^* \Uu_2 \to {i_1}_* \Ll_{\oa-\ob} \to 0,
	\end{equation}
\end{defn}

\begin{prop}\label{prop:tilde_U-prop}

Let $\Uut$ be as above.
\begin{enumerate}
	\item The coherent sheaf $\Uut$ is locally free of rank two on $\Xt$.
	\item $\Uut$ is the pullback of a vector bundle $\Ub$ of rank two on $X$.
	\item The bundle $\Ub$ is exceptional.
  	\item The bundles $\scal{\Ub,\Oo_X}$ form an exceptional pair.
	\item We have $\det(\Uut) = \pi^* \det(\Ub) = \pi^* \Oo_X(-1)$.
\end{enumerate}
\end{prop}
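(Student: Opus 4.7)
The plan is to handle the five assertions in turn using the defining sequence of $\Uut$, the descriptions of $E_1,E_2$ via $\xi,\pa,\pb$, and Borel--Weil--Bott computations on $G_2/B$ and $\Gad$. Item \emph{(1)} is immediate by applying $\mathcal{H}om(-,\Oo_\Xt)$ to the defining sequence: $\xi^*\pb^*\Uu_2$ is locally free and $i_1$ is divisorial, so $\mathcal{E}xt^i({i_1}_*\Ll_{\oa-\ob},\Oo_\Xt)=0$ for $i \geq 2$, and the long $\mathcal{E}xt$-sequence forces $\mathcal{E}xt^{\geq 1}(\Uut,\Oo_\Xt)=0$.

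For \emph{(2)}, I factor $\pi$ as $\Xt \to \Xt_Y \to X$ (successive blow-ups of the disjoint smooth centers $Y$ and $Z$) and apply Proposition~\ref{prop:vecbun_blow-up_descent} twice. Since $E_1 \cap E_2=\emptyset$, $i_2^*\Uut=\pb^*\Uu_2$ is obviously $\pb$-pulled back. For $i_1^*\Uut$, I apply $\Le i_1^*$ to the defining sequence, using $\Le i_1^*{i_1}_*L=L \oplus L\otimes N_{E_1/\Xt}^\vee[1]$ with $N_{E_1/\Xt}=\Ll_{\oa-\ob}$ together with \eqref{eq:seqforU}, to obtain
\[
0 \to \Oo_{E_1} \to i_1^*\Uut \to \Ll_{-\oa} \to 0.
\]
The extension class lies in $\Hr^1(G_2/B,\Ll_\oa)$, which vanishes by Bott ($\oa + \rho = 2\oa + \ob$ is dominant regular). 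Hence $i_1^*\Uut = \Oo_{E_1} \oplus \Ll_{-\oa} = \pa^*(\Oo_{\Qs_5} \oplus \Oo_{\Qs_5}(-1))$ is $\pa$-pulled back, so $\Uut$ descends to a rank-two bundle $\Ub$ on $X$.

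For \emph{(3)}, $\Ri\pi_*\Oo_\Xt=\Oo_X$ yields $\Ext^\bullet_X(\Ub,\Ub)=\Ext^\bullet_\Xt(\Uut,\Uut)$. Applying $\Hom(\Uut,-)$ to the defining sequence, adjunction and $i_1^*\Uut=\Oo\oplus\Ll_{-\oa}$ give $\Ext^i(\Uut,{i_1}_*\Ll_{\oa-\ob})=\Hr^i(G_2/B,\Ll_{\oa-\ob})\oplus\Hr^i(G_2/B,\Ll_{2\oa-\ob})=0$ by Bott (both $2\oa$ and $3\oa$ are singular on $\beta^\vee$). Similarly $\Hom^\bullet({i_1}_*\Ll_{\oa-\ob},\xi^*\pb^*\Uu_2)=\Hr^{\bullet-1}(G_2/B,\pb^*\Uu_2)=0$, since the graded pieces $\Ll_{-\oa},\Ll_{\oa-\ob}$ of $\pb^*\Uu_2$ yield singular cohomology. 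These reduce $\Ext^\bullet(\Uut,\Uut)$ to $\Ext^\bullet_{\Gad}(\Uu_2,\Uu_2)$. Decomposing $\End(\Uu_2)=\Oo\oplus\mathfrak{sl}(\Uu_2)$, the irreducible homogeneous summand $\mathfrak{sl}(\Uu_2)$ has highest $T$-weight $2\oa-\ob$, and $(2\oa-\ob)+\rho=3\oa$ is singular, so Borel--Weil--Bott gives $\Hr^\bullet(\Gad,\mathfrak{sl}(\Uu_2))=0$, whence $\Ub$ is exceptional. This last Bott computation on $\Gad$ is the main technical step.

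For \emph{(4)}, taking global sections of the defining sequence and using the Bott vanishings already obtained ($\Hr^\bullet(G_2/B,\pb^*\Uu_2)=0$ and $\Hr^\bullet(G_2/B,\Ll_{\oa-\ob})=0$) gives $\Hom^\bullet(\Oo_X,\Ub)=\Hr^\bullet(\Xt,\Uut)=0$. For \emph{(5)}, the defining sequence yields $\det\Uut=\det(\xi^*\pb^*\Uu_2)\otimes\det({i_1}_*\Ll_{\oa-\ob})^{-1}$; by \eqref{eq:seqforU} we have $\det\pb^*\Uu_2=\Ll_{-\ob}$, and the tautological resolution $0\to\xi^*\Ll_{\oa-\ob}(-E_1)\to\xi^*\Ll_{\oa-\ob}\to{i_1}_*\Ll_{\oa-\ob}\to 0$ gives $\det({i_1}_*\Ll_{\oa-\ob})=\Oo_\Xt(E_1)$. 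Substituting the identity $\Oo_\Xt(E_1)=\xi^*\Ll_{-\ob}\otimes\pi^*\Oo_X(1)$ from Lemma~\ref{lem:E_1-E_2-O(rho)-lemma}, the $\xi^*\Ll_{\pm\ob}$ factors cancel to leave $\det\Uut=\pi^*\Oo_X(-1)$.
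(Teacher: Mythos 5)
Your proof is correct. The paper itself disposes of this proposition in two lines, by observing that $\Uut=\Ft_Y$ (with $F_Y=\Uu_2$, $\cO_p(1)=\Ll_{\oa-\ob}$, $\Omega^1_{E/Y}\otimes\cO_p(1)=\Ll_{-\oa}$) and citing the general Propositions~\ref{fact-ll}, \ref{prop:desc}, \ref{prop:exc} and \ref{prop:esq} proved in Section~\ref{section:exc-gen} for all cases of the classification except case (2). Your argument is a self-contained re-derivation of those general proofs for $G_2$, and the skeleton is the same: the local-$\mathcal{E}xt$ criterion for (1), the descent criterion of Proposition~\ref{prop:vecbun_blow-up_descent} applied to $i_1^*\Uut$ and $i_2^*\Uut$ for (2), and the defining sequence plus adjunction for (3)--(5). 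The genuine differences are in how two steps are closed. For descent along $E_1$, the paper notes that $\Omega^1_{E/Y}\otimes\cO_p(1)$ has rank $\codim_XY-1=1$ in case (5) and concludes from the determinant condition, whereas you split the extension $0\to\Oo_{E_1}\to i_1^*\Uut\to\Ll_{-\oa}\to0$ via $\Hr^1(G_2/B,\Ll_{\oa})=0$; the splitting is in fact not needed, since $\pa^*\Dd^b(\Qs_5)$ is a triangulated subcategory and hence closed under extensions, but it is harmless and gives the explicit answer $\Oo_{E_1}\oplus\Ll_{-\oa}$. For exceptionality, the paper reduces to $\Ext^\bullet_Y(F_Y,F_Y)$ and takes the exceptionality of $\Uu_2$ on $\Gad$ as known (it appears in Kuznetsov's full exceptional collection quoted later), whereas you reprove it by decomposing $\End(\Uu_2)=\Oo\oplus\mathfrak{sl}(\Uu_2)$ and killing the summand $\mathfrak{sl}(\Uu_2)$ by Bott, since $(2\oa-\ob)+\rho=3\oa$ is singular against $\beta^\vee$; all your weight computations check out. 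Your route buys explicit, representation-theoretic verification of every vanishing in this one case; the paper's buys uniformity across the classification at the cost of deferring to the general section.
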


\begin{proof}
In the notation of Section~\ref{section:exc-gen}, we have $\Uu_2 = F_Y$, $\Ll_{\oa-\ob} = \cO_p(1)$ and $\Ll_{-\oa} = \Omega^1_{E/Y} \otimes \cO_p(1)$. In particular, $\Uut = \Ft_Y$ and the results follow from Propositions~\ref{fact-ll}, \ref{prop:desc}, \ref{prop:exc} and \ref{prop:esq}. We have $\Ub = \cF_Y$, and the isomorphism in item (5) is obtained by passing to determinants in the short exact sequence~\eqref{eq:defseq_tilde_U}. Note that we have $\xi_* \Uut = \xi_* \Ft_Y = \Ll_{-\oa}$.
\end{proof}

We have obtained the bundle $\Ub$ on $X$ starting from the bundle $\Uu_2$ which is pulled back via $\pb^*$ to $G_2/B$ from the grassmannian $\Gad$. One can also apply the above construction to vector bundles that are pulled back from $\Qs_5$ via the projection $\pa$. We will see next that when applied to the spinor bundle $\Ss$ on $\Qs_5$, this gives an {\it almost} exceptional bundle (cf. \cite{BK} and Proposition \ref{prop:tilde_S-prop} below) $\Sb$ on $X$. To this end we need an auxiliary fact.

\begin{prop}\label{prop:spinor_U_2_sequence}
	On $G_2/B$, we have an exact sequence of vector bundles 
	\begin{equation}\label{eq:spinor_U_2_sequence}
		0\to \pb^* \Uu_2 \to \pa^* \Ss \to \pb^* \Uu_2^\vee\otimes \Ll_{-\oa} \to 0.
	\end{equation}
\end{prop}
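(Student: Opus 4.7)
My plan is to construct the required short exact sequence by working at the level of $G_2$-equivariant homogeneous vector bundles on $G_2/B$. Writing $W := \Ss|_{eP_\alpha}$ and $U := \Uu_2|_{eP_\beta}$ for the $P_\alpha$- and $P_\beta$-module fibres at the base points, one has $\pa^*\Ss = G_2\times^B W$ and $\pb^*\Uu_2 = G_2\times^B U$, so it suffices to produce a short exact sequence of $B$-modules $0 \to U \to W \to U^\vee \otimes \C_{-\oa} \to 0$. Throughout I use the convention $\Ll_\lambda = G_2\times^B \C_{-\lambda}$ (consistent with the form of Bott's theorem stated in the paper), under which the $B$-weights of $W$ are the negatives of the weights of $\Ss$, namely $\{\oa,-\oa+\ob,\alpha,0\}$, and those of $U$ are $\{\oa,-\oa+\ob\}$.

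The heart of the argument is an analysis of the $B$-module filtration of $W$. Since $W$ is irreducible as a $P_\alpha$-module and each of its $T$-weights is one-dimensional, the $U^+$-socle at each stage of the filtration is one-dimensional; a case check on the six positive roots of $G_2$ together with the Levi-$\SL_2$ (of root $\beta$) structure on $W$ produces the unique $B$-filtration with successive subquotients $\C_\oa,\,\C_{-\oa+\ob},\,\C_\alpha,\,\C_0$. In particular $W$ has a unique two-dimensional $B$-submodule $N$, sitting in an extension $0 \to \C_\oa \to N \to \C_{-\oa+\ob} \to 0$ that matches the $B$-module version of the Euler sequence~\eqref{eq:seqforU} for $U$; this extension is non-split because the $B$-socle of $W$ is one-dimensional, so $\C_{-\oa+\ob}$ cannot embed as a $B$-subobject of $W$. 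Since $\Ext^1_B(\C_{-\oa+\ob},\C_\oa)$ is one-dimensional, the two non-split extensions agree, giving $N \cong U$ as $B$-modules. A parallel argument identifies the quotient $W/N$ (which sits in a non-split extension $0 \to \C_\alpha \to W/N \to \C_0 \to 0$) with $U^\vee \otimes \C_\oa$, whose associated bundle on $G_2/B$ is $\pb^*\Uu_2^\vee \otimes \Ll_{-\oa}$. Taking associated homogeneous bundles of the resulting short exact sequence of $B$-modules then yields the claimed exact sequence of vector bundles on $G_2/B$.

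The main obstacle I anticipate is verifying that the second $U^+$-socle of $W$ (the highest weight of $W/\C v_\oa$) really lies at $-\oa+\ob$ rather than at $\alpha$: the key point is that $\{\alpha,-\oa+\ob\}$ forms a two-step $\beta$-string under the Levi $\SL_2$ of $P_\alpha$, so the raising operator $e_\beta$ sends $v_\alpha$ to a non-zero multiple of $v_{-\oa+\ob}$ inside $W$, preventing $v_\alpha$ from being $U^+$-fixed in the quotient. A secondary difficulty is to handle the sign conventions consistently (in particular $\Ll_\lambda \leftrightarrow \C_{-\lambda}$ and the identification of the sub in the Euler sequence with the ``lower'' weight), which is routine but must be threaded with care when identifying $W/N$ with $U^\vee \otimes \C_\oa$.
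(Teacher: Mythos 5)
Your approach --- producing the sequence directly from the $B$-module structure of the fibre $W$ of $\pa^*\Ss$ --- is exactly the alternative the authors flag in the Remark following the proposition, and it differs genuinely from the paper's proof, which instead computes ${\pb}_*\pa^*\Kk$ and ${\pb}_*\pa^*\Ss=\Uu_2$ via the relative Euler sequences, obtains the map $\pb^*\Uu_2\to\pa^*\Ss$ by adjunction, and identifies its cone as the unique non-split extension of $\Oo_{G_2/B}$ by $\Ll_{-\alpha}$ (anything else would have non-trivial cohomology). Your skeleton is right: once one knows that the $B$-socle filtration of $W$ has one-dimensional layers $\C_\oa,\C_{-\oa+\ob},\C_\alpha,\C_0$ with both the sub-extension and the quotient-extension non-split, the one-dimensionality of $\Ext^1_B(\C_{-\oa+\ob},\C_\oa)$ and $\Ext^1_B(\C_0,\C_\alpha)$ (the weight differences being the simple root $\alpha$) forces the identifications $N\cong U$ and $W/N\cong U^\vee\otimes\C_\oa$.

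The gap is in how you establish that filtration. First, the stated premise is false: $W$ is not irreducible as a $P_\alpha$-module. The spinor bundle is irreducible as a $\mathrm{Spin}_7$-homogeneous bundle, but on $\Qs_5=G_2/P_\alpha$ its fibre already decomposes over the Levi of $P_\alpha$ as $\C_\oa\oplus(\text{2-dim }\beta\text{-string})\oplus\C_0$; worse, the trivial summand of $\Uu=V(\oa)\oplus\C$ gives a nowhere-vanishing invariant section of $\Ss^\vee$ (its zero locus is $G_2$-invariant and $\Hr^0(\Qs_5,\Ss)=0$), hence via $\Ss^\vee=\Ss\otimes\Ll_{\oa}$ an honest homogeneous sub-line-bundle $\Ll_{-\oa}\subset\Ss$, i.e.\ a proper $P_\alpha$-submodule $\C_\oa\subset W$. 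Second, and more seriously, the one-dimensionality of the socle at each stage is precisely the assertion that certain root vectors of the unipotent radical of $P_\alpha$ act non-trivially on $W$ --- namely $e_\alpha\colon W_{-\oa+\ob}\to W_{\oa}$, and some raising operator acting non-trivially on $W_0$. Your Levi-$\SL_2$ argument pins down only $e_\beta$, and a ``case check on the positive roots'' tells you which matrix entries are \emph{allowed} to be non-zero by weight reasons, not which ones \emph{are}. Without these non-vanishings both non-splitness claims --- the crux of the identification of $N$ with $U$ and of $W/N$ with $U^\vee\otimes\C_\oa$ --- are unproved. To close the gap you would need an explicit model of $W$ (inside the spin representation, or via $0\to\Kk\to(\Psi_1^{\oa})^\vee\otimes\Ll_{-\oa}\to\Ss\to0$), or else fall back on a cohomological argument of the kind the paper uses.
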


\begin{proof}
This was shown in \cite[Proposition 3 and Lemma 4]{Kuz2}. For convenience of the reader, we give a sketch of its proof, providing details of some cohomological calculations which are paramount to this section.

We first verify that the graded vector space $\Hom ^{\bullet}_{G_2/B}(\pb^* \Uu_2^\vee\otimes \Ll_{-\oa},\pb^* \Uu_2)=\Hom ^{\bullet}_{G_2/B}(\pb^* \Uu_2^\vee,\pb^* \Uu_2\otimes \Ll_{\oa})$ is isomorphic to $\C [-1]$. To see this, recall that the bundle $\pb^* \Uu_2^\vee$ has a two--step filtration by the line bundles $\Ll _{\ob-\oa}$ and $\Ll _{\oa}$, as can be seen from sequence (\ref{eq:seqforU}). Similarly, the bundle $\pb^* \Uu_2\otimes \Ll_{\oa}$ has a two--step filtration by the line bundles $\Ll _{2\oa-\ob}$ and $\Oo _{G_2/B}$. Applying Theorem \ref{th:Bott-Demazure_th}, we immediately see that the pairwise $\Ext$--groups between all line bundles above are trivial, except one which is isomorphic to $\C$ in degree 1. Specifically, the weights of line bundles in question are $\oa-\ob,-\oa,3\oa-2\ob,\oa-\ob$, and all these weights except the weight $3\oa-2\ob=-\beta$ are singular, since $\langle \oa-\ob, \beta ^{\vee}\rangle =\langle -\oa,\alpha ^{\vee}\rangle =-1$ (recall that $\langle \rho ,\gamma ^{\vee}\rangle =1$ for any simple root $\gamma$). Hence, by Theorem \ref{th:Bott-Demazure_th}, (1) the line bundles 
$\Ll _{\oa-\ob}$ and $\Ll _{-\oa}$ are acyclic. As for the weight $3\oa-2\ob=-\beta$, after applying the simple reflection $s_{\beta}$ to it, we obtain $s_{\beta}\cdot (-\beta)=0$, and Theorem \ref{th:Bott-Demazure_th}, (3) gives that ${\rm H}^1(G_2/B,\Ll _{-\beta})=\Ext ^1_{G_2/B}(\pb^* \Uu_2^\vee\otimes \Ll_{-\oa},\pb^* \Uu_2)=\C$. 

Associated to a unique non--trivial extension is a short exact sequence:
\begin{equation}\label{eq:spinor_U_2_sequence-prop}
0\rightarrow \pb^* \Uu_2\rightarrow ?\rightarrow \pb^* \Uu_2^\vee\otimes \Ll_{-\oa}\rightarrow 0.
\end{equation}

The coherent sheaf $?$ in the middle, being an extension of two locally free sheaves, is also locally free. We check that it is the pullback of a vector bundle on ${\sf Q}_5$. To ensure that, it is sufficient to show that ${\pi _{\a}}_{\ast}(?\otimes \Ll _{-\ob})=0$. Indeed, if $?=\pi _{\a}^{\ast}\Ee$ for a vector bundle $\Ee$ on ${\sf Q}_5$, then by the projection formula 
${\pi _{\a}}_{\ast}(?\otimes \Ll _{-\ob})={\pi _{\a}}_{\ast}(\pi _{\a}^{\ast}\Ee\otimes \Ll _{-\ob})=\Ee \otimes {\pi _{\a}}_{\ast}\Ll _{-\ob}=0$, since the line bundle $\Ll _{-\ob}$ has the relative degree $-1$ with respect to the projection $\pi _{\a}$. It follows from \cite{Or} that this sufficient condition is also necessary that a vector bundle on ${G_2/B}$ be pulled back 
from ${\sf Q}_5$. Using the above filtrations for the bundles $\pb^* \Uu_2$ and $\pb^* \Uu_2^\vee\otimes \Ll_{-\oa}$, tensoring (\ref{eq:spinor_U_2_sequence-prop}) with $\Ll _{-\ob}$, and finally applying ${\pi _{\a}}_{\ast}$ to the resulting sequence, one may check that ${\pi _{\a}}_{\ast}(?\otimes \Ll _{-\ob})$ fits into an exact triangle $\dots \rightarrow {\pi _{\a}}_{\ast}(?\otimes \Ll _{-\ob})\rightarrow \Ll _{-2\oa}\rightarrow \Ll _{-2\oa}\rightarrow \dots$ which forces ${\pi _{\a}}_{\ast}(?\otimes \Ll _{-\ob})$ to be zero.

Thus, there is a vector bundle $\Ee$ on ${\sf Q}_5$, such that $\pi ^{\ast}_{\a}\Ee =?$. By \cite[Theorem 4.19]{Kap}, the category $\Dd ^b({\sf Q}_5)$ has a full exceptional collection $\langle \Ll _{-3\oa}, \Ll _{-2\oa},\Ll _{-\oa},\Ss,\Oo _{{\sf Q}_5},\Ll _{\oa}\rangle$. 
To ensure that $\Ee$ is isomorphic to the spinor bundle $\Ss$, we check first that $\Ee$ is right orthogonal to line bundles $\Oo _{{\sf Q}_5}$ and $\Ll _{\oa}$ and is left ortogonal to line bundles $\Ll _{-3\oa}, \Ll _{-2\oa}$, and $\Ll _{-\oa}$. This easy cohomological computation which we are now skipping gives that $\Ee$ is a direct sum of copies of the bundle $\Ss$. However, the rank of $\Ss$ is equal to four which is the same as the rank of $\Ee$ as is seen from the sequence (\ref{eq:spinor_U_2_sequence-prop}). Hence, $\Ee$ is isomorphic to $\Ss$, and the statement follows.
\end{proof}

\comment{
Considering the sequence~\eqref{eq:relEulerseqforK} and applying ${\pb}_*$, we obtain:
\begin{align*}
	& 0\to \Ri^0{\pb}_* \Ll_{-\ob}\to \Ri^0{\pb}_* \pa^* \Kk \to \Ri^0{\pb}_* \Ll_{\beta-\ob} \to \\
	& \Ri^1{\pb}_* \Ll_{-\ob} \to 
\Ri^1{\pb}_* \pa^* \Kk \to \Ri^1{\pb}_* \Ll_{\beta-\ob} \to 0.
\end{align*}
Note that $\beta-\ob=\ob-3\oa$. We have $\Ri^0{\pb}_* \Ll_{-\ob} = \Ll_{-\ob}$, $\Ri^0{\pb}_* \Ll_{\beta-\ob} = 0,$ and $\Ri^1{\pb}_* \Ll_{-\ob} = 0$. Moreover, $\Ri^1{\pb}_* \Ll_{\beta-\ob} = \Ll_{\ob}\otimes \Uu_2 \otimes \Ll_{-\ob} = \Uu_2$; thus, the above sequence reduces to 
\[
	0\to \Ll_{-\ob} \to \Ri^0{\pb}_* \pa^* \Kk \to 0 \to 0 \to \Ri^1{\pb}_* \pa^* \Kk \to \Uu_2 \to 0.
\]
Hence, the full direct image ${\pb}_* \pa^* \Kk$ fits into a distinguished triangle:
\[
	\dots \to \Ll_{-\ob} \to {\pb}_* \pa^* \Kk \to \Uu_2[-1] \to \Ll_{-\ob}[1] \to \dots 
\]
But $\Hom_{G_2/B}(\Uu_2[-1],\Ll_{-\ob}[1]) = \Ext^2_{G_2/B}(\Uu_2,\Ll_{-\ob}) =\Hr^2(G_2/B,\Uu_2)$ is trivial, thus ${\pb}_* \pa^* \Kk = \Ll_{-\ob}\oplus \Uu_2[-1]$.
Applying ${\pb}_* \pi_{\a}^*$ to~\eqref{eq:seqforK}, we get
\begin{align*}
	& 0 \to \Ll_{-\ob} \to \Ri^0{\pb}_* \pa^* ((\Psi_1^{\oa})^\vee \otimes \Ll_{-\oa}) \to \Ri^0{\pb}_* \pa^* \Ss \to \Uu_2\to \\
	& \Ri^1{\pb}_* \pa^* ((\Psi_1^{\oa})^\vee \otimes \Ll_{-\oa}) \to \Ri^1{\pb}_* \pa^* \Ss \to 0.
\end{align*}

To compute ${\pb}_* \pa^* ((\Psi_1^{\oa})^\vee \otimes \Ll_{-\oa})$, we apply ${\pb}_*$ to the Euler sequence 
\[
	0 \to \Ll_{-2\oa} \to V(\oa) \otimes \Ll_{-\oa} \to (\Psi_1^{\oa})^\vee \otimes \Ll_{-\oa} \to 0.
\]
Taking into account that $\Ri^1{\pb}_* \Ll_{-2\oa} = \Ll_{-\ob}$ and $\Ri^0{\pb}_* \Ll_{-\oa} = 0$, we obtain an isomorphism ${\pb}_* \pa^* ((\Psi_1^{\oa})^\vee \otimes \Ll_{-\oa}) = \Ll_{-\ob}$. Finally, we arrive at the exact sequence
\[
	0 \to \Ll_{-\ob} \to \Ll_{-\ob} \to {\pb}_* \pa^* \Ss \to 
\Uu_2 \to 0,
\]
hence at the isomorphism ${\pb}_* \pa^* \Ss = \Uu_2$. This isomorphism and the adjunction of $\pi_{\beta_*}$ and $\pb^*$ give a map $\pb^ *\Uu_2\to \pa^* \Ss$. Consider its cone $\mathrm{C}$:
\[
	\dots \to \pb^* \Uu_2 \to \pa^* \Ss \to \mathrm{C} \to \dots.
\]
Since ${\pb}_* \pa^* \Ss = \Uu_2$, the cone $\mathrm{C}$ is annihilated by ${\pb}_*$, thus $\mathrm{C}$ is of the form $\pb^* (?) \otimes \Ll_{-\oa}$. The weights of $\pb^* \Uu_2$ are $-\oa,\oa-\ob$, while the weights of $\Ss$ are $-\oa,\oa-\ob,\ob-2\oa,0$, and hence the weights of $\mathrm{C}$ are $\ob-2\oa,0$. Note that $\ob-2\oa = -\alpha$ and that there is a unique non-split extension $\Ext^1(\Oo_{G_2/B} ,\Ll_{-\alpha}) = \C$, corresponding to $\pb^*\Uu_2^\vee \otimes \Ll_{-\oa}$. The cone $\mathrm{C}$ is isomorphic to this non-split extension since otherwise $\mathrm{C}$ would have a non-trivial cohomology group. This proves the statement.
\end{proof}

\begin{remark}
The above exact sequence can also be obtained by looking at weight decomposition of the representations corresponding to $\Ss$ and $\Uu$.
\end{remark}
}

\begin{defn}
Define $\Sst$ as the kernel of the natural map
\begin{equation}\label{eq:defseq_tilde_S}
	0 \to \Sst \to \xi^* \pa^* \Ss \to {i_2}_* (\pb^* \Uu_2^\vee \otimes \Ll_{-\oa}) \to 0.
\end{equation}
\end{defn}

\begin{prop}\label{prop:tilde_S-prop}
\begin{enumerate}
	\item The coherent sheaf $\Sst$ is locally free of rank four on $\Xt$.
	\item $\Sst$ is the pullback of a vector bundle $\Sb$ of rank four on $X$.
	\item We have $\Hom _X^{\bullet}(\Sb,\Sb) = \C \oplus \C$.
    \item We have $\det(\Sst) = \pi^* \det(\Sb) = \pi^* \Oo_X(-2) = \Oo_\xi(-2)$.
\end{enumerate}
\end{prop}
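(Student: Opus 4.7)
The four claims are independent enough to be handled in sequence.

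Part (1) will be a direct repeat of the argument of Proposition \ref{fact-ll}: applying $\mathcal{H}om(-,\cO_\Xt)$ to \eqref{eq:defseq_tilde_S} and using that $E_2$ is a smooth divisor so that $\mathcal{E}xt^i({i_2}_*(\pb^*\Uu_2^\vee \otimes \Ll_{-\oa}),\cO_\Xt) = 0$ for $i \geq 2$, the long exact sequence forces $\mathcal{E}xt^i(\Sst,\cO_\Xt) = 0$ for $i \geq 1$. The rank is $4$ since the quotient in \eqref{eq:defseq_tilde_S} is torsion while $\xi^*\pa^*\Ss$ has rank $4$.

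Part (2) will use Proposition \ref{prop:vecbun_blow-up_descent} applied to both components of the exceptional divisor. Since $E_1\cap E_2=\emptyset$, we have $i_1^*\Sst = i_1^*\xi^*\pa^*\Ss = \pa^*\Ss$, which visibly lies in $\pa^*\Dd^b(\Qs_5)$. For the restriction over $E_2$, compute $\mathrm{Tor}_1({i_2}_*(\pb^*\Uu_2^\vee \otimes \Ll_{-\oa}),\cO_{E_2}) = \pb^*\Uu_2^\vee \otimes \Ll_{-\oa} \otimes N^\vee_{E_2/\Xt} = \pb^*(\Uu_2^\vee \otimes \cO_\Gad(-1))$, using $N^\vee_{E_2/\Xt} = \Ll_{\oa-\ob}$. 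The resulting four-term sequence collapses via \eqref{eq:spinor_U_2_sequence} to
\[
    0 \to \pb^*(\Uu_2^\vee \otimes \cO_\Gad(-1)) \to i_2^*\Sst \to \pb^*\Uu_2 \to 0.
\]
Restricted to a fiber $\P^1$ of $\pb$ this is an extension $0 \to \cO(-1)^{\oplus 2} \to ? \to \cO^{\oplus 2} \to 0$, which is forced to split because $\Ext^1_{\P^1}(\cO,\cO(-1)) = 0$. Thus $i_2^*\Sst$ is $\pb$-trivial on fibers and lies in $\pb^*\Dd^b(\Gad)$, giving the desired descent $\Sst = \pi^*\Sb$.

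Part (4) is a direct determinantal calculation. Using $\det\Ss = \cO_{\Qs_5}(-2)$ (obtained either from the weights of the spinor representation or from \eqref{eq:seqforK}), one has $\det(\xi^*\pa^*\Ss) = \xi^*\Ll_{-2\oa}$, while the rank-$2$ torsion sheaf on $E_2$ contributes $\cO_\Xt(2E_2)$. Substituting $\cO_\Xt(E_2) = \xi^*\Ll_{-\oa} \otimes \pi^*\cO_X(1)$ from Lemma \ref{lem:E_1-E_2-O(rho)-lemma} yields $\det\Sst = \pi^*\cO_X(-2) = \cO_\xi(-2)$, and injectivity of $\pi^*\colon \Pic(X) \to \Pic(\Xt)$ identifies $\det\Sb = \cO_X(-2)$.

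The real work is (3). Since $\pi_*\cO_\Xt = \cO_X$ and $R^{>0}\pi_*\cO_\Xt = 0$, the projection formula gives $\Ext^i_X(\Sb,\Sb) = \Ext^i_\Xt(\Sst,\Sst)$; applying $\Hom(\Sst,-)$ to \eqref{eq:defseq_tilde_S} reduces the calculation to $\Ext^\bullet_\Xt(\Sst,\xi^*\pa^*\Ss)$ and $\Ext^\bullet_\Xt(\Sst,{i_2}_*(\pb^*\Uu_2^\vee \otimes \Ll_{-\oa}))$, and a further application of $\Hom(-,\,\cdot\,)$ to \eqref{eq:defseq_tilde_S} converts these, via the adjunctions $(\xi^*,\xi_*)$ and $(i_2^*, {i_2}_*, i_2^!)$, into cohomology groups on $G_2/B$ computable by Bott's theorem. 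The main obstacle is the conclusion $\End_X(\Sb) = \C \oplus \C$ rather than $\C$: this forces $\Sb$ to split as a direct sum of two non-isomorphic indecomposable summands with no Ext between them. The Ext bookkeeping must therefore not only produce a two-dimensional semisimple endomorphism algebra but also exhibit the idempotents realizing this splitting — I expect the two summands to correspond, after descent, to the two extremal terms of the spinor sequence \eqref{eq:spinor_U_2_sequence}.
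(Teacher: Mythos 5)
Your parts (1) and (4) are fine and follow the paper's route (part (4) is more explicit than the paper's ``easy computation''). Part (2) contains a concrete error: the sub-object of your collapsed sequence, $\pb^*(\Uu_2^\vee\otimes\Ll_{-\ob})=\pb^*\Uu_2$ (since $\det\Uu_2=\Ll_{-\ob}$ by \eqref{eq:seqforU}), is a pullback along $\pb$, so it restricts \emph{trivially} to every fiber of $\pb$ --- not as $\cO(-1)^{\oplus 2}$. Note that if your claim were true it would defeat your own conclusion: an extension of $\cO^{\oplus 2}$ by $\cO(-1)^{\oplus 2}$ on the fiber is never trivial, so $i_2^*\Sst$ would \emph{not} lie in $\pb^*\Dd^b(\Gad)$. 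The correct fiberwise extension is of $\cO^{\oplus 2}$ by $\cO^{\oplus 2}$, which splits since $H^1(\P^1,\cO)=0$; the paper goes further and splits the global sequence $0\to\pb^*\Uu_2\to i_2^*\Sst\to\pb^*\Uu_2\to 0$ using exceptionality of $\pb^*\Uu_2$, obtaining $i_2^*\Sst=\pb^*\Uu_2^{\oplus 2}$, an identification that is then reused in part (3).

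The serious gap is part (3), which you neither carry out nor read correctly. The assertion $\End_X(\Sb)=\C\oplus\C$ is a statement about the underlying vector space (dimension two); it is \emph{not} the semisimple algebra $\C\times\C$, and $\Sb$ does not split into two non-isomorphic summands. Proposition~\ref{p:diag}, which rests on item (3), proves that $\End_\Xt(\Sst)\simeq\C[t]/t^2$ as an algebra, i.e.\ $\Sst$ is indecomposable with local endomorphism ring, precisely because $i_1^*\Sst=\pa^*\Ss$ is indecomposable. So the idempotents you propose to ``exhibit'' do not exist, and that search would fail. What is actually required is the computation you only gesture at: apply $\Hom_\Xt(-,\Sst)$ to \eqref{eq:defseq_tilde_S}; since $\xi_*\Sst=\pb^*\Uu_2$, adjunction gives $\Ext^\bullet_\Xt(\xi^*\pa^*\Ss,\Sst)=\Ext^\bullet_{G_2/B}(\pa^*\Ss,\pb^*\Uu_2)=0$ (via \eqref{eq:seqforU} and Bott), while $i_2^!(-)=i_2^*(-)\otimes\Ll_{\ob-\oa}[-1]$ together with $i_2^*\Sst=\pb^*\Uu_2^{\oplus 2}$ yields $\Ext^i_\Xt({i_2}_*(\pb^*\Uu_2^\vee\otimes\Ll_{-\oa}),\Sst)=\Ext^{i-1}_{G_2/B}(\pb^*\Uu_2^\vee,(\pb^*\Uu_2^\vee)^{\oplus 2})$, which is $\C^2$ for $i=1$ and $0$ otherwise. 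The long exact sequence then gives $\dim_\C\End(\Sst)=2$ and $\Ext^{>0}(\Sst,\Sst)=0$, with no claim of semisimplicity.
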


\begin{proof}
Item (1) is similiar to Proposition~\ref{prop:tilde_U-prop}, $(1)$ since $E_2$ is a smooth divisor in $\Xt$ and $\pb^* \Uu_2^\vee \otimes \Ll_{-\oa}$ is a locally free sheaf on $E_2$.

For (2), note that the restriction of $\Sst$ to $E_1$ is isomorphic to 
$\pa^* \Ss$. To compute the restriction of $\Sst$ to $E_2$, apply $i_2^*$ to~\eqref{eq:defseq_tilde_S}:
\[
	0 \to \Le^1 i_2^* {i_2}_* (\pb^* \Uu_2^\vee \otimes \Ll_{-\oa}) \to i_2^*\Sst \to \pa^* \Ss \to \pb^* \Uu_2^\vee \otimes \Ll_{-\oa} \to 0.
\]
We have $\Le^1 i_2^* {i_2}_* (\pb^* \Uu_2^\vee \otimes \Ll_{-\oa}) = \pb^* \Uu_2^\vee \otimes \Ll_{-\oa} \otimes \Ll_{-\ob+\oa} = \pb^* \Uu_2$. We deduce that
\begin{equation}\label{eq:i_2-restriction_tilde_S}
	0 \to \pb^* \Uu_2 \to i_2^* \Sst \to \pb^* \Uu_2\to 0,
\end{equation}
hence $i_2^*\Sst$ is trivial when restricted to fibers of $\pb$. As the bundle $\pb^* \Uu_2$ is exceptional, the above short exact sequence splits. Thus, $i_2^* \Sst = \pb^* \Uu_2 \oplus \pb^* \Uu_2$.

To prove (3), we apply $\Hom_\Xt(-,\Sst)$ to~\eqref{eq:defseq_tilde_S}. 
We first compute $\Ext ^i _\Xt(\xi^* \pa^* \Ss,\Sst)$. Since $\xi_* \Sst = \pb^* \Uu_2$, this group is isomorphic to $\Ext ^i_{G_2/B}(\pa^* \Ss,\pb^* \Uu_2)$. To compute the latter, apply $\Hom_{G_2/B}(\pa^* \Ss,-)$ to sequence \eqref{eq:seqforU}. We obtain $\Ext ^i_{G_2/B}(\pa^* \Ss,\Ll_{-\oa}) = \Hr^i(G_2/B,\pa^* \Ss)=0$ and $\Ext ^i_{G_2/B}(\pa^* \Ss,\Ll_{\oa-\ob}) = 0$; the latter group is trivial since ${\pa}_* \Ll_{-\ob} = 0$.

\quad Next, using the right adjoint for ${i_2}_{\ast}$, we obtain an isomorphism $\Ext ^i_\Xt({i_2}_* (\pb^* \Uu_2^\vee \otimes \Ll_{-\oa}),\Sst)=\Ext ^{i-1}_{G_2/B}(\pb^* \Uu_2^\vee \otimes \Ll_{-\oa},i_2^* \Sst \otimes \Ll_{\ob-\oa})$. From \eqref{eq:i_2-restriction_tilde_S}, we see that the latter group is isomorphic to $\Ext ^{i-1}_{G_2/B}(\pb^* \Uu_2^\vee,\pb^* \Uu_2^\vee \oplus \pb^* \Uu_2^\vee)$. This gives $\Hom _\Xt^{\bullet}({i_2}_* (\pb^* \Uu_2^\vee \otimes \Ll_{-\oa}),\Sst)=(\C\oplus \C)[-1]$. From the previous paragraph and from the long exact sequence associated to ~\eqref{eq:defseq_tilde_S}, we obtain $\Hom_\Xt^{\bullet}(\Sst,\Sst)=\C\oplus \C$.
This proves (3).

Finally, item (4) is an easy computation.
\end{proof}

The top row in the following diagram represents the bundle $\Sst$ as an extension of two locally free sheaves on $\Xt$. The middle column is short exact sequence ~\eqref{eq:defseq_tilde_S}, while the last column is the tensor product of $\xi^* \pb^* (\Uu_2^\vee \otimes \Ll_{-\oa})$ with the short exact sequence
\[
	0 \to \Oo_\xi(-E_2) \to \Oo_\Xt \to {i_2}_* \Oo_{E_2} \to 0,
\] 
where we use Proposition \ref{prop:rel_Euler_seq_rho} and Lemma \ref{lem:E_1-E_2-O(rho)-lemma} to identify $\xi^* (\pb^* \Uu_2^\vee \otimes \Ll_{-\oa}) \otimes \Oo_\Xt(-E_2)$ with $\xi^* \pb^* \Uu_2^\vee \otimes \Oo_\xi(-1)$):

\[
	\xymatrix{
  		& & 0 \ar[d] & 0 \ar[d] & \\
  		0 \ar[r] & \xi^*\pb^* \Uu_2 \ar[r] \ar@{=}[d] & \Sst \ar[r]\ar[d] & \xi^* \pb^* \Uu_2^\vee \otimes \Oo_\xi(-1) \ar[r] \ar[d]& 0 \\
  		0 \ar[r] & \xi^* {\pb}^* \Uu_2 \ar[r]& \xi^* \pi_\a^* \Ss \ar[r] \ar[d] & \xi^* (\pb^* \Uu_2^\vee \otimes \Ll_{-\oa}) \ar[r] \ar[d] & 0 \\
 		& & {i_2}_*(\pb^* \Uu_2^\vee \otimes \Ll_{-\oa}) \ar@{=}[r] \ar[d] & {i_2}_* (\pi_{\beta}^* \Uu_2^\vee \otimes \Ll_{-\oa}) \ar[d] & \\
  		& & 0 & 0 & \\
	}
\]

The first row represents a non--trivial element in $\Ext^1_\Xt(\xi^* \pb^* \Uu_2^\vee \otimes \Oo_\xi(-1),\xi^* \pb^* \Uu_2) = \C$. The group $\Ext^2_\Xt({i_2}_* (\pb^* \Uu_2^\vee \otimes \Ll_{-\oa}),\xi^* \pb^* \Uu_2)$ is trivial, and the bundle $\xi^* \pa^* \Ss$ in the center of the above diagram trivializes this extension.

\begin{prop}\label{p:diag}
As $\C$-algebras, we have $\End_\Xt(\Sst) \simeq \C[t]/t^2$.
\end{prop}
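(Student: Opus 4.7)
The plan is to exhibit a non-zero nilpotent endomorphism $t$ of $\Sst$ and then invoke the dimension count in Proposition~\ref{prop:tilde_S-prop}~(3). Concretely, I will use the non-split short exact sequence
\[
  0 \to A \xrightarrow{\iota} \Sst \xrightarrow{p} B \to 0
\]
displayed in the diagram following Proposition~\ref{prop:tilde_S-prop}, with $A := \xi^*\pb^*\Uu_2$ and $B := \xi^*\pb^*\Uu_2^\vee \otimes \Oo_\xi(-1)$. Given any non-zero morphism $\phi : B \to A$, I set $t := \iota \circ \phi \circ p$. This endomorphism is non-zero since $\iota$ is a monomorphism and $p$ is an epimorphism, and it satisfies $t^2 = \iota \circ \phi \circ (p \circ \iota) \circ \phi \circ p = 0$ by exactness at $\Sst$.

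The key step is therefore to prove $\Hom_\Xt(B, A) \neq 0$. Using adjunction, the projection formula for $\xi : \Xt \to G_2/B$, and the identity $\xi_*\Oo_\xi(1) = \Ll_\oa \oplus \Ll_\ob$, one rewrites
\[
  \Hom_\Xt(B, A) = \Hr^0(G_2/B, \pb^*(\Uu_2 \otimes \Uu_2) \otimes (\Ll_\oa \oplus \Ll_\ob)).
\]
Decomposing $\Uu_2 \otimes \Uu_2 = S^2\Uu_2 \oplus \Lambda^2\Uu_2$ and using $\Lambda^2\Uu_2 = \det\Uu_2 = \Ll_{-\ob}$ (which is computed from the weights of $\pb^*\Uu_2$ read off~\eqref{eq:seqforU}), the summand $\pb^*\Lambda^2\Uu_2 \otimes \Ll_\ob = \Oo_{G_2/B}$ produces a non-zero global section, so $\Hom(B,A) \neq 0$.

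Finally, since $t$ is nilpotent and non-zero, it is not proportional to the identity, so $\{1_\Sst, t\}$ is linearly independent in $\End_\Xt(\Sst)$. By Proposition~\ref{prop:tilde_S-prop}~(3), together with $\Sst = \pi^*\Sb$ and $\pi_*\Oo_\Xt = \Oo_X$, we have $\dim_\C \End_\Xt(\Sst) = \dim_\C \End_X(\Sb) = 2$. Hence $\{1_\Sst, t\}$ is a $\C$-basis of $\End_\Xt(\Sst)$, and the relation $t^2 = 0$ identifies this algebra with $\C[t]/t^2$.

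The main obstacle is the correct evaluation of $\Hom_\Xt(B, A)$. A naive attempt to compute $\Hr^0$ by listing the $B$-weights of $\pb^*(\Uu_2 \otimes \Uu_2) \otimes (\Ll_\oa \oplus \Ll_\ob)$ and summing line-bundle contributions via Bott would overcount: weight-zero terms arise from both $S^2\Uu_2$ and $\Lambda^2\Uu_2$, but only the latter survives in the actual cohomology. The clean way is to split off the $P_\beta$-irreducible summand $S^2\Uu_2$ and apply Kempf's vanishing (Borel--Weil--Bott for $G/P$), which reduces its contribution to the cohomology of a singular line bundle on $G_2/B$, hence to zero.
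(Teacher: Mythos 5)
Your argument is correct, and it takes a genuinely different route from the paper. The paper proves indecomposability of $\Sst$ abstractly: by the Krull--Schmidt theorem (Atiyah), a nontrivial splitting $\Sst = \Sst_1 \oplus \Sst_2$ would restrict along $i_1$ to a nontrivial splitting of $i_1^*\Sst = \pa^*\Ss$, contradicting the indecomposability of the spinor bundle; indecomposability makes $\End_\Xt(\Sst)$ a local ring, and a two-dimensional local $\C$-algebra is automatically $\C[t]/t^2$. You instead exhibit the nilpotent generator of the maximal ideal explicitly, as the composite $\Sst \twoheadrightarrow B \xrightarrow{\phi} A \hookrightarrow \Sst$ through the two-step filtration $0 \to \xi^*\pb^*\Uu_2 \to \Sst \to \xi^*\pb^*\Uu_2^\vee \otimes \Oo_\xi(-1) \to 0$, and your key computation $\Hom_\Xt(B,A) \supset \Hr^0(G_2/B, \pb^*\Lambda^2\Uu_2 \otimes \Ll_{\ob}) = \Hr^0(G_2/B,\Oo) = \C$ is right (the projection formula with $\xi_*\Oo_\xi(1) = \Ll_{\oa}\oplus\Ll_{\ob}$ and $\det\Uu_2 = \Ll_{-\ob}$ are both consistent with the identities established in the paper). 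Both proofs rest on the same input, namely $\dim_\C\End_\Xt(\Sst) = 2$ from Proposition~\ref{prop:tilde_S-prop}(3). The paper's route is shorter and avoids any further cohomology computation, at the cost of invoking Krull--Schmidt and the known indecomposability of $\Ss$ on $\Qs_5$; yours is more constructive and identifies the square-zero element concretely, which could be reused elsewhere (e.g.\ it makes transparent why $\Hom^\bullet(\Sst,\Uut)$ and $\Hom^\bullet(\Uut,\Sst)$ are each one-dimensional). One small remark: the non-splitness of the extension plays no role in your argument --- you only need the sequence to exist, $p\circ\iota = 0$, and $\phi \neq 0$ --- so you could drop that adjective; and your closing caveat about overcounting in a naive Bott computation is not needed for the proof, since a single nonvanishing summand already suffices.
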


\begin{proof}
By \cite{At}, the vector bundle $\Sst$ is uniquely decomposed, up to a reordering, into a direct sum of indecomposable vector bundles. If $\Sst$ were decomposable, it would be the direct sum of two vector bundles, namely, $\Sst = \Sst_1 \oplus \Sst_2$. Recall that the restriction $i_1^* \Sst$ of $\Sst$ to $E_1$ is isomorphic to $\pa^* \Ss$, hence it is an indecomposable bundle on $E_1$; if the bundle $\Sst$ was split as above, then applying $i_1^*$ to the direct sum decomposition, one would obtain an isomorphism $i_1^* \Sst = i_1^* \Sst_1 \oplus i_1^* \Sst_2$, with each of the summands being necessarily non-trivial. That would contradict the indecomposability of $i_1^* \Sst$.

Therefore the bundle $\Sst$ is indecomposable, hence the algebra $\End_\Xt(\Sst)$ is local. By Proposition \ref{prop:tilde_S-prop}, (3), $\dim_\C \End_\Xt(\Sst) = 2$, proving the claim.
\end{proof}

\begin{prop}\label{prop:S_to_U_to_S}
We have $\Hom_\Xt^{\bullet}(\Sst,\Uut) = \Hom_\Xt^{\bullet}(\Uut,\Sst) = \C$.
\end{prop}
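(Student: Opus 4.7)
The plan is to apply $\Hom(-,\Sst)$ and $\Hom(-,\Uut)$ to the defining short exact sequences~\eqref{eq:defseq_tilde_U} and~\eqref{eq:defseq_tilde_S} of $\Uut$ and $\Sst$. The main ingredients are the pushforward identifications $\xi_*\Uut = \Ll_{-\oa}$ and $\xi_*\Sst = \pb^*\Uu_2$ already recorded in the proofs of Propositions~\ref{prop:tilde_U-prop} and~\ref{prop:tilde_S-prop}, the projection formula for the $\Pp^1$-bundle $\xi$, and the divisorial adjunctions $\Hom_\Xt({i_k}_*(-),-) = \Hom_{E_k}(-, i_k^*(-)\otimes N_{E_k/\Xt}[-1])$ with $N_{E_1/\Xt} = \Ll_{\oa-\ob}$ and $N_{E_2/\Xt} = \Ll_{\ob-\oa}$. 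Since $E_1\cap E_2 = \emptyset$, restriction yields $i_1^*\Sst = \pa^*\Ss$ and $i_2^*\Uut = \pb^*\Uu_2$. The only non-formal input is the vanishing $\Hr^\bullet(\Qs_5,\Ss) = 0$: Proposition~\ref{prop:spinor_U_2_sequence} gives ${\pb}_*\pa^*\Ss = \Uu_2$ with vanishing first direct image, so Leray reduces the claim to $\Hr^\bullet(G_2/B,\pb^*\Uu_2)$, which vanishes by Bott applied to~\eqref{eq:seqforU} since $-\oa+\rho = \ob$ and $\oa-\ob+\rho = 2\oa$ are both singular.

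To handle $\Hom_\Xt^{\bullet}(\Uut,\Sst)$, I would apply $\Hom(-,\Sst)$ to~\eqref{eq:defseq_tilde_U}. The $\xi$-adjunction together with exceptionality of $\pb^*\Uu_2$ on $G_2/B$ (which follows from exceptionality of $\Uu_2$ on $\Gad$ via ${\pb}_*\pb^* = \id$) give $\Hom_\Xt^{\bullet}(\xi^*\pb^*\Uu_2,\Sst) = \Hom_{G_2/B}^{\bullet}(\pb^*\Uu_2,\pb^*\Uu_2) = \C$. The divisorial adjunction collapses the other term to $\Hom_\Xt^{\bullet}({i_1}_*\Ll_{\oa-\ob},\Sst) = \Hr^{\bullet-1}(G_2/B,\pa^*\Ss) = \Hr^{\bullet-1}(\Qs_5,\Ss) = 0$, and the long exact sequence then forces $\Hom_\Xt^{\bullet}(\Uut,\Sst) = \C$ concentrated in degree zero.

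For $\Hom_\Xt^{\bullet}(\Sst,\Uut)$, I would apply $\Hom(-,\Uut)$ to~\eqref{eq:defseq_tilde_S}. Using $\xi_*\Uut = \Ll_{-\oa}$ together with the identity $\Ss^\vee = \Ss\otimes\Ll_\oa$ yields $\Hom_\Xt^{\bullet}(\xi^*\pa^*\Ss,\Uut) = \Hom_{G_2/B}^{\bullet}(\pa^*\Ss,\Ll_{-\oa}) = \Hr^\bullet(G_2/B,\pa^*\Ss) = 0$. For the other term, tracking the shift from $N_{E_2/\Xt} = \Ll_{\ob-\oa}$ and using ${\pb}_*\Ll_\ob = \cO_\Gad(1)$ one lands in $\Hom_\Gad^{\bullet-1}(\Uu_2^\vee,\Uu_2\otimes\cO_\Gad(1))$; the rank two identity $\Uu_2\otimes\cO_\Gad(1) = \Uu_2^\vee$ (from $\det\Uu_2 = \cO_\Gad(-1)$) together with exceptionality of $\Uu_2^\vee$ reduce this to $\C$ sitting in cohomological degree one. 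The long exact sequence then forces the connecting map $\Hom_\Xt^0(\Sst,\Uut) \to \C$ to be an isomorphism, with all higher groups vanishing.

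The main obstacle I expect is the careful bookkeeping of the shifts and line bundle twists produced by the divisorial adjunctions and the projection formula, especially the self-duality $\Uu_2\otimes\cO_\Gad(1) = \Uu_2^\vee$ and the identification of the direct images along $\pb$ of line bundles on $G_2/B$. Once these are settled, both computations reduce to the single non-formal vanishing $\Hr^\bullet(\Qs_5,\Ss) = 0$ coming from Borel--Weil--Bott on $G_2/B$.
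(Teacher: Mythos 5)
Your proposal is correct and follows essentially the same route as the paper's proof: apply $\Hom$ to the two defining sequences \eqref{eq:defseq_tilde_U} and \eqref{eq:defseq_tilde_S}, use $\xi_*\Uut = \Ll_{-\oa}$ and $\xi_*\Sst = \pb^*\Uu_2$ together with divisorial adjunction along $E_1$ and $E_2$ (with $i_1^*\Sst = \pa^*\Ss$, $i_2^*\Uut = \pb^*\Uu_2$), and reduce everything to the exceptionality of $\Uu_2$ and the vanishing $\Hr^\bullet(\Qs_5,\Ss)=0$. The only addition is your self-contained Borel--Weil--Bott verification of that last vanishing, which the paper simply takes as known.
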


\begin{proof}
The proof is similar to that of Proposition \ref{prop:tilde_S-prop}.
To compute the first group, apply $\Hom_\Xt(-,\Uut)$ to~\eqref{eq:defseq_tilde_S}. We need to compute $\Ext^i_\Xt(\xi^* \pa^* \Ss,\Uut)$ and $\Ext^i_\Xt({i_2}_* (\pb^* \Uu_2^\vee \otimes \Ll_{-\oa}),\Uut)$. We see that the former group is trivial as  
  \[
  	\Ext^i_\Xt(\xi^* \pa^* \Ss,\Uut)=\Ext^i_{G_2/B}(\pa^* \Ss,\xi_ *\Uut) = \Ext^i_{G_2/B}(\pa^* \Ss,\Ll_{-\oa}) = \Hr^i(\Qs_5,\Ss)=0.
  \]
  For the latter group, we have an isomorphism 
  \[
  	\Ext^i_\Xt({i_2}_* (\pb^* \Uu_2^\vee \otimes \Ll_{-\oa}),\Uut)=\Ext_{G_2/B}^{i-1}(\pb^* \Uu_2^\vee \otimes \Ll_{-\oa},i_2^* \Uut \otimes \Ll_{\ob-\oa}).
  \]

From \eqref{eq:defseq_tilde_U}, we see that the last group is isomorphic to $\Ext_{G_2/B}^{i-1}(\pb^* \Uu_2^\vee,\pb^* \Uu_2^\vee)$.
The bundle $\pb^* \Uu_2^\vee$ being exceptional, we obtain from the long exact sequence associated to \eqref{eq:defseq_tilde_S} an isomorphism $\Hom_\Xt^{\bullet}(\Sst,\Uut)=\C$. This proves the first statement. For the $\Hom$--groups in the opposite direction, apply $\Hom_\Xt(-,\Sst)$ to \eqref{eq:defseq_tilde_U}. We get

 \[
  	\Hom^{\bullet}_\Xt({i_1}_* \Ll_{\oa-\ob},\Sst)=\Hr^{\bullet}(G_2/B,\pa^* \Ss)=0
  \]

  and 
   \[
  	\Hom_\Xt^{\bullet}(\xi^* \pb^* \Uu_2,\Sst) = \Hom_\Xt^{\bullet}(\pb^* \Uu_2,\xi_* \Sst) = \Hom_\Xt^{\bullet}(\pb^* \Uu_2,\pb^* \Uu_2)  = \C.
  \]
  
We conclude as above invoking the long exact sequence associated to sequence \eqref{eq:defseq_tilde_U}. \end{proof}

We now use Proposition \ref{prop:S_to_U_to_S} to construct, out of the pair of bundles $\Ub$ and $\Sb$, another genuine exceptional object $\Sbh$ on $X$. Together with $\Ub$, these two objects will form an exceptional pair (Proposition \ref{prop:hatS_except} and Corollary \ref{cor:StoUorthog} below).

\begin{defn}\label{def:def_hat_S}
  Let $\Ssh$ denote $\Cone(\Sst \otimes \Hom^{\bullet}_\Xt(\Sst,\Uut) \to \Uut)$, where the map is the canonical evaluation map. Identifying $\Hom^{\bullet}_\Xt(\Sst,\Uut)$ with $\C$ via Proposition \ref{prop:S_to_U_to_S}, we can also write $\Ssh$ as $\Cone(\Sst \to \Uut)$.
\end{defn}

We have a distinguished triangle
\begin{equation}\label{eq:cone_tilde_S_to_tilde_U}
	\dots \to \Sst \to \Uut \to \Ssh \to \Sst[1] \to \dots 
\end{equation}

Passing to cohomology, we obtain an exact sequence, both $\Sst$ and $\Uut$  being vector bundles:

$$0 \to \Hh^{-1}(\Ssh) \to \Sst \to \Uut \to \Hh^0(\Ssh) \to 0$$.

\begin{prop}\label{prop:cohomology_hat_S}
We have $\Hh^{-1}(\Ssh) = \xi^* \pb^* \Uu_2$ and $\Hh^0(\Ssh) = {i_1}_* \Ll_{-\oa}$.
\end{prop}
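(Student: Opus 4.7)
The plan is to construct a non-trivial morphism $\gamma\colon\Sst\to\Uut$ explicitly as a composition of a surjection and an injection, and to read off its kernel and cokernel. By Proposition~\ref{prop:S_to_U_to_S}, $\Hom_\Xt(\Sst,\Uut)=\C$, so any non-zero $\gamma$ automatically coincides, up to a non-zero scalar, with the morphism used in Definition~\ref{def:def_hat_S}, and rescaling does not affect the cohomology sheaves of the cone. I would start by rewriting the quotient term in the short exact sequence
\[
0\to\xi^*\pb^*\Uu_2\to\Sst\to\xi^*\pb^*\Uu_2^\vee\otimes\Oo_\xi(-1)\to 0
\]
from the proof of Proposition~\ref{prop:tilde_S-prop}. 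The identity $\pb^*\Uu_2^\vee=\pb^*\Uu_2\otimes\Ll_\ob$ (from $\det\pb^*\Uu_2=\Ll_{-\ob}$) together with $\Oo_\xi(-1)=\Oo_\Xt(-E_1)\otimes\xi^*\Ll_{-\ob}$, deduced from Lemma~\ref{lem:E_1-E_2-O(rho)-lemma}, identifies this quotient with $\xi^*\pb^*\Uu_2\otimes\Oo_\Xt(-E_1)$.

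Next, the natural inclusion $j\colon\xi^*\pb^*\Uu_2\otimes\Oo_\Xt(-E_1)\hookrightarrow\xi^*\pb^*\Uu_2$ factors through $\Uut$: any local section of $\xi^*\pb^*\Uu_2$ divisible by the equation of $E_1$ restricts to $0$ on $E_1$ and hence vanishes under the defining quotient $\xi^*\pb^*\Uu_2\to{i_1}_*\Ll_{\oa-\ob}$ of $\Uut$. Call the induced injection $\alpha\colon\xi^*\pb^*\Uu_2\otimes\Oo_\Xt(-E_1)\hookrightarrow\Uut$, and let $\gamma$ be its composition with the surjection $\Sst\twoheadrightarrow\xi^*\pb^*\Uu_2\otimes\Oo_\Xt(-E_1)$; this $\gamma$ is non-zero since $\iota\circ\alpha=j\neq 0$, where $\iota\colon\Uut\hookrightarrow\xi^*\pb^*\Uu_2$. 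From the factorisation, the kernel is immediate: $\Hh^{-1}(\Ssh)=\Ker\gamma=\xi^*\pb^*\Uu_2$.

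For the cokernel I would apply the snake lemma to the commutative diagram with exact rows
\[
0\to\xi^*\pb^*\Uu_2\otimes\Oo_\Xt(-E_1)\xrightarrow{j}\xi^*\pb^*\Uu_2\to{i_1}_*\pb^*\Uu_2\to 0
\]
and
\[
0\to\Uut\to\xi^*\pb^*\Uu_2\to{i_1}_*\Ll_{\oa-\ob}\to 0,
\]
whose vertical maps are $\alpha$, $\id_{\xi^*\pb^*\Uu_2}$, and the ${i_1}_*$-pushforward of the surjection $\pb^*\Uu_2\to\Ll_{\oa-\ob}$ from~\eqref{eq:seqforU}. Since the middle vertical map is the identity, the snake lemma gives $\Hh^0(\Ssh)=\operatorname{Coker}\gamma=\operatorname{Coker}\alpha\cong\Ker({i_1}_*\pb^*\Uu_2\to{i_1}_*\Ll_{\oa-\ob})={i_1}_*\Ll_{-\oa}$, where the last equality uses the kernel $\Ll_{-\oa}$ from~\eqref{eq:seqforU} and the exactness of ${i_1}_*$. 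The one genuinely creative step is the identification $\xi^*\pb^*\Uu_2^\vee\otimes\Oo_\xi(-1)=\xi^*\pb^*\Uu_2\otimes\Oo_\Xt(-E_1)$, turning the opaque quotient into a natural twist of $\xi^*\pb^*\Uu_2$; after that, the factorisation through $\Uut$ and the snake-lemma computation are purely formal.
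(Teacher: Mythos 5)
Your proof is correct, and it takes a genuinely different and more economical route than the paper's. The paper first builds the universal extension $\Uuh$ of ${i_2}_*(\pb^*\Uu_2^\vee\otimes\Ll_{-\oa})$ by $\Uut$, presents it as a cokernel, then realises $\Ssh$ as the cone of the explicit map~\eqref{eq:cone_hatS_explicit} via a push-out/pull-back diagram chase, and only then reads off kernel and cokernel. You instead exhibit the (up to scalar unique, by Proposition~\ref{prop:S_to_U_to_S}) non-zero morphism $\Sst\to\Uut$ directly as a surjection followed by an injection, after the key observation that the quotient in the top row of the diagram preceding Proposition~\ref{p:diag} is $\xi^*\pb^*\Uu_2^\vee\otimes\Oo_\xi(-1)=\xi^*\pb^*\Uu_2\otimes\Oo_\Xt(-E_1)$ — which checks out, since $\det\pb^*\Uu_2=\Ll_{-\ob}$ gives $\pb^*\Uu_2^\vee=\pb^*\Uu_2\otimes\Ll_{\ob}$ and Lemma~\ref{lem:E_1-E_2-O(rho)-lemma} gives $\Oo_\xi(-1)=\xi^*\Ll_{-\ob}\otimes\Oo_\Xt(-E_1)$. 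The factorisation of the twisted inclusion through $\Uut$, the identification $\Ker\gamma=\Ker p=\xi^*\pb^*\Uu_2$, and the snake-lemma computation $\Coker\gamma=\Coker\alpha={i_1}_*\Ll_{-\oa}$ are all sound (the right-hand square commutes because the surjection defining $\Uut$ is by construction the adjunction composite of restriction to $E_1$ with $\pb^*\Uu_2\to\Ll_{\oa-\ob}$, and ${i_1}_*$ is exact for the closed immersion $i_1$). What the paper's longer argument buys is the explicit two-term resolution~\eqref{eq:cone_hatS_explicit} of $\Ssh$ by pullbacks of homogeneous bundles, which is extra structural information; for the statement of the proposition itself and its downstream uses via the triangle~\eqref{eq:cohomology_triangle_hatS}, your argument is sufficient.
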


\begin{proof}
By the previous proof, we have $\Ext_\Xt^1({i_2}_* (\pb^* \Uu_2^\vee \otimes \Ll_{-\oa}),\Uut) = \C$. Consider the unique non-trivial extension:
\begin{equation}\label{eq:def_seq_hat_U}
	0 \to \Uut \to \Uuh \to {i_2}_* (\pb^* \Uu_2^\vee \otimes \Ll_{-\oa}) \to 0.
\end{equation}

We claim that the middle term $\Uuh$ fits into a short exact sequence:
\begin{equation}\label{eq:hat_U_as_cokernel-1}
	0 \to \xi^* \Ll_{-\oa} \otimes \Oo_\Xt(-E_1) \to \xi^* (\Ll_{-\oa} \oplus \pb^* \Uu_2^\vee \otimes \Ll_{-\oa}) \to \Uuh \to 0.
\end{equation}

Indeed, start with a distinguished triangle 
\begin{equation}\label{eq:rho-decomp_of_hatU}
	\dots \to \xi^* \xi_* \Uuh \to \Uuh \to \xi^*(?) \otimes \Oo_\xi(-1) \to \xi^* \xi_* \Uuh[1] \to \dots 
\end{equation}

which is obtained by completing the natural adjunction morphism $\xi^* \xi_* \Uuh \to \Uuh$ to a distinguished triangle and by identifying the cone of that morphism with $\xi^*(?) \otimes \Oo_\xi(-1)$ via semiorthogonal decomposition from \cite{Or} for the $\Pp ^1$--bundle $\xi :\Xt\rightarrow G_2/B$, as 
in Proposition \ref{prop:rel_Euler_seq_rho}. Here $?$ is an object $\Dd ^b(G_2/B)$. To compute $\xi_*\Uuh$ and $?$, apply $\xi_*$ to (\ref{eq:def_seq_hat_U}):

\[
	\dots \to \xi_* \Uut \to \xi_* \Uuh \to \pb^* \Uu_2^* \otimes \Ll_{-\oa}\to \dots.
\]
Remembering that $\xi_* \Uut = \Ll_{-\oa}$, the above triangle reduces to 
\[
	0 \to \Ll_{-\oa} \to \xi_* \Uuh \to \pb^* \Uu_2^\vee \otimes \Ll_{-\oa} \to 0,
\]
which splits since $\Ext_{G_2/B}^1(\pb^* \Uu_2^\vee \otimes \Ll_{-\oa},\Ll_{-\oa}) = \Hr^1(G_2/B,\pb^* \Uu_2) = 0$. Thus, $\xi_* \Uuh = \Ll_{-\oa} \oplus \pb^* \Uu_2^\vee \otimes \Ll_{-\oa}$. To compute $?$, tensor~\eqref{eq:rho-decomp_of_hatU} with $\Oo_\xi(-1)$ and apply $\xi_*$. As $\xi_* \Oo_\xi(-1) = 0$, by projection formula we obtain $\xi_* (\xi^* \xi_*\Uuh \otimes \Oo_\xi(-1)) = 0$. Further, $\xi_* (\xi^* (?) \otimes \Oo_\xi(-2)) = ? \otimes \xi_* \Oo_\xi(-2) = ? \otimes \Ll_{-\rho}[-1]$. Thus, $? \otimes \Ll_{-\rho}[-1] = \xi_* (\Uuh \otimes \Oo_\xi(-1))$, and $? = \xi_* (\Uuh \otimes \Oo_\xi(-1)) \otimes \Ll_{\rho}[1]$.

To compute $\xi_* (\Uuh \otimes \Oo_\xi(-1))$, tensor~\eqref{eq:def_seq_hat_U} with $\Oo_\xi(-1)$:
\[
	0 \to \Uut \otimes \Oo_\xi(-1) \to \Uuh \otimes \Oo_\xi(-1)\to {i_2}_* (\pb^* \Uu_2^\vee \otimes \Ll_{-\oa-\ob}) \to 0.
\]
Applying $\xi_*$ to the above sequence, we get
\[
	0\to \Ri^0\xi_*(\Uut \otimes \cO_\xi(-1))\to \Ri^0\xi_*(\Uuh \otimes \Oo_\xi(-1))\to \pb^*\Uu_2^\vee \otimes \Ll_{-\rho} \to \Ri^1\xi_*(\Uut \otimes \cO_\xi(-1))\to \dots 
\]
To compute $\xi_*(\Uut \otimes \Oo_\xi(-1))$, tensor~\eqref{eq:defseq_tilde_U} with $\Oo_\xi(-1)$ and apply $\xi_*$, obtaining an isomorphism $\xi_*(\Uut \otimes \Oo_\xi(-1))=\xi_*{i_1}_*(\Ll_{\oa-\ob-\oa})[-1]$ (using again isomorphisms $\xi_*\Oo_\xi(-1)=0$ and ${i_1}^*\Oo_\xi(-1)={i_1}^*\pi^*\Oo_X(-1)=\Ll_{-\oa}$). Thus, $\xi_*(\Uut \otimes \Oo_\xi(-1))=\Ll_{-\ob}[-1]$.
From the above sequence we see that 
\[
	0\to \Ri^0\xi_*(\Uuh \otimes \Oo_\xi(-1))\to \pb^*\Uu_2^\vee \otimes \Ll_{-\rho}\to \Ll_{-\ob}\to 0,
\]
which is the sequence~\eqref{eq:seqforU} tensored with $\Ll_{-\oa}$. We conclude that $\xi_*(\Uuh \otimes \Oo_\xi(-1))=\Ll_{-2\oa}$. Returning to $?$, we obtain an isomorphism $?=\xi_*(\Uuh \otimes \Oo_\xi(-1)) \otimes \Ll_{\rho}[1]=\Ll_{\rho-2\oa}[1]=\Ll_{\ob-\oa}[1]$. Thus, the triangle~\eqref{eq:rho-decomp_of_hatU} gives an exact sequence:
\begin{equation}\label{eq:hat_U_as_cokernel-2}
	0 \to \xi^*\Ll_{\ob-\oa} \otimes \Oo_\xi(-1)\to \xi^*(\Ll_{-\oa}\oplus \pb^*\Uu_2^\vee \otimes \Ll_{-\oa}) \to \Uuh \to 0.
\end{equation}
Lemma~\ref{lem:E_1-E_2-O(rho)-lemma} implies that $\xi^*\Ll_{\ob-\oa} \otimes \Oo_\xi(-1)=\xi^*\Ll_{-\oa} \otimes \Oo_\Xt(-E_1)$. The morphism $\xi^*\Ll_{-\oa} \otimes \Oo_\Xt(-E_1)\to \xi^*\Ll_{-\oa}$ in~\eqref{eq:hat_U_as_cokernel-2} is obtained by tensoring the map 
$\Oo_\Xt(-E_1)\to \Oo_\Xt$ with $\xi^*\Ll_{-\oa}$, thus it is injective. The morphism $\xi^*\Ll_{\ob-\oa} \otimes \Oo_\xi(-1)\to \xi^*(\pb^*\Uu_2^\vee \otimes \Ll_{-\oa})$ is also injective since its source is an invertible sheaf and its target is a locally free sheaf (we also compute $\Hom_\Xt^{\bullet}(\xi^*\Ll_{\ob-\oa} \otimes \Oo_\xi(-1),\xi^*(\pb^*\Uu_2^\vee \otimes \Ll_{-\oa}))= V(\ob)$). This proves~\eqref{eq:hat_U_as_cokernel-1}.

Consider the push-out of the extension~\eqref{eq:defseq_tilde_S} along the map $\Sst\to \Uut$ in~\eqref{eq:cone_tilde_S_to_tilde_U}. It follows from the above that the push-out extension is isomorphic to $\Uuh$:
\[
	\xymatrix{
		& 0\ar[r]&\Sst \ar[r]\ar[d] & \xi^*\pa^*\Ss \ar[r]\ar[d]^{\widetilde f} & {i_2}_*(\pb^*\Uu_2^\vee \otimes \Ll_{-\oa})\ar[r]\ar@{=}[d] & 0\\
		& 0 \ar[r]& \Uut \ar[r] & \Uuh \ar[r] & {i_2}_*(\pb^*\Uu_2^\vee \otimes \Ll_{-\oa}) \ar[r] & 0.\\
	}
\]
Our next step is to pull back short exact sequence ~\eqref{eq:hat_U_as_cokernel-1} along the map $\tilde f: \xi^*\pa^*\Ss\to \Uuh$. Denoting $\xi^*(\Ll_{-\oa}\oplus \pb^*\Uu_2^\vee \otimes \Ll_{-\oa})$ by $\A$, we obtain 
\[
\xymatrix{
0\ar@{->}[r]&\xi^*\Ll_{-\oa} \otimes \Oo_\Xt(-E_1) \ar@{->}[r]\ar@{=}[d] & ? \ar@{->}[r]\ar@{->}[d] & \xi^*\pa^*\Ss\ar@{->}[r]\ar@{->}[d]^{\tilde f} & 0\\
0 \ar@{->}[r]& \xi^*\Ll_{-\oa} \otimes \Oo_\Xt(-E_1) \ar@{->}[r] & \A
\ar@{->}[r] & \Uuh \ar@{->}[r]& 0.\\
}
\]
We compute $\Ext^1_\Xt(\xi^*\pa^*\Ss,\xi^*\Ll_{-\oa} \otimes \Oo_\Xt(-E_1))=0$. Thus, the top row in the above diagram splits and we obtain an isomorphism $? = \xi^*\pa^*\Ss \oplus \xi^*\Ll_{-\oa} \otimes \Oo_\Xt(-E_1)$. We finally see that the cone $\Ssh$ is isomorphic to the cone of the morphism 

\begin{equation}\label{eq:cone_hatS_explicit}
\xi^*\pa^*\Ss \oplus \xi^*\Ll_{-\oa} \otimes \Oo_\Xt(-E_1)\to \xi^*(\Ll_{-\oa}\oplus \pb^*\Uu_2^\vee \otimes \Ll_{-\oa})
\end{equation}

in which $\Hom^{\bullet}_\Xt(\xi^*\pa^*\Ss,\xi^*\Ll_{-\oa})=0$ and $\Hom^{\bullet}_\Xt(\xi^*\pa^*\Ss,\xi^*\pb^*\Uu_2^\vee \otimes \Ll_{-\oa})={\C}$. The unique non-trivial element of the latter group gives rise to sequence~\eqref{eq:spinor_U_2_sequence} from Proposition~\ref{prop:spinor_U_2_sequence}. The map $\xi^*\Ll_{-\oa} \otimes \Oo_\Xt(-E_1)\to \xi^*\Ll_{-\oa}$ is obtained by tensoring the injective map $\Oo_\Xt(-E_1)\to \Oo_\Xt$ with $\xi^*\Ll_{-\oa}$. Finally, $\Hom^{\bullet}_\Xt(\xi^*\Ll_{-\oa} \otimes \Oo_\Xt(-E_1),\xi^*(\pb^*\Uu_2^\vee \otimes \Ll_{-\oa}))= V(\ob)$ as was computed above. It follows that the kernel 
$\Hh^{-1}(\Ssh)$ is isomorphic to $\xi^*\pb^*\Uu_2$ and that the cokernel of~\eqref{eq:cone_hatS_explicit} is isomorphic to $\Hh^{0}(\Ssh)=\xi^*\Ll_{-\oa} \otimes {i_1}_*\Oo_{E_1}={i_1}_*\Ll_{-\oa}$, and Proposition~\ref{prop:cohomology_hat_S} follows. 
\end{proof}

\begin{lemma}\label{lem:tildeS_is_pullback}
  We have $\Ssh =\pi^*\Sbh$ for some $\Sbh \in \Dd^b(X)$.
\end{lemma}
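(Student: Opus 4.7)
The plan is to exploit the fact, established in Propositions~\ref{prop:tilde_U-prop}(2) and~\ref{prop:tilde_S-prop}(2), that both summands of the distinguished triangle~\eqref{eq:cone_tilde_S_to_tilde_U} already descend: $\Uut = \pi^*\Ub$ and $\Sst = \pi^*\Sb$. Since the cone of a morphism is preserved by any triangulated functor, it will then suffice to show that the map $\Sst \to \Uut$ used in Definition~\ref{def:def_hat_S} is itself the pullback of a morphism on $X$, and to define $\Sbh$ as the cone of that morphism downstairs.

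The key ingredient is the identity $\mathrm{R}\pi_*\Oo_\Xt = \Oo_X$, which holds because $\pi$ is the blow-up of the smooth disjoint subvariety $Y \sqcup Z$ inside the smooth variety $X$. Combined with adjunction and the projection formula, this gives the canonical isomorphism
\[
  \mathrm{R}\Hom_\Xt(\pi^*\Sb,\pi^*\Ub)
   \;=\; \mathrm{R}\Hom_X\bigl(\Sb, \Ub \otimes^{\mathrm{L}} \mathrm{R}\pi_*\Oo_\Xt\bigr)
   \;=\; \mathrm{R}\Hom_X(\Sb,\Ub).
\]
By Proposition~\ref{prop:S_to_U_to_S} the left-hand side is one-dimensional in degree $0$ and vanishes elsewhere, so $\Hom_X(\Sb,\Ub) = \C$ as well, and the map $\Sst \to \Uut$ defining $\Ssh$ is canonically of the form $\pi^*\phi_0$ for a unique (up to scalar) morphism $\phi_0 \colon \Sb \to \Ub$ in $\Dd^b(X)$.

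I would then simply set $\Sbh := \Cone(\phi_0) \in \Dd^b(X)$. Because $\mathrm{L}\pi^* \colon \Dd^b(X) \to \Dd^b(\Xt)$ is a triangulated functor, it commutes with cones, so
\[
  \pi^*\Sbh \;=\; \Cone(\pi^*\phi_0) \;=\; \Cone(\Sst \to \Uut) \;=\; \Ssh,
\]
as required. No genuine obstacle appears: the argument is essentially automatic once one notices that both constituents of the defining triangle descend and that $\mathrm{R}\pi_*\Oo_\Xt = \Oo_X$ promotes this descent to the level of morphisms. The only point deserving care is the verification that the specific morphism produced by Definition~\ref{def:def_hat_S} corresponds, under the Hom identification above, to a nonzero element of $\Hom_X(\Sb,\Ub)$; but since the Hom spaces are both one-dimensional and the identification is canonical, this is automatic.
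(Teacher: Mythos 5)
Your argument is correct, but it is not the route the paper actually writes out. You make precise the one-line remark with which the authors open their proof (``$\Sbh$ is the cone of the map $\Sb\to\Ub$'') and which they then decline to elaborate: since $\pi$ is the blow-up of the smooth disjoint centre $Y\sqcup Z$ in the smooth $X$, one has $\mathrm{R}\pi_*\Oo_\Xt=\Oo_X$, hence $\pi^*$ is fully faithful (the paper itself invokes this full faithfulness later, in the fullness argument of Lemma~\ref{lem:exceptional_lemma}'s section), so the morphism $\Sst=\pi^*\Sb\to\pi^*\Ub=\Uut$ of Definition~\ref{def:def_hat_S} is $\pi^*\phi_0$ for a unique $\phi_0\in\Hom_X(\Sb,\Ub)$, and the cone descends because $\pi^*$ is exact. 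All the inputs you cite (Propositions~\ref{prop:tilde_U-prop}(2), \ref{prop:tilde_S-prop}(2), \ref{prop:S_to_U_to_S}) are indeed available at this point, and the identification of the Hom spaces via adjunction and the projection formula is as automatic as you say; in fact you do not even need the one-dimensionality of $\Hom_X(\Sb,\Ub)$, only surjectivity of $\pi^*$ on Hom's. The paper instead gives a ``direct proof'' through the descent criterion of Proposition~\ref{prop:vecbun_blow-up_descent}: it restricts $\Ssh$ to the exceptional divisors and checks that $i_2^*\Ssh\in\pb^*\Dd^b(\Gad)$ and $i_1^*\Ssh\in\pa^*\Dd^b(\Qs_5)$ by computing the cohomology sheaves $\Hh^0(i_1^*\Ssh)=\Ll_{-\oa}$ and the extension describing $\Hh^{-1}(i_1^*\Ssh)$. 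What your approach buys is brevity and conceptual clarity; what the paper's computation buys is the explicit description of the restrictions of $\Ssh$ to $E_1$ and $E_2$, which is recorded in the remark immediately following the lemma and feeds into the later $\Hom$-computations. Either proof is acceptable for the statement as posed.
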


\begin{proof}
Using Propositions \ref{prop:tilde_U-prop} and \ref{prop:tilde_S-prop}, and recalling that the functor $\pi ^{\ast}:\Dd ^b(X)\rightarrow \Dd ^b(\Xt)$ is fully faithful, we see that $\Ssh$ is the pull--back under $\pi$ of the cone of a unique non--zero map $\Sb \to \Ub$. Hence, $\Ssh =\pi^*\Sbh$ where $\Sbh=\Cone(\Sb \to \Ub)\in \Dd^b(X)$. 

Let us also give a direct proof which will be useful for what follows. By Proposition~\ref{prop:vecbun_blow-up_descent}, it is sufficient to check that the restrictions $i_1^*\Ssh$ and $i_2^*\Ssh$ are pulled--back from $\Qs_5$ and $\Gad$, respectively. Applying $i_2^*$ to \eqref{eq:cohomology_triangle_hatS}, we immediately see that $i_2^*\Ssh\in \pb^*\Dd^b(\Gad)$. Applying $i_1^*$, we obtain:
\begin{equation}\label{eq:i_1*to_cohomology_triangle_hatS1}
	\dots \to \pb^*\Uu_2[1]\to i_1^*{\Ssh}\to i_1^*{i_1}_*\Ll_{-\oa}\to \pb^*\Uu_2[2]\rightarrow \dots 
\end{equation}
Applying to the above triangle the cohomology functor $\Hh^0$, we compute $\Hh^0(i_1^*{\Ssh})=\Ll_{-\oa}$, while $\Hh^{-1}(i_1^*\Ssh)$ fits into a short exact sequence:
\begin{equation}\label{eq:i_1*to_cohomology_triangle_hatS2}
	0\to \pb^*\Uu_2 \to \Hh^{-1}(i_1^*\Ssh)\to \Ll_{\ob-2\oa} \to 0,
\end{equation}
as $\Hh^{-1}(i_1^*{i_1}_*\Ll_{-\oa})={\Le}^1i_1^*{i_1}_*\Ll_{-\oa}=\Ll_{\ob-2\oa}$. To see that $\Hh^{-1}(i_1^*{\Ssh})$ belongs to $\pa^*\Dd^b(\Qs_5)$, it is sufficient to show that ${\pa}_*( \Hh^{-1}(i_1^*{\Ssh}) \otimes \Ll_{-\ob})=0$. Tensoring~\eqref{eq:i_1*to_cohomology_triangle_hatS2} with $\Ll_{-\ob}$ and applying ${\pa}_*$ we see that $\Ri^0{\pa}_*\Ll_{-2\oa}=
\Ri^1{\pa}_*(\pb^*\Uu_2 \otimes \Ll_{-\ob})=\Ll_{-2\oa}$, hence ${\pa}_*( \Hh^{-1}(i_1^*{\Ssh}) \otimes \Ll_{-\ob})=0$.
\end{proof}

\begin{remark}
Up to tensoring with $\Ll_{-\oa}$, the dual extension to~\eqref{eq:i_1*to_cohomology_triangle_hatS1} is the push-out of~\eqref{eq:spinor_U_2_sequence} along the map $\pb^*\Uu_2\to \Ll_{\oa-\ob}$ from~\eqref{eq:tautological_seq_U}:
\[
	\xymatrix{
		& 0\ar@{->}[r]&\pb^*\Uu_2\ar@{->}[r]\ar@{->}[d] & \pa^*\Ss \ar@{->}[r]\ar@{->}[d] & \pb^*\Uu_2^\vee \otimes \Ll_{-\oa}\ar@{->}[r]\ar@{=}[d] & 0\\
		& 0 \ar@{->}[r]& \Ll_{\oa-\ob} \ar@{->}[r] & {\Gg} \ar@{->}[r] & \pb^*\Uu_2^\vee \otimes \Ll_{-\oa} \ar@{->}[r] & 0.\\
	}
\]
That is, we have an isomorphism $\Hh^{-1}(i_1^*{\Ssh}) = {\Gg}^\vee \otimes \Ll_{-\oa}$.
\end{remark}

\begin{prop}\label{prop:hatS_except}
	The object $\Ssh$ is exceptional.
\end{prop}

\begin{proof}
By Proposition~\ref{prop:cohomology_hat_S}, there is a distinguished triangle:
\begin{equation}\label{eq:cohomology_triangle_hatS}
	\dots \to \xi^*\pb^*\Uu_2[1]\to {\Ssh}\to {i_1}_*\Ll_{-\oa}\to \xi^*\pb^*\Uu_2[2]\to \dots 
\end{equation}
The coboundary map in this triangle lies in the group 
\[
	\Hom_\Xt({i_1}_*\Ll_{-\oa},\xi^*\pb^*\Uu_2[2])=\Ext_\Xt^2({i_1}_*\Ll_{-\oa},\xi^*\pb^*\Uu_2)={\C}
\]
and corresponds to a unique non-trivial extension in the latter group. Applying the functor $\Hom_\Xt(-,{\Ssh})$ to~\eqref{eq:cohomology_triangle_hatS}, and using~\eqref{eq:cone_tilde_S_to_tilde_U} to compute the groups $\Hom^{\bullet}_\Xt(\xi^*\pb^*\Uu_2,{\Ssh})$ and $\Hom^{\bullet}_\Xt({i_1}_*\Ll_{-\oa},{\Ssh})$, we arrive at the following:

\begin{equation}\label{eq:8.13-1}
\Hom^{\bullet}_\Xt(\xi^*\pb^*\Uu_2[1],\Sst) = \Hom^{\bullet}_{G_2/B}(\pb^*\Uu_2[1],\pb^*\Uu_2)={\C}[-1];
\end{equation}

\begin{equation}\label{eq:8.13-2}
\Hom^{\bullet}_\Xt(\xi^*\pb^*\Uu_2,\Uut)= 0; \ \Hom^{\bullet}_\Xt({i_1}_*\Ll_{-\oa},\Uut)= 0; \ \Hom^{\bullet}_\Xt({i_1}_*\Ll_{-\oa},\Sst) = 0.
\end{equation}

We therefore obtain $\Hom^{\bullet}_\Xt({\Ssh},{\Ssh})$ = $\Hom_\Xt^{\bullet}(\xi^*\pb^*\Uu_2[1],\Sst[1])={\C}$, as claimed.
\end{proof}

The first two $\Hom$--vanishings of (\ref{eq:8.13-2}) furnish an important orthogonality (see Corollary \ref{cor:StoUorthog} below) that will allow us 
to complete the construction of the starting block of our would--be Lefschetz exceptional collection. For convenience of the reader, we recall the details of this calculation.

For the first, applying $\xi_{\ast}$ to (\ref{eq:defseq_tilde_U}), we see that $\xi_{\ast}\Uut=\Ll_{-\oa}$. By adjunction, $\Hom^{\bullet}_\Xt(\xi^*\pb^*\Uu_2,\Uut)=\Hom^{\bullet}_{G_2/B}(\pb^*\Uu_2,\Ll_{-\oa})={\rm H}^{\bullet}(G_2/B,\pb^*{\Uu_2^{\ast}}\otimes \Ll_{-\oa})$. The last group is zero, by the projection formula and since ${\pb}_{\ast}\Ll_{-\oa}=0$.

For the second, recall that the right adjoint to ${i_1}_{\ast}(-)$ is isomorhic to $i_1^{\ast}(-)\otimes \Ll _{\oa-\ob}[-1]$ (recalling also that $\Oo _{E_1}(E_1)=\Ll _{\oa-\ob}$). Thus, $\Hom^{\bullet}_\Xt({i_1}_*\Ll_{-\oa},\Uut)=\Hom^{\bullet}_\Xt(\Ll_{-\oa},i_1^{\ast}\Uut\otimes \Ll _{\oa-\ob}[-1])$. Recalling that ${i_1}^*\Uut=\Oo _{E_1}\oplus \Ll _{-\oa}$, we see that by Theorem \ref{th:Bott-Demazure_th} both line bundles $\Ll _{2\oa-\ob}$ and $\Ll _{\oa-\ob}$ are acyclic. We conclude that  
$\Hom^{\bullet}_\Xt({i_1}_*\Ll_{-\oa},\Uut)=0$.

\begin{cor}\label{cor:StoUorthog}
We have $\Hom_\Xt^{\bullet}(\Ssh,\Uut)=0$. 
\end{cor}

\begin{proof}
Applying $\Hom _\Xt^{\bullet}(-,\Uut)$ to (\ref{eq:cohomology_triangle_hatS}) and using the first two $\Hom$--vanishings from (\ref{eq:8.13-2}) of Proposition \ref{prop:hatS_except}, we obtain the claim. 
\end{proof}

We note here for the record that $\Hom_{\Xt}^{\bullet}(\Uut,\Ssh) = \C \oplus \C[1]$. Its computation is similar to that of (\ref{eq:8.13-2}): using sequence (\ref{eq:defseq_tilde_U}) from Definition \ref{def:tilde_U-def}
and Proposition \ref{prop:tilde_U-prop}, (3), together with the isomorphism ${i_1}^*\Uut=\Oo _{E_1}\oplus \Ll _{-\oa}$, we obtain
$$
\Hom_{\Xt}^{\bullet}(\Uut,{i_1}_*\Ll_{-\oa})= \C \quad ; \quad \Hom_{\Xt}^{\bullet}(\Uut,\xi^*\pb^*\Uu_2) = \C.
$$

We conclude by applying $\Hom_\Xt(\Uut,-)$ to (\ref{eq:cohomology_triangle_hatS}) and using the above isomorphisms.

\begin{prop}\label{lem:exceptional_triple_on_X}
	The triple $\scal{\Ub,\Sbh, \Oo_X}$ of $\Dd^b(X)$ is exceptional.
\end{prop}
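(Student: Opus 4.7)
The plan is to reduce every required vanishing to either a statement already proved in the paper or to a single Bott-type cohomology computation on $G_2/B$, using the full faithfulness of $\pi^*$.

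First I would check that $\Sbh$ is exceptional. By the preceding proposition $\Ssh = \pi^*\Sbh$ is exceptional on $\Xt$. Because $\pi \colon \Xt \to X$ is a blow-up of smooth subvarieties in a smooth variety we have $R\pi_*\Oo_\Xt = \Oo_X$, hence $\pi^*$ is fully faithful. Therefore
\[
	\Hom^{\bullet}_X(\Sbh,\Sbh) = \Hom^{\bullet}_\Xt(\pi^*\Sbh,\pi^*\Sbh) = \Hom^{\bullet}_\Xt(\Ssh,\Ssh) = \C.
\]
Since $\Ub$ is exceptional by Proposition \ref{prop:tilde_U-prop}(3) and $\Oo_X$ is trivially exceptional, there remains only the semi-orthogonality $\Hom^{\bullet}_X(\Ub,\Sbh) = 0$, $\Hom^{\bullet}_X(\Oo_X,\Sbh) = 0$, $\Hom^{\bullet}_X(\Oo_X,\Ub) = 0$. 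The third one is Proposition \ref{prop:esq}. The first one follows by pulling back along $\pi^*$, which is fully faithful and sends $\Ub \mapsto \Uut$, $\Sbh \mapsto \Ssh$, so that $\Hom^{\bullet}_X(\Ub,\Sbh) = \Hom^{\bullet}_\Xt(\Uut,\Ssh) = 0$ by Remark \ref{rem-no-hom}.

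The only genuine work is the vanishing $\Hom^{\bullet}_X(\Oo_X,\Sbh) = \Hom^{\bullet}_\Xt(\Oo_\Xt,\Ssh)$. To attack it I would apply $\Hom^{\bullet}_\Xt(\Oo_\Xt,-)$ to the triangle \eqref{eq:cone_tilde_S_to_tilde_U} defining $\Ssh$, so that it suffices to prove
\[
	\Hom^{\bullet}_\Xt(\Oo_\Xt,\Sst) = 0 \quad \text{and} \quad \Hom^{\bullet}_\Xt(\Oo_\Xt,\Uut) = 0.
\]
For $\Uut = \pi^*\Ub$, full faithfulness again gives $\Hom^{\bullet}_\Xt(\Oo_\Xt,\Uut) = \Hom^{\bullet}_X(\Oo_X,\Ub) = 0$ by Proposition \ref{prop:esq}.

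The final step, which I expect to be the main (and only) technical obstacle, is the vanishing of $\Hom^{\bullet}_\Xt(\Oo_\Xt,\Sst)$. Here I would use the identity $\xi_*\Sst = \pb^*\Uu_2$ obtained in the proof of Proposition \ref{prop:tilde_S-prop}, together with adjunction along $\xi$ and $\pb$:
\[
	\Hom^{\bullet}_\Xt(\Oo_\Xt,\Sst) = \Hom^{\bullet}_{G_2/B}(\Oo_{G_2/B},\pb^*\Uu_2) = \Hr^{\bullet}(\Gad,\Uu_2).
\]
To see that this vanishes, I would pull back once more along $\pb$ and invoke the short exact sequence \eqref{eq:seqforU}, reducing to the Bott computation for $\Ll_{-\oa}$ and $\Ll_{\oa - \ob}$ on $G_2/B$. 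For each of these weights $\chi$, the shift $\chi + \rho$ (with $\rho = \oa + \ob$) equals $\ob$ and $2\oa$, respectively; both lie on a wall of the Weyl chambers, so by Theorem \ref{th:Bott-Demazure_th} both line bundles are acyclic, giving $\Hr^{\bullet}(\Gad,\Uu_2) = 0$. Assembling all of the above produces the required vanishings and completes the proof.
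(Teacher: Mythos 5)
Your proof is correct and follows essentially the same route as the paper: deduce exceptionality of $\Sbh$ from that of $\Ssh$ via full faithfulness of $\pi^*$, quote Remark~\ref{rem-no-hom} for $\Hom^{\bullet}_X(\Ub,\Sbh)=0$, and reduce $\Hom^{\bullet}_X(\Oo_X,\Sbh)=0$ to the vanishings $\Hom^{\bullet}_\Xt(\Oo_\Xt,\Uut)=\Hom^{\bullet}_\Xt(\Oo_\Xt,\Sst)=0$ via the triangle~\eqref{eq:cone_tilde_S_to_tilde_U}. The only difference is that you make explicit, via $\xi_*\Sst=\pb^*\Uu_2$ and Bott's theorem, the vanishing $\Hr^{\bullet}(\Xt,\Sst)=0$ that the paper simply asserts; your weight check is correct.
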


\begin{proof}
By Proposition \ref{prop:tilde_U-prop}, the pair $\scal{\Ub,\Oo_X}$ is exceptional. By Lemma \ref{lem:tildeS_is_pullback} we have $\pi^*\Sbh = \Ssh$, and by Proposition \ref{prop:hatS_except} the object $\Ssh$ is exceptional. Hence, so is $\Sbh$. Corollary \ref{cor:StoUorthog} gives that 
$\Hom_\Xt^{\bullet}(\Ssh,\Uut)=\Hom_X^{\bullet}(\Sbh,\Ub)=0$. Finally, from distinguished triangle \eqref{eq:cone_tilde_S_to_tilde_U} and from the vanishings $\Hom_\Xt^{\bullet}(\cO_\Xt,\Uut) = 0 = \Hom_\Xt^{\bullet}(\cO_\Xt,\Sst)$, we get $\Hom_\Xt^{\bullet}(\cO_\Xt,\Ssh) = \Hom_X^{\bullet}(\cO_X,\Sbh)=0$.
\end{proof}

\subsection{A rectangular Lefschetz exceptional collection on $X$}
Denote by $\A_0$ the subcategory $\scal{\Ub, \Sbh, \Oo_X}$ from Proposition \ref{lem:exceptional_triple_on_X}. By Lemma \ref{lem:Lemma 1.58_Huyb}, $\A_0$ is an admissible subcategory of $\Dd^b(X)$.
For $i \in [1,3]$, let $\A_0(-i)$ denote the subcategory generated by $\scal{\Ub \otimes \Oo_X(-i),\Sbh \otimes \Oo_X(-i), \Oo_X(-i)}$. The subcategories $\A_0(-i)$ are generated by exceptional sequences and are also admissible subcategories of $\Dd^b(X)$.

Recall that the index of $X$ is equal to 4. Proposition~\ref{lem:exceptional_triple_on_X} thus suggests to consider the following sequence of admissible subcategories:
\begin{equation}\label{eq:Lefschetz_exc_coll_G_2hor}
\A = \scal{\A_0(-3), \A_0(-2), \A_0(-1), \A_0}.
\end{equation}

\comment{
\begin{equation}\label{eq:Lefschetz_exc_coll_G_2hor}
  \left\langle
  \begin{array}{cccccc}
    \Ub(-3), & \Sbh(-3), & \Oo_X(-3), & \Ub (-2), & \Sbh(-2), & \Oo_X(-2), \\
	\Ub (-1), & \Sbh(-1), & \Oo_X(-1), & \Ub, & \Sbh, & \Oo_X
  \end{array} \right\rangle
\end{equation}
}

\begin{lemma}\label{lem:exceptional_lemma}
We have the following vanishing results.
\begin{enumerate}[(a)]
	\item $\Hom_X^{\bullet}(\Ub,\Oo_X(-i))=0$ for $i \in [1,3]$.
	\item $\Hom_X^{\bullet}(\Sb,\Oo_X(-i))=0$ for $i \in [1,3]$.
	\item $\Hom_X^{\bullet}(\Ub,\Sb \otimes \Oo_X(-1))=0$.
	\item $\Hom_X^{\bullet}(\Sb,\Sb \otimes \Oo_X(-1)) = 0.$
	\item $\Hom_X^{\bullet}(\Sb,\Ub \otimes \Oo_X(-1))=0$.
	\item $\Hom_X^{\bullet}(\Oo_X,\Ub \otimes \Oo_X(-i))=\Hom_X^{\bullet}(\Oo_X,\Sb \otimes \Oo_X(-i))=0$ for $i \in [1,3]$.
	\item $\Hom_X^{\bullet}(\Ub,\Ub \otimes \Oo_X(-1)))=0$.
	\item $\Hom_X^{\bullet}(\Sb,\Sb \otimes \Oo_X(-2))=0$.
	\item $\Hom_X^{\bullet}(\Ub,\Ub \otimes \Oo_X(-2))=0$.
	\item $\Hom_X^{\bullet}(\Ub,\Sb \otimes \Oo_X(-2))=0$.
	\item $\Hom_X^{\bullet}(\Sb,\Ub \otimes \Oo_X(-2)))= 0$.
	\item $\Hom_X^{\bullet}(\Sb,\Ub \otimes \Oo_X(-3)))= 0$.
  	\item $\Hom_X^{\bullet}(\Sb,\Sb \otimes \Oo_X(-3))= 0$.
  	\item $\Hom_X^{\bullet}(\Ub,\Sb \otimes \Oo_X(-3))=0$.
  	\item $\Hom_X^{\bullet}(\Ub,\Ub \otimes \Oo_X(-3)))=0$.
  \end{enumerate}
\end{lemma}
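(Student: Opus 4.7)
All fifteen statements are computations of groups of the form $\Hom^\bullet_X(A, B(-i))$ with $A, B \in \{\Sb, \Ub, \Oo_X\}$ and $i \in \{1,2,3\}$. The overall strategy is to pull back to $\Xt$ along $\pi$ and then push down to $G_2/B$ along $\xi$, so as to reduce every computation to a cohomology of an equivariant bundle on $G_2/B$, to which Bott's theorem (Theorem~\ref{th:Bott-Demazure_th}) applies. Since $\pi$ is a blow-up of smooth subvarieties, $R\pi_*\cO_\Xt = \cO_X$, hence
\[
  \Hom^\bullet_X(A, B(-i)) = \Hom^\bullet_\Xt(\pi^* A, \pi^* B \otimes \cO_\xi(-i)),
\]
using $\pi^*\cO_X(1) = \cO_\xi(1)$ from Lemma~\ref{lem:E_1-E_2-O(rho)-lemma}. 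We then plug in the defining sequences~\eqref{eq:defseq_tilde_U} and~\eqref{eq:defseq_tilde_S}, which express $\Uut = \pi^*\Ub$ and $\Sst = \pi^*\Sb$ as extensions of $\xi^*\pb^*\Uu_2$ and $\xi^*\pa^*\Ss$ by sheaves supported on the exceptional divisors $E_1, E_2$.

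\textbf{Reduction to Bott.} The push-forward $\xi_*$ of each basic building block is known: for the $\P^1$-bundle $\xi : \Xt = \P(\Ll_{-\oa}\oplus\Ll_{-\ob}) \to G_2/B$ one has $\xi_*\cO_\xi(k) = S^k(\Ll_\oa\oplus\Ll_\ob)$ for $k\geq 0$, $\xi_*\cO_\xi(-1)=0$, and $R^1\xi_*\cO_\xi(-k-2) = S^k(\Ll_\oa\oplus\Ll_\ob)\otimes\Ll_\rho$; the identities $\xi_*\Uut = \Ll_{-\oa}$ and $\xi_*\Sst = \pb^*\Uu_2$ were established in the proofs of Propositions~\ref{prop:tilde_U-prop} and~\ref{prop:tilde_S-prop}; and the sheaves $i_{j*}\Ll_\mu$ on $E_j$ are handled by $i_{j*}\Ll_\mu = \xi^*\Ll_\mu \otimes i_{j*}\cO_{E_j}$ together with the resolutions of $i_{j*}\cO_{E_j}$ coming from Lemma~\ref{lem:E_1-E_2-O(rho)-lemma}. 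Via projection formula and the long exact sequences from~\eqref{eq:defseq_tilde_U}--\eqref{eq:defseq_tilde_S}, every group in the statement collapses to a finite sum of cohomologies of the form $\Hr^*(G_2/B,\Ll_\lambda)$, $\Hr^*(G_2/B,\Ll_\lambda\otimes \pb^*\Uu_2^{(\vee)})$, and $\Hr^*(G_2/B,\Ll_\lambda\otimes \pa^*\Ss)$; after filtering $\Uu_2$ and $\Ss$ by their weight summands using~\eqref{eq:seqforU} and Proposition~\ref{prop:spinor_U_2_sequence}, Bott's theorem finishes each item by showing that the relevant shifted weight $\lambda+\rho$ lands on a wall of a Weyl chamber.

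\textbf{Organization and main obstacle.} Items (a), (b), (f) and the one-$\Sb$ mixed cases (c), (e), (g), (j)--(l), (n), (o) are the most mechanical: after applying $\xi_*$ the surviving weights are easily seen to be singular, so the groups vanish by Theorem~\ref{th:Bott-Demazure_th}(1). The delicate part will be items (d), (h) and (m), in which both factors involve $\Sb$: after pulling back, one is led to $\Ext^\bullet_{G_2/B}(\pa^*\Ss,\pa^*\Ss\otimes\Ll_\mu)$ and $\Ext^\bullet_{G_2/B}(\pa^*\Ss,\pb^*\Uu_2^\vee\otimes\Ll_\mu)$ for $\mu\in\{-\oa,\,-\oa-\ob,\,-2\oa-\ob\}$ (plus boundary terms coming from the divisorial sheaves in~\eqref{eq:defseq_tilde_S}), and here one must invoke Proposition~\ref{prop:spinor_U_2_sequence} to decompose $\pa^*\Ss$ into $\pb^*\Uu_2$ and $\pb^*\Uu_2^\vee\otimes\Ll_{-\oa}$ before Bott applies term-by-term. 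I expect these three $\Sb$-against-$\Sb$ cases to be the main obstacle, precisely because $\Sst$ is itself a nontrivial extension on $\Xt$ (Proposition~\ref{p:diag}), so one cannot split $\Hom^\bullet_X(\Sb,\Sb(-i))$ into Homs between line bundles and must carefully track the connecting maps through the long exact sequences.
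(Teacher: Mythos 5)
Your plan follows essentially the same route as the paper: reduce to $\Xt$ via the fully faithful $\pi^*$, feed the defining sequences~\eqref{eq:defseq_tilde_U} and~\eqref{eq:defseq_tilde_S} into long exact sequences, push down along $\xi$, and kill everything on $G_2/B$ by weight inspection and Bott. Two points of divergence are worth recording. First, the paper does \emph{not} compute the $\Oo_X(-3)$ twists (items (k)--(o)) directly: since $\omega_X=\Oo_X(-4)$, Serre duality gives $\Hom^\bullet_X(A,B\otimes\Oo_X(-3))\cong\Hom^\bullet_X(B,A\otimes\Oo_X(-1)[7])^\vee$, so (k)--(o) follow at once from (c), (d), (e), (g), (j); your plan treats (k), (l), (n), (o) as ``mechanical'' direct computations and singles out (m) as hard, whereas with duality (m) is free from (d). Your direct route is viable but roughly doubles the bookkeeping at the deepest twist, which is exactly where sign errors creep in. Second, and this is a concrete error you should fix before running the weight computations: your stated higher direct image $R^1\xi_*\cO_\xi(-k-2)=S^k(\Ll_{\oa}\oplus\Ll_{\ob})\otimes\Ll_{\rho}$ is dual to the correct answer. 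The correct formula is $R^1\xi_*\cO_\xi(-k-2)=S^k(\Ll_{-\oa}\oplus\Ll_{-\ob})\otimes\Ll_{-\rho}$, consistent with the paper's $\xi_*\Oo_\xi(-2)=\Ll_{-\rho}[-1]$ and $\xi_*\Oo_\xi(-3)=(\Ll_{-\oa-\rho}\oplus\Ll_{-\ob-\rho})[-1]$ in~\eqref{eq:rho_O(-i)}; with your version every subsequent weight would be reflected and the singularity checks would fail. Finally, for the $\Sb$-against-$\Sb$ items the paper simply lists the eight weights of $\pa^*\Ss\otimes\pb^*\Uu_2$ and checks each is singular, rather than filtering $\pa^*\Ss$ through Proposition~\ref{prop:spinor_U_2_sequence}; your filtration is equivalent but note that the boundary terms $\Ext^\bullet_\Xt({i_2}_*(\pb^*\Uu_2^\vee\otimes\Ll_{-\oa}),\Sst\otimes\Oo_\xi(-i))$ must be computed via $i_2^!\,$ and the splitting $i_2^*\Sst=\pb^*\Uu_2\oplus\pb^*\Uu_2$, not by pushing to $G_2/B$ first.
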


\begin{proof}
As before, the functor $\pi^*:\Dd^b(X)\to \Dd^b(\Xt)$ being fully faithful, the above groups can be computed on $\Xt$. Recall the isomorphism $\pi^*\Oo_X(-1)=\Oo_\xi(-1)$.

(a) Applying $\Hom_\Xt(-,\Oo_\xi(-i))$ to~\eqref{eq:defseq_tilde_U}, we need to compute the following groups: $\Hom_\Xt(\xi^*\pb^*\Uu_2,\Oo_\xi(-i))$ and $\Hom_\Xt({i_1}_*\Ll_{\oa-\ob},\Oo_\xi(-i))$. We have 
\begin{equation}\label{eq:rho_O(-i)}
  \begin{array}{l}
    \xi_*\Oo_\xi(-1)=0, \quad \xi_*\Oo_\xi(-2)=\Ll_{-\rho}[-1], \quad \text{and} \\
    \xi_*\Oo_\xi(-3)= (\Ll_{-\oa-\rho}\oplus \Ll_{-\ob-\rho})[-1].\\
    \end{array}
\end{equation}
Using this and~\eqref{eq:seqforU}, we get
$\Hom_\Xt^{\bullet}(\xi^*\pb^*\Uu_2,\Oo_\xi(-i))=0$ for $i \in [1,3]$. Moreover
\[
	\Hom_\Xt^{\bullet}({i_1}_*\Ll_{\oa-\ob},\Oo_\xi(-i)) = \Hr^{\bullet}(\Qs_5,\Ll_{-i\oa}[-1]) = 0
\] for $i \in [1,3]$.

(b) Applying $\Hom_\Xt(-,\Oo_\xi(-1))$ to~\eqref{eq:defseq_tilde_S}, we need to compute the following groups: $\Hom_\Xt^{\bullet}(\xi^*\pa^*\Ss,\Oo_\xi(-i))$ and $\Hom_\Xt^{\bullet}({i_2}_*(\pb^*\Uu_2^\vee \otimes \Ll_{-\oa}),\Oo_\xi(-i))$.

For $i=1$, we have $\Hom_\Xt^{\bullet}(\xi^*\pa^*\Ss,\Oo_\xi(-1))=0$. For $i \in [2,3]$, use~\eqref{eq:defseq_tilde_S} and~\eqref{eq:rho_O(-i)}. Recall that the weights of $\Ss^\vee$ are equal to $0,2\oa-\ob,\ob-\oa,\oa$. We see that the cohomology of $\pa^*\Ss^\vee \otimes \Ll_{-\rho}$ and of $\pa^*\Ss^\vee \otimes (\Ll_{-\oa-\rho}\oplus \Ll_{-\ob-\rho})$ is trivial. Indeed, their respective weights are equal to 
\[
	\begin{array}{l}
   	-\rho, \quad \oa-2\ob, \quad -2\oa, \quad -\ob, \quad {\rm and} \\
 	-\oa-\rho, \quad -2\ob, \quad -3\oa, \quad
-\rho \ 
 	-\ob-\rho, \quad \oa-3\ob, \quad -\oa -\rho, \quad -2\ob.
	\end{array}
\]
We have $\Hom_\Xt^{\bullet}({i_2}_*(\pb^*\Uu_2^\vee \otimes \Ll_{-\oa}),\Oo_\xi(-i))= \Hr^{\bullet}(\Gad, \Uu_2^\vee  \otimes \Ll_{-i\ob}[-1])=0$ for $i \in [1,3]$.

(c) Applying $\Hom_\Xt(-,\Sst \otimes \Oo_\xi(-1))$ to~\eqref{eq:defseq_tilde_U}, we need to compute the $\Hom$--groups $\Hom_{G_2/B}^{\bullet}(\pb^*\Uu_2,\xi_*(\Sst \otimes \Oo_\xi(-1))$ and $\Hom_\Xt^{\bullet}({i_1}_*\Ll_{\oa-\ob},\Sst \otimes \Oo_\xi(-1))$. Tensoring~\eqref{eq:defseq_tilde_S} with $\Oo_\xi(-1)$ and applying $\xi_*$, we see that $\xi_*(\Sst \otimes \Oo_\xi(-1))=\pb^*\Uu_2 \otimes \Ll_{-\oa}[-1]$, and $\Hom_{G_2/B}^{\bullet}(\pb^*\Uu_2,\pb^*\Uu_2 \otimes \Ll_{-\oa}[-1])=0$. Finally, $\Hom_\Xt^\bullet({i_1}_*\Ll_{\oa-\ob},\Sst \otimes \Oo_\xi(-1)) = \Hr^\bullet(G_2/B,i_1^*\Sst \otimes \Ll_{-\oa}[-1])$. Since $i_1^*\Sst=\pa^*\Ss$, we conclude by $\Hr^{\bullet}(G_2/B,\pa^*\Ss \otimes \Ll_{-\oa})=0$.

(d) Applying $\Hom_\Xt(-,\Sst \otimes \Oo_\xi(-1))$ to~\eqref{eq:defseq_tilde_S}, we need to compute $\Hom_\Xt^{\bullet}(\xi^*\pa^*\Ss,\Sst \otimes \Oo_\xi(-1))$ and $\Hom_\Xt^{\bullet}({i_2}_*(\pb^*\Uu_2^\vee \otimes \Ll_{-\oa}),\Sst \otimes \Oo_\xi(-1))$. As above, we have
$\xi_*(\Sst \otimes \Oo_\xi(-1))=\pb^*\Uu_2 \otimes \Ll_{-\oa}[-1]$. By adjunction of $\xi^*$ and $\xi_*$, the first group is isomorphic to 
\[
	\Hom_{G_2/B}^{\bullet}(\pa^*\Ss,\pb^*\Uu_2 \otimes \Ll_{-\oa}[-1])=\Hr^{\bullet}(G_2/B,\pa^*\Ss^\vee \otimes \pb^*\Uu_2 \otimes \Ll_{-\oa}[-1]).
\]

It vanishes since the weights of $\pa^*\Ss^\vee \otimes \pb^*\Uu_2 \otimes \Ll_{-\oa} = \pa^* \Ss  \otimes \pb^*\Uu_2$ are equal to 

\[
	\begin{array}{l}
 		-2\oa, \quad -\ob, \quad \ob-3\oa, \quad -\oa, \
 		-\ob, \quad 2\oa-2\ob, \quad -\oa, \quad \oa-\ob. \ 
	\end{array}
\]

The second group is $\Hom_{G_2/B}^{\bullet}(\pb^*\Uu_2^\vee \otimes \Ll_{-\oa},\pb^*\Uu_2^{\oplus 2} \otimes \Ll_{-\ob} \otimes \Ll_{\ob-\oa}[-1])$ and vanishes since $\Hr^{\bullet}(G_2/B,\pb^*\Uu_2^{ \otimes 2})=0$.

(e) Apply $\Hom_\Xt(-,\Uut \otimes \Oo_\xi(-1))$ to~\eqref{eq:defseq_tilde_S}. We need to compute $\Hom_\Xt^{\bullet}(\xi^*\pa^*\Ss,\Uut \otimes \Oo_\xi(-1))$ and $\Hom_\Xt^{\bullet}({i_2}_*(\pb^*\Uu_2^\vee \otimes \Ll_{-\oa}),\Uut \otimes \Oo_\xi(-1))$. Tensoring~\eqref{eq:defseq_tilde_U} with $\Oo_\xi(-1)$ and applying $\xi_*$ we get $\xi_*(\Uut \otimes \Oo_\xi(-1))=\Ll_{-\ob}[-1]$, and 
\[
	\Hom_\Xt^{\bullet}(\xi^*\pa^*\Ss,\Uut \otimes \Oo_\xi(-1))=\Hr^{\bullet-1}(G_2/B,\pa^*\Ss^\vee \otimes \Ll_{-\ob})=0.
\]
The second group is isomorphic to $\Hom_{G_2/B}^{\bullet}(\pb^*\Uu_2^\vee,\pb^*\Uu_2[-1])=0$. 

(f) For $i=0$, the statement follows from Proposition~\ref{lem:exceptional_triple_on_X}. By (d) and (e), we have $\xi_*(\Sst \otimes \Oo_\xi(-1))=\pb^*\Uu_2 \otimes \Ll_{-\oa}[-1]$ and $\xi_*(\Uut \otimes \Oo_\xi(-1))=\Ll_{-\ob}[-1]$, which gives the statement for $i=1$. Note that $\Uut^\vee = \Uut \otimes \pi^*\Oo_X(1)$ therefore $\Ub^\vee = \Ub \otimes \cO_X(1)$. By Serre duality, we have $\Hr^{\bullet}(X,\Ub \otimes \Oo_X(-i)) = \Hr^{7-\bullet}(X,\Ub \otimes \Oo_X(i-3))^\vee$, proving the result for $\Ub  \otimes \Oo_X(-i)$ and $i=2,3$. Tensoring~\eqref{eq:defseq_tilde_S} with $\Oo_\xi(-i)$ for $i=2,3$, and applying $\xi_*$, we get that $\xi_*(\Sst \otimes \Oo_\xi(-i))$ is isomorphic to the cone of $\pa^*\Ss \otimes \xi_*\Oo_\xi(-i)\to \pb^*\Uu_2^\vee \otimes \Ll_{-\oa} \otimes \Ll_{-i\ob}$. Since ${\pb}_*\Ll_{-\oa}=0$, we have $\Hr^{\bullet}(G_2/B,\pb^*\Uu_2^\vee \otimes \Ll_{-\oa} \otimes \Ll_{-i\ob})=0$. Taking into account the fact that $\xi_*\Oo_\xi(-i)={\sf S}^{i-2}(\Ll_{-\oa}\oplus \Ll_{-\ob}) \otimes \Ll_{-\rho}[-1]$, we are reduced to showing the vanishing of the cohomology of $\pa^*\Ss \otimes \Ll_{-\rho}$ and of $\pa^*\Ss \otimes (\Ll_{-\oa}\oplus \Ll_{-\ob}) \otimes \Ll_{-\rho}$. We have $\Hr^{\bullet}(G_2/B,\pa^*\Ss \otimes \Ll_{-\rho})=\Hr^{\bullet}(G_2/B,\pa^*\Ss \otimes \Ll_{-\oa} \otimes \Ll_{-\rho})=0$. Remembering that ${\pa}_*\Ll_{-2\ob}=\Ll_{-3\oa}[-1]$, we compute $\Hr^{\bullet}(G_2/B,\pa^*\Ss \otimes \Ll_{-\ob} \otimes \Ll_{-\rho})=\Hr^{\bullet}(\Qs_5,\Ss  \otimes \Ll_{-4\oa})=\Hr^{5-\bullet}(\Qs_5,\Ss^\vee \otimes \Ll_{-\oa})^\vee=\Hr^{\bullet}(\Qs_5,\Ss)=0$, by Serre duality.

(g) Apply $\Hom_\Xt(-,\Uut \otimes \Oo_\xi(-1))$ to~\eqref{eq:defseq_tilde_U}. We need to compute the group 
\[
	\Hom_\Xt^{\bullet}(\xi^*\pb^*\Uu_2, \Uut \otimes \Oo_\xi(-1))=\Hom_{G_2/B}^{\bullet-1}(\pb^*\Uu_2,\Ll_{-\ob})=0
\]
and the group
\[
	\Hom_\Xt^{\bullet}({i_1}_*\Ll_{\oa-\ob},\Uut \otimes \Oo_\xi(-1))=\Hr^{\bullet -1}(G_2/B,\Ll_{-\oa} \oplus \Ll_{-2\oa})=0.
\] 

(h) Tensoring~\eqref{eq:defseq_tilde_S} with $\Oo_\xi(-2)$ and applying $\xi_*$, we get
\[
  0\to \pb^*\Uu_2^\vee \otimes \Ll_{-\oa-2\ob}\to \Ri^1\xi_*(\Sst \otimes \Oo_\xi(-2))\to \pa^*\Ss \otimes \Ll_{-\rho}\to  0.
\]
The extension is trivial as $\Ext^1_{G_2/B}(\pa^*\Ss,\pb^*\Uu_2)=0$. Thus, 
\[
	\xi_*(\Sst \otimes \Oo_\xi(-2)) = (\pb^*\Uu_2 \otimes \Ll_{-\rho}\oplus \pa^*\Ss \otimes \Ll_{-\rho})[-1].
\]
Applying $\Hom_\Xt(-,\Sst \otimes \Oo_\xi(-2))$ to~\eqref{eq:defseq_tilde_S}, we need to compute $\Hom_\Xt^{\bullet}({i_2}_*(\pb^*\Uu_2^\vee\otimes \Ll_{-\oa}),\Sst \otimes \Oo_\xi(-2))$ and $\Hom_\Xt^{\bullet}(\xi^*\pa^*\Ss,\Sst \otimes \Oo_\xi(-2))=\Hom_{G_2/B}^{\bullet-1}(\pa^*\Ss,\pb^*\Uu_2 \otimes \Ll_{-\rho}\oplus \pa^*\Ss \otimes \Ll_{-\rho})$.

Since ${\pa}_*\Ll_{-\ob}=0$, we get $\Hom_{G_2/B}^{\bullet}(\pa^*\Ss,\pa^*\Ss \otimes \Ll_{-\rho})=0$. From~\eqref{eq:seqforU}, we get that ${\pa}_*(\pb^*\Uu_2 \otimes \Ll_{-\ob})=\Ll_{-2\oa}[-1]$, hence 
\[
	\begin{array}{ll}
		\Hom_{G_2/B}^{\bullet}(\pa^*\Ss,\pb^*\Uu_2 \otimes \Ll_{-\rho})& =\Hom_{G_2/B}^{\bullet}(\pa^*\Ss,\pb^*\Uu_2 \otimes \Ll_{-\ob - \oa}) \\
		& =\Hom_{\Qs_5}^{\bullet-1}(\Ss,\Ll_{-3\oa})=0.\\
	\end{array}
\]
Finally, we have $\Hom_\Xt^{\bullet}({i_2}_*(\pb^*\Uu_2^\vee \otimes \Ll_{-\oa}),\Sst \otimes \Oo_\xi(-2)) = \Hom_{G_2/B}^{\bullet -1}(\pb^*\Uu_2^\vee,\pb^*\Uu_2^{\oplus 2} \otimes \Ll_{-\ob})=
\Hr^{\bullet-1}(G_2/B,\pb^*(\Uu_2 \otimes \Uu_2)^{\oplus 2} \otimes \Ll_{-\ob})$. The latter group is isomorphic to 
\[
	\Hr^{\bullet-1}(G_2/B,\Ll_{-2\ob})\oplus \Hr^{\bullet-1}(\Gad,{\sf S}^2\Uu_2 \otimes \Ll_{-\ob})=0
\]
as $\omega_{\Gad}=\Ll_{-3\ob}$ and $\mathrm{S}^2\Uu_2 \otimes \Ll_{-\ob}[-1]={\pb}_*\Ll_{-4\oa}$ that is also acyclic, since $\omega_{\Qs_5}=\Ll_{-5\oa}$.

(i) Tensoring~\eqref{eq:defseq_tilde_U} with $\Oo_\xi(-2)$ and applying $\xi_*$, we get
\[
	0\to \Ll_{-\oa-\ob}\to \Ri^1\xi_*(\Uut \otimes \Oo_\xi(-2)) \to \pb^*\Uu \otimes \Ll_{-\rho}\to  0.
\]
The extension is trivial since $\Ext^1_{G_2/B}(\pb^*\Uu \otimes \Ll_{-\rho},\Ll_{-\rho})=0$. Thus, we have an isomorphism $\xi_*(\Uut \otimes \Oo_\xi(-2))=(\pb^*\Uu \otimes \Ll_{-\rho}\oplus \Ll_{-\rho})[-1]$. Applying $\Hom_\Xt(-,\Uut \otimes \Oo_\xi(-2))$ to~\eqref{eq:defseq_tilde_U}, we get that the group $\Hom_\Xt^{\bullet}({i_1}_*\Ll_{\oa-\ob},\Uut \otimes \Oo_\xi(-2))$ equals to 
$$\Hom_{G_2/B}^{\bullet -1}(\Ll_{\oa-\ob},(\Oo_{G_2/B}\oplus \Ll_{-\oa}) \otimes \Ll_{-2\oa} \otimes \Ll_{\oa-\ob}) = 0 \quad \textrm{and}$$
$$\Hom_\Xt^{\bullet}(\xi^*\pb^*\Uu_2,\Uut \otimes \Oo_\xi(-2)) =\Hom_{G_2/B}^{\bullet-1}(\pb^*\Uu_2,\pb^*\Uu \otimes \Ll_{-\rho}\oplus \Ll_{-\rho}) =0.$$

(j) Apply $\Hom_\Xt(-,\Sst \otimes \Oo_\xi(-2))$ to~\eqref{eq:defseq_tilde_U}. We need to compute the groups 
\[
	\Hom_\Xt^{\bullet}(\xi^*\pb^*\Uu_2,\Sst \otimes \Oo_\xi(-2)) = \Hom_{{G_2/B}}^{\bullet-1}(\pb^*\Uu_2,\pb^*\Uu_2 \otimes \Ll_{-\rho}\oplus \pa^*\Ss \otimes \Ll_{-\rho})
\]
and 
\[
	\Hom_\Xt^{\bullet}({i_1}_*\Ll_{\oa-\ob},\Sst \otimes \Oo_\xi(-2)) = \Hr^{\bullet -1}(E_1,\pa^*\Ss \otimes \Ll_{-2\oa}).
\]
We have $\Hom_{G_2/B}^{\bullet}(\pb^*\Uu_2,\pb^*\Uu_2 \otimes \Ll_{-\rho})=0$ as in (i), and $\Hom_{G_2/B}^{\bullet}(\pb^*\Uu_2,\pa^*\Ss  \otimes \Ll_{-\rho})=0$ by applying $\Hom_{G_2/B}(-,\pa^*\Ss \otimes \Ll_{-\rho})$ to~\eqref{eq:seqforU}. Moreover, we have $\Hr^{\bullet}(G_2/B,\pa^*\Ss \otimes \Ll_{-2\oa})=0$.

For (k)--(o), use Serre duality and the previous results:

\begin{itemize}
	\item $\Hom_X^{\bullet}(\Sb,\Ub \otimes \Oo_X(-2)))=\Hom_X^{\bullet}(\Ub,\Sb \otimes \Oo_X(-2)[7])^\vee=0$ by (j);
	\item $\Hom_X^{\bullet}(\Sb,\Ub \otimes \Oo_X(-3)))=\Hom_X^{\bullet}(\Ub,\Sb \otimes \Oo_X(-1)[7])^\vee=0$ by (c);
	\item $\Hom_X^{\bullet}(\Sb,\Sb \otimes \Oo_X(-3))= \Hom_X^{\bullet}(\Sb,\Sb \otimes \Oo_X(-1)[7])^\vee = 0$ by (d);
	\item $\Hom_X^{\bullet}(\Ub,\Sb \otimes \Oo_X(-3))=\Hom_X^{\bullet}(\Sb,\Ub \otimes \Oo_X(-1)[7])^\vee=0$ by (e);
	\item $\Hom_X^{\bullet}(\Ub,\Ub \otimes \Oo_X(-3)))=
  \Hom_X^{\bullet}(\Ub,\Ub \otimes \Oo_X(-1))[7])^\vee=0$ by (g).
\end{itemize}
This completes the proof.
\end{proof}

\begin{cor}
	The sequence $\A = \scal{\A_0(-3), \A_0(-2), \A_0(-1), \A_0}\subset \Dd ^b(X)$ is semiorthogonal.
\end{cor}

\begin{proof}
By Proposition \ref{lem:exceptional_triple_on_X}, each block $\A_0(-i)$ for $i=0,1,2,3$ is generated by an exceptional sequence $\scal{\Ub \otimes \Oo_X(-i),\Sbh \otimes \Oo_X(-i), \Oo_X(-i)}$. By the Kodaira vanishing, we have $\Hom _X^{\bullet}(\Oo _X,\Oo _X(-i))=\Hr^{\bullet}(X,\Oo _X(-i))=0$ for $i=1,2,3$, the index of $X$ being equal to 4. Next, using Lemma~\ref{lem:exceptional_lemma} and triangle~\eqref{eq:cone_tilde_S_to_tilde_U}, we obtain the following vanishing of $\Hom$--groups:

\begin{enumerate}[(1)]
	\item $\Hom_X^{\bullet}(\Sbh,\Oo_X(-i)) = 0$  for $i=1,2,3$, using 
(a) and (b) of Lemma~\ref{lem:exceptional_lemma}.
	\item $\Hom_X^{\bullet}(\Oo_X,\Sbh(-i))=0$ for $i=0,1,2,3$, using (f) of Lemma~\ref{lem:exceptional_lemma}.
	\item $\Hom_X^{\bullet}(\Sbh,\Ub(-i))=0$ for $i=1,2,3$, using (c), (g), (i), (k), (l), and (o) of Lemma~\ref{lem:exceptional_lemma}.
	\item $\Hom_X^{\bullet}(\Sbh,\Sbh(-i))=0$ for $i=1,2,3$, using (c), (d), (h), (j), (l), and (m) of Lemma~\ref{lem:exceptional_lemma}.\qedhere
\end{enumerate}
\end{proof}

The above statements imply:

\begin{thm}\label{th:Lefexccoll}
	The sequence ~\eqref{eq:Lefschetz_exc_coll_G_2hor} forms a rectangular Lefschetz exceptional collection on $X$ whose length is equal to the rank of $K^0(X)$.
\end{thm}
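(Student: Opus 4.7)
The plan is to verify the three constitutive assertions of the theorem: the collection~\eqref{eq:Lefschetz_exc_coll_G_2hor} (i) is rectangular Lefschetz with respect to $\Oo_X(1)$, (ii) is exceptional, and (iii) has length equal to $\rk K^0(X)$.

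Item (i) is formal: the listed collection has the shape $\scal{\A(-3),\A(-2),\A(-1),\A}$ with constant starting block $\A = \scal{\Sbh, \Ub, \Oo_X}$, so the Lefschetz data is rectangular and the twisting line bundle is $\Oo_X(1)$. Item (ii) is the content of the corollary immediately preceding the theorem, whose proof assembles the required Hom-vanishings from Lemma~\ref{lem:exceptional_lemma} together with the triangle~\eqref{eq:cone_tilde_S_to_tilde_U} expressing $\Sbh$ in terms of $\Sb$ and $\Ub$, all pulled through the blow-up $\pi : \Xt \to X$.

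For item (iii), the length of~\eqref{eq:Lefschetz_exc_coll_G_2hor} is visibly $3 \times 4 = 12$, so it suffices to verify that $\rk K^0(X) = 12$. The description in Subsection~\ref{ss:blowups} exhibits $\cU_Y$ as a vector bundle over $Y = \Gad$, with $Z = \Qs_5$ its complement; combining the Schubert stratifications of $Y$ and $Z$ with the triviality of $\cU_Y \to Y$ over each Schubert cell, one obtains a cellular decomposition of $X$. The Chern character then provides an isomorphism $K^0(X) \otimes \Q \simeq H^*(X,\Q)$, and by Fact~\ref{f:bases} the right-hand side has rank $|W^{P_Y}| + |W^{P_Z}|$. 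In case (5), both $Y$ and $Z$ are five-dimensional rational homogeneous spaces for $G_2$, each carrying exactly one Schubert class in every complex dimension $0, 1, \dots, 5$, giving $|W^{P_Y}| = |W^{P_Z}| = 6$ and hence $\rk K^0(X) = 12$. The hard technical work is already behind us, concentrated in Lemma~\ref{lem:exceptional_lemma} via Bott--Demazure calculations on $G_2/B$ and the blow-up description of $\Xt$; what remains for this theorem is the straightforward bookkeeping sketched above, consistent with the Hasse diagram recorded for case (5) in Subsection~4.4.
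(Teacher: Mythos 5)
Your proposal is correct and follows the same route as the paper: the theorem there is stated as an immediate consequence of the preceding corollary (exceptionality of the twelve objects, assembled from Lemma~\ref{lem:exceptional_lemma} and the triangle defining $\Sbh$) together with the visible block structure. The only piece you make explicit that the paper leaves implicit is the computation $\rk K^0(X)=|W^{P_Y}|+|W^{P_Z}|=6+6=12$ from the cellular decomposition, which is accurate for case (5) since $Y=\Gad$ and $Z=\Qs_5$ each have one Schubert class in every dimension $0,\dots,5$.
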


\subsection{Fullness of the exceptional collection in Theorem~\ref{th:Lefexccoll}}

\begin{thm}
	The semiorthogonal sequence $\A = \scal{\A_0(-3), \A_0(-2), \A_0(-1), \A_0}$ is a rectangular Lefschetz semiorthogonal decomposition of $\Dd^b(X)$ of length 4 with the starting block $\A_0$. 
\end{thm}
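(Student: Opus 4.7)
By Theorem~\ref{th:Lefexccoll}, the collection~\eqref{eq:Lefschetz_exc_coll_G_2hor} is a rectangular Lefschetz exceptional collection of length $12$ in $\Dd^b(X)$, and this length equals the rank of $K^0(X)$. Let $\A \subset \Dd^b(X)$ be the admissible subcategory generated, so that $\Dd^b(X) = \scal{\A,\A^\perp}$. The theorem amounts to proving $\A^\perp = 0$, i.e., fullness of the collection.

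The plan is to compare Hochschild homologies. First, the variety $X$ is cellular: from Section~\ref{ss:blowups}, $X$ decomposes as $\cU_Y \sqcup Z$, where $\cU_Y \to Y$ is the $G$-equivariant rank-$2$ vector bundle associated to $F_Y$ and both $Y = \Gad$ and $Z = \Qs_5$ are homogeneous. Since $F_Y$ is trivial over each Bruhat cell of $Y$ (by $G$-equivariance combined with the fact that Bruhat cells are affine spaces), $\cU_Y$ inherits an algebraic cell decomposition with $|W^{P_Y}| = 6$ cells; together with the $|W^{P_Z}| = 6$ Bruhat cells of $Z$, we obtain an algebraic cell decomposition of $X$ into $12$ cells, consistent with Fact~\ref{f:bases}. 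Consequently the Hodge numbers $h^{p,q}(X)$ vanish off the diagonal, and the Hochschild--Kostant--Rosenberg isomorphism
\[
	HH_i(X) \;=\; \bigoplus_{q-p = i} \Hr^q(X,\Omega_X^p)
\]
is concentrated in degree $0$ with $\dim_\C HH_0(X) = \sum_p h^{p,p}(X) = 12$.

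Combining this with Kuznetsov's additivity of Hochschild homology under semiorthogonal decompositions \cite{Ku}, we have $HH_*(X) = HH_*(\A) \oplus HH_*(\A^\perp)$. Since $\A$ is generated by $12$ exceptional objects, $HH_*(\A) \cong \C^{12}$ placed in degree $0$, forcing $HH_*(\A^\perp) = 0$. As $\A^\perp$ is an admissible subcategory of the geometric category $\Dd^b(X)$, vanishing of $HH_*(\A^\perp)$ implies $\A^\perp = 0$ by a standard Hochschild-theoretic criterion, completing fullness.

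The main technical point is to check the cellular decomposition carefully, which requires the triviality of $F_Y$ along each Bruhat cell and the corresponding verification for $Z$; the rest is formal. A more direct but computationally heavier alternative would be to pull the collection back along $\pi:\Xt \to X$ and apply Orlov's blow-up formula to the disjoint center $Y \sqcup Z$: combined with Kapranov's full exceptional collection of length $6$ on $\Qs_5$ and a full exceptional collection on $\Gad$, this yields a candidate full exceptional collection of length $24$ on $\Xt = \Pp_{G_2/B}(\Ll_{-\oa} \oplus \Ll_{-\ob})$, matching $\rk K^0(\Xt) = 2\,|W(G_2)| = 24$; fullness on $\Xt$ would then descend via $\pi^*$ to give fullness of $\A$ on $X$.
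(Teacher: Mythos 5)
Your argument has a fatal gap at its final step. The counting part is fine: $X$ is cellular (as the paper notes in Subsection~\ref{ss:cohomology}), so $HH_*(X)$ is concentrated in degree $0$ of dimension $12$; additivity of Hochschild homology under semiorthogonal decompositions gives $HH_*(\A^\perp)=0$. But there is no ``standard Hochschild-theoretic criterion'' asserting that an admissible subcategory with vanishing Hochschild homology is zero. Such subcategories --- \emph{quasi-phantoms} (vanishing $HH_*$, finite nonzero $K_0$) and \emph{phantoms} (vanishing $HH_*$ and $K_0$) --- are known to exist, e.g.\ in the derived categories of certain surfaces of general type (Gorchinskiy--Orlov, B\"ohning--Graf von Bothmer--Katzarkov--Sosna). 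What is true is that vanishing of the \emph{zeroth Hochschild cohomology} of an admissible subcategory forces it to be zero, but Hochschild cohomology is not additive under semiorthogonal decompositions and cannot be computed by your counting argument. Ruling out phantoms in $\Dd^b(X)$ for a given rational or cellular $X$ is an open problem, not a formality; this is precisely why the paper cannot stop at Theorem~\ref{th:Lefexccoll} (which already records that the length equals $\rk K^0(X)=12$) and must prove fullness by hand.

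Your ``computationally heavier alternative'' is in fact the route the paper takes, but the step you wave at --- ``fullness on $\Xt$ would then descend via $\pi^*$'' --- is the entire content of the proof and is not automatic. Since $\pi^*$ is fully faithful, $\pi^*\A$ is a proper admissible subcategory of $\Dd^b(\Xt)$ (of $K$-theory rank $12$ inside rank $24$), so an object $E\in{}^\perp\A$ only gives $\pi^*E\in{}^\perp(\pi^*\A)$, which is far from forcing $\pi^*E=0$. The paper's actual argument is: build a full $24$-object exceptional collection on $\Xt=\Pp_{G_2/B}(\Ll_{-\oa}\oplus\Ll_{-\ob})$ from Kuznetsov's collection on $\Gad$ via Orlov's projective-bundle formula, perform a chain of mutations, and then show (Lemma~\ref{lem:reductionlemma}) that orthogonality of $E$ to $\A$ forces $\pi^*E$ into the three-object subcategory $\scal{\Uu_2^*,\Ll_{\oa},\Oo_\Xt(E_2)}$ --- crucially using that $\pi^*E$ is orthogonal not only to $\pi^*\A$ but also to $\pi^*\A\otimes\Oo_\Xt(E_1)$, $\pi^*\A\otimes\Oo_\Xt(E_2)$ and $\pi^*\A\otimes\Oo_\Xt(E_1+E_2)$. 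A further mutation and a restriction-to-$E_1$ computation reduce $\pi^*E$ to $\Rb\otimes V_1^\bullet$ for an explicit bundle $\Rb$ descending to $X$, and only then does a $K^0$-rank argument (the one your Hochschild count is a shadow of) legitimately conclude $V_1^\bullet=0$. Without an argument of this kind your proof does not close.
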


To unburden the notation, in what follows we will suppress the pullback symbol $\xi^*(-) \in \Dd^b(\Xt)$ at an object $(-) \in \Dd^b(G_2/B)$, and whenever it doesn't lead to confusion we will also be suppressing the pullback symbols $\pa^*$ and $\pb^*$ at objects of $\Dd^b(\Qs_5)$ and $\Dd^b(\Gad)$, respectively. Thus, typically an object such as $\Uu_2$ means $\xi^*\pb^*\Uu_2$ on $\Xt$, or $\pb^*\Uu_2$ on $G_2/B$, depending on the variety the object is being considered.

\begin{proof}
The proof is a bit involved computationally, but conceptually it is rather straightforward. We start with finding a special semiorthogonal decomposition of $\Dd ^b(\Xt)$. Assuming then that the semiorthogonal sequence $\A$ is not a semiorthogonal decomposition of $\Dd ^b(X)$, that is, that there exists a nonzero object $E\in {^\perp\A}$, we show in Lemma \ref{lem:reductionlemma} that orthogonality conditions imposed on $E$ force the pullback $\pi ^{\ast}E$ to belong to a subcategory $\B$ of $\Dd ^b(\Xt)$ that is generated by three exceptional objects (see collection (\ref{eq:step_7}) and Lemma \ref{lem:reductionlemma} below). The category $\B$ is rather manageable, and 
in this last step we show that the question of fullness of the sequence $\A$ on the triangulated level can be reduced to the level of the group $K^0(X)$ where it is already proven by the above Theorem \ref{th:Lefexccoll}.

Thus, our first step is to find the aforementioned special semiorthogonal decomposition of $\Dd ^b(\Xt)$. By \cite[6.4]{Ku}, the category $\Dd^b(\Gad)$ has a full exceptional collection 
\[
	\scal{\Uu_2 \otimes \Ll_{-2\ob}, \Ll_{-2\ob}, \Uu_2 \otimes \Ll_{-\ob}, \Ll_{-\ob}, \Uu_2, \Oo_{\Gad}}.
\]

\quad By \cite{Or}, there is a semiorthogonal decomposition $\Dd^b(G_2/B)= \scal{\pb^*\Dd^b(\Gad) \otimes \Ll_{-\oa},\pb^*\Dd^b(\Gad)}$, and combining these two statements we obtain a full exceptional collection in $\Dd^b(G_2/B)$:

\begin{eqnarray}\label{eq:G_2/B_exc_coll_end}
& \Uu _2\otimes \Ll _{-2\omega _{\beta}}, \ \Ll _{-2\omega _{\beta}}, \ \Uu _2\otimes \Ll _{-\omega _{\beta}}, \ \Ll _{-\omega _{\beta}}, \ \Uu _2, \ \Oo _{\tilde X}\rangle \\ \nonumber 
& \langle \Uu _2\otimes \Ll _{-2\omega _{\beta}-\omega _{\alpha}}, \ \Ll _{-2\omega _{\beta}-\omega _{\alpha}},\Uu _2\otimes \Ll _{-\rho},\ \Ll _{-\rho},\ \Uu _2\otimes \Ll _{-\omega _{\alpha}}, \ \Ll _{-\omega _{\alpha}},
\end{eqnarray}

When it comes to an exceptional collection as above, our convention is that it is ordered as usual for objects in the same line, and the order on lines is from top to bottom. Thus, in the above example the objects of the upper line are all left orthogonal to the objects of the lower line, and each of the two lines forms an exceptional collection. The brackets $''\langle''$ and $''\rangle''$ indicate the margins of a collection.

\ Remembering that $\xi$ is a $\P^1$-bundle and using once again \cite{Or}, we obtain a semiorthogonal decomposition $\Dd^b(\Xt)= \scal{\xi^*\Dd^b(G_2/B) \otimes \Oo_\xi(-1),\xi^*\Dd^b(G_2/B)}$. Taking the collection \eqref{eq:G_2/B_exc_coll_end} in $\Dd^b(G_2/B)$ and tensoring it with $\Oo_\xi(-1)$, we obtain a full exceptional collection in $\Dd^b(\Xt)$. We first perform a series of mutations of the collection thus obtained inside $\Dd^b(\Xt) = \scal{\xi^*\Dd^b(G_2/B) \otimes \Oo_\xi(-1),\xi^*\Dd^b(G_2/B)}$. Throughout, we use Lemma~\ref{lem:mutations_canonical_class}.

\subsubsection*{Step 1} Starting from decomposition \eqref{eq:G_2/B_exc_coll_end} and recalling that $\omega^{-1}_{\Gad}=\Ll_{3\ob}$, we mutate $\Uu_2 \otimes \Ll_{-2\ob}$ to the right inside $\pb^*\Dd^b(\Gad)$, obtaining a full exceptional collection in $\xi^*\Dd^b(G_2/B)=\xi^* \scal{\pb^*\Dd^b(\Gad) \otimes \Ll_{-\oa}, \pb^*\Dd^b(\Gad)}$:

\begin{eqnarray}\label{eq:step_1}
&  \Ll _{-2\omega _{\beta}}, \ \Uu _2\otimes \Ll _{-\omega _{\beta}}, \ \Ll _{-\omega _{\beta}}, \ \Uu _2, \ \Oo _{\tilde X}, \ \Uu _2^{\vee} \rangle \\ \nonumber 
& \langle \Ll _{-2\omega _{\beta}-\omega _{\alpha}}, \ \Uu _2\otimes \Ll _{-\rho},\ \Ll _{-\rho},\ \Uu _2\otimes \Ll _{-\omega _{\alpha}}, \ \Ll _{-\omega _{\alpha}},\ \Uu _2^{\vee}\otimes \Ll _{-\omega _{\alpha}},
\end{eqnarray}

\subsubsection*{Step 2} Recall that $\omega^{-1}_{G_2/B}=\Ll_{2\rho}$ and mutate $\Ll_{-2\ob-\oa}$ to the right in $\xi^*\Dd^b(G_2/B)$, we obtain

\begin{eqnarray}\label{eq:step_2}
&  \Ll _{-2\omega _{\beta}}, \ \Uu _2\otimes \Ll _{-\omega _{\beta}}, \ \Ll _{-\omega _{\beta}}, \ \Uu _2, \ \Oo _{\tilde X}, \ \Uu _2^{\vee}, \ \Ll _{\omega _{\alpha}}\rangle \\ \nonumber 
& \langle \Uu _2\otimes \Ll _{-\rho},\ \Ll _{-\rho},\ \Uu _2\otimes \Ll _{-\omega _{\alpha}}, \ \Ll _{-\omega _{\alpha}},\ \Uu _2^{\vee}\otimes \Ll _{-\omega _{\alpha}},
\end{eqnarray}

\subsubsection*{Step 3} Consider $\Dd^b(\Xt) = \scal{\xi^*\Dd^b(G_2/B) \otimes \Oo_\xi(-1),\xi^*\Dd^b(G_2/B)}$, and let the full exceptional collection in $\xi^*\Dd^b(G_2/B)$ be the one from~\eqref{eq:step_2}, and the collection in $\xi^*\Dd^b(G_2/B) \otimes \Oo_\xi(-1)$ be the one from \eqref{eq:step_1} tensored with $\Oo_\xi(-1)$. 

\subsubsection*{Step 4} 
In $\xi^*\Dd^b(G_2/B) \otimes \Oo_\xi(-1)$, consider $\Ll_{-2\ob} \otimes \Oo_\xi(-1)\in \xi^*\pb^*\Dd^b(\Gad)\otimes \Oo_\xi(-1)$ and mutate it to the right inside $\xi^*\pb^*\Dd^b(\Gad) \otimes \Oo_\xi(-1)\subset \xi^*\Dd^b(G_2/B) \otimes \Oo_\xi(-1)$, obtaining $\Ll_{\ob} \otimes \Oo_\xi(-1)$ and a full exceptional collection in $\xi^*\Dd^b(G_2/B) \otimes \Oo_\xi(-1)$:

\begin{eqnarray}\label{eq:step_4}
&  \Ll _{-\omega _{\beta}}\otimes \Oo _{\xi}(-1), \Uu _2\otimes \Oo _{\xi}(-1), \Oo _{\xi}(-1), \Uu _2^{\vee}\otimes \Oo _{\xi}(-1), \Ll _{\omega _{\beta}}\otimes \Oo _{\xi}(-1)\rangle \\
& \Uu _2\otimes \Ll _{-\omega _{\alpha}}\otimes \Oo _{\xi}(-1), \Ll _{-\omega _{\alpha}}\otimes \Oo _{\xi}(-1), \Uu _2^{\vee}\otimes \Ll _{-\omega _{\alpha}}\otimes \Oo _{\xi}(-1), \Uu _2\otimes \Ll _{-\omega _{\beta}}\otimes \Oo _{\xi}(-1), \nonumber \\
& \langle \Ll _{-2\omega _{\beta}-\omega _{\alpha}}\otimes \Oo _{\xi}(-1),\Uu _2\otimes \Ll _{-\rho}\otimes \Oo _{\xi}(-1), \Ll _{-\rho}\otimes \Oo _{\xi}(-1), \nonumber  
\end{eqnarray}

\subsubsection*{Step 5} In \eqref{eq:step_4}, mutate $\Ll_{\ob} \otimes \Oo_\xi(-1)$ to the left inside $\xi^*\Dd^b(G_2/B) \otimes \Oo_\xi(-1)$, obtaining $\Ll_{-\ob-2\oa} \otimes \Oo_\xi(-1)$ and  a full exceptional collection in $\xi^*\Dd^b(G_2/B) \otimes \Oo_\xi(-1)$:

\begin{eqnarray}\label{eq:step_5}
&  \Uu _2\otimes \Ll _{-\omega _{\beta}}\otimes \Oo _{\xi}(-1), \Ll _{-\omega _{\beta}}\otimes \Oo _{\xi}(-1), \Uu _2\otimes \Oo _{\xi}(-1), \Oo _{\xi}(-1), \Uu _2^{\vee}\otimes \Oo _{\xi}(-1)\rangle  \nonumber \\
& \Ll _{-\rho}\otimes \Oo _{\xi}(-1), \Uu _2\otimes \Ll _{-\omega _{\alpha}}\otimes \Oo _{\xi}(-1), \Ll _{-\omega _{\alpha}}\otimes \Oo _{\xi}(-1), \Uu _2^{\vee}\otimes \Ll _{-\omega _{\alpha}}\otimes \Oo _{\xi}(-1), \nonumber \\
& \langle \Ll _{-\omega _{\beta}-2\omega _{\alpha}}\otimes \Oo _{\xi}(-1), \Ll _{-2\omega _{\beta}-\omega _{\alpha}}\otimes \Oo _{\xi}(-1),\Uu _2\otimes \Ll _{-\rho}\otimes \Oo _{\xi}(-1),
\end{eqnarray}

Note that $\Hom^{\bullet}_{\Xt}(\Ll_{-\ob-2\oa} \otimes \Oo_\xi(-1),\Ll_{-2\ob-\oa} \otimes \Oo_\xi(-1))=\Hr^{\bullet}(G_2/B,\Ll_{\oa-\ob})=0$ and also $\Hom^{\bullet}_{\Xt}(\Ll_{-2\ob-\oa} \otimes \Oo_\xi(-1),\Ll_{-\ob-2\oa} \otimes \Oo_\xi(-1))=\Hr^{\bullet}(G_2/B,\Ll_{\ob-\oa})=0$. Thus, mutating $\Ll_{-2\ob-\oa} \otimes \Oo_\xi(-1)$ in \eqref{eq:step_5} to the left through $\Ll_{-\ob-2\oa} \otimes \Oo_\xi(-1)$ amounts to just interchanging these two objects. We obtain 

\begin{eqnarray}\label{eq:step_5-1/2}
&  \Uu _2\otimes \Ll _{-\omega _{\beta}}\otimes \Oo _{\xi}(-1), \Ll _{-\omega _{\beta}}\otimes \Oo _{\xi}(-1), \Uu _2\otimes \Oo _{\xi}(-1), \Oo _{\xi}(-1), \Uu _2^{\vee}\otimes \Oo _{\xi}(-1)\rangle  \nonumber \\
& \Ll _{-\rho}\otimes \Oo _{\xi}(-1), \Uu _2\otimes \Ll _{-\omega _{\alpha}}\otimes \Oo _{\xi}(-1), \Ll _{-\omega _{\alpha}}\otimes \Oo _{\xi}(-1), \Uu _2^{\vee}\otimes \Ll _{-\omega _{\alpha}}\otimes \Oo _{\xi}(-1), \nonumber \\
& \langle \Ll _{-2\omega _{\beta}-\omega _{\alpha}}\otimes \Oo _{\xi}(-1),\Ll _{-\omega _{\beta}-2\omega _{\alpha}}\otimes \Oo _{\xi}(-1), \Uu _2\otimes \Ll _{-\rho}\otimes \Oo _{\xi}(-1),
\end{eqnarray}

\subsubsection*{Step 6} In~\eqref{eq:step_5-1/2}, we mutate $\Ll_{-2\ob-\oa} \otimes \Oo_\xi(-1)$ to the right inside $\xi^*\Dd^b(G_2/B) \otimes \Oo_\xi(-1)$, obtaining $\Ll_{\oa} \otimes \Oo_\xi(-1)$ and a full exceptional collection in $\xi^*\Dd^b(G_2/B) \otimes \Oo_\xi(-1)$:

\begin{eqnarray}\label{eq:step_6}
&  \Ll _{-\omega _{\beta}}\otimes \Oo _{\xi}(-1), \Uu _2\otimes \Oo _{\xi}(-1), \Oo _{\xi}(-1), \Uu _2^{\vee}\otimes \Oo _{\xi}(-1), \Ll _{\omega _{\alpha}}\otimes \Oo _{\xi}(-1)\rangle  \\
&  \Uu _2\otimes \Ll _{-\omega _{\alpha}}\otimes \Oo _{\xi}(-1), \Ll _{-\omega _{\alpha}}\otimes \Oo _{\xi}(-1), \Uu _2^{\vee}\otimes \Ll _{-\omega _{\alpha}}\otimes \Oo _{\xi}(-1), \Uu _2\otimes \Ll _{-\omega _{\beta}}\otimes \Oo _{\xi}(-1), \nonumber \\
& \langle \Ll _{-\omega _{\beta}-2\omega _{\alpha}}\otimes \Oo _{\xi}(-1), \Uu _2\otimes \Ll _{-\rho}\otimes \Oo _{\xi}(-1), \Ll _{-\rho}\otimes \Oo _{\xi}(-1), \nonumber 
\end{eqnarray}

\subsubsection*{Step 7} Finally, recalling that $\omega^{-1}_\Xt =\xi^*\Ll_{\rho} \otimes \Oo_\xi(2)$ we mutate $\Ll_{-\ob-2\oa} \otimes \Oo_\xi(-1)\in \Dd^b({\Xt})$ to the right inside the whole of $\Dd^b({\Xt})$, obtaining $\Ll_{-\ob-2\oa} \otimes \Oo_\xi(-1) \otimes \xi^*\Ll_{\rho} \otimes \Oo_\xi(2)=\Ll_{-\oa} \otimes \Oo_\xi(1)=\Oo_\Xt(E_2)$ and a full exceptional collection in $\Dd^b(\Xt)$:

\begin{eqnarray}\label{eq:step_7}
&  \Ll _{-2\omega _{\beta}}, \ \Uu _2\otimes \Ll _{-\omega _{\beta}}, \ \Ll _{-\omega _{\beta}}, \ \Uu _2, \ \Oo _{\tilde X}, \ \Uu _2^{\vee}, \ \Ll _{\omega _{\alpha}}, \Oo _{\tilde X}(E_2)\rangle \\ \nonumber 
&  \Uu _2^{\vee}\otimes \Oo _{\xi}(-1), \Ll _{\omega _{\alpha}}\otimes \Oo _{\xi}(-1), \Uu _2\otimes \Ll _{-\rho},\ \Ll _{-\rho},\ \Uu _2\otimes \Ll _{-\omega _{\alpha}}, \ \Ll _{-\omega _{\alpha}},\ \Uu _2^{\vee}\otimes \Ll _{-\omega _{\alpha}}, \\
&  \Uu _2^{\vee}\otimes \Ll _{-\omega _{\alpha}}\otimes \Oo _{\xi}(-1), \Uu _2\otimes \Ll _{-\omega _{\beta}}\otimes \Oo _{\xi}(-1), \Ll _{-\omega _{\beta}}\otimes \Oo _{\xi}(-1), \Uu _2\otimes \Oo _{\xi}(-1), \Oo _{\xi}(-1),  \nonumber \\
& \langle \Uu _2\otimes \Ll _{-\rho}\otimes \Oo _{\xi}(-1),\Ll _{-\rho}\otimes \Oo _{\xi}(-1), \Uu _2\otimes \Ll _{-\omega _{\alpha}}\otimes \Oo _{\xi}(-1), \Ll _{-\omega _{\alpha}}\otimes \Oo _{\xi}(-1),  \nonumber
\end{eqnarray}

We now assume that the semiorthogonal sequence $\A$ is not a semiorthogonal decomposition, that is, that there exists a nonzero object $E\in {^\perp\A}$. 

\begin{lemma}\label{lem:reductionlemma}
Let $\B$ denote the admissible subcategory $\scal{ \Uu_2^{\vee}, \Ll_{\oa}, \Oo_\Xt(E_2)} \subset \Dd^b(\Xt)$ of~\eqref{eq:step_7}. Consider a non--zero object $E\in {^\perp\A}$. Then $\pi^*E\in \B$. 
\end{lemma}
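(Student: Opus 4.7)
The plan is to use the $3$-term semiorthogonal decomposition
\[
\Dd^b(\Xt) = \scal{\C_1, \B, \C_2}
\]
induced by the full exceptional collection~\eqref{eq:step_7}, where $\C_1$ is generated by the first five objects $\Ll_{-2\ob}, \Uu_2 \otimes \Ll_{-\ob}, \Ll_{-\ob}, \Uu_2, \Oo_\Xt$ and $\C_2$ is generated by the remaining sixteen objects. An object of $\Dd^b(\Xt)$ lies in $\B$ precisely when it is right-orthogonal to $\C_1$ and left-orthogonal to $\C_2$, so it suffices to verify these two conditions for $\pi^*E$.

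For the left-orthogonality I would check $\Hom^{\bullet}_\Xt(\pi^*E, X_j) = 0$ for each generator $X_j$ of $\C_2$. The objects of the form $\Oo_\xi(-i)$ (appearing directly or after identifying $\Ll_{-\rho}$-type twists via Lemma~\ref{lem:E_1-E_2-O(rho)-lemma} with pullbacks from $X$) equal $\pi^*\Oo_X(-i)$, so the vanishing is immediate from $E \in {}^\perp\A$ by adjunction. For objects built from $\xi^*\pb^*\Uu_2$, $\xi^*\pb^*\Uu_2^\vee$, or $\xi^*\pa^*\Ss$ tensored with $\Oo_\xi(-i)$, I would use the defining triangles~\eqref{eq:defseq_tilde_U}, \eqref{eq:defseq_tilde_S}, and~\eqref{eq:cone_tilde_S_to_tilde_U} to express them, modulo objects supported on the exceptional divisors $E_1$ or $E_2$, in terms of $\Uut = \pi^*\Ub$ and $\Ssh = \pi^*\Sbh$. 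The pullback terms then yield vanishing Homs against $\pi^*E$ from $E \in {}^\perp\A$, while the divisorial correction terms contribute trivially because Orlov's blow-up formula for $\pi: \Xt \to X$ places $\pi^*\Dd^b(X)$ in the left orthogonal to ${i_1}_*\pa^*\Dd^b(\Qs_5)$ and ${i_2}_*\pb^*\Dd^b(\Gad)$.

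For the right-orthogonality to $\C_1$, the case $X_5 = \Oo_\Xt = \pi^*\Oo_X$ is immediate from $\Oo_X \in \A$. The remaining four objects have the shape $\xi^*\pb^*F$ for explicit $F \in \Dd^b(\Gad)$, so by adjunction the required vanishing reduces to identifying the pushforward $(\pb \circ \xi)_*\pi^*E \in \Dd^b(\Gad)$. Using the fact that $\pb \circ \xi$ factors through the natural projection $\Xt \to Y = \Gad$ arising from the blow-up structure, together with base change along $i_Y : Y \hookrightarrow X$, this pushforward is controlled by the restriction of $E$ near $Y$ and the normal data $F_Y$. The resulting Homs on $\Gad$ then match orthogonality conditions produced by pairing $E$ against the classes $\s_u$ and their twists supported on $Y$, all of which lie in $\A$.

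The main obstacle will be the bookkeeping in the left-orthogonality step: each of the sixteen generators of $\C_2$ must be systematically unwound through the defining triangles and the blow-up structure, so that every residual term either lies in $\pi^*\A$ (whence the Hom vanishes by hypothesis on $E$) or is supported on an exceptional divisor (whence orthogonality to $\pi^*E$ comes from the blow-up formula).
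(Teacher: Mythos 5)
Your setup misreads the collection~\eqref{eq:step_7} and consequently imposes the wrong orthogonality conditions. In the paper's convention a semiorthogonal sequence $(\A_1,\dots,\A_n)$ satisfies $\Hom(\A_j,\A_i)=0$ for $i<j$, and the display~\eqref{eq:step_7} is to be read from the opening bracket $\langle$ (bottom line) to the closing $\rangle$ (top line) — this is forced, e.g., by $\Hom^\bullet(\Oo_\xi(-1),\Oo_\Xt)\neq 0$ while $\Hom^\bullet(\Oo_\Xt,\Oo_\xi(-1))=0$, so $\Oo_\xi(-1)$ must precede $\Oo_\Xt$. Hence $\B=\scal{\Uu_2^*,\Ll_{\oa},\Oo_\Xt(E_2)}$ is the \emph{final} block: there is no $\C_2$ to its right, the decomposition is $\Dd^b(\Xt)=\scal{\B^\perp,\B}$, and membership in $\B={}^\perp(\B^\perp)$ is the single one-sided condition $\Hom^\bullet(\pi^*E,G)=0$ for all $21$ remaining generators $G$ — including the five objects $\Ll_{-2\ob},\Uu_2\otimes\Ll_{-\ob},\Ll_{-\ob},\Uu_2,\Oo_\Xt$ that you place in $\C_1$. (Even granting your ordering, the directions are swapped: for $F$ in the middle block of $\scal{\C_1,\B,\C_2}$ one needs $\Hom(F,\C_1)=0$ and $\Hom(\C_2,F)=0$, not the reverse.)

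This is not merely cosmetic. Your ``right-orthogonality to $\C_1$'' step proposes to compute groups of the form $\Hom^\bullet(\xi^*\pb^*F,\pi^*E)$ via $(\pb\circ\xi)_*\pi^*E$; but these Homs point \emph{into} $\pi^*E$ and cannot be controlled by the hypothesis $E\in{}^\perp\A$, which only kills Homs \emph{out of} $E$ (indeed $\Hom(\Oo_\Xt,\pi^*E)=\Hom(\Oo_X,E)$ need not vanish, and your claim that this case is ``immediate from $\Oo_X\in\A$'' confuses the two directions). What is actually required for those five objects is $\Hom^\bullet(\pi^*E,-)=0$, and the hardest instance, $\Hom^\bullet(\pi^*E,\Ll_{-2\ob})=0$, needs a dedicated argument — the paper resolves $\Ll_{-2\ob}$ by the sequence $0\to\Ll_{-\rho-\oa}\to\Uu_2\otimes\Ll_{-\rho}\to\Ll_{-2\ob}\to 0$ and feeds in previously established orthogonalities — for which your proposal offers no substitute. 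The other half of your plan (unwinding the sixteen twisted generators through the defining triangles of $\Uut$ and $\Sst$, with pulled-back pieces handled by $E\in{}^\perp\A$ and divisorial pieces by the blow-up semiorthogonal decomposition) is in the right spirit and matches the paper's Steps 1--3, but as written the argument does not establish the statement.
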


\begin{proof}
Since $\B$ is admissible, we have a semiorthogonal decomposition $\Dd^b(\Xt) = \scal{\B^\perp,\B}$. The statement of lemma amounts to showing that $\pi^*E$ is left orthogonal to $\B^\perp$, that is to all the objects to the left from $\scal{\Uu_2^{\vee}, \Ll_{\oa}, \Oo_\Xt(E_2)}$ in (\ref{eq:step_7}). We break up the proof into several steps. We first recall the basic short exact sequences that will be used throughout:

\begin{equation}\label{eq:seqforU-1}
	0\to \Ll_{-\oa}\to \Uu_2\to \Ll_{\oa-\ob}\to 0.
\end{equation}

\begin{equation}\label{eq:defseq_tilde_U-1}
	0\to \Uut \to \Uu_2\to {i_1}_{\ast}\Ll_{\oa-\ob}\to 0,
\end{equation}

\begin{equation}\label{eq:spinor_U_2_sequence-1}
	0\to \Uu_2\to \Ss\to \Uu_2^* \otimes \Ll_{-\oa}\to 0.
\end{equation}

\begin{equation}\label{eq:defseq_tilde_S-1}
	0\to \Sst \to \Ss\to {i_2}_{*}(\Uu_2^* \otimes \Ll_{-\oa})\to 0.
\end{equation}

Finally, recall also the distinguished triangle that defines the object $\Ssh$ (cf. (\ref{eq:cone_tilde_S_to_tilde_U})):

\begin{equation}\label{eq:cone_tilde_S_to_tilde_U-1}
\dots \to \Sst\to \Uut \to \Ssh \to \Sst[1] \to \dots 
\end{equation}

Denote $\At_0$ a full triangulated subcategory of $\Dd^b(X)$ generated by 
$\scal{\Sb, \Ub, \Oo_X}$, and let $\At$ denote the minimal full triangulated subcategory containing the collection $\scal{ \At_0 \otimes \Oo_X(-3), \At_0 \otimes \Oo_X(-2), \At_0 \otimes \Oo_X(-1), \At_0} \subset \Dd^b(X)$. Then, by virtue of (\ref{eq:cone_tilde_S_to_tilde_U-1}), we have $\At = \A$. 
Given a non--zero object $E \in {^\perp\A}$, we see that $E\in {^\perp\At}$,
and taking the pull--back $\pi^*E$, we see that $\pi^*E\in {^\perp{\pi^{\ast}\At}}$ (and conversely). We show that the latter orthogonality condition implies that $\pi^*E\in \B$.

\comment{
the left orthogonality of $E$ to  
the admissible subcategory $\A\subset \Dd ^b(X)$ is equivalent to the left orthogonality of $E$ to $\in {^\perp\A}$
\\
the triangulated subcategory $\pi^*\A_0= \scal{\Ssh, \Uut ,\Oo_\Xt} \subset \Dd^b(\Xt)$ is equivalent to a subcategory $\pi^*\At_0\subset \Dd^b(\Xt)$, where $\At_0\subset \Dd^b(X)$ is a subcategory generated by 
$\scal{\Sbt, \Ub, \Oo_X}$ (recall Definition~\ref{def:def_hat_S} of the object $\Sbh$). Write $\At = \scal{ \At_0 \otimes \Oo_X(-3), \At_0 \otimes \Oo_X(-2), \At_0 \otimes \Oo_X(-1), \At_0} \subset \Dd^b(X)$; we have $\At = \A$ and $E\in ^\perp\At, \pi^*E\in \pi^* {^\perp\At}$. We show that the latter orthogonality condition implies that $\pi^*E\in \B$.
}

\subsubsection*{Step 1}
We prove the inclusion
$$\pi^*E \in {^\perp \scal{ \pi^*\At ,\pi^*\At  \otimes \Oo_\Xt(E_1),\pi^*\At  \otimes \Oo_\Xt(E_2),\pi^*\At  \otimes \Oo_\Xt(E_1+E_2)}}.$$ Indeed, isomorphisms $\pi_*\Oo_\Xt({\rm E}_1)=\pi_*\Oo_\Xt({\rm E}_2)=\pi_*\Oo_\Xt({\rm E}_1+{\rm E}_2) = \Oo_X$ imply that $\Hom^{\bullet}_{\Xt}(\pi^*E,\pi^*\At  \otimes \Oo_\Xt({\rm E}_1))=\Hom^{\bullet}_{\Xt}(\pi^*E,\pi^*\At  \otimes \Oo_\Xt({\rm E}_2))=\Hom^{\bullet}_{\Xt}(\pi^*E,\pi^*\At  \otimes \Oo_\Xt({\rm E}_1+{\rm E}_2))=\Hom^{\bullet}_X(E,\At)=0$.

\subsubsection*{Step 2}

Taking $\Oo_\xi(-i)=\pi^*\Oo_X(-i)$ for $i=0,1,2,3$ and tensoring it with $\Oo_X(E_1)=\Ll_{-\ob} \otimes \Oo_\xi(1)$ and $\Oo_X(E_2)=\Ll_{-\oa} \otimes \Oo_\xi(1)$ (resp., with  $\Oo_X(E_1+E_2)=\Ll_{-\rho} \otimes \Oo_\xi(2)$), we have $\pi^*E \in ^\perp\scal{ \Oo_\xi(-i)}$ for $i=0,1,2,3$, $\pi^*E\in ^\perp\scal{ \Ll_{-\oa} \otimes \Oo_\xi(-i), \Ll_{-\ob} \otimes \Oo_\xi(-i)}$ for $ i=-1,0,1,2$, $\pi^*E \in ^\perp\scal{\Ll_{-\rho} \otimes \Oo_\xi(-i)}$ for $i=-2,-1,0,1$.

Taking into account that $\pi_*{i_1}_*\Ll_{\oa-\ob}=\pi_*{i_2}_*(\Uu_2^{\vee} \otimes \Ll_{-\oa})=0$ we see from~\eqref{eq:defseq_tilde_U-1} and~\eqref{eq:defseq_tilde_S-1} that $\pi^*E \in {^\perp\scal{ \Uu_2, \Ss}}$.
Indeed, applying $\Hom _{\tilde X}(\pi ^{\ast}E,-)$ to the first sequence ~\eqref{eq:defseq_tilde_U-1}, we obtain an isomorphism $\Hom _{\tilde X}(\pi ^{\ast}E,{\tilde \Uu})=\Hom _{\tilde X}(\pi ^{\ast}E,\Uu _2)$, since $\Hom _{\tilde X}(\pi ^{\ast}E,{i_1}_{\ast}\Ll _{\omega _{\alpha}-\omega _{\beta}})=\Hom _{\tilde X}(E,\pi _{\ast}{i_1}_{\ast}\Ll _{\omega _{\alpha}-\omega _{\beta}})=0$, the last isomorphism being a consequence of $\pi _{\ast}{i_1}_{\ast}\Ll _{\omega _{\alpha}-\omega _{\beta}}=0$. Now $\Hom _{\tilde X}(\pi ^{\ast}E,{\tilde \Uu})=0$ by the assumption on $E$: it is supposed to be a non--trivial object of the left orthogonal to $\mathcal A$ and $\tilde \Uu=\pi ^{\ast}\Ub\in \pi^{\ast}\mathcal A$. It follows that $\Hom _{\tilde X}(\pi ^{\ast}E,\Uu _2)=0$, as claimed.
 
Tensoring~\eqref{eq:defseq_tilde_U-1} and~\eqref{eq:defseq_tilde_S-1} with $\Oo_\xi(-i)$ for $i=1,2,3$, we obtain, respectively

\begin{equation}\label{eq:defseq_tilde_U_otimesO_rho(-i)}
	0\to \Uut  \otimes \Oo_\xi(-i)\to \Uu_2  \otimes \Oo_\xi(-i)\to {i_1}_*\Ll_{(1-i)\oa-\ob}\to 0,
\end{equation}

and
\begin{equation}\label{eq:defseq_tilde_S_otimesO_rho(-i)}
0\to \Sst  \otimes \Oo_\xi(-i)\to \Ss  \otimes \Oo_\xi(-i)\to {i_2}_*(\Uu_2^{\vee} \otimes \Ll_{-i\ob-\oa})\to 0.
\end{equation}

Taking into account that $\pi_*{i_1}_*\Ll_{(1-i)\oa-\ob}=\pi_*{i_2}_*(\Uu_2^{\vee} \otimes \Ll_{-i\ob-\oa})=0$ we obtain $\pi^*E \in {^\perp\scal{ \Uu_2 \otimes \Oo_\xi(-i),\Ss \otimes \Oo_\xi(-i)}}$ for $i=1,2,3$. Tensoring~\eqref{eq:spinor_U_2_sequence-1} with $\Oo_\xi(-i)$ for $i=0,1,2,3$ we obtain $\pi^*E \in ^\perp\scal{ \Uu_2^{\vee} \otimes \Ll_{-\oa} \otimes \Oo_\xi(-i)}$ for $i=0,1,2,3$.

%\footnote{${\mathbb N\mathbb B}$: there is $\Hom_{\Xt}(\pi^*E,
%\tilde \Ss)$, not $\Hom_{\Xt}(\pi^*E,\hat \Ss)

Tensoring~\eqref{eq:defseq_tilde_U_otimesO_rho(-i)} with $\Oo_\Xt(E_2)=\Ll_{-\oa} \otimes \Oo_\xi(1)$ and~\eqref{eq:defseq_tilde_S_otimesO_rho(-i)} with $\Oo_\Xt(E_1)=\Ll_{-\ob} \otimes \Oo_\xi(1)$ we obtain, respectively:

\begin{eqnarray}\label{eq:defseq_tilde_U_otimesE_2}
& 0\rightarrow \tilde \Uu \otimes \Oo _{\xi}(-i)\otimes \Oo _{\tilde X}(E_2)\rightarrow \Uu _2\otimes \Ll _{-\omega _{\alpha}}\otimes \Oo _{\xi}(1-i)\rightarrow \nonumber \\
& {i_1}_{\ast}\Ll _{(1-i)\omega _{\alpha}-\omega _{\beta}}\otimes\Oo _{\tilde X}(E_2)\rightarrow 0,
\end{eqnarray}

and 

\begin{eqnarray}\label{eq:defseq_tilde_S_otimesE_1}
& 0\rightarrow \tilde {\mathcal S}\otimes \Oo _{\xi}(-i)\otimes \Oo _{\tilde X}(E_1)\rightarrow {\mathcal S}\otimes \Ll _{-\omega _{\beta}}\otimes \Oo _{\xi}(1-i) \rightarrow \nonumber \\
& {i_2}_{\ast}(\Uu _2^{\ast}\otimes \Ll _{-\omega _{\alpha}})\otimes \Oo _{\tilde X}(E_1)\rightarrow 0.
\end{eqnarray}

Taking into account isomorphisms ${i_1}_*\Ll_{(1-i)\oa-\ob} \otimes\Oo_\Xt(E_2) = {i_1}_*\Ll_{(1-i)\oa-\ob}$ and ${i_2}_*(\Uu_2^{\vee} \otimes \Ll_{-i\ob-\oa}) \otimes \Oo_\Xt(E_1)={i_2}_*(\Uu_2^{\vee} \otimes \Ll_{-i\ob-\oa})$, we obtain
\[
	\pi^*E \in {^\perp\scal{(\Uu_2 \otimes \Ll_{-\oa}) \otimes \Oo_\xi(1-i), (\Ss \otimes \Ll_{-\ob}) \otimes \Oo_\xi(1-i)}}
\]
for $i=0,1,2,3$. Moreover, tensoring the relative Euler sequence 
\begin{equation}\label{eq:relEulerseq_rho}
	0\to \Oo_\xi(-1)\to \Ll_{-\ob}\oplus \Ll_{-\oa}\to \Ll_{-\rho} \otimes \Oo_\xi(1)\to 0
\end{equation}
with $\Ll_{-\oa} \otimes \Oo_\xi(-1)$, we obtain
\[
0\to \Ll_{-\oa} \otimes \Oo_\xi(-2)\to (\Ll_{-\rho}\oplus \Ll_{-2\oa}) \otimes \Oo_\xi(-1)\to \Ll_{-\rho-\oa}\to 0.
\]

We have $\pi_*(\Ll_{-\oa} \otimes \Oo_\xi(-2))=\pi_*(\Ll_{-\rho} \otimes \Oo_\xi(-1))= \Oo_X(-3)$, while $\pi_*(\Ll_{-2\oa} \otimes \Oo_\xi(-1))$ can be found from the short exact sequence 

\begin{equation}\label{eq:U_2otimes-alpha}
	0\to \Ll_{-2\oa}\to \Uu_2 \otimes \Ll_{-\oa}\to \Ll_{-\ob}\to 0.
\end{equation}
We deduce that $\pi_*\Ll_{-2\oa}=\Cone(\Ub(-1)\to \Oo_X(-1))$. Thus, $\pi^*E\in {^\perp}\scal{ \Ll_{-\rho-\oa}}$. This also implies $\pi^*E \in {^\perp\scal{ \Uu_2 \otimes \Ll_{-\ob} \otimes \Oo_\xi(-i)}}$ for $i=0,1$ via tensoring~\eqref{eq:spinor_U_2_sequence-1} with $\Ll_{-\ob}$.

Tensoring \eqref{eq:relEulerseq_rho} with $\Ll_{\oa}$ we obtain:
\[
	0\to \Ll_{\oa} \otimes \Oo_\xi(-1)\to \Ll_{\oa-\ob}\oplus \Oo_\Xt\to \Ll_{-\ob} \otimes \Oo_\xi(1)\to 0,
\]
Using that $\Ll_{-\ob} \otimes \Oo_\xi(1)=\Oo_\Xt({\rm E}_1)$, while $\Ll_{\oa-\ob}\in \scal{ \Uu_2, \Ll_{-\oa}}$, we also get that $\pi^*E\in {^\perp\scal{ \Ll_{\oa} \otimes \Oo_\xi(-1)}}$. Tensoring~\eqref{eq:relEulerseq_rho} with $\Uu_2 \otimes \Oo_\xi(-i), i=1,2$, we obtain 

\begin{eqnarray}\label{eq:relEulerseq_rho_otimesU_2O_rho(-1)}
& 0\rightarrow \Uu _2\otimes \Oo _{\xi}(-i-1)\rightarrow (\Uu _2\otimes \Ll _{-\omega _{\alpha}}\oplus \Uu _2\otimes \Ll _{-\omega _{\beta}})\otimes \Oo _{\xi}(-i)\rightarrow \nonumber \\
& \Uu _2\otimes \Ll _{-\rho}\otimes \Oo _{\xi}(1-i)\rightarrow 0
\end{eqnarray}

We have seen that $\pi^*E\in {^\perp\scal{ \Uu_2 \otimes \Oo_\xi(-i-1)}}$ and $\pi^*E\in {^\perp\scal{\Uu_2 \otimes \Ll_{-\oa} \otimes \Oo_\xi(-i)}}$ for $ i=1,2$. For this we refer to the lines after~\eqref{eq:defseq_tilde_S_otimesO_rho(-i)} and~\eqref{eq:defseq_tilde_S_otimesE_1}. Tensoring~\eqref{eq:spinor_U_2_sequence-1} with $\Ll_{-\ob} \otimes \Oo_\xi(-i-1)$ and using again the lines after~\eqref{eq:defseq_tilde_S_otimesE_1} we obtain that $\pi^*E\in {^\perp\scal{ \Uu_2 \otimes \Ll_{-\ob}) \otimes \Oo_\xi(-i-1)}}$. Hence $\pi^*E\in {^\perp\scal{ \Uu_2 \otimes \Ll_{-\rho} \otimes \Oo_\xi(1-i)}}$ for $i=1,2$.

We conclude from Steps 1 and 2 that $\pi^*E$ is left orthogonal to the following objects:

\begin{eqnarray}\label{eq:Steps1-2}
&  \Uu _2^{\vee}\otimes \Ll _{-\omega _{\alpha}}, \ \Uu _2\otimes \Ll _{-\omega _{\beta}}, \ \Ll _{-\omega _{\beta}}, \ \Ll _{-\omega _{\alpha}}, \ \Uu _2, \ \Oo _{\tilde X} \rangle. \\
& \Oo _{\xi}(-1), \ \Ll _{\omega _{\alpha}}\otimes \Oo _{\xi}(-1), \ \Uu _2\otimes \Ll _{-\rho}, \Ll _{-\rho}, \ \Uu _2\otimes \Ll _{-\omega _{\alpha}},\nonumber \\
& \Uu _2^{\vee}\otimes \Ll _{-\omega _{\alpha}}\otimes \Oo _{\xi}(-1), \ \Uu _2\otimes \Ll _{-\omega _{\beta}}\otimes \Oo _{\xi}(-1), \ \Ll _{-\omega _{\beta}}\otimes \Oo _{\xi}(-1), \ \Uu _2\otimes \Oo _{\xi}(-1), \nonumber \\
& \langle \Uu _2\otimes \Ll _{-\rho}\otimes \Oo _{\xi}(-1), \ \Ll _{-\rho}\otimes \Oo _{\xi}(-1), \ \Uu _2\otimes \Ll _{-\omega _{\alpha}}\otimes \Oo _{\xi}(-1), \ \Ll _{-\omega _{\alpha}}\otimes \Oo _{\xi}(-1), \nonumber
\end{eqnarray}

\subsubsection*{Step 3} We check that $\pi^*E\in {^\perp\scal{  \Uu_2^{\vee} \otimes \Oo_\xi(-1)}}$. This follows immediately from the short exact sequence representing the vector bundle $\Sst$ as an extension of two vector bundles (cf. the diagram before Proposition~\ref{p:diag}):
\[
	0\to \Uu_2\to \Sst\to \Uu_2^{\vee} \otimes \Oo_\xi(-1)\to 0.
\]

\subsubsection*{Step 4} We check that $\pi^*E\in {^\perp\scal{ \Ll_{-2\ob}}}$. Tensoring~\eqref{eq:seqforU-1} with $\Ll_{-\rho}$, we get 
\[
	0\to \Ll_{-\rho -\oa}\to \Uu_2 \otimes \Ll_{-\rho}\to \Ll_{-2\ob}\to 0.
\]

We have $\pi^*E\in {^\perp\scal{ \Ll_{-\rho -\oa}}}$ (see the line after~\eqref{eq:U_2otimes-alpha}) and $\pi^*E\in {^\perp\scal{ \Uu_2 \otimes \Ll_{-\rho}}}$ (see the paragraph after~\eqref{eq:relEulerseq_rho_otimesU_2O_rho(-1)}). This implies the statement.
\end{proof}

Lemma~\ref{lem:reductionlemma} implies that $\pi^*E\in \B$ belongs to the full subcategory generated by $\scal{ \Uu_2^{\vee}, \Ll_{\oa}, \Oo_\Xt(E_2)}$. We conclude the proof by showing that an object of the form $\pi^*E$ belonging to the admissible subcategory $\scal{ \Uu_2^{\vee}, \Ll_{\oa}, \Oo_\Xt(E_2)}\subset \Dd^b(\Xt)$ is necessarily trivial, i.e. $E=0$. The first step is to mutate the subcategory $\scal{ \Uu_2^{\vee}, \Ll_{\oa}}$ to the right through $\scal{ \Oo_\Xt(E_2)}$.  One computes $\Hom^{\bullet}_{\Xt}(\Ll_{\oa},\Oo_\Xt(E_2))=\Hom^{\bullet}_{\Xt}(\Ll_{\oa}\Ll_{-\oa} \otimes \Oo_\xi(1))=\Hr^{\bullet}(\Xt,\Ll_{-2\oa} \otimes \Oo_\xi(1))=\Hr^{\bullet}(G_2/B,\Ll_{-\oa}\oplus \Ll_{-2\oa+\ob})=\C[-1]$, hence the right mutation $\Rb := \Rb_{\Oo_\Xt(E_2)}(\Ll_{\oa})$ of $\Ll_{\oa}$ through $\scal{ \Oo_\Xt(E_2)}$ is given by 

\begin{equation}\label{eq:lasteq1}
\dots \to \Ll_{\oa}[-1]\to \Oo_\Xt(E_2)\to \Rb \to \Ll_{\oa}\to \dots 
\end{equation}

which is in fact a short exact sequence $0\to \Oo_\Xt(E_2)\to \Rb \to \Ll_{\oa}\to 0$ corresponding to a unique non-trivial extension in $\Ext^1_{\Xt}(\Ll_{\oa},\Oo_\Xt(E_2))=\C$ above. Further, $\Hom_{\Xt}(\Uu_2^{\vee},\Oo_\Xt(E_2))=\Hr^{\bullet}(\Xt,\Uu_2 \otimes \Ll_{-\oa} \otimes \Oo_\xi(1))=\Hr^{\bullet}(G_2/B,\Uu_2\oplus \Uu_2^{\vee} \otimes \Ll_{-\oa})=0$. Thus, the right mutation of $\Uu_2^{\vee}$ through $\Oo_\Xt(E_2)$ is just a transposition, and we arrive at the collection $\scal{ \Oo_\Xt(E_2),\Uu_2^{\vee},\Rb}$.

We have seen that $\Hom^{\bullet}_{\Xt}(\pi^*E,\Oo_\Xt(E_2))=\Hom^{\bullet}_{X}(E,\pi_*\Oo_\Xt(E_2))=0$ which follows from $\pi_*\Oo_\Xt(E_2)$ = $\Oo_X$. Thus, the object $\pi^*E$ fits into an exact triangle 

\[
	\dots \to \Rb \otimes V_1^{\bullet}\to \pi^*E\to \Uu_2^{\vee} \otimes V_2^{\bullet}\to \dots  ,
\]

where $V_1^{\bullet}$ and $V_2^{\bullet}$ are graded vector spaces. Restricting that triangle to $E_1$, then tensoring it with $\Ll_{-\ob}$, and finally applying ${\pi_{\alpha}}_*$, we obtain an exact triangle

\begin{eqnarray}\label{eq:lasteq3}
& \dots \rightarrow {\pi _{\alpha}}_{\ast}(i_1^{\ast}{\bf R}\otimes \Ll _{-\omega _{\beta}})\otimes V_1^{\bullet}\rightarrow {\pi _{\alpha}}_{\ast}(i_1^{\ast}\pi ^{\ast}E\otimes \Ll _{-\omega _{\beta}})\rightarrow \nonumber \\
& {\pi _{\alpha}}_{\ast}(\Uu _2^{\vee}\otimes \Ll _{-\omega _{\beta}})\otimes V_2^{\bullet}\rightarrow \dots 
\end{eqnarray}

The middle term ${\pi_{\alpha}}_*(i_1^*\pi^*E \otimes \Ll_{-\ob})$ is equal to $0$, while the term ${\pi_{\alpha}}_*(i_1^* \Rb \otimes \Ll_{-\ob}) \otimes V_1^{\bullet}$ can be read off of the sequence~\eqref{eq:lasteq1}:

\begin{eqnarray}\label{eq:lasteq4}
& \dots \rightarrow {\pi _{\alpha}}_{\ast}\Ll _{\omega _{\alpha}-\omega _{\beta}}[-1]\rightarrow {\pi _{\alpha}}_{\ast}(\Oo _{E_1}\otimes \Ll _{-\omega _{\beta}})\rightarrow {\pi _{\alpha}}_{\ast}(i_1^{\ast}{\bf R}\otimes \Ll _{-\omega _{\beta}})\rightarrow \nonumber \\
& {\pi _{\alpha}}_{\ast}\Ll _{\omega _{\alpha}-\omega _{\beta}}\rightarrow \dots 
\end{eqnarray}

and since ${\pi_{\alpha}}_*\Ll_{\oa-\ob}={\pi_{\alpha}}_*(\Oo_{E_1} \otimes \Ll_{-\ob})=0$, we obtain ${\pi_{\alpha}}_*(i_1^* \Rb \otimes \Ll_{-\ob})=0$. On the other hand, ${\pi_{\alpha}}_*(\Uu_2^{\vee} \otimes \Ll_{-\ob})=\Ll_{-\oa}$ as can be seen from~\eqref{eq:seqforU-1}. From ~\eqref{eq:lasteq3} we conclude that $V_2^{\bullet}=0$. Thus, $\pi^*E = \Rb \otimes V_1^{\bullet}$. Recall that $\Rb$ is the middle term in the short exact sequence $0\to \Oo _{\tilde X}(E_2)\to \Rb \to \Ll_{\oa}\to 0$, hence $\Rb$ is a locally free sheaf having a non-trivial class in $K^0(\Xt)$. For consistency of calculations, let us check that $\bf R$ belongs to $\pi^*\Dd^b(X)$. Indeed, restricting $\Rb$ to $E_1$ we obtain a short exact sequence $0\to \Oo_{E_1}\to i_1^* \Rb \to \Ll_{\oa}\to 0$, hence $i_1^* \Rb$ is pulled back from $\Qs_5$ and has a trivial restriction to the fibers of $\pa$. Restricting $\Rb$ to $E_2$ we obtain a short exact sequence $0\to \Ll_{\ob-\oa}\to i_2^* \Rb \to \Ll_{\oa}\to 0$, and then $i_2^* \Rb =\Uu_2^{\vee}$, hence $i_2^* \Rb$ is pulled back from $\Gad$ and has a trivial restriction to the fibers of $\pb$.

Let $\R$ denote the vector bundle on $X$ such that $\pi^*\R = \Rb$. Now $E=\R  \otimes V_1^{\bullet} \in {^\perp\A}$, where $\A$ is a full subcategory of $\Dd^b(X)$ generated by a semiorthogonal sequence. We have $\Hom_{X}^{\bullet}(E, \A) =\Hom_{X}^{\bullet}(\R  \otimes V_1^{\bullet}, \A) = V_1^{\bullet}  \otimes \Hom_{X}^{\bullet}(\R,\A)=0$, where the second isomorphism holds since $E= \R  \otimes V_1^{\bullet}\in \Dd^b(X)$, and hence $V_1^{\bullet}$ is a bounded complex of finite-dimensional vector spaces. Assuming that $V_1^{\bullet}\neq 0$, we necessarily obtain $\Hom_{X}^{\bullet}(\R,\A)=0$, but this would contradict the fact that the terms of $\A$ generate the group $K^0(X) \cong \Z^{12}$. Thus $V_1^{\bullet}=0$ and $E=0$.

Therefore, ${^\perp\A}=0$, and the semiorthogonal sequence $\A$ is a semiorthogonal decomposition of $\Dd ^b(X)$.
\end{proof}

\begin{remark}
We finish with a (heuristic so far) remark about how collection (\ref{eq:Lefschetz_exc_coll_G_2hor}) of Theorem \ref{th:Lefexccoll} shows up. By \cite[Proposition 2.3]{BP}, the variety $X^5$ has a deformation to the orthogonal grassmannian ${\sf OGr}(2,7)$. By \cite[Section 7]{Kuzisotropic}, there is a full rectangular Lefschetz exceptional collection on ${\sf OGr}(2,7)$ which consists of four blocks with the starting block being $\langle \Uu ,{\mathcal S},\Oo _{{\sf OGr}(2,7)}\rangle$. Here $\Uu$ is the tautological rank two bundle and ${\mathcal S}$ is the spinor bundle on ${\sf OGr}(2,7)$. 
Exceptional objects can be extended over to the formal neighbourhood of the 
special fiber in a family, \cite[Lemma 2.10]{BK}, as all the obstructions to deformations vanish. We see that the shape of the starting block of collection (\ref{eq:Lefschetz_exc_coll_G_2hor}) agrees with that of the starting block on ${\sf OGr}(2,7)$. We will explore this connection in a subsequent work.
\end{remark}
%With some work, these deformations can be extended to an \'etale %neighbourhood of the special fiber. 
%Up to a blow--up, the variety $X^5$ is the $\Pp ^1$--bundle over the flag variety $G_2/B$. The objects $\Ub$ and $\Sbh$ on $X^5$ above are built, respectively, out of the natural tautological bundle of rank two and of the spinor bundle of rank four on $G_2/B$. 

\comment{
\section{Appendix}

In this appendix, we give the beginning (these are infinite graphs) of the quantum Hasse diagrams in case (1), $n = 3$, case (2), case (3) $n = 3 = m$ and case (5). Each column corresponds to a degree starting from the left with degree zero. The black vertices correspond to the classes $\s'_u$ and where there are two classes of the same degree, the classes are $\s'_u$ and $\s'_{u'}$ and the top class is always $\s'_u$ while the bottom one is $\s'_{u'}$. We use the same convention for blue vertices which correspond to $\tau_v$ classes. Black and blue edges are from the Hasse diagram of $Y$ and $Z$. Grey edges are from the Hasse diagram of $Y$ to the one of $Z$ while dotted edges correspond to quantum multiplication.

\subsection{Case (1), $n = 3$} We give the quantum hasse diagram in case (1) for $n = 3$.

\centerline{\begin{pspicture*}(0,0)(8.4,6.0)

\psellipse[fillstyle=solid,fillcolor=black](1.2,4.8)(0.086,0.086)
\psellipse[fillstyle=solid,fillcolor=black](1.8,4.8)(0.086,0.086)
\psellipse[fillstyle=solid,fillcolor=black](2.4,4.8)(0.086,0.086)
\psellipse[fillstyle=solid,fillcolor=black](2.4,4.2)(0.086,0.086)
\psellipse[fillstyle=solid,fillcolor=black](3.0,4.8)(0.086,0.086)
\psellipse[fillstyle=solid,fillcolor=black](3.0,4.2)(0.086,0.086)
\psellipse[linecolor=blue,fillstyle=solid,fillcolor=blue](3.0,3.6)(0.086,0.086)
\psellipse[fillstyle=solid,fillcolor=black](3.6,4.8)(0.086,0.086)
\psellipse[fillstyle=solid,fillcolor=black](3.6,4.2)(0.086,0.086)
\psellipse[linecolor=blue,fillstyle=solid,fillcolor=blue](3.6,3.6)(0.086,0.086)
\psellipse[fillstyle=solid,fillcolor=black](4.2,4.8)(0.086,0.086)
\psellipse[fillstyle=solid,fillcolor=black](4.2,4.2)(0.086,0.086)
\psellipse[linecolor=blue,fillstyle=solid,fillcolor=blue](4.2,3.6)(0.086,0.086)
\psellipse[fillstyle=solid,fillcolor=black](4.2,3.0)(0.086,0.086)
\psellipse[fillstyle=solid,fillcolor=black](4.8,4.8)(0.086,0.086)
\psellipse[linecolor=blue,fillstyle=solid,fillcolor=blue](4.8,4.2)(0.086,0.086)
\psellipse[linecolor=blue,fillstyle=solid,fillcolor=blue](4.8,3.6)(0.086,0.086)
\psellipse[fillstyle=solid,fillcolor=black](4.8,3.0)(0.086,0.086)
\psellipse[fillstyle=solid,fillcolor=black](5.4,4.8)(0.086,0.086)
\psellipse[linecolor=blue,fillstyle=solid,fillcolor=blue](5.4,4.2)(0.086,0.086)
\psellipse[fillstyle=solid,fillcolor=black](5.4,3.0)(0.086,0.086)
\psellipse[fillstyle=solid,fillcolor=black](5.4,2.4)(0.086,0.086)
\psellipse[linecolor=blue,fillstyle=solid,fillcolor=blue](6.0,4.2)(0.086,0.086)
\psellipse[fillstyle=solid,fillcolor=black](6.0,3.0)(0.086,0.086)
\psellipse[fillstyle=solid,fillcolor=black](6.0,2.4)(0.086,0.086)
\psellipse[linecolor=blue,fillstyle=solid,fillcolor=blue](6.0,1.8)(0.086,0.086)
\psellipse[linecolor=blue,fillstyle=solid,fillcolor=blue](6.6,4.2)(0.086,0.086)
\psellipse[fillstyle=solid,fillcolor=black](6.6,3.0)(0.086,0.086)
\psellipse[fillstyle=solid,fillcolor=black](6.6,2.4)(0.086,0.086)
\psellipse[linecolor=blue,fillstyle=solid,fillcolor=blue](6.6,1.8)(0.086,0.086)
\psellipse[fillstyle=solid,fillcolor=black](7.2,3.0)(0.086,0.086)
\psellipse[fillstyle=solid,fillcolor=black](7.2,2.4)(0.086,0.086)
\psellipse[linecolor=blue,fillstyle=solid,fillcolor=blue](7.2,1.8)(0.086,0.086)
\psellipse[fillstyle=solid,fillcolor=black](7.2,1.2)(0.086,0.086)
\psline(1.286,4.8)(1.714,4.8)
\psline(1.861,4.739)(2.339,4.261)
\psline(2.461,4.739)(2.939,4.261)
\psline(3.061,4.261)(3.539,4.739)
\psline(2.486,4.8)(2.914,4.8)
\psline(3.061,4.739)(3.539,4.261)
\psline(3.661,4.261)(4.139,4.739)
\psline(3.686,4.8)(4.114,4.8)
\psline(4.261,4.261)(4.739,4.739)
\psline(4.886,4.8)(5.314,4.8)
\psline(4.286,3.0)(4.714,3.0)
\psline(4.861,2.939)(5.339,2.461)
\psline(5.486,3.0)(5.914,3.0)
\psline(6.061,2.939)(6.539,2.461)
\psline(6.514,2.4)(6.514,2.4)
\psline(6.661,2.461)(7.139,2.939)
\psline(5.914,2.4)(5.914,2.4)
\psline(5.461,2.939)(5.939,2.461)
\psline(6.061,2.461)(6.539,2.939)
\psline(6.686,3.0)(7.114,3.0)
\psline(1.878,4.764)(2.322,4.764)
\psline(1.878,4.836)(2.322,4.836)
\psline(2.478,4.164)(2.922,4.164)
\psline(2.478,4.236)(2.922,4.236)
\psline(3.078,4.764)(3.522,4.764)
\psline(3.078,4.836)(3.522,4.836)
\psline(3.078,4.164)(3.522,4.164)
\psline(3.078,4.236)(3.522,4.236)
\psline(4.278,4.764)(4.722,4.764)
\psline(4.278,4.836)(4.722,4.836)
\psline(3.678,4.164)(4.122,4.164)
\psline(3.678,4.236)(4.122,4.236)
\psline(4.878,2.964)(5.322,2.964)
\psline(4.878,3.036)(5.322,3.036)
\psline(6.078,2.964)(6.522,2.964)
\psline(6.078,3.036)(6.522,3.036)
\psline(5.478,2.364)(5.922,2.364)
\psline(5.478,2.436)(5.922,2.436)
\psline(6.078,2.364)(6.522,2.364)
\psline(6.078,2.436)(6.522,2.436)
\psline(6.678,2.364)(7.122,2.364)
\psline(6.678,2.436)(7.122,2.436)
\psline[linecolor=blue](3.086,3.6)(3.514,3.6)
\psline[linecolor=blue](3.686,3.6)(4.114,3.6)
\psline[linecolor=blue](4.261,3.661)(4.739,4.139)
\psline[linecolor=blue](4.886,4.2)(5.314,4.2)
\psline[linecolor=blue](5.486,4.2)(5.914,4.2)
\psline[linecolor=blue](6.086,4.2)(6.514,4.2)
\psline[linecolor=blue](4.286,3.6)(4.714,3.6)
\psline[linecolor=blue](4.861,3.661)(5.339,4.139)
\psline[linecolor=blue](6.086,1.8)(6.514,1.8)
\psline[linecolor=blue](6.686,1.8)(7.114,1.8)
\psline[linecolor=gray](2.461,4.139)(2.939,3.661)
\psline[linecolor=gray](3.061,4.139)(3.539,3.661)
\psline[linecolor=gray](3.661,4.139)(4.139,3.661)
\psline[linecolor=gray](4.261,4.739)(4.739,4.261)
\psline[linecolor=gray](4.261,4.139)(4.739,3.661)
\psline[linecolor=gray](4.861,4.739)(5.339,4.261)
\psline[linecolor=gray](5.461,4.739)(5.939,4.261)
\psline[linecolor=gray](5.461,2.339)(5.939,1.861)
\psline[linecolor=gray](6.061,2.339)(6.539,1.861)
\psline[linecolor=gray](6.661,2.339)(7.139,1.861)
\psline[linestyle=dashed](3.661,3.539)(4.139,3.061)
\psline[linestyle=dashed](4.261,3.539)(4.739,3.061)
\psline[linestyle=dashed](4.861,3.539)(5.339,3.061)
\psline[linestyle=dashed](4.827,4.118)(5.373,2.482)
\psline[linestyle=dashed](5.427,4.118)(5.973,2.482)
\psline[linestyle=dashed](6.027,4.118)(6.573,2.482)
\psline[linestyle=dashed](6.627,4.118)(7.173,2.482)
\psline[linestyle=dashed](5.417,4.716)(5.983,1.884)
\psline[linestyle=dashed](6.021,4.117)(6.579,1.883)
\psline[linestyle=dashed](6.621,4.117)(7.179,1.883)
\psline[linestyle=dashed](6.58,4.116)(7.149,1.27)
\psline[linestyle=dashed](6.651,4.13)(7.22,1.284)

\end{pspicture*}
}

\subsection{Case (2)} We give the quantum hasse diagram in case (2).

\centerline{\begin{pspicture*}(0,0)(8.4,4.8)

\psellipse[fillstyle=solid,fillcolor=black](1.2,3.6)(0.082,0.082)
\psellipse[fillstyle=solid,fillcolor=black](1.8,3.6)(0.082,0.082)
\psellipse[fillstyle=solid,fillcolor=black](2.4,3.6)(0.082,0.082)
\psellipse[fillstyle=solid,fillcolor=black](3.0,3.6)(0.082,0.082)
\psellipse[linecolor=blue,fillstyle=solid,fillcolor=blue](3.0,3.0)(0.082,0.082)
\psellipse[fillstyle=solid,fillcolor=black](3.6,3.6)(0.082,0.082)
\psellipse[linecolor=blue,fillstyle=solid,fillcolor=blue](3.6,3.0)(0.082,0.082)
\psellipse[fillstyle=solid,fillcolor=black](4.2,3.6)(0.082,0.082)
\psellipse[linecolor=blue,fillstyle=solid,fillcolor=blue](4.2,3.0)(0.082,0.082)
\psellipse[linecolor=blue,fillstyle=solid,fillcolor=blue](4.8,3.0)(0.082,0.082)
\psellipse[linecolor=blue,fillstyle=solid,fillcolor=blue](4.8,2.4)(0.082,0.082)
\psellipse[linecolor=blue,fillstyle=solid,fillcolor=blue](5.4,2.4)(0.082,0.082)
\psellipse[fillstyle=solid,fillcolor=black](5.4,1.8)(0.082,0.082)
\psellipse[linecolor=blue,fillstyle=solid,fillcolor=blue](6.0,2.4)(0.082,0.082)
\psellipse[fillstyle=solid,fillcolor=black](6.0,1.8)(0.082,0.082)
\psellipse[linecolor=blue,fillstyle=solid,fillcolor=blue](6.6,2.4)(0.082,0.082)
\psellipse[fillstyle=solid,fillcolor=black](6.6,1.8)(0.082,0.082)
\psellipse[fillstyle=solid,fillcolor=black](7.2,1.8)(0.082,0.082)
\psellipse[linecolor=blue,fillstyle=solid,fillcolor=blue](7.2,1.2)(0.082,0.082)
\psline(1.282,3.6)(1.718,3.6)
\psline(1.882,3.6)(2.318,3.6)
\psline(3.082,3.6)(3.518,3.6)
\psline(3.682,3.6)(4.118,3.6)
\psline(5.482,1.8)(5.918,1.8)
\psline(6.082,1.8)(6.518,1.8)
\psline(6.674,1.764)(7.126,1.764)
\psline(6.674,1.836)(7.126,1.836)
\psline(2.474,3.564)(2.926,3.564)
\psline(2.474,3.636)(2.926,3.636)
\psline[linecolor=blue](3.082,3.0)(3.518,3.0)
\psline[linecolor=blue](3.682,3.0)(4.118,3.0)
\psline[linecolor=blue](4.282,3.0)(4.718,3.0)
\psline[linecolor=blue](4.858,2.942)(5.342,2.458)
\psline[linecolor=blue](5.482,2.4)(5.918,2.4)
\psline[linecolor=blue](6.082,2.4)(6.518,2.4)
\psline[linecolor=blue](4.258,2.942)(4.742,2.458)
\psline[linecolor=blue](4.882,2.4)(5.318,2.4)
\psline[linecolor=gray](2.458,3.542)(2.942,3.058)
\psline[linecolor=gray](3.058,3.542)(3.542,3.058)
\psline[linecolor=gray](3.658,3.542)(4.142,3.058)
\psline[linecolor=gray](4.258,3.542)(4.742,3.058)
\psline[linecolor=gray](6.658,1.742)(7.142,1.258)
\psline[linestyle=dashed](4.858,2.342)(5.342,1.858)
\psline[linestyle=dashed](5.458,2.342)(5.942,1.858)
\psline[linestyle=dashed](6.058,2.342)(6.542,1.858)
\psline[linestyle=dashed](6.658,2.342)(7.142,1.858)

\end{pspicture*}
}

\subsection{Case (3), $n = 3 = m$} We give the quantum hasse diagram in case (3) for $n = 3 = m$.

\centerline{\begin{pspicture*}(0,0)(8.4,7.2)

\psellipse[fillstyle=solid,fillcolor=black](1.2,6.0)(0.079,0.079)
\psellipse[fillstyle=solid,fillcolor=black](1.8,6.0)(0.079,0.079)
\psellipse[fillstyle=solid,fillcolor=black](2.4,6.0)(0.079,0.079)
\psellipse[linecolor=blue,fillstyle=solid,fillcolor=blue](2.4,4.8)(0.079,0.079)
\psellipse[fillstyle=solid,fillcolor=black](3.0,6.0)(0.079,0.079)
\psellipse[fillstyle=solid,fillcolor=black](3.0,5.4)(0.079,0.079)
\psellipse[linecolor=blue,fillstyle=solid,fillcolor=blue](3.0,4.8)(0.079,0.079)
\psellipse[fillstyle=solid,fillcolor=black](3.6,5.4)(0.079,0.079)
\psellipse[linecolor=blue,fillstyle=solid,fillcolor=blue](3.6,4.8)(0.079,0.079)
\psellipse[linecolor=blue,fillstyle=solid,fillcolor=blue](3.6,4.2)(0.079,0.079)
\psellipse[fillstyle=solid,fillcolor=black](4.2,5.4)(0.079,0.079)
\psellipse[linecolor=blue,fillstyle=solid,fillcolor=blue](4.2,4.8)(0.079,0.079)
\psellipse[linecolor=blue,fillstyle=solid,fillcolor=blue](4.2,4.2)(0.079,0.079)
\psellipse[fillstyle=solid,fillcolor=black](4.2,3.6)(0.079,0.079)
\psellipse[fillstyle=solid,fillcolor=black](4.8,5.4)(0.079,0.079)
\psellipse[linecolor=blue,fillstyle=solid,fillcolor=blue](4.8,4.8)(0.079,0.079)
\psellipse[linecolor=blue,fillstyle=solid,fillcolor=blue](4.8,4.2)(0.079,0.079)
\psellipse[fillstyle=solid,fillcolor=black](4.8,3.6)(0.079,0.079)
\psellipse[linecolor=blue,fillstyle=solid,fillcolor=blue](5.4,4.8)(0.079,0.079)
\psellipse[linecolor=blue,fillstyle=solid,fillcolor=blue](5.4,4.2)(0.079,0.079)
\psellipse[fillstyle=solid,fillcolor=black](5.4,3.6)(0.079,0.079)
\psellipse[linecolor=blue,fillstyle=solid,fillcolor=blue](5.4,2.4)(0.079,0.079)
\psellipse[linecolor=blue,fillstyle=solid,fillcolor=blue](6.0,4.8)(0.079,0.079)
\psellipse[fillstyle=solid,fillcolor=black](6.0,3.6)(0.079,0.079)
\psellipse[fillstyle=solid,fillcolor=black](6.0,3.0)(0.079,0.079)
\psellipse[linecolor=blue,fillstyle=solid,fillcolor=blue](6.0,2.4)(0.079,0.079)
\psellipse[linecolor=blue,fillstyle=solid,fillcolor=blue](6.6,4.8)(0.079,0.079)
\psellipse[fillstyle=solid,fillcolor=black](6.6,3.0)(0.079,0.079)
\psellipse[linecolor=blue,fillstyle=solid,fillcolor=blue](6.6,2.4)(0.079,0.079)
\psellipse[linecolor=blue,fillstyle=solid,fillcolor=blue](6.6,1.8)(0.079,0.079)
\psellipse[fillstyle=solid,fillcolor=black](7.2,3.0)(0.079,0.079)
\psellipse[linecolor=blue,fillstyle=solid,fillcolor=blue](7.2,2.4)(0.079,0.079)
\psellipse[linecolor=blue,fillstyle=solid,fillcolor=blue](7.2,1.8)(0.079,0.079)
\psellipse[fillstyle=solid,fillcolor=black](7.2,1.2)(0.082,0.082)
\psline(1.279,6.0)(1.721,6.0)
\psline(2.456,5.944)(2.944,5.456)
\psline(3.056,5.944)(3.544,5.456)
\psline(4.279,5.4)(4.721,5.4)
\psline(4.279,3.6)(4.721,3.6)
\psline(5.456,3.544)(5.944,3.056)
\psline(6.056,3.544)(6.544,3.056)
\psline(1.87,5.964)(2.33,5.964)
\psline(1.87,6.036)(2.33,6.036)
\psline(2.47,5.964)(2.93,5.964)
\psline(2.47,6.036)(2.93,6.036)
\psline(3.07,5.364)(3.53,5.364)
\psline(3.07,5.436)(3.53,5.436)
\psline(3.67,5.364)(4.13,5.364)
\psline(3.67,5.436)(4.13,5.436)
\psline(4.87,3.564)(5.33,3.564)
\psline(4.87,3.636)(5.33,3.636)
\psline(5.47,3.564)(5.93,3.564)
\psline(5.47,3.636)(5.93,3.636)
\psline(6.07,2.964)(6.53,2.964)
\psline(6.07,3.036)(6.53,3.036)
\psline(6.67,2.964)(7.13,2.964)
\psline(6.67,3.036)(7.13,3.036)
\psline[linecolor=blue](2.479,4.8)(2.921,4.8)
\psline[linecolor=blue](3.079,4.8)(3.521,4.8)
\psline[linecolor=blue](3.679,4.8)(4.121,4.8)
\psline[linecolor=blue](4.256,4.744)(4.744,4.256)
\psline[linecolor=blue](4.879,4.2)(5.321,4.2)
\psline[linecolor=blue](5.456,4.256)(5.944,4.744)
\psline[linecolor=blue](6.079,4.8)(6.521,4.8)
\psline[linecolor=blue](3.056,4.744)(3.544,4.256)
\psline[linecolor=blue](3.679,4.2)(4.121,4.2)
\psline[linecolor=blue](4.256,4.256)(4.744,4.744)
\psline[linecolor=blue](4.879,4.8)(5.321,4.8)
\psline[linecolor=blue](5.479,4.8)(5.921,4.8)
\psline[linecolor=blue](3.656,4.256)(4.144,4.744)
\psline[linecolor=blue](4.856,4.744)(5.344,4.256)
\psline[linecolor=blue](5.479,2.4)(5.921,2.4)
\psline[linecolor=blue](6.079,2.4)(6.521,2.4)
\psline[linecolor=blue](6.679,2.4)(7.121,2.4)
\psline[linecolor=blue](6.056,2.344)(6.544,1.856)
\psline[linecolor=blue](6.656,1.856)(7.144,2.344)
\psline[linecolor=blue](6.679,1.8)(7.121,1.8)
\psline[linecolor=blue](4.27,4.764)(4.73,4.764)
\psline[linecolor=blue](4.27,4.836)(4.73,4.836)
\psline[linecolor=blue](4.27,4.164)(4.73,4.164)
\psline[linecolor=blue](4.27,4.236)(4.73,4.236)
\psline[linecolor=gray](1.835,5.929)(2.365,4.871)
\psline[linecolor=gray](2.435,5.929)(2.965,4.871)
\psline[linecolor=gray](3.035,5.329)(3.565,4.271)
\psline[linecolor=gray](3.035,5.929)(3.565,4.871)
\psline[linecolor=gray](3.656,5.344)(4.144,4.856)
\psline[linecolor=gray](4.256,5.344)(4.744,4.856)
\psline[linecolor=gray](4.856,5.344)(5.344,4.856)
\psline[linecolor=gray](4.835,3.529)(5.365,2.471)
\psline[linecolor=gray](5.435,3.529)(5.965,2.471)
\psline[linecolor=gray](6.035,2.929)(6.565,1.871)
\psline[linecolor=gray](6.035,3.529)(6.565,2.471)
\psline[linecolor=gray](6.656,2.944)(7.144,2.456)
\psline[linecolor=gray](7.118,1.2)(7.118,1.2)
\psline[linestyle=dashed](3.635,4.729)(4.165,3.671)
\psline[linestyle=dashed](4.235,4.729)(4.765,3.671)
\psline[linestyle=dashed](4.835,4.729)(5.365,3.671)
%\psline[linestyle=dashed](5.435,4.729)(5.965,3.671)
\psline[linestyle=dashed](5.435,4.729)(5.965,3.075)
\psline[linestyle=dashed](6.025,4.725)(6.575,3.075)
\psline[linestyle=dashed](6.625,4.725)(7.175,3.075)
\psline[linestyle=dashed](4.825,4.125)(5.375,2.475)
\psline[linestyle=dashed](5.425,4.125)(5.975,2.475)
\psline[linestyle=dashed](6.015,4.723)(6.585,1.877)
\psline[linestyle=dashed](6.615,4.723)(7.185,1.877)
%\psline[linestyle=dashed](5.435,4.129)(5.965,3.071)
\psline[linestyle=dashed](5.435,4.129)(5.965,3.671)
\psline[linestyle=dashed](6.635,2.329)(7.163,1.273)

\end{pspicture*}
} 

\subsection{Case (5)} We give the quantum hasse diagram in case (5).

\centerline{\begin{pspicture*}(0,0)(7.2,4.8)

\psellipse[fillstyle=solid,fillcolor=black](1.2,3.6)(0.084,0.084)
\psellipse[fillstyle=solid,fillcolor=black](1.8,3.6)(0.084,0.084)
\psellipse[fillstyle=solid,fillcolor=black](2.4,3.6)(0.084,0.084)
\psellipse[linecolor=blue,fillstyle=solid,fillcolor=blue](2.4,3.0)(0.084,0.084)
\psellipse[fillstyle=solid,fillcolor=black](3.0,3.6)(0.084,0.084)
\psellipse[linecolor=blue,fillstyle=solid,fillcolor=blue](3.0,3.0)(0.084,0.084)
\psellipse[fillstyle=solid,fillcolor=black](3.6,3.6)(0.084,0.084)
\psellipse[linecolor=blue,fillstyle=solid,fillcolor=blue](3.6,3.0)(0.084,0.084)
\psellipse[fillstyle=solid,fillcolor=black](3.6,2.4)(0.084,0.084)
\psellipse[fillstyle=solid,fillcolor=black](4.2,3.6)(0.084,0.084)
\psellipse[linecolor=blue,fillstyle=solid,fillcolor=blue](4.2,3.0)(0.084,0.084)
\psellipse[fillstyle=solid,fillcolor=black](4.2,2.4)(0.084,0.084)
\psellipse[linecolor=blue,fillstyle=solid,fillcolor=blue](4.8,3.0)(0.084,0.084)
\psellipse[fillstyle=solid,fillcolor=black](4.8,2.4)(0.084,0.084)
\psellipse[linecolor=blue,fillstyle=solid,fillcolor=blue](4.8,1.8)(0.084,0.084)
\psellipse[linecolor=blue,fillstyle=solid,fillcolor=blue](5.4,3.0)(0.084,0.084)
\psellipse[fillstyle=solid,fillcolor=black](5.4,2.4)(0.084,0.084)
\psellipse[linecolor=blue,fillstyle=solid,fillcolor=blue](5.4,1.8)(0.084,0.084)
\psellipse[fillstyle=solid,fillcolor=black](6.0,2.4)(0.084,0.084)
\psellipse[linecolor=blue,fillstyle=solid,fillcolor=blue](6.0,1.8)(0.084,0.084)
\psellipse[fillstyle=solid,fillcolor=black](6.0,1.2)(0.084,0.084)
\psline(1.284,3.6)(1.716,3.6)
\psline(3.684,3.6)(4.116,3.6)
\psline(3.684,2.4)(4.116,2.4)
\psline(2.476,3.564)(2.924,3.564)
\psline(2.476,3.636)(2.924,3.636)
\psline(4.876,2.364)(5.324,2.364)
\psline(4.876,2.436)(5.324,2.436)
\psline(1.843,3.528)(2.357,3.528)
\psline(1.884,3.6)(2.316,3.6)
\psline(1.843,3.672)(2.357,3.672)
\psline(3.043,3.528)(3.557,3.528)
\psline(3.084,3.6)(3.516,3.6)
\psline(3.043,3.672)(3.557,3.672)
\psline(4.243,2.328)(4.757,2.328)
\psline(4.284,2.4)(4.716,2.4)
\psline(4.243,2.472)(4.757,2.472)
\psline(5.443,2.328)(5.957,2.328)
\psline(5.484,2.4)(5.916,2.4)
\psline(5.443,2.472)(5.957,2.472)
\psline[linecolor=blue](2.484,3.0)(2.916,3.0)
\psline[linecolor=blue](3.084,3.0)(3.516,3.0)
\psline[linecolor=blue](4.284,3.0)(4.716,3.0)
\psline[linecolor=blue](4.884,3.0)(5.316,3.0)
\psline[linecolor=blue](4.884,1.8)(5.316,1.8)
\psline[linecolor=blue](5.484,1.8)(5.916,1.8)
\psline[linecolor=blue](3.676,2.964)(4.124,2.964)
\psline[linecolor=blue](3.676,3.036)(4.124,3.036)
\psline[linecolor=gray](1.859,3.541)(2.341,3.059)
\psline[linecolor=gray](2.459,3.541)(2.941,3.059)
\psline[linecolor=gray](3.059,3.541)(3.541,3.059)
\psline[linecolor=gray](3.659,3.541)(4.141,3.059)
\psline[linecolor=gray](4.259,3.541)(4.741,3.059)
\psline[linecolor=gray](4.259,2.341)(4.741,1.859)
\psline[linecolor=gray](4.859,2.341)(5.341,1.859)
\psline[linecolor=gray](5.459,2.341)(5.941,1.859)
\psline[linestyle=dashed](3.059,2.941)(3.541,2.459)
\psline[linestyle=dashed](3.659,2.941)(4.141,2.459)
\psline[linestyle=dashed](4.259,2.941)(4.741,2.459)
\psline[linestyle=dashed](4.859,2.941)(5.341,2.459)
\psline[linestyle=dashed](5.459,2.941)(5.941,2.459)
\psline[linestyle=dashed](4.227,3.52)(4.773,1.88)
\psline[linestyle=dashed](4.838,2.925)(5.362,1.875)
\psline[linestyle=dashed](5.438,2.925)(5.962,1.875)
\psline[linestyle=dashed](5.39,2.917)(5.942,1.261)
\psline[linestyle=dashed](5.458,2.939)(6.01,1.283)

\end{pspicture*}
}
}

%%%%%%%%%%%%%%%%%%%%%%%%%%%%%%%%%%%%%%%%%%%%%%%%%%%%%%%%%%%%%%%%%%%%%%%%%%

\section{Appendix}

In this appendix, we give the beginning (these are infinite graphs) of the quantum Hasse diagrams in case (1), $n = 3$, case (2), case (3) $n = 3 = m$ and case (5). Each column corresponds to a degree starting from the left with degree zero. The black vertices correspond to the classes $\s'_u$ and where there are two classes of the same degree, the classes are $\s'_u$ and $\s'_{u'}$ and the top class is always $\s'_u$ while the bottom one is $\s'_{u'}$. We use the same convention for blue vertices which correspond to $\tau_v$ classes. Black and blue edges are from the Hasse diagram of $Y$ and $Z$. Grey edges are from the Hasse diagram of $Y$ to the one of $Z$ while dotted edges correspond to quantum multiplication.

\subsection{Case (1), $n = 3$} We give the quantum hasse diagram in case (1) for $n = 3$.

\centerline{}

\subsection{Case (2)} We give the quantum hasse diagram in case (2).

\centerline{}

\subsection{Case (3), $n = 3 = m$} We give the quantum hasse diagram in case (3) for $n = 3 = m$.

\centerline{} 

\subsection{Case (5)} We give the quantum hasse diagram in case (5).

\centerline{}

\bibliographystyle{plain}
\bibliography{biblio-revision}

\begin{thebibliography}{10}

\bibitem{At}
M.~Atiyah.
\newblock On the {K}rull-{S}chmidt theorem with application to sheaves.
\newblock {\em Bull. Soc. Math. France}, 84:307--317, 1956.

\bibitem{BF}
K.~Behrend and B.~Fantechi.
\newblock The intrinsic normal cone.
\newblock {\em Invent. Math.}, 128(1):45--88, 1997.

\bibitem{BK}
R.V. Bezrukavnikov and D.B. Kaledin.
\newblock {M}ckay equivalence for symplectic resolutions of quotient
  singularities.
\newblock {\em Proc. Steklov Inst. Math.}, 246(3):13--33, 2004.

\bibitem{bf:rc}
M.~Brion and B.~Fu.
\newblock Minimal rational curves on wonderful group compactifications.
\newblock {\em J. \'Ec. polytech. Math.}, 2:153--170, 2015.

\bibitem{buch:grass}
A.S. Buch.
\newblock Quantum cohomology of {G}rassmannians.
\newblock {\em Compositio Math.}, 137(2):227--235, 2003.

\bibitem{BCMP}
A.S. Buch, P.-E. Chaput, L.C. Mihalcea, and N.~Perrin.
\newblock Finiteness of cominuscule quantum {$K$}-theory.
\newblock {\em Ann. Sci. \'Ec. Norm. Sup\'er. (4)}, 46(3):477--494 (2013),
  2013.

\bibitem{BCMP2}
A.S. Buch, P.-E. Chaput, L.C. Mihalcea, and N.~Perrin.
\newblock Rational connectedness implies finiteness of quantum {$K$}-theory.
\newblock {\em Asian J. Math.}, 20(1):117--122, 2016.

\bibitem{BKT1}
A.S. Buch, A.~Kresch, and H.~Tamvakis.
\newblock Gromov-{W}itten invariants on {G}rassmannians.
\newblock {\em J. Amer. Math. Soc.}, 16(4):901--915 (electronic), 2003.

\bibitem{BKT2}
A.S. Buch, A.~Kresch, and H.~Tamvakis.
\newblock Quantum {P}ieri rules for isotropic {G}rassmannians.
\newblock {\em Invent. Math.}, 178(2):345--405, 2009.

\bibitem{CMP2}
P.-E. Chaput, L.~Manivel, and N.~Perrin.
\newblock Quantum cohomology of minuscule homogeneous spaces. {II}. {H}idden
  symmetries.
\newblock {\em Int. Math. Res. Not. IMRN}, (22):Art. ID rnm107, 29, 2007.

\bibitem{CMP1}
P.-E. Chaput, L.~Manivel, and N.~Perrin.
\newblock Quantum cohomology of minuscule homogeneous spaces.
\newblock {\em Transform. Groups}, 13(1):47--89, 2008.

\bibitem{CMP4}
P.-E. Chaput, L.~Manivel, and N.~Perrin.
\newblock Affine symmetries of the equivariant quantum cohomology ring of
  rational homogeneous spaces.
\newblock {\em Math. Res. Lett.}, 16(1):7--21, 2009.

\bibitem{CMP3}
P.-E. Chaput, L.~Manivel, and N.~Perrin.
\newblock Quantum cohomology of minuscule homogeneous spaces {III}.
  {S}emi-simplicity and consequences.
\newblock {\em Canad. J. Math.}, 62(6):1246--1263, 2010.

\bibitem{adjoint}
P.-E. Chaput and N.~Perrin.
\newblock On the quantum cohomology of adjoint varieties.
\newblock {\em Proc. Lond. Math. Soc. (3)}, 103(2):294--330, 2011.

\bibitem{cp:qk}
P.-E. Chaput and N.~Perrin.
\newblock Rationality of some {G}romov-{W}itten varieties and application to
  quantum {$K$}-theory.
\newblock {\em Commun. Contemp. Math.}, 13(1):67--90, 2011.

\bibitem{chevalley:cel}
C.~Chevalley.
\newblock Sur les d\'ecompositions cellulaires des espaces {$G/B$}.
\newblock In {\em Algebraic groups and their generalizations: classical methods
  ({U}niversity {P}ark, {PA}, 1991)}, volume~56 of {\em Proc. Sympos. Pure
  Math.}, pages 1--23. Amer. Math. Soc., Providence, RI, 1994.
\newblock With a foreword by Armand Borel.

\bibitem{MSP}
J.~A. Cruz~Morales, A.~Kuznetsov, A.~Mellit, N.~Perrin, and M.~Smirnov.
\newblock On quantum cohomology of {G}rassmannians of isotropic lines,
  unfoldings of {$A_n$}--singularities, and {L}efschetz exceptional
  collections.
\newblock {\em Annales de l'Institut Fourier}, 69(3):955--991, 2019.

\bibitem{Dem}
M.~Demazure.
\newblock A very simple proof of {B}ott's theorem.
\newblock {\em Invent. Math.}, 33(3):271--272, 1976.

\bibitem{Fon}
A.~Fonarev.
\newblock {On the bounded derived category of $IGr(3,7)$}.
\newblock {\em preprint: arXiv:1804.06946v2}, 2018.

\bibitem{fp:moduli}
W.~Fulton and R.~Pandharipande.
\newblock Notes on stable maps and quantum cohomology.
\newblock In {\em Algebraic geometry---{S}anta {C}ruz 1995}, volume~62 of {\em
  Proc. Sympos. Pure Math.}, pages 45--96. Amer. Math. Soc., Providence, RI,
  1997.

\bibitem{fp:cm}
W.~Fulton and P.~Pragacz.
\newblock {\em Schubert varieties and degeneracy loci}, volume 1689 of {\em
  Lecture Notes in Mathematics}.
\newblock Springer-Verlag, Berlin, 1998.
\newblock Appendix J by the authors in collaboration with I. Ciocan-Fontanine.

\bibitem{GMS}
S.~Galkin, A.~Mellit, and M.~Smirnov.
\newblock Dubrovin's conjecture for {${\rm IG}(2,6)$}.
\newblock {\em Int. Math. Res. Not. IMRN}, (18):8847--8859, 2015.

\bibitem{gandini}
J.~Gandini.
\newblock Embeddings of spherical homogeneous spaces.
\newblock {\em Acta Math. Sin. (Engl. Ser.)}, 34(3):299–340, 2018.

\bibitem{gra:e}
T.~Graber.
\newblock Enumerative geometry of hyperelliptic plane curves.
\newblock {\em J. Algebraic Geom.}, 10(4):725--755, 2001.

\bibitem{Huyb}
D.~Huybrechts.
\newblock {\em Fourier-Mukai Transforms in Algebraic Geometry}.
\newblock Oxford Mathematical Monographs. Oxford University Press, 2006.

\bibitem{ip:fano}
V.~A. Iskovskikh and Yu.~G. Prokhorov.
\newblock Fano varieties.
\newblock In {\em Algebraic geometry, {V}}, volume~47 of {\em Encyclopaedia
  Math. Sci.}, pages 1--247. Springer, Berlin, 1999.

\bibitem{jantzen}
J.~C. Jantzen.
\newblock {\em Representations of algebraic groups}, volume 131 of {\em Pure
  and Applied Mathematics}.
\newblock Academic Press, Inc., Boston, MA, 1987.

\bibitem{Kap}
M.M. Kapranov.
\newblock On the derived categories of coherent sheaves on some homogeneous
  spaces.
\newblock {\em Invent.Math.}, 92:479--508, 1988.

\bibitem{kim-pandharipande}
B.~Kim and R.~Pandharipande.
\newblock The connectedness of the moduli space of maps to homogeneous spaces.
\newblock In {\em Symplectic geometry and mirror symmetry ({S}eoul, 2000)},
  pages 187--201. World Sci. Publ., River Edge, NJ, 2001.

\bibitem{knop}
F.~Knop.
\newblock The {L}una-{V}ust theory of spherical embeddings.
\newblock In {\em Proceedings of the {H}yderabad {C}onference on {A}lgebraic
  {G}roups ({H}yderabad, 1989)}, pages 225--249. Manoj Prakashan, Madras, 1991.

\bibitem{k:c}
J.~Koll{\'a}r.
\newblock {\em Rational curves on algebraic varieties}, volume~32 of {\em
  Ergebnisse der Mathematik und ihrer Grenzgebiete. 3. Folge. A Series of
  Modern Surveys in Mathematics [Results in Mathematics and Related Areas. 3rd
  Series. A Series of Modern Surveys in Mathematics]}.
\newblock Springer-Verlag, Berlin, 1996.

\bibitem{KT1}
A.~Kresch and H.~Tamvakis.
\newblock Quantum cohomology of the {L}agrangian {G}rassmannian.
\newblock {\em J. Algebraic Geom.}, 12(4):777--810, 2003.

\bibitem{KT2}
A.~Kresch and H.~Tamvakis.
\newblock Quantum cohomology of orthogonal {G}rassmannians.
\newblock {\em Compos. Math.}, 140(2):482--500, 2004.

\bibitem{k:flag}
S.~Kumar.
\newblock {\em Kac-Moody groups, their flag varieties and representation
  theory}, volume 204 of {\em Progress in Mathematics}.
\newblock Birkh\"{a}user Boston, 2002.

\bibitem{KuHPD}
A.~Kuznetsov.
\newblock Homological projective duality.
\newblock {\em Publ. Math. Inst. Hautes \'Etudes Sci.}, 105(1):157--220, 2007.

\bibitem{Kuzisotropic}
A.~Kuznetsov.
\newblock Exceptional collections for grassmannians of isotropic lines.
\newblock {\em Proc. London Math. Soc.}, 97(3):155--182, 2008.

\bibitem{Kuz}
A.~Kuznetsov.
\newblock Derived categories of cubic fourfolds.
\newblock In {\em Cohomological and geometric approaches to rationality
  problems}, volume 282 of {\em Progr. Math.}, pages 219--243. Birkh\"{a}user
  Boston, Inc., Boston, MA, 2010.

\bibitem{Kuz2}
A.~Kuznetsov.
\newblock Derived equivalence of {I}to--{M}iura--{O}kawa--{U}eda
  {C}alabi--{Y}au 3--folds.
\newblock {\em JMSJ}, 70(3):1007--1013, 2018.

\bibitem{KuzSm}
A.~Kuznetsov and Smirnov M.
\newblock On residual categories for {G}rassmannians.
\newblock {\em Proc. London Math. Soc.}, 120:617--641, 2020.

\bibitem{Ku}
A.~G. Kuznetsov.
\newblock Hyperplane sections and derived categories.
\newblock {\em Izv. Ross. Akad. Nauk Ser. Mat.}, 70(3):23--128, 2006.

\bibitem{lee:f}
Y.-P. Lee.
\newblock Quantum {$K$}--theory. {I}. {F}oundations.
\newblock {\em Duke Math. J.}, 121(3):389--424, 2004.

\bibitem{odd:mih2}
C.~Li, L.C. Mihalcea, and R.M. Shifler.
\newblock Conjecture $\mathcal{O}$ holds for the odd symplectic grassmannian.
\newblock {\em preprint arXiv:1706.00744}, 2017.

\bibitem{m:o}
I.A. Mihai.
\newblock Odd symplectic flag manifolds.
\newblock {\em Transform. Groups}, 12(3):573--599, 2007.

\bibitem{odd:mih1}
L.C. Mihalcea and R.M. Shifler.
\newblock Equivariant quantum cohomology of the odd symplectic grassmannian.
\newblock {\em to appear in Math. Zeitschrift, preprint arXiv:1706.00385},
  2017.

\bibitem{Or}
D.~O. Orlov.
\newblock Projective bundles, monoidal transformations, and derived categories
  of coherent sheaves.
\newblock {\em Izv. Ross. Akad. Nauk Ser. Mat.}, 56(4):852--862, 1992.

\bibitem{ottaviani}
G.~Ottaviani.
\newblock Spinor bundles on quadrics.
\newblock {\em Trans. Amer. Math. Soc.}, 307(1):301--316, 1988.

\bibitem{pa:these}
B.~Pasquier.
\newblock {\em Vari\'et\'es horosph\'eriques de Fano}.
\newblock PhD thesis, Universit\'e Grenoble 1, available at
  \texttt{http://tel.archives-ouvertes.fr/tel-00111912}, 2006.

\bibitem{pa:h}
B.~Pasquier.
\newblock On some smooth projective two--orbit varieties with {P}icard number
  1.
\newblock {\em Math. Ann.}, 344(4):963--987, 2009.

\bibitem{BP}
B.~Pasquier and N.~Perrin.
\newblock Local rigidity of quasi--regular varieties.
\newblock {\em Math. Z.}, 265(3):589--600, 2010.

\bibitem{pech:t}
C.~Pech.
\newblock {\em Cohomologie quantique des grassmanniennes symplectiques
  impaires}.
\newblock PhD thesis, Universit\'e Grenoble 1, available at
  \texttt{http://tel.archives-ouvertes.fr/tel-00650211}, 2011.

\bibitem{pech:q}
C.~Pech.
\newblock Quantum cohomology of the odd symplectic {G}rassmannian of lines.
\newblock {\em J. Algebra}, 375:188--215, 2013.

\bibitem{pe:courbes}
N.~Perrin.
\newblock Courbes rationnelles sur les vari\'et\'es homog\`enes.
\newblock {\em Ann. Inst. Fourier (Grenoble)}, 52(1):105--132, 2002.

\bibitem{survey}
N.~Perrin.
\newblock On the geometry of spherical varieties.
\newblock {\em Transform. Groups}, 19(1):171--223, 2014.

\bibitem{pe:ss}
N.~Perrin.
\newblock Semisimple quantum cohomology of some {F}ano varieties.
\newblock {\em preprint arXiv:1405.5914}, 2014.

\bibitem{sanya}
N.~Perrin.
\newblock Sanya lectures: geometry of spherical varieties.
\newblock {\em Acta Math. Sin. (Engl. Ser.)}, 34(3):371--416, 2018.

\bibitem{PR}
P.~Pragacz and J.~Ratajski.
\newblock A {P}ieri-type theorem for {L}agrangian and odd orthogonal
  {G}rassmannians.
\newblock {\em J. Reine Angew. Math.}, 476:143--189, 1996.

\bibitem{ST}
B.~Siebert and G.~Tian.
\newblock On quantum cohomology rings of {F}ano manifolds and a formula of
  {V}afa and {I}ntriligator.
\newblock {\em Asian J. Math.}, 1(4):679--695, 1997.

\bibitem{thomsen}
J.F. Thomsen.
\newblock Irreducibility of {$\overline{M}_{0,n}(G/P,\beta)$}.
\newblock {\em Internat. J. Math.}, 9(3):367--376, 1998.

\end{thebibliography}

\end{document}